\documentclass[11pt,a4paper,leqno]{amsart}
\usepackage{anysize}
\marginsize{3.8cm}{3.8cm}{2cm}{3cm}
\usepackage{amsmath}
\usepackage{amsfonts,amssymb}
\usepackage{enumerate}
\usepackage{amsthm}
\usepackage{mathrsfs}
\usepackage{graphicx}

\setcounter{tocdepth}{1}

\parskip=2pt
\parindent=20pt

\DeclareSymbolFont{pletters}{OT1}{cmr}{m}{sl}
\DeclareMathSymbol{s}{\mathalpha}{pletters}{`s}

\theoremstyle{plain}
\newtheorem{theo}{Theorem}[section]
\newtheorem{prop}[theo]{Proposition}
\newtheorem{lemm}[theo]{Lemma}
\newtheorem{coro}[theo]{Corollary}
\newtheorem{defi}[theo]{Definition}

\theoremstyle{definition}
\newtheorem{rema}[theo]{Remark}
\newtheorem{nota}[theo]{Notation}
%------------------------------------------------------------------------
\DeclareMathOperator{\cnx}{div}
\DeclareMathOperator{\RE}{Re}
\DeclareMathOperator{\dist}{dist}
\DeclareMathOperator{\IM}{Im}
\DeclareMathOperator{\sgn}{sgn}
\DeclareMathOperator{\supp}{supp}

\def\Cr{\mathscr{C}}

\def\CZ#1{C_{z}^{0}\big(H_{x}^{{#1}}\big)}
\def\defn{\mathrel{:=}}
\def\Deltax{\Delta}
\def\Deltayx{\Delta_{x,y}}
\def\eps{\varepsilon}
\def\F{\tilde{F}}
\def\la{\left\lvert}
\def\lA{\left\lVert}
\def\le{\leq}
\def\les{\lesssim}
\def\leo{}
\def\L#1{\langle #1 \rangle}
\def\mez{\frac{1}{2}}
\def\partialx{\nabla}
\def\partialyx{\nabla_{x,y}}
\def\ra{\right\rvert}
\def\rA{\right\rVert}
\def\tdm{\frac{3}{2}}
\def\tq{\frac{3}{4}}

\def\U{\tilde{U}}
\def\vareta{\tilde{\eta}}
\def\vPhi{\tilde{\Phi}}
\def\varpsi{\tilde{\psi}}
\def\vvarphi{\tilde{\varphi}}

\def\xN{\mathbf{N}}
\def\xR{\mathbf{R}}

\numberwithin{equation}{section}
\pagestyle{plain}

\begin{document}

%\begin{center}
%{\Large\textbf{On the water waves equations with surface tension}}
%\vspace{5mm}
%{\large T. Alazard, N. Burq \& C. Zuily}
%\end{center}

\title{On the water waves equations with surface tension}
\author{T. Alazard}\address{T. Alazard \\ Univ Paris Sud-11 \& CNRS \\ Laboratoire de Math\'ematiques\\91405 Orsay cedex.}
\email{thomas.alazard@math.u-psud.fr}
\author{N. Burq}\address{N. Burq \\ Univ Paris Sud-11  \\ Laboratoire de Math\'ematiques\\91405 Orsay cedex\& Institut universitaire de France.}
\email{nicolas.burq@math.u-psud.fr}
\author{C. Zuily}\address{C. Zuily \\ Univ Paris Sud-11 \\ Laboratoire de Math\'ematiques\\91405 Orsay cedex.}
\email{claude.zuily@math.u-psud.fr}

\thanks{Support by the french Agence Nationale de la Recherche, project EDP Dispersives, r\'ef\'erence ANR-07-BLAN-0250, is acknowledged.}

%\date{\empty}
%\title{On the water waves equations with surface tension}
%\author{
%T. Alazard\thanks{Support by the french Agence Nationale de la Recherche, project EDP Dispersives, reference ANR-07-BLAN-0250, is acknowledged. %
%}
%\and
%N. Burq\footnotemark[1]  
%\and
%C. Zuily\footnotemark[1]
%}

\begin{abstract}The purpose of this article is to clarify the Cauchy theory of the water waves equations
as well in terms of regularity indexes for the initial conditions as for the smoothness of the 
bottom of the domain (namely no regularity assumption is assumed on the bottom). 
Our main result is that, following the approach developped in~\cite{AM}, after suitable paralinearizations,
 the system can be arranged into an explicit symmetric system of Schr\"odinger type. We then show that 
the smoothing effect for the (one dimensional) surface tension water waves proved in~\cite{CHS}, is in fact a rather direct consequence of this reduction, which allows also to lower the regularity 
indexes of the initial data, and to obtain the natural weights in the estimates.
\end{abstract}

\maketitle

\tableofcontents

\section{Introduction}

We consider a solution of the incompressible Euler equations for a potential flow
in a domain with free boundary, of the form
$$
\{\,(t,x,y)\in [0,T]\times\xR^d\times\xR \, : \, (x,y) \in \Omega_t\,\},
$$
where $\Omega_t$ is the domain located between a 
free surface
$$
\Sigma_t= \{\, (x,y)\in \xR^d\times \xR\, : \, y= \eta(t,x)\,\},
$$
and a given bottom denoted by $\Gamma= \partial \Omega_t \setminus \Sigma_t$. 
The only assumption we shall make on the domain is that the top boundary, 
$\Sigma_t$, and the bottom boundary, $\Gamma$ are separated by a "strip" of fixed length.

More precisely, we assume that the initial domain satisfy the following assumption for $t=0$.
\begin{itemize}
\item[\bf{$\mathbf{H_t}$)}] The domain $\Omega_{t}$ is the intersection of the half space, 
denoted by $\Omega_{1,t}$, located below the free surface $\Sigma_t$,
$$ 
\Omega_{1,t}= \{  (x,y)\in \xR^d\times \xR\, : \, y< \eta(t,x)\}
$$ 
and an open set $\Omega_{2}\subset \xR^ {d+1}$ such that  $\Omega_{2}$ contains 
a fixed strip around $\Sigma_{t}$, which means that there exists $h>0$ such that, 
$$
\{ (x,y)\in \xR^d\times \xR\, : \, \eta(t,x) -h \le y \leq \eta(t,x) \} \subset \Omega_2.
$$
We shall also assume that the domain $\Omega_2$ (and hence the domain $\Omega_{t}=\Omega_{1,t}\cap \Omega_2$) 
is connected. 
\end{itemize}
We emphasize that no regularity assumption is made on the domain (apart from the regularity of the top boundary $\Sigma_t$).
Notice that our setting contains both cases of infinite depth and bounded depth bottoms (and all cases in-between).

\begin{figure}[h]
\includegraphics{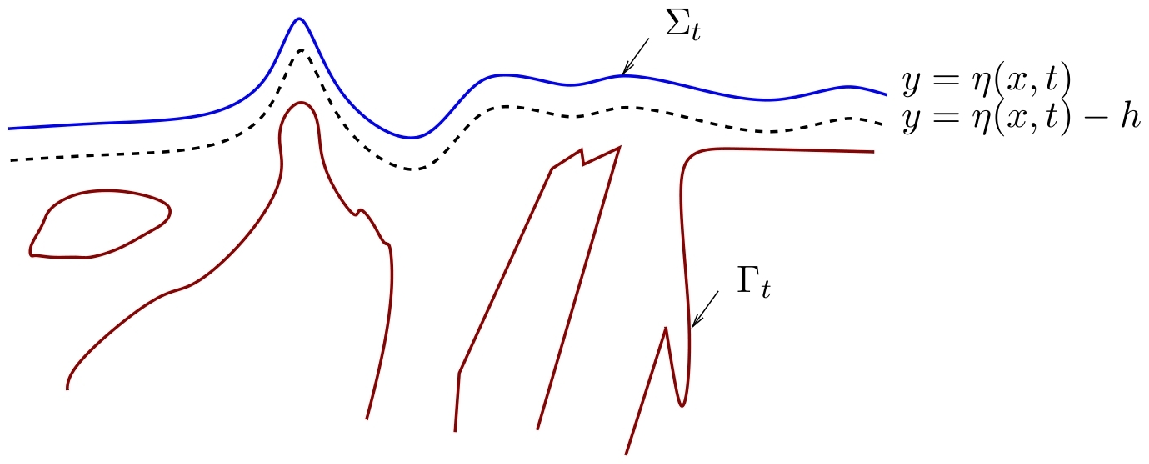}
\begin{center}{The domain}\end{center}
\end{figure}

A key feature of the water waves equations is that
there are two boundary conditions on the free surface $\Sigma_t=\{y=\eta(t,x)\}$. Namely, 
we consider a potential flow so that 
the velocity field is the gradient of a potential~$\phi=\phi(t,x,y)$
which is a harmonic function. 
The water waves equations are then given by the Neumann boundary condition on the bottom $\Gamma$, 
and the classical kinematic and dynamic boundary conditions on the free surface $\Sigma_t$. The system reads 
\begin{equation}\label{sI}
\left\{
\begin{aligned}
&\Deltax \phi+\partial_{y}^2\phi=0 &&\text{in \,}\Omega_t,\\
&\partial_{t} \eta = \partial_{y}\phi -\partialx\eta\cdot\partialx \phi &&\text{on } \Sigma_t, \\
&\partial_{t}\phi=-g \eta + \kappa H(\eta)-\frac{1}{2}\la \partialx\phi\ra^2 -\frac{1}{2}\la \partial_{y}\phi\ra^2
&&\text{on } \Sigma_t ,\\
&\partial_{n}\phi = 0 &&\text{on } \Gamma,
\end{aligned}
\right.
\end{equation}
where $\partialx=(\partial_{x_i})_{1\le i\le d}$, $\Deltax=\sum_{i=1}^d \partial_{x_i}^2$, $n$ is the normal to the boundary $\Gamma$, 
$g>0$ denotes the acceleration of gravity, $\kappa\ge 0$ is the coefficient of surface tension 
and $H(\eta)$ is the mean curvature of the free surface:
\begin{equation*}
H(\eta)= \cnx \left(\frac{\partialx\eta}{\sqrt{1+|\partialx\eta|^2}}\right).
\end{equation*}
We are concerned with the problem with surface tension and then we set $\kappa=1$. 

Since we make no regularity assumption on the bottom, 
to make sense of the system~\eqref{sI} requires some care (see Section~\ref{sec:2} for a precise definition).  
Following Zakharov we shall first define $\psi=\psi(t,x) \in\xR$ by
$$
\psi(t,x)=\phi(t,x,\eta(t,x)),
$$
and for $\chi \in C^\infty _0 ( ]-1,1[)$ equal to $1$ near $0$, 
$$
\widetilde \psi (t, x,y) = \chi\left(\frac{y-\eta(t,x)}{h} \right) \psi(t,x).
$$ 
The function  $\phi$ being harmonic, $\phi - \widetilde \psi= \widetilde \phi $ will be 
defined as the variational solution of the system 
$$ -\Deltayx \widetilde \phi =\Deltayx \widetilde \psi \quad\text{in }\Omega_t, 
\qquad \widetilde \phi\mid_{\Sigma_t} = 0, \qquad \partial_{n}\phi\mid_{\Gamma}  = 0.
$$ 
Let us now define the Dirichlet-Neumann operator by 
\begin{align*}
(G(\eta) \psi)  (t,x)&=
\sqrt{1+|\partialx\eta|^2}\,
\partial _n \phi\arrowvert_{y=\eta(t,x)},\\
&=(\partial_y \phi)(t,x,\eta(t,x))-\partialx \eta (t,x)\cdot (\partialx \phi)(t,x,\eta(t,x)).
\end{align*}
Now  $(\eta,\psi)$ solves
\begin{equation}\label{system}
\left\{
\begin{aligned}
&\partial_{t}\eta-G(\eta)\psi=0,\\
&\partial_{t}\psi+g \eta- H(\eta)
+ \frac{1}{2}\la\partialx \psi\ra^2  -\frac{1}{2}
\frac{\bigl(\partialx  \eta\cdot\partialx \psi +G(\eta) \psi \bigr)^2}{1+|\partialx  \eta|^2}
= 0.
\end{aligned}
\right.
\end{equation}

The fact that the Cauchy problem (without bottom) is well posed 
was proved by Beyer and G\"unther in~\cite{BG}. 
This result, as well as related uniform estimates with respect to $\kappa$, 
have been obtained by different proofs 
in \cite{Sch,Iguchi,AmMa,CS,MZ,RT}. The purpose of this article is twofold: 
first we want to clarify the Cauchy theory 
as well in terms of regularity indexes for the initial conditions as for the smoothness 
of the bottom (to our knowledge, previous results required the bottom to be the graph 
of an $H^{13}$ function). Second we want to show that the smoothing effect for the 
(one dimensional) surface tension water waves, as proved in~\cite{CHS}, is in 
fact a rather direct consequence of the paralinearization approach developped in~\cite{AM}.

Our first result (Cauchy theory) is the following
\begin{theo}
\label{theo:Cauchy}
Let $d\ge 1$, $s>2+d/2$ and 
$(\eta_{0},\psi_{0})\in H^{s+\mez}(\xR^d)\times H^{s}(\xR^d)$ be such that 
the assumption $H_{t=0}$ is satisfied.  
Then there exists $T>0$ such that 
the Cauchy problem for \eqref{system} 
with initial data  $(\eta_{0},\psi_{0})$ has a unique solution 
$$
(\eta,\psi)\in C^0\big([0,T];H^{s+\mez}(\xR^d)\times H^{s}(\xR^d)\big)
$$
such that the assumption $H_t$ is satisfied for $t\in [0, T]$.
\end{theo}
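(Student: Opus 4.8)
The strategy is the classical one for quasilinear evolution equations: reduce the system to a paradifferential symmetric hyperbolic/Schrödinger-type equation, derive a priori estimates by an energy method adapted to that symmetrized form, and then pass from estimates to existence and uniqueness by a regularization/iteration scheme. The key technical input, which I would develop before touching the Cauchy problem, is a careful analysis of the Dirichlet–Neumann operator $G(\eta)$ under the sole assumption $H_t$: namely that $G(\eta)$ is a (paradifferential) operator of order $1$, with principal symbol $\sqrt{(1+|\partialx\eta|^2)|\xi|^2-(\partialx\eta\cdot\xi)^2}$, and that it depends continuously on $\eta$ with the expected tame estimates. The point of working with the variational solution $\vPhi$ defined in the excerpt is precisely that it makes sense with no regularity on $\Gamma$; one flattens the free boundary by the change of variables $y\mapsto y-\eta(t,x)$, writes the resulting elliptic equation in the strip $\{-h<z<0\}$ (the fixed strip furnished by $H_t$), and runs elliptic regularity there. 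This is where the hypothesis $H_t$ is used in an essential way.

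**Reduction and symmetrization.** Next I would paralinearize the two equations in \eqref{system}. The curvature term $H(\eta)=\cnx\big(\partialx\eta/\sqrt{1+|\partialx\eta|^2}\big)$ paralinearizes as a second-order elliptic paradifferential operator acting on $\eta$, while the quadratic terms in $\psi$ and $G(\eta)\psi$ paralinearize using the Bony calculus together with the structure of $G(\eta)$. Introducing the "good unknown" of Alinhac, $U=\psi-T_{B}\eta$ with $B=(\partialx\eta\cdot\partialx\psi+G(\eta)\psi)/(1+|\partialx\eta|^2)$ the vertical trace of the velocity, the system for $(\eta,U)$ becomes, modulo acceptable (i.e., order $\le 0$ in the relevant sense, smoothing, semilinear) remainders,
\begin{equation*}
\partial_t\eta = T_\lambda U + \cdots, \qquad \partial_t U = -T_\ell\, \eta - T_V\cdot\partialx U + \cdots,
\end{equation*}
where $\lambda$ is the (order $1$) symbol of the Dirichlet–Neumann operator, $\ell$ is the (order $2$) symbol coming from the curvature (essentially $(1+|\partialx\eta|^2)^{-1/2}|\xi|^2$ up to lower order and the $g$ term), and $V=\partialx\psi-B\partialx\eta$ is the horizontal component of the velocity. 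One then seeks a symmetrizer: conjugating by a well-chosen elliptic paradifferential operator of order $\mez$ turns this into a single equation $\partial_t w + i T_{\sqrt{\lambda\ell}}\, w + \tfrac12 T_V\cdot\partialx w = f$ of Schrödinger type, with $w$ equivalent to $(\eta,U)$ in $H^{s+\mez}\times H^s\simeq H^s$, and with the transport term $T_V\cdot\partialx$ being the only genuine first-order part, which is moreover (para-)skew-adjoint up to order zero. This last structural fact is the linchpin of the whole argument.

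**A priori estimates, then existence.** From the symmetrized equation the a priori estimate is immediate: $\frac{d}{dt}\|w\|_{H^s}^2 \le C(\|(\eta,\psi)\|_{H^{s+\mez}\times H^s})\|w\|_{H^s}^2$, because the leading symbol $\sqrt{\lambda\ell}$ is real and self-adjoint (contributing nothing to the energy balance) and the transport term is skew to top order. A continuity argument then gives a uniform time $T>0$ of existence for the a priori bound. Turning this into an actual solution I would do by one of the standard routes: either a Friedrichs-type mollification/Picard iteration on the paralinearized system, checking that the scheme is uniformly bounded in $C^0_tH^{s+\mez}\times C^0_tH^s$ on $[0,T]$ and contracts in a weaker norm (e.g.\ $H^{s-\mez+\mez}\times H^{s-\mez}$), or by smoothing the data, solving with the existing higher-regularity theory, and passing to the limit using the uniform estimates plus Bona–Smith arguments for the time-continuity at the top index. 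Uniqueness follows from the contraction estimate in the lower norm. The main obstacle, I expect, is the Dirichlet–Neumann analysis under no regularity of the bottom: proving that $G(\eta)$ is paradifferential of order $1$ with the stated symbol and tame dependence on $\eta$, and that it behaves well under the paralinearization, purely from the variational characterization of $\vPhi$ and the fixed-strip property $H_t$ — in particular, getting the remainders in the paralinearization of $G(\eta)\psi$ to be of the claimed (sub-principal or smoothing) order so that they are absorbed by the energy method. Once that operator-theoretic input is in hand, the symmetrization and the energy estimate are, as the authors indicate, rather direct.
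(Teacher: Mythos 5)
Your plan matches the paper's at a high level: flatten the domain using the fixed strip from $H_t$, develop elliptic regularity for the variational solution, paralinearize $G(\eta)\psi$ and the curvature in terms of the good unknown $U=\psi-T_{\mathfrak{B}}\eta$, find a paradifferential symmetrizer reducing the system to a Schr\"odinger-type form with real self-adjoint dispersive part and skew transport, derive the energy estimate, and close by regularization plus contraction in a weaker norm. That is indeed the structure of the paper.

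There is, however, a concrete gap at the regularization step, and it is not a cosmetic one. You offer two routes: a Friedrichs-type mollification/Picard iteration, or smoothing the data, invoking ``the existing higher-regularity theory,'' and passing to the limit via Bona--Smith. Neither works as stated here. First, standard mollifiers $\chi(\eps D_x)$ destroy the symmetric structure you have just fought to obtain: $\chi(\eps D_x)$ neither commutes (modulo bounded errors) with the variable-coefficient operator $T_\gamma$ of order $3/2$, nor is it compatible with the symmetrizer $S=\operatorname{diag}(T_p,T_q)$, so the top-order cancellation in $\frac{d}{dt}\|w\|_{H^s}^2$ is lost and the $\eps$-dependent system has no $\eps$-uniform estimate. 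The paper's fix is to build a bespoke paradifferential mollifier $J_\eps = T_{\jmath_\eps}$ with $\jmath_\eps^{(0)}=\exp(-\eps\,\gamma^{(3/2)})$ (plus a sub-principal correction forcing $(J_\eps)^*\sim J_\eps$), so that $\{\jmath_\eps^{(0)},\gamma^{(3/2)}\}=0$ and hence $J_\eps T_\gamma\sim T_\gamma J_\eps$ and $(J_\eps)^*\sim J_\eps$ \emph{uniformly in $\eps$}; this is exactly what lets the a priori estimate of Proposition~\ref{ueps} go through with constants independent of $\eps$. Second, the alternative route (smooth the data, solve, and pass to the limit by compactness/Bona--Smith) founders on the fact that under the general bottom hypothesis $H_t$ the Dirichlet--Neumann operator is genuinely nonlocal; as the paper notes, one cannot apply standard compactness, and one must instead prove directly that $(\eta_\eps,\psi_\eps)$ is Cauchy in a weaker norm by estimating the difference of two approximate solutions and using $\|J_{\eps_2}-J_{\eps_1}\|_{H^\mu\to H^{\mu-a}}\lesssim (\eps_2-\eps_1)^a$ (Lemma~\ref{CSLU}). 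So the regularization is not a ``standard route'' you can choose from a menu; it is one of the technical crux points and needs the specific $J_\eps$ construction.

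A smaller but real issue: at the low regularity $s>2+\tfrac d2$ the sub-principal symbols (e.g.\ $p^{(-1/2)}$, $\gamma^{(1/2)}$, $\lambda^{(0)}$) involve two spatial derivatives of $\eta$, and after differentiating in time one falls out of $L^\infty_x$ in general. The paper handles this with the structural observation that all sub-principal symbols depend only \emph{linearly} on second-order derivatives of $\eta$ (Definition~\ref{defiSigma}, Propositions~\ref{2d21}--\ref{2d22}, Lemma~\ref{comdtp}), which yields operator bounds controlled by $\|\eta\|_{H^{s-1}}$ rather than $\|\eta\|_{H^{s}}$. Your sketch treats the remainders as ``acceptable'' without addressing this; at $s>3+\tfrac d2$ it would be automatic, but at the stated threshold it requires this extra argument.
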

%\begin{theo}
%\label{theo:Cauchy}
%Let $d\ge 1$, $s>2+d/2$ and 
%$(\eta_{0},\psi_{0})\in H^{s+\mez}(\xR^d)\times H^{s}(\xR^d)$ be such that 
%the assumption $H_{t=0}$ is satisfied.  
%Then there exists $T^\star>0$ such that 
%the Cauchy problem for \eqref{system} 
%with initial data  $(\eta_{0},\psi_{0})$ has a unique maximal solution $(\eta,\psi)$ such that the assumption 
%$H_t$ is satisfied for $t\in [0, T^\star[$ and 
%$(\eta,\psi)\in C^0\big([0,T^\star[;H^{s+\mez}(\xR^d)\times H^{s}(\xR^d)\big)$.
%\end{theo}
\begin{rema}
The assumption $\psi_{0}\in H^s(\xR^d)$ could be replaced by $\partialx\psi_{0}\in H^{s-1}(\xR^d)$. 
We then obtain solutions such that 
$\psi-\psi_{0}\in C^0\big([0,T];H^{s}(\xR^d)\big)$ (cf \cite{LannesJAMS}). 
Notice that our thresholds of regularities appear to be the natural ones, 
as they  control the Lipschitz norm of the non-linearities. However, working at that level of regularity 
gives rise to many technical difficulties, which would be avoided by choosing $s> 3 + \frac d 2$. 
\end{rema}

Our second result is the following $1/4$-smoothing effect for 2D-water waves. % with surface tension.

\begin{theo}\label{theo:main}
Assume that $d=1$ and let $s>5/2$ and $T>0$. Consider a   solution 
$(\eta,\psi)$ of ~\eqref{system} on the time interval $[0,T]$, 
such that $\Omega_t$ satisfies the assumption $H_t$. If 
$$
(\eta,\psi)\in C^0\big([0,T];H^{s+\mez}(\xR)\times H^{s}(\xR)\big),
$$
then 
$$
\L{x}^{-\mez-\delta}
(\eta,\psi)\in L^2\big(0,T;H^{s+\frac{3}{4}}(\xR)\times H^{s+\frac{1}{4}}(\xR)\big),
$$
for any $\delta>0$.
\end{theo}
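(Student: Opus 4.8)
The plan is to reduce the system \eqref{system} to a paralinearized symmetric equation of Schr\"odinger type --- the central reduction announced in the abstract --- and then to deduce the smoothing effect from a local-smoothing estimate for a Schr\"odinger-type operator with variable coefficients, exploiting the ellipticity of the surface-tension term (which provides the extra derivative responsible for the $1/4$ gain). Concretely, I would first collect the necessary analysis of the Dirichlet--Neumann operator without regularity on the bottom (paralinearization of $G(\eta)\psi$, its principal symbol $\lambda = \sqrt{(1+|\partialx\eta|^2)|\xi|^2 - (\partialx\eta\cdot\xi)^2}$, and the contraction/regularity estimates valid under assumption $H_t$ at the stated thresholds $s>5/2$, $d=1$). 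Using these, the nonlinear system becomes, after introducing a suitable complex unknown $u$ built from $\eta$ and $\psi$ (roughly $u = \L{D}^{s}\big(\eta + i\, T_{p}\psi\big)$ for an appropriate elliptic symbol $p$ of order $-1/2$), an equation of the form
\begin{equation*}
\partial_t u + i\, T_{\ell} u = f,
\end{equation*}
where $\ell = \ell(t,x,\xi)$ is a real-valued symbol of order $3/2$ whose principal part is $\sqrt{\lambda(t,x,\xi)}\,|\xi|$ (coming from the composition of the first-order Dirichlet--Neumann symbol with the second-order surface-tension operator $H(\eta)$), and $f$ is an acceptable remainder in $C^0_t H^s_x$. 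The symmetrization step (balancing the $\eta$-equation, which loses $1/2$ derivative, against the $\psi$-equation, which gains it, via the weight $T_p$) is exactly what is done following \cite{AM}; I would quote it rather than redo it.

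Given this reduction, the heart of the argument is a local smoothing estimate for $\partial_t u + i T_\ell u = f$: if $u \in C^0_t L^2_x$ and $f\in C^0_t L^2_x$, then $\L{x}^{-1/2-\delta} u \in L^2(0,T; H^{1/4}_x)$, with norm controlled by $\|u\|_{C^0_t L^2_x} + \|f\|_{C^0_t L^2_x}$. This is the classical Kato-type smoothing for Schr\"odinger-like evolutions, here with a symbol $\ell$ of order $3/2$ rather than $2$, so the gain is half the order minus one, i.e. $\tfrac34 - \tfrac12 = \tfrac14$ derivative. I would prove it with a positive commutator (Morawetz-type multiplier) argument: choose a symbol $a(x,\xi) = \chi(x)\, \psi(\xi)$ with $\chi' \ge 0$ (e.g. $\chi(x) = \int_{-\infty}^x \L{t}^{-1-2\delta}\,dt$, so that $\chi$ is bounded and $\chi'(x) = \L{x}^{-1-2\delta}$), compute $\frac{d}{dt}\langle T_a u, u\rangle$, and use that the principal symbol of $\tfrac{1}{i}[T_\ell, T_a] + (\text{time derivative terms})$ is $\{\ell, a\} = \partial_\xi \ell\,\partial_x a - \partial_x\ell\,\partial_\xi a$, whose leading term $\partial_\xi\ell \cdot \chi'(x)\psi(\xi)$ is, by ellipticity of $\ell$ (order $3/2$, so $\partial_\xi \ell \gtrsim \L{\xi}^{1/2}$ on the support of $\psi$), bounded below by $c\,\L{x}^{-1-2\delta}\L{\xi}^{1/2}$ there. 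Integrating in time over $[0,T]$ and summing over a dyadic decomposition in $\xi$ (Littlewood--Paley) yields $\int_0^T \|\L{x}^{-1/2-\delta}\L{D}^{1/4} u\|_{L^2}^2\,dt \lesssim \|u\|_{C^0_t L^2}^2 + \|f\|_{C^0_t L^2}^2$, which is the claim. The lower-order and remainder terms in the symbolic calculus are absorbed because they cost fewer derivatives than the gain.

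Finally I would unwind the change of unknowns: applying the smoothing estimate to $u = \L{D}^s(\eta + i T_p \psi)$ gives $\L{x}^{-1/2-\delta}\L{D}^{1/4} u \in L^2_t L^2_x$, i.e. $\L{x}^{-1/2-\delta}\eta \in L^2(0,T;H^{s+1/4})$ and $\L{x}^{-1/2-\delta} T_p\psi \in L^2(0,T; H^{s+1/4})$; since $T_p$ is elliptic of order $-1/2$, the latter is equivalent to $\L{x}^{-1/2-\delta}\psi \in L^2(0,T;H^{s-1/4})$. To upgrade $\eta$ to $H^{s+3/4}$ and $\psi$ to $H^{s+1/4}$ --- the extra half derivative on each --- I would use the first equation $\partial_t\eta = G(\eta)\psi$ together with the (already obtained) control of $\psi$, and the second equation to trade the gain on $\eta$ (which sits at $H^{s+1/2}$ a priori, so $H^{s+3/4}$ is a $1/4$-gain, consistent) — more precisely, bootstrapping once through the paralinearized system, where the weight $\L{x}^{-1/2-\delta}$ commutes with the paradifferential operators up to acceptable errors since its symbol is of order $0$ with symbol seminorms under control. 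The main obstacle I anticipate is precisely this commutation of the polynomial weight $\L{x}^{-1/2-\delta}$ through the paradifferential machinery and the Dirichlet--Neumann operator at the low regularity $s > 5/2$: one must check that all commutators $[\L{x}^{-1/2-\delta}, T_\sigma]$ and the weighted remainder estimates for $G(\eta)$ stay within the budget, which is a technical but routine application of the paradifferential calculus recalled earlier in the paper. Everything else is either quoted from the reduction (Sections on paralinearization and symmetrization) or is the by-now standard positive-commutator proof of Kato smoothing adapted to a symbol of order $3/2$.
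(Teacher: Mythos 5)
Your overall route matches the paper's: paralinearize and symmetrize to reduce to a scalar complex equation $\partial_t\Phi + T_V\partial_x\Phi + iT_\gamma\Phi = F$ with $\gamma$ elliptic of order $3/2$, then prove a weighted Kato smoothing estimate for that equation by a positive-commutator (Doi) argument, then unwind. That is exactly the architecture of Corollary~\ref{psym}, Proposition~\ref{psmooth} and Lemma~\ref{DoiL}, so the plan is sound.

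However, your choice of unknown $u = \L{D}^{s}(\eta + iT_p\psi)$ with $p$ elliptic of order $-1/2$ is misnormalized by half a derivative, and this causes the final step to fail as written. With $\eta\in C^0 H^{s+1/2}$ and $\psi\in C^0 H^s$, your $u$ lies in $C^0 H^{1/2}$, not $C^0 L^2$, so Proposition~\ref{psmooth} (as you state it) does not apply directly; and your unwinding arithmetic in fact produces $\eta\in L^2(H^{s+1/4})$ and $\psi\in L^2(H^{s-1/4})$, which is \emph{weaker} than the hypothesis, not stronger. The ``bootstrap'' you then invoke cannot recover the missing half derivative: once the smoothing output is weaker than the input, iterating through the equations gains nothing. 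The paper avoids the issue by building the unknown with the symmetrizer $S = \mathrm{diag}(T_p, T_q)$ where $p\in\Sigma^{1/2}$ (order $+1/2$, applied to $\eta$) and $q\in\Sigma^0$ (applied to $U = \psi - T_{\mathfrak{B}}\eta$), and then multiplying by an elliptic $T_\beta$ of order $s$ chosen so that $\{\beta,\gamma^{(3/2)}\}=0$ and hence $[T_\beta,T_\gamma]$ is of order $s$ rather than $s+1/2$; the resulting scalar unknown is genuinely in $C^0 L^2$, and unwinding through the elliptic operators of orders $s$, $1/2$ and $0$ yields directly $\eta\in L^2(H^{s+3/4})$, $U\in L^2(H^{s+1/4})$, hence $\psi = U + T_{\mathfrak{B}}\eta\in L^2(H^{s+1/4})$. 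With that normalization (equivalently, your ansatz with $\L{D}^{s+1/2}$ in front, or with $p$ of order $+1/2$ acting on $\eta$), no bootstrap is needed. Two smaller points: your commutator multiplier $a=\chi(x)\psi(\xi)$ is not quite enough --- the paper's Lemma~\ref{DoiL} uses a $\mathrm{sgn}(\xi)$-dependent construction with the cutoffs $\phi_0,\phi_\pm$ to control the region where $x\xi<0$; and the transport term $T_V\partial_x$ in the reduced equation, which you omit, must be absorbed in the energy identity (it contributes $\partial_x(T_V T_a)-T_aT_V\partial_x$, of order $0$).
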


This $1/4$-smoothing effect was first established recently by Christianson, 
Hur and Staffilani in~\cite{CHS} by a different method. 
Theorem~\ref{theo:main} improves the result in \cite{CHS} in the following directions. 
Firstly, we obtain the smoothing effect 
on the lifespan of the solution and not only for a time small enough. 
Secondly, we lower the index of regularity (in \cite{CHS} the authors require $s\ge 15$) 
and we improve the decay rate in space to the optimal one (in \cite{CHS} the authors require $\delta > 5/2$). 
In addition, we allow much more general domains, which is interesting for 
applications to the cases where one takes into account the surface tension effect. 
Notice finally that our proof would apply to the radial case in dimension $3$. 

\smallbreak

Many variations are possible concerning the fluid domain. 
Our method would apply to the case where the free surface is not a graph 
over the hyperplane $\xR ^{d} \times \{0\}$, but rather a graph over a fixed hypersurface. 
Our results hold also in the case where the bottom is time-dependent, under an additional 
Lipschitz regularity assumption on the bottom and we prove (see Appendix~\ref{app}) 
\begin{theo}\label{th.4} Assume that the domain is time dependent and 
satisfies the assumptions $H_2), H_3)$ in Appendix~\ref{app}. Then the conclusions in 
Theorems~\ref{theo:Cauchy} and~\ref{theo:main} still hold for the system 
of the water-wave equations with time dependent bottom~\eqref{sIbis}.
\end{theo}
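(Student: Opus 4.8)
The plan is to reduce Theorem~\ref{th.4} to the already-established Theorems~\ref{theo:Cauchy} and~\ref{theo:main} by a change of unknowns which flattens the time-dependence of the bottom, so that essentially no new analysis is needed beyond bookkeeping the extra terms generated by the moving domain. First I would make precise, as in Appendix~\ref{app}, the geometric setup: the hypotheses $H_2), H_3)$ provide a fixed strip around the free surface together with a Lipschitz-in-time (and only Lipschitz-in-space, by the philosophy of the paper) description of $\Omega_2 = \Omega_2(t)$, and the water-wave system~\eqref{sIbis} is the natural analogue of~\eqref{system} where the Dirichlet--Neumann operator $G(\eta)$ is replaced by a time-dependent operator $G(t,\eta)\psi$ whose definition uses the variational solution of the elliptic problem in the moving domain with the Neumann condition on the time-dependent bottom $\Gamma(t)$. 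The key point established earlier in the paper is that all the estimates on $G(\eta)$ — its paralinearization, the structure of its principal symbol $\sqrt{(1+|\partialx\eta|^2)|\xi|^2 - (\partialx\eta\cdot\xi)^2}$, and the remainder bounds — depend on the bottom only through the existence of the fixed strip of width $h$, and are completely insensitive to the regularity of $\Gamma$. Hence the same bounds hold for $G(t,\eta)$ uniformly in $t$, with the dependence on $t$ entering only through $\eta(t,\cdot)$ and through harmless lower-order terms carrying the time-derivative of the bottom parametrization.

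The main steps would then be: (i) show that the Cauchy theory of~\eqref{sIbis} follows verbatim from the proof of Theorem~\ref{theo:Cauchy}, because the symmetrization procedure (paralinearization followed by the reduction to a symmetric Schr\"odinger-type system, following~\cite{AM}) only uses the elliptic estimates for the potential, which are uniform in $t$; the extra term coming from $\partial_t$ acting on the time-dependent domain is an operator of order $\le 1$ acting on $\psi$ with coefficients controlled by the Lipschitz-in-time assumption, hence it is absorbed into the subprincipal terms of the energy estimate and does not affect the balance of derivatives or the contraction argument. (ii) For the smoothing effect, one reruns the proof of Theorem~\ref{theo:main}: the $1/4$-gain comes from a positive commutator / weighted energy estimate for the symmetrized system, exploiting the ellipticity of the principal symbol $\sqrt{\kappa}\,\lambda(x,\xi)^{1/2}|\xi|$ of the Schr\"odinger operator; again the moving bottom contributes only lower-order, time-Lipschitz perturbations which are dominated by the main positive term and by the $C^0_tH^s$ control of the solution, so the weighted $L^2(0,T;H^{s+3/4}\times H^{s+1/4})$ bound survives unchanged. (iii) Finally, verify that the weight $\L{x}^{-1/2-\delta}$ and the restriction $d=1$ (or the radial case in $d=3$) play exactly the same role as before, the time-dependence being transverse to the spatial structure used in the smoothing estimate.

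The one genuinely delicate point, and the step I expect to be the main obstacle, is controlling $\partial_t$ of the Dirichlet--Neumann operator in the moving domain: one must show that $\partial_t\big(G(t,\eta)\psi\big) - G(t,\eta)\partial_t\psi$ is, up to the expected term involving $\partial_t\eta$ (which is already present in the fixed-bottom case), an operator of order at most one applied to $\psi$, with operator norm controlled by the Lipschitz-in-time norm of the bottom parametrization. This requires differentiating in $t$ the variational problem defining $\vPhi$ in the time-dependent $\Omega_t$, which in turn forces one to commute $\partial_t$ with the elliptic problem — the subtlety being that the test-function space itself moves with $t$, so one pulls back to a fixed domain via the Lipschitz diffeomorphism furnished by $H_3)$, differentiates there, and then transports the estimates back. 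Because only Lipschitz (not smooth) regularity in space is available for the diffeomorphism, this differentiation must be done at the level of the variational formulation and energy estimates rather than by symbolic calculus, exactly in the spirit of the rest of the paper; once this single lemma is in place, the remainder of the argument is a routine transcription of the proofs of Theorems~\ref{theo:Cauchy} and~\ref{theo:main}.
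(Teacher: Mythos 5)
Your proposal misses the main technical point, which is the construction of the Dirichlet--Neumann operator $G(\eta,k)\psi$ in the moving-bottom setting --- this is the \emph{entire} content of Appendix~\ref{app}, and you take it as given. To make the variational problem with the inhomogeneous Neumann condition $\partial_n\phi|_{\Gamma_t}=k$ well posed, one must show that the new boundary term $v\mapsto\int_{\Gamma}k\,v|_\Gamma\,d\sigma$ defines a bounded linear form on $H^{1,0}(\Omega_t)$, uniformly in $t$. This is exactly where $H_2)$ and $H_3)$ enter: using the Lipschitz regularity of $\Gamma$ provided by $H_2)$, the paper strengthens the weighted Poincar\'e inequality of Proposition~\ref{corog} so that the weight $g$ belongs to $L^\infty_{\mathrm{loc}}(\overline\Omega)$ rather than only $L^\infty_{\mathrm{loc}}(\Omega)$; this gives a trace theorem $H^{1,0}(\Omega)\to L^2(\Gamma, g\,d\sigma)$; and then $H_3)$, which says precisely that $kg^{1/2}\in L^\infty_t L^2(\Gamma)$, bounds the boundary term. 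Once $G(\eta,k)$ is so defined, the paper simply observes, without further argument, that the rest of the proof goes through verbatim: the paralinearization, symmetrization, and Doi estimate all take place in the flat strip $[-1,0]\times\xR^d$ near the free surface and use only the a priori bound $\|\nabla_{x,y}\phi\|_{L^2(\Omega)}\lesssim\|\psi\|_{H^1}$ plus interior elliptic regularity, none of which see the bottom.

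Your proposed change of variables flattening the time-dependence of the bottom does not appear in the paper and is unnecessary, since the analysis never touches $\Gamma$ directly; with only Lipschitz data the pullback would in any case produce an $L^\infty$-coefficient operator outside the paradifferential calculus (you note this and retreat to the variational level, but then the question becomes exactly the trace theorem above, which you do not prove). The one point you raise that the paper genuinely glosses over is the control of $\partial_t$ acting on $G(\eta,k)$, which is needed for the estimate of $T_{\partial_t\mathfrak{B}}\eta$ in Lemma~\ref{lemm:be0}; your instinct that the shape derivative with respect to the moving bottom enters here is correct. But the resolution is a locality argument, not the Lipschitz-pullback differentiation of the elliptic problem you sketch: the extra contribution to $\partial_t(G(\eta,k)\psi)$ is the trace on $\Sigma$ of a function harmonic near $\Sigma$, vanishing on $\Sigma$, with source localized on $\Gamma$ at distance at least $h$, so interior elliptic regularity in the strip makes it $C^\infty$ in $x$, with all Sobolev norms controlled by the uniform variational bound that $H_2)$ and $H_3)$ supply. (A small correction: the Lipschitz parametrization of the bottom comes from $H_2)$; $H_3)$ is a decay condition at spatial infinity.)
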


 To prove Theorem~\ref{theo:main}, we start in \S\ref{sec:2} by defining and proving regularity properties of the Dirichlet-Neumann operator. 
Then in \S\ref{s3} we perform several reductions to a paradifferential system on the boundary by means of 
the analysis in \cite{AM}. The key technical lemma in this paper in a reduction of the system~\eqref{system} to a simple hyperbolic form. 
To perform this reduction, we prove in \S\ref{Sym} the existence of a paradifferential symmetrizer. 
We deduce Theorem~\ref{theo:Cauchy} from this symmetrization in \S\ref{sle}. 
Theorem~\ref{theo:main} is then proved in \S\ref{s4} by means of Doi's approach~\cite{Doi1,Doi2}.
Finaly, we give in Appendix~\ref{app} the modifications required to prove Theorem~\ref{th.4}. 
Note that our strategy is based on a direct analysis in Eulerian coordinates. 
In this direction it is influenced by the 
important paper by Lannes (\cite{LannesJAMS}). 

As it was shown by Zakharov (see \cite{Zakharov} and references there in), 
the system \eqref{system} is a Hamiltonian one, of the form
$$
\frac{\partial \eta}{\partial t}=\frac{\delta H}{\delta\psi} ,\quad 
\frac{\partial \psi}{\partial t}=-\frac{\delta H}{\delta\eta},
$$
where $H$ is the total energy of the system. 
Denoting by $H_0$ the Hamiltonian associated to 
the linearized system at the origin, we have
$$
H_0=\mez \int \left[ \la \xi\ra | \widehat{\psi}|^2 + ( g  +\la \xi\ra^2) | \widehat{\eta}|^2 \right]\, d\xi,
$$
where $\widehat{f}$ denotes the Fourier transform, 
$\widehat{f}(\xi)=\int e^{-ix\cdot \xi} f(x)\, dx$. An important observation is that 
the canonical transformation $(\eta,\psi)\mapsto a$ with 
$$
\widehat{a}= \frac{1}{\sqrt{2}}\bigg\{ \bigg(\frac{ g  +\la \xi\ra^2}{\la \xi\ra}\bigg)^{1/4}\widehat{\eta}
-i \bigg( \frac{\la \xi\ra}{ g  +\la \xi\ra^2}\bigg)^{1/4}\widehat{\psi}\bigg\},
$$
diagonalizes the Hamiltonian $H_0$ and reduces the analysis 
of the linearized system to one complex equation (see \cite{Zakharov}). 
%$$
%\frac{\partial \widehat{a}}{\partial t} + i\frac{\partial H}{\partial \bar{\widehat{a}}}=0.
%$$
We shall show that there exists a similar diagonalization for the nonlinear equation, 
by using paradifferential calculus instead of Fourier transform. 
As already mentionned, this is the main technical result in this paper. 
In fact, we strongly believe that all dispersive estimates on the water waves system with surface tension 
could be obtained by using our reduction.

\smallbreak

%We conclude this introduction with the following illustration of the relevance of our (lack of) assumptions on the bottom. 

%\begin{figure}[h]
%$$\includegraphics[width= 11cm,height=6cm]{m-nb.jpg}$$
%\caption{An irregular bottom}
%\end{figure}

\section{The Dirichlet-Neumann operator}\label{sec:2}
\subsection{Definition of the operator}\label{sec.2.1}
The purpose of this section is to define the Dirichlet-Neumann operator and prove some basic regularity properties.
Let us recall that we assume that $\Omega_t$ is the intersection of the half space located below the free surface 
$$ 
\Omega_{1,t}= \{  (x,y)\in \xR^d\times \xR\, : \, y< \eta(t,x)\}
$$ 
and an open set $\Omega_{2}\subset \xR^ {d+1}$ 
and that $\Omega_{2}$ contains a fixed strip around $\Sigma_t$, which means that there exists $h>0$ such that
$$
\{ (x,y)\in \xR^d\times \xR\, : \, \eta(t,x) -h \le y \leq \eta(t,x) \} \subset \Omega_{2}.
$$
We shall also assume that the domain $\Omega_2$ (and hence the domain $\Omega_t$) 
is connected. In the remainder of this subsection, we will drop the time dependence of the domain, 
and it will appear clearly from the proofs that all estimates are uniform as long as $\eta(t,x)$ 
remains bounded in the set of functions such that $\|\eta(t, \cdot)\|_{H^s(\mathbb{R}^d)}$ remains bounded. 

Below we use the following notations
$$
\partialx=(\partial_{x_i})_{1\le i\le d},\quad \partialyx =(\partialx,\partial_y), 
\quad \Delta = \sum_{1\le i\le d} \partial_{x_i}^2,\quad 
\Deltayx = \Delta+\partial_y^2.
$$
\begin{nota}\label{notaD}
Denote by $\mathscr{D}$ the space of functions $u\in C^{\infty}(\Omega)$ such that 
$\nabla_{x,y}u\in L^2(\Omega)$. We then define $\mathscr{D}_0$ as the subspace of functions $u\in \mathscr{D}$ such that $u$ is 
equal to $0$ near the top boundary $\Sigma$.
\end{nota}
\begin{prop}\label{corog}
There exists a positive weight $g\in L^\infty_{loc} ( \Omega)$, equal to~$1$ 
near the top boundary of $\Omega$ and a positive constant $C$ such that 
\begin{equation}\label{eq.Poinc}
\int_\Omega g(x,y) |u(x,y)|^2 \,dx dy \leq  C \int_{\Omega} |\partialyx u (x,y)|^2 \,dx dy,
\end{equation}
for all $u\in \mathscr{D}_0$.
\end{prop}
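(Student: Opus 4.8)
The plan is to split $\Omega$ into the fixed strip
$$
S\defn\{(x,y)\in\xR^d\times\xR : \eta(x)-h\le y<\eta(x)\}
$$
lying just below the free surface, which by assumption $H_t$ is entirely contained in $\Omega$, and the remaining open set $\Omega\setminus S$. On $S$ I will prove the inequality with weight $1$ (this provides the normalization near $\Sigma$), and on $\Omega\setminus S$ with a small, rapidly decaying weight obtained from a chaining argument; the two pieces are then glued by taking the pointwise maximum of the two weights.

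First I treat the strip. Fix $u\in\mathscr{D}_0$. For a.e.\ $x$ the function $z\mapsto u(x,z)$ is smooth on $(\eta(x)-h,\eta(x))$ and vanishes for $z$ close to $\eta(x)$, so the fundamental theorem of calculus gives $u(x,y)=-\int_y^{\eta(x)}\partial_z u(x,z)\,dz$ for $\eta(x)-h<y<\eta(x)$. By Cauchy--Schwarz $|u(x,y)|^2\le h\int_{\eta(x)-h}^{\eta(x)}|\partial_z u(x,z)|^2\,dz$; integrating in $y$ over $(\eta(x)-h,\eta(x))$ and then in $x$ (Fubini, using $\partialyx u\in L^2(\Omega)$) yields
$$
\int_S|u(x,y)|^2\,dx\,dy\le h^2\int_\Omega|\partialyx u(x,y)|^2\,dx\,dy .
$$
Hence on $S$ the weight $1$ works, and $S$ contains a neighbourhood of the top boundary of $\Omega$, which will give the required normalization.

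For $\Omega\setminus S$ the weight must be allowed to decay, since $\Omega$ may be unbounded (e.g.\ infinite depth), and here connectedness of $\Omega$ enters. Pick a locally finite covering of $\Omega$ by open balls $(B_j)_{j\in\xN}$ with $\overline{B_j}\subset\Omega$, together with a subordinate partition of unity $(\chi_j)$. Since the interior of $S$ is a nonempty open subset and $\Omega$ is connected, the union of all balls of the family that can be joined to a ball meeting $S$ by a finite chain of successively overlapping balls is open, relatively closed in $\Omega$, and nonempty, hence equals $\Omega$; consequently every $B_j$ is joined to $S$, say by a chain $B_{j_0},\dots,B_{j_N}=B_j$ with $B_{j_k}\cap B_{j_{k+1}}\neq\emptyset$ and $B_{j_0}$ meeting $S$. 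On each ball the Poincar\'e--Wirtinger inequality bounds $u$ minus its mean by a multiple of the radius times $\|\partialyx u\|_{L^2}$; across a (fixed) overlap two consecutive means differ by at most $C\|\partialyx u\|_{L^2(\Omega)}$; and on $B_{j_0}$ the mean of $u$ is controlled by $\|u\|_{L^2(S)}$, hence by $\|\partialyx u\|_{L^2(\Omega)}$ via the strip estimate. Telescoping along the chain gives
$$
\|u\|_{L^2(B_j)}^2\le A_j\,\|\partialyx u\|_{L^2(\Omega)}^2 ,
$$
with $A_j$ depending only on the geometry of the chain (number of balls, their radii and overlaps), not on $u$. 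I then set $\eps_j\defn 2^{-j}(1+A_j)^{-1}$ and
$$
g(m)\defn\max\Big(\mathbf 1_S(m),\ \min\{\eps_j : m\in B_j\}\Big),
$$
which is positive, bounded (so in $L^\infty_{loc}$) and equals $1$ on $S$; using $g\le \mathbf 1_S+\sum_j\chi_j\eps_j$ together with the two displays above,
$$
\int_\Omega g\,|u|^2\le\int_S|u|^2+\sum_j\eps_j\|u\|_{L^2(B_j)}^2\le\Big(h^2+\sum_j 2^{-j}\Big)\|\partialyx u\|_{L^2(\Omega)}^2 .
$$

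The one–dimensional bound on $S$ and the ball-by-ball Poincar\'e inequality are routine; the real work is the chaining step --- verifying that every covering ball is reachable from $S$ and organising the geometric constants $A_j$ so that they are absorbed by a summable choice of weight. Observe that only the vanishing of $u$ on $\Sigma$ and the bound $\partialyx u\in L^2(\Omega)$ are used, so the argument is completely insensitive to the (possibly very rough) geometry of the bottom, which is precisely the point of the proposition.
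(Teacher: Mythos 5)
Your proposal is correct and follows essentially the same strategy as the paper: a one-dimensional Poincar\'e estimate on the strip just below $\Sigma$, a connectedness/chaining argument to propagate control to the rest of $\Omega$, and a summably-weighted partition of unity to absorb the (possibly unbounded) local constants into a globally valid weight $g$. The only substantive difference lies in the chaining mechanism. The paper proves an elementary translation lemma (its Lemma~\ref{lem.accr}: if $B(m_0,2\delta)\subset\Omega$ and $m_1\in B(m_0,\delta)$, then $\int_{B(m_0,\delta)}|u|^2\le 2\int_{B(m_1,\delta)}|u|^2+2\delta^2\int_{B(m_0,2\delta)}|\partialyx u|^2$) and iterates it along a continuous path from a point of $\mathcal{O}_2$ back to the strip, then exhausts $\mathcal{O}_2$ by compacts $K_n$. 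You instead chain by comparing Poincar\'e--Wirtinger means across overlapping balls of a locally finite cover and verify via the usual open-and-closed argument that every ball is reachable from a ball meeting the strip. Both are routine; yours has the small advantage of avoiding the parametrized path/compactness step, while the paper's translation lemma needs no Poincar\'e--Wirtinger inequality and produces explicit constants. The closing steps --- choosing $\eps_j$ (resp.\ coefficients $(1+C(K_n))^{-1}n^{-2}$) summable enough to kill the chain constants, and checking $g=1$ near $\Sigma$ and $g>0$ everywhere --- are identical in spirit, and your pointwise-majorization of $g$ by $\mathbf 1_S+\sum_j\chi_j\eps_j$ is correctly justified by $\sum_j\chi_j=1$ and $\supp\chi_j\subset B_j$.
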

Here is the proof. Let us set 
\begin{equation}\label{defiO1}
\begin{aligned}
\mathcal{O}_1&=\left\{ (x,y)\in\xR^d\times\xR\,:\, \eta(x)-h<y<\eta(x)\right\},\\
\mathcal{O}_2&=\Big\{ (x,y)\in\Omega\,:\, y<\eta(x)-h\Big\}.
\end{aligned}
\end{equation}
To prove Proposition~\ref{corog}, the starting point is the following Poincar\'e inequality on $\mathcal{O}_1$.
\begin{lemm}\label{lem.Poinc}
For all $u\in \mathscr{D}_0$ we have
$$
\int_{\mathcal{O}_1} |u|^2 \, dx dy \leq h^2 \int_{\Omega} |\partialyx u |^2 \, dx dy.
$$
\end{lemm}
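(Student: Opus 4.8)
The plan is to prove the Poincaré inequality on the strip $\mathcal{O}_1$ by integrating along vertical segments, exploiting the fact that functions in $\mathscr{D}_0$ vanish near the top boundary $\Sigma$. Fix $x\in\xR^d$. Since $u$ is smooth and vanishes for $y$ close to $\eta(x)$ (because $u\in\mathscr{D}_0$ is supported away from $\Sigma$, and the vertical segment $\{x\}\times(\eta(x)-h,\eta(x))$ is contained in $\mathcal{O}_1\subset\Omega$ by assumption $H_t$), we may write for $y\in(\eta(x)-h,\eta(x))$
$$
u(x,y) = -\int_y^{\eta(x)} \partial_y u(x,z)\,dz.
$$
Then by the Cauchy-Schwarz inequality, since the interval of integration has length at most $h$,
$$
|u(x,y)|^2 \le \bigl(\eta(x)-y\bigr)\int_y^{\eta(x)} |\partial_y u(x,z)|^2\,dz
\le h \int_{\eta(x)-h}^{\eta(x)} |\partial_y u(x,z)|^2\,dz.
$$

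Next I would integrate this pointwise bound over the strip. Integrating in $y$ over $(\eta(x)-h,\eta(x))$ picks up another factor of $h$, giving
$$
\int_{\eta(x)-h}^{\eta(x)} |u(x,y)|^2\,dy \le h^2 \int_{\eta(x)-h}^{\eta(x)} |\partial_y u(x,z)|^2\,dz,
$$
and then integrating in $x$ over $\xR^d$ yields
$$
\int_{\mathcal{O}_1} |u|^2\,dx\,dy \le h^2 \int_{\mathcal{O}_1} |\partial_y u|^2\,dx\,dy \le h^2 \int_{\Omega} |\partialyx u|^2\,dx\,dy,
$$
where the last step uses $\mathcal{O}_1\subset\Omega$ and $|\partial_y u|\le|\partialyx u|$. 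This is the claimed estimate.

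The only genuine point to be careful about is the justification of the fundamental theorem of calculus step: one needs that for (almost) every $x$, the function $z\mapsto u(x,z)$ is absolutely continuous on the closed segment up to $\eta(x)$ and vanishes at the top endpoint. This is immediate here because elements of $\mathscr{D}_0$ are by definition $C^\infty(\Omega)$ functions vanishing in a neighborhood of $\Sigma$, and the whole vertical segment lies in $\Omega$ thanks to the strip assumption; so there is no real obstacle, merely the observation that the geometry of $H_t$ is exactly what makes the vertical slicing legitimate. A density argument to pass from $\mathscr{D}_0$ to more general finite-energy functions is not needed at this stage since the statement is only asserted for $u\in\mathscr{D}_0$.
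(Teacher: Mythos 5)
Your proof is correct and follows essentially the same route as the paper's: write $u$ as the integral of $\partial_y u$ along a vertical segment from $y$ up to $\eta(x)$ (where $u$ vanishes), apply Cauchy--Schwarz (the paper calls it H\"older) to get a factor of $h$, then integrate over $\mathcal{O}_1$ to pick up the second factor of $h$. The extra remarks you add about the legitimacy of the fundamental theorem of calculus step are accurate but not needed given that $u\in\mathscr{D}_0$ is smooth and vanishes near $\Sigma$.
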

\begin{proof}
%By a Lipschitz change of coordinates, we can assume that $\Omega_1$ is the half 
%plane $\{ (x,y)\in \xR^{d}\times\xR\,:\,  y<0\}$. In the particular case where $\Omega = \Omega_1$ (that is when $\Omega_2=\emptyset$), 
For $(x,y)\in \mathcal{O}_1$ we can write $u(x,y)=-\int_{y}^{\eta(x)}(\partial_y u)(x,z)\, dz$, 
so using the H\"older inequality we obtain 
 %using that $u(x,0)=0$, we obtain, for $\alpha >2$,
$$%\begin{multline}\label{eq.calcbis}
 | u(x,y)|^2 \le  h \int_{\eta(x)-h}^{\eta(x)}\la (\partial_y u)(x,z)\ra^2 \, dz.
 $$
% \\int_{x,y} \L{y}^{-\alpha} \left| \int_{y}^{0} \partial_y u (x, z) dz\right| ^2 \, dx dy\\
%\leq \int_{x,y} \L{y}^{-\alpha}\int_{y}^{0} |\partial_z u(x,z)|^2 dz\, |y|\, dy dx\\
%\leq \int_{-\infty}^{0} \L{y}^{-\alpha+1} dy
% \int_{x,z} |\partial_z u|^2 \, dz dx \leq C \|\nabla_{x,y} u\|_{L^2}^2.
%\end{multline}
Integrating on $\mathcal{O}_1$ we obtain the desired conclusion.
\end{proof}

%Notice that now, the calculation above can be localized to a strip, that means that we have (with a constant $C$ independent on $a, A >0$), 
%\begin{equation}\label{eq.calc}
%\int_{|x- x_0|< a,-A <y<0} | u(x,y)|^2 \L{y}^{-\alpha} \, dx dy \leq C \|\nabla_{x,y} u\|_{L^2}^2.
%\end{equation}
%Notice also that in the general case where $\Omega_2\neq \emptyset$, \eqref{eq.calc} shows that Lemma~\ref{lem.Poinc} is true if% %$m_0$ is in a small neighborhood of the top boundary. To prove Lemma~\ref{lem.Poinc} for arbitrary points, we start with

\begin{lemm}\label{lem.accr}
Let $m_0\in \Omega$ and $\delta>0$ such that 
$$
B(m_0, 2\delta) = \{ m\in \xR^d\times \xR \,:\, |m- m_0| <2 \delta\} \subset \Omega.
$$
Then for any $m_1\in B(m_0, \delta)$ and any $u\in\mathscr{D}$,
\begin{equation}\label{eq.accr} 
\int_{B(m_0, \delta) } \la u\ra^2 \,dxdy \leq 2 \int_{B(m_1, \delta)} \la u\ra^2 \,dxdy
+ 2 \delta ^2 \int_{B(m_0, 2\delta)} \la \partialyx u\ra^2 \,dxdy. 
\end{equation}
\end{lemm}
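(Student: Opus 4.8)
The plan is to prove the inequality~\eqref{eq.accr} by a very elementary argument based on the fundamental theorem of calculus applied along the segment joining $m_0$ to $m_1$. The geometric picture is that $B(m_0,\delta)$ and $B(m_1,\delta)$ are two balls of the same radius whose centers are at distance less than $\delta$, so both are contained in $B(m_0,2\delta)\subset\Omega$, on which $u$ and $\partialyx u$ are controlled. First I would fix the vector $v=m_1-m_0$, with $|v|<\delta$, and for $m\in B(m_0,\delta)$ consider the function $t\mapsto u(m+tv)$ for $t\in[0,1]$. Writing
$$
u(m+v)=u(m)+\int_0^1 \partialyx u(m+tv)\cdot v\, dt,
$$
we get, by the inequality $(a+b)^2\le 2a^2+2b^2$ and the Cauchy--Schwarz inequality in the $t$ integral,
$$
\la u(m+v)\ra^2 \le 2\la u(m)\ra^2 + 2|v|^2\int_0^1 \la \partialyx u(m+tv)\ra^2\, dt.
$$
This is not quite what is wanted; it is cleaner to keep $u(m)$ on the left and write instead, for $m\in B(m_0,\delta)$,
$$
\la u(m)\ra^2 \le 2\la u(m+v)\ra^2 + 2|v|^2\int_0^1 \la \partialyx u(m+tv)\ra^2\, dt.
$$

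Next I would integrate this pointwise bound over $m\in B(m_0,\delta)$. For the first term on the right, the change of variables $m\mapsto m+v$ maps $B(m_0,\delta)$ onto $B(m_1,\delta)$, giving exactly $2\int_{B(m_1,\delta)}\la u\ra^2$. For the second term, I would use Fubini to exchange the $m$-integral and the $t$-integral, and for each fixed $t\in[0,1]$ perform the change of variables $m\mapsto m+tv$; since $|tv|\le|v|<\delta$, the image of $B(m_0,\delta)$ under this shift is contained in $B(m_0,2\delta)$, so
$$
\int_0^1\!\!\int_{B(m_0,\delta)} \la \partialyx u(m+tv)\ra^2\, dm\, dt \le \int_0^1\!\!\int_{B(m_0,2\delta)} \la \partialyx u\ra^2\, dm\, dt = \int_{B(m_0,2\delta)} \la \partialyx u\ra^2\, dm.
$$
Combining with $|v|^2<\delta^2$ yields the claimed bound~\eqref{eq.accr}.

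One point that needs a word of care is that $u\in\mathscr{D}$ is only assumed smooth on $\Omega$ with $\partialyx u\in L^2(\Omega)$, so the segment $\{m+tv : t\in[0,1]\}$ must stay inside $\Omega$ for every $m\in B(m_0,\delta)$; this holds because such a segment lies in $B(m_0,2\delta)\subset\Omega$ by convexity of the ball, so the fundamental theorem of calculus is legitimate and all the integrals above are finite. There is no real obstacle here — the only mild subtlety is organizing the two changes of variables (the translation by $v$ and the family of translations by $tv$) and invoking Fubini, which is justified because $\la\partialyx u\ra^2$ is integrable on $B(m_0,2\delta)$. This lemma is then a building block towards Proposition~\ref{corog}: by a chain-of-balls argument connecting $\mathcal{O}_1$ to an arbitrary compact subset of $\Omega$ (using connectedness of $\Omega$), one propagates the Poincar\'e inequality of Lemma~\ref{lem.Poinc} into the interior, with the weight $g$ absorbing the constants accumulated along the chain.
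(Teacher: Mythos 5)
Your argument is correct and is essentially the paper's own proof: FTC along the segment, $(a+b)^2\le 2a^2+2b^2$ with Cauchy--Schwarz in $t$, then translation changes of variables to land the integrals in $B(m_1,\delta)$ and $B(m_0,2\delta)$. The only cosmetic difference is that the paper sets $v=m_0-m_1$ and integrates the pointwise bound over $B(m_1,\delta)$, whereas you set $v=m_1-m_0$ and integrate over $B(m_0,\delta)$; the two are interchangeable by the very translations you invoke.
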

\begin{proof}Denote by $v= m_0 - m_1$ and write
$$ u(m+ v) = u(m) + \int_0^1 v\cdot \partialyx u (m + tv)dt $$
As a consequence, we get
$$
|u(m+v)|^2 \leq 2 |u(m)|^2 + 2 |v|^2 \int_0^1 \la \partialyx u(m+ tv)\ra^2 dt,
$$
and integrating this last inequality on $B(m_1, \delta)\subset B(m_0,2\delta)\subset \Omega$, 
we obtain~\eqref{eq.accr}.
\end{proof}
\begin{coro}\label{coro.K}
For any compact $K\subset \mathcal{O}_2$, there exists a  constant $C(K)>0$ such that, for all $u\in\mathscr{D}_0$, we have
 $$ \int_K |u|^2 \, dx dy \leq C(K) \int_{\Omega} |\partialyx u |^2 \, dx dy.
$$
\end{coro}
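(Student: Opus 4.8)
The plan is to transport the $L^2$ control from the strip $\mathcal{O}_1$, where Lemma~\ref{lem.Poinc} provides it, down to an arbitrary compact subset of $\mathcal{O}_2$, by iterating the almost-translation estimate of Lemma~\ref{lem.accr} along a finite chain of overlapping balls. The only structural input is that $\Omega$ is connected, hence (being open) path-connected.

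First I would fix once and for all a point $m_*\in\mathcal{O}_1$ and a radius $\rho>0$ with $B(m_*,\rho)\subset\mathcal{O}_1$, which is possible because $\mathcal{O}_1\subset\Omega_{1}\cap\Omega_2=\Omega$ is a nonempty open set. Given any $m\in\mathcal{O}_2$, I would choose a continuous path $\gamma\colon[0,1]\to\Omega$ with $\gamma(0)=m$ and $\gamma(1)=m_*$. Its image is a compact subset of the open set $\Omega$, so $c\defn\dist\big(\gamma([0,1]),\xR^{d+1}\setminus\Omega\big)>0$; I would then set $\delta\defn\min(c/3,\rho)$. By uniform continuity I would pick a subdivision $0=t_0<t_1<\cdots<t_N=1$ with $|\gamma(t_{i+1})-\gamma(t_i)|<\delta$ for all $i$ and put $m_i\defn\gamma(t_i)$, so that $m_0=m$, $m_{i+1}\in B(m_i,\delta)$ and $B(m_i,2\delta)\subset\Omega$ for every $i$, while $B(m_N,\delta)=B(m_*,\delta)\subset\mathcal{O}_1$.

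Next I would apply Lemma~\ref{lem.accr} with $(m_0,m_1)$ replaced by $(m_i,m_{i+1})$, estimate the gradient term from above by its integral over all of $\Omega$, and iterate over $i=0,\dots,N-1$ to obtain
$$
\int_{B(m,\delta)}|u|^2\,dxdy\le 2^N\int_{B(m_N,\delta)}|u|^2\,dxdy+C_1\int_\Omega|\partialyx u|^2\,dxdy,
$$
where $C_1$ depends only on $N$ and $\delta$. Since $B(m_N,\delta)\subset\mathcal{O}_1$, Lemma~\ref{lem.Poinc} bounds $\int_{B(m_N,\delta)}|u|^2$ by $\int_{\mathcal{O}_1}|u|^2\le h^2\int_\Omega|\partialyx u|^2$. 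Hence for each $m\in\mathcal{O}_2$ there are $r(m)>0$ and $C(m)>0$ with $\int_{B(m,r(m))}|u|^2\le C(m)\int_\Omega|\partialyx u|^2$ for all $u\in\mathscr{D}_0$. To finish, I would cover $K$ by finitely many balls $B(m_1,r(m_1)),\dots,B(m_L,r(m_L))$ extracted from the open cover $\{B(m,r(m))\}_{m\in K}$ and sum the corresponding inequalities, which yields the claim with $C(K)=\sum_{j=1}^L C(m_j)$.

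The only genuinely delicate point is the first one: producing a path from $m$ to $m_*$ lying at a fixed positive distance from $\partial\Omega$, which is precisely where the connectedness of $\Omega$ from hypothesis $H_t$ enters; everything afterwards is a finite iteration of Lemma~\ref{lem.accr} together with an elementary compactness argument. Note that the constant $C(K)$ will in general degenerate as $K$ approaches $\partial\Omega\setminus\Sigma$, consistently with the fact that no bound of this form can hold on all of $\Omega$ without the weight $g$ of Proposition~\ref{corog}.
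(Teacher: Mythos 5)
Your proposal is correct and follows essentially the same strategy as the paper's proof: use connectedness to produce a path from an arbitrary point of $\mathcal{O}_2$ into $\mathcal{O}_1$, cover the path by a finite chain of balls at a uniform scale $\delta$ below the distance to $\partial\Omega$, iterate Lemma~\ref{lem.accr} along the chain, anchor the estimate with Lemma~\ref{lem.Poinc}, and finish by compactness of $K$. The only difference is cosmetic: you fix a single reference point $m_*\in\mathcal{O}_1$ in advance and you spell out the explicit constant $2^N$, whereas the paper leaves the endpoint $\gamma(1)\in\mathcal{O}_1$ free and keeps the constants implicit.
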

\begin{proof}
Consider now an arbitrary point $m_0\in\mathcal{O}_2$. Since $\Omega$ is open and connected, there exists a continuous map $\gamma: [0,1]\rightarrow \Omega$ such that $\gamma(0)=m_0$ and $\gamma(1)\in \mathcal{O}_1$.  By compactness, there exists $\delta>0$ such that for any $t\in [0,1]$ $B(\gamma(t), 2 \delta) \subset \Omega$. Taking smaller $\delta$ if necessary, we can also assume that $B(\gamma(1),\delta)\subset \mathcal{O}_1$ so that by Lemma~\ref{lem.Poinc}
$$ \int_{B(\gamma(1), \delta)} |u|^2 \, dx dy \leq C \int_{\Omega} |\partialyx u |^2 \, dx dy.
$$
We now can find a sequence $t_0=0, t_1, \cdots, t_N=1$ such that the points $m_n= \gamma(t_n)$ satisfy $m_{N+1}\in B(m_N, \delta)$. Applying Lemma~\ref{lem.accr} successively, we obtain
$$
\int_{B(m_0, \delta)} |u|^2 \, dx dy \leq C' \int_{\Omega} |\partialyx u |^2 \, dx dy.
$$
Then Corollary~\ref{coro.K} follows by compactness.
\end{proof}

\begin{proof}[Proof of Proposition~\ref{corog}] 
Writing $\mathcal{O}_2= \cup_{n=1}^\infty K_n$, 
and taking a partition of unity $(\chi_n)$ such that $0\leq \chi_n\leq 1 $ and $\supp \chi_n\subset K_n$, 
we can define the continuous function
$$
\widetilde{g}(x,y) = \sum_{n=1}^\infty \frac{ \chi_n(x,y)} { (1+C(K_n)) n^2},
$$
which is clearly positive. Then by Corollary~\ref{coro.K}, 
\begin{equation}\label{w2}
\begin{aligned}
\int_{\mathcal{O}_2}\widetilde{g}(x,y)\la u\ra^2\, dxdy 
&\le  \sum_{n=1}^\infty \frac{ 1} { (1+C(K_n)) n^2} \int_{K_n}\la u\ra^2\, dx dy\\
&\le 2\int _{\mathcal{O}_2}\la \nabla_{x,y} u\ra^2\, dx dy.
\end{aligned}
\end{equation}
Finally, let us set
$$
g(x,y)=1\quad\text{for } (x,y)\in \mathcal{O}_1,\quad
g(x,y)=\widetilde{g}(x,y)\quad\text{for } (x,y)\in \mathcal{O}_2.
$$
Then Proposition~\ref{corog} follows from Lemma~\ref{lem.Poinc} and \eqref{w2}.
\end{proof}

We now introduce the space in which we shall solve the variational formulation of our Dirichlet problem.
\begin{defi} Denote by $H^{1,0}( \Omega)$ the space of functions $u$ on $\Omega$ 
such that there exists a sequence $(u_n)\in \mathscr{D}_0$ such that,
$$ \partialyx u_n  \rightarrow \partialyx u \text{ in } L^2( \Omega, dxdy), 
\qquad u_n  \rightarrow  u \text{ in } L^2( \Omega, g(x,y)dxdy).$$
We endow the space $H^{1,0}$ with the norm
$$ \lA u\rA = \lA \partialyx u \rA_{L^2( \Omega)}.
$$
\end{defi}  
The key point is that the space $H^{1,0}(\Omega)$ is a Hilbert space. Indeed, passing to the limit in~\eqref{eq.Poinc}, 
we obtain first  that by definition, the norm on $H^{1,0}(\Omega)$ is equivalent to 
$$
\lA \partialyx u \rA_{L^2( \Omega, dxdy)}+ \lA u \rA_{L^2( \Omega, g(x,y)dxdy)}.
$$ 
As a consequence, if $(u_n)$ is a Cauchy  sequence in $H^{1,0}(\Omega)$, we obtain easily from 
the completeness of $L^2$ spaces that there exists $u \in L^2( \Omega, g(x,y) dxdy)$ 
and $v \in L^2( \Omega, dxdy)$ such that 
$$
u_n \rightarrow u \text{ in }  L^2( \Omega, g(x,y) dxdy),  
\qquad \partialyx u_n \rightarrow v \text{ in } L^2(\Omega, dxdy).
$$
But the convergence in $ L^2( \Omega, g(x,y) dxdy)$ implies the 
convergence in $\mathcal{D}'( \Omega)$ and consequently $v = \partialyx u$ 
in $\mathcal{D}'( \Omega)$ and it is easy to see that $u\in H^{1,0}(\Omega)$.

\smallbreak

We are now in position to define the Dirichlet-Neumann operator. 
Let $\psi (x) \in H^1( \xR^d)$. For $\chi \in C^\infty_0 (]-1,1[)$ equal to $1$ near $0$, we first define 
$$
\widetilde \psi (x,y)= \chi \left(\frac {y-\eta(x)} {h}\right) \psi(x) \in H^{1} ( \xR^{d+1}),
$$
which is the most simple lifting of $\psi$. Then the map
$$
v\mapsto \langle \Deltayx \widetilde \psi ,v\rangle  = - \int_{\Omega}  \partialyx  \widetilde \psi \cdot \partialyx v \, dx dy 
$$
is a bounded linear form on $H^{1,0}(\Omega)$. It follows from the Riesz theorem that there exists a unique 
$\widetilde{\phi}\in H^{1,0}(\Omega)$ such that
\begin{equation}\label{eq.variabis}
 \forall v \in H^{1,0} ( \Omega), \qquad \int_\Omega \partialyx \widetilde \phi \cdot \partialyx v \, dxdy
=  \langle \Deltayx \widetilde \psi ,v\rangle .
 \end{equation}
Then $\widetilde \phi$ solves the problem 
$$
- \Deltayx \widetilde \phi = \Deltayx \widetilde \psi\quad\text{in }\mathcal{D}'(\Omega), \qquad \widetilde \phi \mid_{\Sigma} =0, 
\qquad \partial _n \widetilde \phi \mid_{\Gamma} = 0,
$$
the latter condition being justified as soon as the bottom $\Gamma$ is regular enough.

We now set $\phi = \widetilde \phi + \widetilde \psi$ and define the Dirichlet-Neumann operator by
\begin{align*}
G(\eta) \psi  (x)&=
\sqrt{1+|\partialx\eta|^2}\,
\partial _n \phi\arrowvert_{y=\eta(x)},\\
&=(\partial_y \phi)(x,\eta(x))-\partialx \eta (x)\cdot (\partialx \phi)(x,\eta(x)),
\end{align*}
Notice that a simple calculation shows that this definition is independent on the choice of the lifting function $\widetilde\psi$ as long as it remains bounded in 
$H^1(\Omega)$ and vanishes near the bottom.

\subsection{Boundedness on Sobolev spaces}

\begin{prop}\label{estDN}
Let $d\ge 1$, $ s > 2+\frac{d}{2}$ and $1  \le \sigma \le s$. Consider $\eta\in H^{s+\mez}(\xR^d)$. Then 
$G(\eta)$ maps $H^{\sigma}(\xR^d)$ to $H^{\sigma-1}(\xR^d)$. Moreover, 
there exists a function $C$ such that, for all 
$\psi \in H^\sigma(\xR ^d)$ and $\eta \in H^{s+\mez } ( \xR ^d)$,
$$
\lA G(\eta)\psi\rA_{H^{\sigma-1}(\xR^d)}\le C\left(\lA \eta\rA_{H^{s+\mez}}\right)\lA \partialx\psi\rA_{H^{\sigma-1}}.
$$
\end{prop}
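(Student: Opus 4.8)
The plan is to reduce the problem to elliptic regularity for the variational solution $\widetilde\phi$ of~\eqref{eq.variabis} in a fixed strip below the free surface, where the domain is a nice $C^1$ (in fact flat after a change of variables) region, and then to trace back onto $\{y=\eta(x)\}$. The crucial point is that $G(\eta)\psi$ only depends on $\phi$ in the strip $\mathcal{O}_1 = \{\eta(x)-h < y < \eta(x)\}$, so the (possibly very rough) bottom $\Gamma$ plays no role in the interior estimates: it only enters through the a priori bound $\lA \partialyx\widetilde\phi\rA_{L^2(\Omega)} \le \lA \partialyx\widetilde\psi\rA_{L^2(\Omega)} \lesssim C(\lA\eta\rA_{H^{s+1/2}})\lA\partialx\psi\rA_{H^{-1/2}\cap L^2}$ coming from~\eqref{eq.variabis} and the definition of the lifting $\widetilde\psi$. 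First I would flatten the strip via the change of variables $(x,y)\mapsto(x,z)$ with $z = y-\eta(x)$, or more robustly the map used in~\cite{AM}, sending $\mathcal{O}_1$ onto $\xR^d\times(-h,0)$; the Laplacian $\Deltayx$ becomes a second-order elliptic operator with coefficients that are (nonlinear, tame) functions of $\partialx\eta$, hence bounded in $H^{s-1/2}(\xR^d)$ uniformly in $z$. Call the transformed unknown $v(x,z)$.

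Next I would run a tangential-regularity bootstrap for the elliptic equation $\mathcal{L}v = f$ in the slab, where $f$ comes from $\Deltayx\widetilde\psi$ and is controlled by $\lA\partialx\psi\rA_{H^{\sigma-1}}$. Starting from $v\in C^0_z(H^{1/2}_x)\cap L^2$ with $\partialx v, \partial_z v \in L^2$, one gains tangential derivatives by commuting the equation with $\L{D_x}^{\mu}$ and using the product/paraproduct (tame) estimates in Sobolev spaces — this is where the hypothesis $s>2+d/2$ enters, guaranteeing $H^{s-1/2}(\xR^d)$ is an algebra acting on the relevant spaces and that the coefficients multiply without loss. The outcome is $v\in C^0_z(H^\sigma_x)$ with $\partial_z v\in C^0_z(H^{\sigma-1}_x)$ on a slightly smaller slab $\{-h/2 < z < 0\}$, with the quantitative bound $\lA v\rA_{C^0_z(H^\sigma)} + \lA\partial_z v\rA_{C^0_z(H^{\sigma-1})} \le C(\lA\eta\rA_{H^{s+1/2}})\lA\partialx\psi\rA_{H^{\sigma-1}}$. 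Here one should be careful near the top boundary $z=0$: since $\widetilde\phi|_{\Sigma}=0$ one actually controls $\widetilde\phi$, and the trace of $\partial_z v$ at $z=0$ makes sense in $H^{\sigma-1}(\xR^d)$ by the slab trace theorem. Finally, unwinding the change of variables, $G(\eta)\psi = (\partial_y\phi - \partialx\eta\cdot\partialx\phi)|_{y=\eta}$ is expressed as a combination of the trace at $z=0$ of $\partial_z v$ and of $\partialx v$, multiplied by functions of $\partialx\eta\in H^{s-1/2}\subset H^{\sigma-1}\cdot(\text{algebra})$; the tame product estimates then give $\lA G(\eta)\psi\rA_{H^{\sigma-1}} \le C(\lA\eta\rA_{H^{s+1/2}})\lA\partialx\psi\rA_{H^{\sigma-1}}$, as claimed. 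The base case $\sigma=1$ is essentially the variational estimate itself plus the $H^{1/2}$ trace theorem.

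The main obstacle I anticipate is the low-regularity tangential bootstrap near the top threshold $\sigma$ close to $s$: one must carry the estimates with the sharp index $\sigma-1$ on $\partialx\psi$ and with coefficients only in $H^{s-1/2}$, so the commutator and product estimates must be genuinely tame (linear in the highest norm, with the $C(\lA\eta\rA_{H^{s+1/2}})$ absorbing all lower-order nonlinear dependence), and one cannot afford any $\epsilon$-loss of derivative. Keeping track of the fact that the estimates are uniform as long as $\lA\eta(t,\cdot)\rA_{H^s}$ stays bounded — as flagged after Notation~\ref{notaD} — requires doing the whole argument with explicit constants depending only on that norm. A secondary technical annoyance is justifying the trace $\partial_n\widetilde\phi|_\Gamma=0$ plays no role: this is handled by localizing all test functions and cutoffs away from $\Gamma$, using that $\widetilde\psi$ and the relevant cutoffs vanish near the bottom, so that the interior elliptic estimates in the strip are self-contained.
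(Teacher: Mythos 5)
Your overall plan is the same as the paper's: flatten a strip immediately below the free surface by an affine change of variables in $y$, prove elliptic regularity for the transformed equation in the resulting slab $\xR^d\times[-1,0]$, and then read off $G(\eta)\psi$ from the traces of $\partial_z v$ and $\partialx v$ at $z=0$. Where the paper differs from your proposal, beyond delegating the slab estimate to Alvarez-Samaniego and Lannes~\cite{ASL} instead of running a tangential bootstrap by hand, is in how the localization away from the rough bottom $\Gamma$ is organized, and this is where your sketch is thin.

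You propose to "localize all test functions and cutoffs away from $\Gamma$" and to bootstrap $\widetilde\phi$ with source $\Deltayx\widetilde\psi$. The paper instead works with the harmonic function $\phi$, so the only source term is a cutoff commutator, and it uses a deliberate \emph{two-function} device: a \emph{regularized} $\eta_1$ (a smoothing of $\eta$ shifted down by $\approx h/2$) defines the cutoff $\chi_0\bigl((y-\eta_1(x))/h\bigr)$, while the \emph{exact} $\eta$ defines the strip $\Pi_\eta=\{\eta(x)-h<y\le\eta(x)\}$ on which one flattens. This buys two things simultaneously. First, since $\eta_1$ is smooth and the commutator $f=[\Deltayx,\chi_0(\cdot)]\phi$ is supported well inside the strip where $\phi$ is already $H^\infty$ by interior elliptic regularity (Lemmas~\ref{cor01}--\ref{cor.1}), the source $f$ is $H^\infty$ — whereas a cutoff built from the rough $\eta$ would produce a source with only $H^{s-3/2}$ coefficients, forcing you to track its interaction with the bootstrap at the top of the range $\sigma\le s$. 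Second, the strip $\Pi_\eta$ is bounded by two \emph{parallel} copies of the graph of $\eta$, so the flattening $\rho(x,z)=hz+\eta(x)$ is affine in $z$ and the transformed boundary conditions at $z=0$ and $z=-1$ are exact and clean; if you instead cut and flattened using a single rough or a single regularized function, you would lose one of these properties. So the gap in your proposal is not the plan but the localization: you should make explicit which function is harmonic on what region, which function is cut off, and with which surface function the cutoff and the flattening are built — following the paper's split, you want $\Phi=\chi_0\bigl((y-\eta_1)/h\bigr)\phi$ with $\phi$ harmonic, $\eta_1$ smooth, and the flattening done on $\Pi_\eta$ with exact $\eta$. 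With that in place, the slab elliptic regularity (whether quoted from~\cite{ASL} as Lemma~\ref{L1} or reproven by your commutator bootstrap with tame estimates) gives $\partialx v,\partial_z v\in L^2_z(H^{\sigma-1/2}_x)$, hence $C^0_z(H^{\sigma-1}_x)$ by interpolation, and the claimed bound follows.
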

\begin{proof} The proof is in two steps.

\smallbreak
\noindent\textbf{First step: A localization argument.} 
Let us define (by regularizing the function $\eta$), a smooth function $\widetilde{\eta}\in H^\infty(\xR^d)$ such that
$\lA \widetilde{\eta} - \eta\rA_{L^\infty}\le h/100$ and $\lA \widetilde{\eta} - \eta\rA_{H^{s+1/2}}\le h/100$. We now set
$$
\eta_1=\widetilde{\eta}-\frac{9h}{20}.
$$
Then $\eta_{1}$ satisfies
\begin{equation}\label{eta1eta}
\eta(x)-\frac{h}{4}<\eta_1(x)\le \eta(x)-\frac{h}{5}.
\end{equation}

\begin{lemm}\label{cor01}
Consider for $-3h/4<a<b<h/5$ , the strip 
$$
S_{a,b}= \{ (x,y) \in \mathbb{R}^{d+1}; a<y-\eta_1(x)<b\},
$$
which is included in $\Omega$. Let $k\ge 1$ and assume that $\|\phi\|_{H^k(S_{a,b})}<+\infty $. 
Then for any $a<a'<b'<b$ there exists $C>0$ such that 
$$
\|\phi\|_{H^{k+1}(S_{a',b'})}\leq C \|\phi\|_{H^k(S_{a,b})}.
$$
\end{lemm} 
\begin{proof}
Choose a function $\chi \in C^\infty_0 (a,b)$ equal to $1$ on $(a',b')$. 
The function $w= \chi(y- \eta_1(x)) \phi(x,y)$ is solution to 
$$
\Deltayx w= [\Deltayx, \chi(y- \eta_1(x))] \phi,
$$
and since the assumption implies that the right hand side 
is bounded in $H^{k-1}$, the result follows from the (explicit) elliptic regularity of the operator $\Deltayx$ in $\xR ^{d+1}$. 
\end{proof}

\begin{lemm}\label{cor.1}
Assume that $-3h/4<a<b<h/5$  then the strip $S_{a,b}= \{ (x,y) \in \mathbb{R}^{d+1}\,:\, a<y- \eta_1(x)<b\}$ 
is included in $\Omega$ and for any $k \ge 1$, there exists $C>0$ such that 
$$
\lA\phi\rA_{H^{k}(S_{a,b})}\leq C \|\psi\|_{H^1(\xR^d)}.
$$
%Furthermore, the constant $C$ above depends only on 
%\begin{enumerate}
%\item the distance $\dist(\Sigma, \Gamma)>0$,
%\item the norm of $\partialx \eta $ in $L^\infty(\xR^d)$ (which controls  the Lipschitz 
%bound on the coordinate change, the $L^\infty$ bound of $\eta- \eta_{j}$ 
%and the $C^\infty$ bounds on the functions $\partialx \eta_{j}$, $j=1,2$).
%\end{enumerate}
\end{lemm}
\begin{proof}
It follows from the variational problem \eqref{eq.variabis}, the definition of $\phi=\widetilde{\phi}+\widetilde{\psi}$, that
$$
\lA \nabla_{x,y}\phi\rA_{L^2(\Omega)}
\le c \lA \psi\rA_{H^1(\xR^d)}.
$$
Noticing that $S_{a,b}\subset \mathcal{O}_1$ (cf \eqref{defiO1}) and applying Lemma~\ref{lem.Poinc} we obtain 
the {\em a priori} $H^1$ bound 
\begin{align*}
\lA \phi\rA_{H^1(S_{a,b})}
\le \lA \phi\rA_{H^1(\mathcal{O}_1)} 
\le (1+h)\lA \nabla_{x,y}\phi\rA_{L^2(\Omega)}
\le c(1+h) \lA \psi\rA_{H^1(\xR^d)}.
\end{align*}
Since it is always possible to chose $a<a_2<\cdots < a_k= a' <b'= b_k < \cdots < b_2<b$, we 
deduce Lemma~\ref{cor.1} from Lemma~\ref{cor01}.
\end{proof}

We next introduce $\chi_0\in C^{\infty}(\xR)$ such that $0\le \chi_0\le 1$, 
$$
\chi_0(z)= 1 \quad \text{for } z\ge 0,
\quad
\chi_0(z)= 0 \quad \text{for }  z\le -\frac{1}{4}
$$
Then the function
$$
\Phi(x,y)=\chi_0\left(\frac{y-\eta_1(x)}{h}\right)\phi(x,y)
$$
is solution to 
$$
\Deltayx \Phi =f\defn\left[ \Deltayx,\chi_0\left(\frac{y-\eta_1(x)}{h}\right)\right] \phi.
$$
In view of \eqref{eta1eta}, notice that $f$ is supported in a set where $\phi$ is $H^\infty$ according to Lemma~\ref{cor.1}, we find that 
$$
f\in H^\infty(\Pi_\eta)\quad\text{where } \Pi_\eta\defn \left\{ (x,y)\in \xR^{d}\times\xR\,:\, \eta(x)-h<y\le \eta(x)\right\}.
$$
In addition, using that $\chi_0(0)=1$ and that $\Phi(x,y)$ is identically equal to $0$ near the set $\{y=\eta-h \}$, we immediately verify 
that $\Phi$ satisfies the boundary conditions
$$ 
\Phi \mid _{y= \eta(x) } = \psi (x), \quad 
\partial_y\Phi\mid_{y=\eta(x) - h} = 0,\quad
\Phi\mid_{y=\eta(x) - h} = 0.
$$
%(in fact, $\Phi(x,y)$ is identically equal to $0$ near the set $\{y=\eta-\frac{3h}{4}\}$ (so that 
%the boundary conditions on the bottom is trivially satisfied).

%where for any $k>0$ there exists $C>0$ such that $\|\psi_1\|_{H^k}\leq C \|\psi\| _{ H^{1/2}}$. 

The fact that the strip $\Pi_\eta$ depends on $\eta$ and not on $\eta_1$ is not a typographical error. Indeed, with this choice, 
the strip $\Pi_\eta$ is made of two parallel curves. As a result, a very simple (affine) change of variables will flatten both the top surface $\{y=\eta(x)\}$
and the bottom surface $\{y=\eta(x)-h\}$. 
\smallbreak
\noindent\textbf{Second step: Elliptic estimates.} 
To prove elliptic estimates, we shall consider the most simple change of variables. Namely, introduce
$$
\rho(x,z) =hz + \eta(x).
$$
Then
$$
(x,z) \mapsto (x, \rho(x,z)),
$$
is a diffeomorphism from the strip $\xR^d\times [-1,0]$ to the set 
$$
\left\{\,(x,y)\in\xR^d\times \xR\,:\, \eta(x)-h \leq y \leq \eta(x)\,\right\}.
$$
Let us define the function $v\colon  \xR^d \times[-1,0] \rightarrow \xR$ by
\begin{equation}\label{defiv}
v(x,z)= \Phi(x, \rho(x,z)).
\end{equation}
From $\Deltayx \Phi=f$ with $f\in H^\infty(\Pi_\eta)$, we deduce that $v$ satisfies the elliptic equation
\begin{equation}\label{evpf}
\left(\frac{1}{ \partial_z \rho}\partial_z \right)^2 v
+ \left( \partialx -\frac{\partialx \rho}{ \partial_z \rho}\partial_z\right)^2 v=g,
\end{equation}
where $g(x,z)=f(x,hz+\eta(x))$ is in $C^2_z([-1,0];H^{s+\mez}(\xR^d_x))$. This yields
\begin{equation}\label{dnint1}
\alpha \partial_z^2 v +\Delta v + \beta \cdot\partialx\partial_z v  - \gamma \partial_z v=g,
\end{equation}
where 
\begin{equation}\label{abc}
\alpha\defn \frac{1+|\partialx  \eta |^2 }{ h^2},\quad 
\beta\defn  - \frac{2\partialx  \eta}{h},\quad 
\gamma \defn \frac{\Delta \eta}{h }  .
\end{equation}
Also $v$ satisfies the boundary conditions 
\begin{equation}\label{dnbord}
v\big\arrowvert_{z=0}= \psi, \qquad  \partial_z v  \arrowvert_{z=-1}=0,\quad v\arrowvert_{z=-1}=0.
\end{equation}
We are now in position to apply elliptic regularity results obtained by 
Alvarez-Samaniego and Lannes in~\cite[Section 2.2]{ASL} to deduce the following result.
\begin{lemm}\label{L1}%[\protect{\cite[Lemma 2.2]{ASL}}]
Suppose that $v$ satisfies the elliptic equation \eqref{dnint1} with the boundary condtions 
\eqref{dnbord} with 
$\psi \in H^\sigma(\xR ^d)$ and $\eta \in H^{s+\mez } ( \xR ^d)$ where $1 \le \sigma \leq s$, $ s > 2+\frac{d}{2}$, $\dist(\Sigma,\Gamma)>0$. 
Then 
$$
\partialx v,\partial_{z}v\in L^2_z\bigl([-1,0];H_x^{\sigma-\frac{1}{2}}(\xR^d)\bigr).
$$
\end{lemm}
%\begin{proof}
%Introducing a lifting of $\psi$, and using a cutoff function in $z$, we reduces the analysis to studying
%\begin{equation*}
%\alpha \partial_z^2 V +\Delta V + \beta \cdot\partialx\partial_z V  - \gamma \partial_z V=f \in H^{\sigma-\tdm}(\xR^d\times [-1,0]),
%\end{equation*}
%with homogeneous boundary conditions: 
%\begin{equation}\label{dnbord}
%V\big\arrowvert_{z=0}= 0, \qquad  \partial_y V  \arrowvert_{z=-1}=0.
%\end{equation}
%We are then in the situation studied in \cite{ASL}, that is, with homogeneous Neumann boundary condition on $z=-1$. 
%Then, in order to verify that the proof in \cite{ASL} applies, we just need to explain two minor technical points. 
%Indeed, in \cite{ASL}, $\eta_2(x)-h/4$ is constant and $s$ belongs to $\xN+1/2$. 

%Let $r\ge 0$ and $a\in H^{s_0}(\xR^d)$ with $s_0>1+d/2$. Consider the commutator 
%$$
%C_{\eps}\defn [a , (I-\eps \Delta_x)^{-r/2}(I-\Delta_x)^{r/2}].
%$$ 
%Then, for any $t\le s_0-1$, the commutator $C_\eps$ 
%is uniformly bounded in $\eps\in [0,1]$ from $H^{t}$ to $H^{t-r+1}$ provided that $t\in ]-s_0+r,s_0-1]$. 
%\end{proof}

It follows from Lemma~\ref{L1} and a 
classical interpolation argument that 
$(\partialx v,\partial_{z}v)$ are continuous in $z\in [-1,0]$ with values in $H^{\sigma-1}(\xR^d)$. 
Now note that, by definition,
\begin{equation*}
G(\eta)\psi=
\frac{1 + | \partialx \eta |^2}{h} \partial_z v  - \partialx \eta \cdot\partialx v
\Big\arrowvert_{z=0}.
\end{equation*}
Therefore, %with assumptions as above, 
we conclude that $G(\eta)\psi\in H^{\sigma-1}(\xR^d)$. 
Moreover we have the desired estimate. 

This completes the proof of Proposition~\ref{estDN}.
\end{proof}

\subsection{Linearization of the Dirichlet-Neumann operator}\label{s.2.3}
The next pro\-position gives an explicit expression of the shape derivative 
of the Dirichlet-Neumann operator, 
that is, of its derivative with respect to the surface parame\-trization.

\begin{prop}\label{Lannes1}
Let $\psi \in H^\sigma(\xR ^d)$ and $\eta \in H^{s+\mez } ( \xR ^d)$ with 
$1 \le  \sigma \leq s$, $ s > 2+\frac{d}{2}$ be such that $\dist(\Sigma,\Gamma)>0$. 
Then there exists a neighborhood $\mathcal{U}_\eta\subset H^{s+\mez}(\xR^d)$ of $\eta$ such that
the mapping
$$
\sigma \in \mathcal{U}_\eta\subset H^{s+\mez}(\xR^d) \mapsto G(\sigma)\psi \in H^{\sigma-1}(\xR^d)
$$
is differentiable. Moreover, for all $h\in H^{s+\mez}(\xR^d)$, we have 
\begin{align*}
dG(\eta)\psi \cdot  h \defn
\lim_{\eps\rightarrow 0} \frac{1}{\eps}\big\{ G(\eta+\eps h)\psi -G(\eta)\psi\big\}
= -G(\eta)( \mathfrak{B} h) -\cnx (V h),
\end{align*}
where
$$
\mathfrak{B}=\frac{\partialx\eta\cdot\partialx\psi+G(\eta)\psi}{1+\la \partialx\eta\ra^2},
\quad V =\partialx\psi-\mathfrak{B}\partialx \eta.
$$
\end{prop}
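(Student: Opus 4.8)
The plan is to compute the shape derivative by differentiating the variational problem defining $\widetilde\phi$, following the classical strategy of Lannes. First I would fix $\psi$ and regard $\phi^\eta := \widetilde\phi^\eta + \widetilde\psi^\eta$ as a function of $\eta$, where the superscript records the dependence on the surface. The key difficulty is that the domain $\Omega = \Omega^\eta$ itself moves with $\eta$, so one cannot differentiate directly under the integral in \eqref{eq.variabis}. The standard device is to straighten the domain: using the change of variables $\rho(x,z) = hz + \eta(x)$ from the second step of the proof of Proposition~\ref{estDN}, pull everything back to the fixed strip $\xR^d\times[-1,0]$ (patched to the fixed part $\mathcal{O}_2$ of the domain, which does not move). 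On the fixed domain the coefficients $\alpha,\beta,\gamma$ of \eqref{dnint1} depend smoothly on $\eta$ through $\partialx\eta$ and $\Delta\eta$, and $C^1$ dependence of the solution $v=v^\eta$ on $\eta\in\mathcal{U}_\eta\subset H^{s+\mez}$ with values in the space given by Lemma~\ref{L1} follows from the implicit function theorem applied to the (linear, boundedly invertible) elliptic problem, using the elliptic estimates of \cite{ASL} together with the tame product estimates available for $s>2+d/2$. This differentiability, transported back, gives the differentiability of $\eta\mapsto G(\eta)\psi\in H^{\sigma-1}$ asserted in the first part of the statement.

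Next I would identify the derivative. Write $\dot\phi = \partial_\eta\phi\cdot h$ for the derivative of $\phi^\eta$ taken at a point $(x,y)$ with $y<\eta(x)$ fixed (not following the boundary). Since $\Deltayx\phi=0$ in the interior and this equation is local and $\eta$-independent away from the moving boundary, $\dot\phi$ is again harmonic in $\Omega$, with $\partial_n\dot\phi=0$ on $\Gamma$. For the top boundary condition: $\phi^\eta(x,\eta(x)) = \psi(x)$ for all $\eta$; differentiating in $\eta$ in the direction $h$ gives $\dot\phi(x,\eta(x)) + (\partial_y\phi)(x,\eta(x))\,h(x) = 0$, i.e. $\dot\phi\arrowvert_\Sigma = -(\partial_y\phi\arrowvert_\Sigma)\,h$. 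Now one must compare $\partial_y\phi\arrowvert_\Sigma$ with the quantities $\mathfrak{B}$ and $V$. From the definition of $G(\eta)\psi$ and of $\mathfrak{B},V$ one has the standard identities $\partial_y\phi\arrowvert_\Sigma = \mathfrak{B}$ and $\partialx\phi\arrowvert_\Sigma = V + \mathfrak{B}\partialx\eta$ (these come from inverting the two linear relations $G(\eta)\psi = \partial_y\phi - \partialx\eta\cdot\partialx\phi$ and $\partialx\psi = \partialx(\phi\arrowvert_\Sigma) = \partialx\phi\arrowvert_\Sigma + (\partial_y\phi\arrowvert_\Sigma)\partialx\eta$, evaluated at the surface). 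Hence $\dot\phi\arrowvert_\Sigma = -\mathfrak{B}h$, so $\dot\phi$ is, up to sign, the harmonic extension solving the Dirichlet problem with boundary data $\mathfrak{B}h$; thus its own Dirichlet--Neumann data is $-G(\eta)(\mathfrak{B}h)$.

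It remains to account for the difference between differentiating $G(\eta)\psi$ and merely reading off the Neumann data of $\dot\phi$, which produces the extra divergence term. By definition
$$
G(\eta)\psi = (\partial_y\phi)(x,\eta(x)) - \partialx\eta\cdot(\partialx\phi)(x,\eta(x)),
$$
so differentiating in $\eta$ in direction $h$, and using that each term $(\partial_\bullet\phi)(x,\eta(x))$ contributes both $(\partial_\bullet\dot\phi)(x,\eta(x))$ and a boundary-motion term $(\partial_y\partial_\bullet\phi)(x,\eta(x))\,h$, I get
$$
dG(\eta)\psi\cdot h = \Big[(\partial_y\dot\phi - \partialx\eta\cdot\partialx\dot\phi)\arrowvert_\Sigma\Big] + \Big[(\partial_y^2\phi)\arrowvert_\Sigma\,h - \partialx h\cdot(\partialx\phi)\arrowvert_\Sigma - \partialx\eta\cdot(\partialx\partial_y\phi)\arrowvert_\Sigma\,h\Big].
$$
The first bracket is exactly $G(\eta)(\dot\phi\arrowvert_\Sigma$-extension$) = -G(\eta)(\mathfrak{B}h)$. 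For the second bracket, I would use harmonicity of $\phi$, $\partial_y^2\phi = -\Deltax\phi$, and the surface identities above, together with the chain-rule fact that $\partialx\big((\partial_\bullet\phi)(x,\eta(x))\big) = (\partialx\partial_\bullet\phi)\arrowvert_\Sigma + (\partial_y\partial_\bullet\phi)\arrowvert_\Sigma\,\partialx\eta$, to rewrite everything in terms of $\mathfrak{B}=\partial_y\phi\arrowvert_\Sigma$, $V=(\partialx\phi)\arrowvert_\Sigma-\mathfrak{B}\partialx\eta$ and their tangential derivatives; a direct but slightly tedious bookkeeping then collapses the second bracket to $-\cnx(Vh)$. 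I expect this last algebraic reorganization — cleanly turning the boundary-correction terms into the single conservative form $-\cnx(Vh)$ — to be the main obstacle, both because it requires care with the chain rule for traces on the moving surface and because one wants it valid at the low regularity $1\le\sigma\le s$, $s>2+d/2$, which forces the use of tame/paraproduct estimates rather than naive multiplication; the mapping properties needed are exactly those furnished by Proposition~\ref{estDN} and Lemma~\ref{L1}.
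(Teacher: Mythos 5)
Your proposal is correct and is precisely the shape-derivative argument of Lannes which the paper itself invokes (the paper gives no independent proof, merely observing that after the flattening change of variables Lannes's proof from \cite{LannesJAMS} applies). Your formal identification of $\partial_y\phi|_\Sigma=\mathfrak{B}$, $\partialx\phi|_\Sigma=V+\mathfrak{B}\partialx\eta$, and the regrouping of the boundary-correction terms into $-\cnx(Vh)$ via harmonicity and the chain rule is accurate, and the implicit-function-theorem route on the flat strip is the intended mechanism for differentiability.
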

The above result goes back to Zakharov~\cite{Zakharov}. Notice that in the previous paragraph we reduced the analysis 
to studying an elliptic equation in a flat strip $\xR^d\times [-1,0]$. As a consequence, the proof of this result by Lannes~\cite{LannesJAMS} applies (see also \cite{BLS,IP,AM}).

\smallbreak
Let us mention a key cancellation in the previous formula, which is proved in \cite[Lemma 1]{BLS} (see also~\cite{LannesJAMS}).
\begin{lemm}\label{cancellation}
We have $G(\eta)\mathfrak{B}=-\cnx V$.
\end{lemm}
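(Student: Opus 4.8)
\textbf{Proof strategy for Lemma~\ref{cancellation}.}

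The plan is to exploit the specific structure of the shape derivative formula combined with the elementary observation that the Dirichlet-Neumann operator is essentially insensitive to a vertical translation of the whole geometry. Concretely, consider the one-parameter family $\eta + \eps$ obtained by adding the constant $\eps$ to the free surface elevation. Translating vertically by $\eps$ maps $\Omega$ to a domain congruent to $\Omega$, and if $\phi$ is the harmonic extension of $\psi$ in $\Omega$, then $\phi(x,y-\eps)$ is the harmonic extension of $\psi(x) = \phi(x,\eta(x))$ for the surface $\eta+\eps$ (here one uses that the bottom is simply translated as well, which is the setting of the time-dependent statement; for the fixed bottom the computation is purely infinitesimal and still valid since only the strip near the top matters). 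Consequently $G(\eta+\eps)\psi = G(\eta)\psi$ for all small $\eps$, and differentiating at $\eps = 0$ gives $dG(\eta)\psi\cdot 1 = 0$.

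Next I would plug $h \equiv 1$ into the formula of Proposition~\ref{Lannes1}. Since $\partialx 1 = 0$, we get $V\cdot 1$-contribution $\cnx(V\cdot 1) = \cnx V$ and the first term becomes $-G(\eta)(\mathfrak{B}\cdot 1) = -G(\eta)\mathfrak{B}$. Therefore
$$
0 = dG(\eta)\psi\cdot 1 = -G(\eta)\mathfrak{B} - \cnx V,
$$
which is exactly the claimed identity $G(\eta)\mathfrak{B} = -\cnx V$.

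A subtle point is that the constant function $1$ is not in $H^{s+\mez}(\xR^d)$, so strictly speaking it is not an admissible $h$ in the differentiability statement; the clean way around this is either to approximate $1$ by a sequence of $H^{s+\mez}$ functions equal to $1$ on larger and larger balls and use the locality/continuity of $dG(\eta)\psi$ together with the boundedness estimates of Proposition~\ref{estDN}, or simply to check the translation-invariance identity $G(\eta+\eps)\psi=G(\eta)\psi$ directly at the level of the variational solution and differentiate. The main obstacle is therefore not the algebra — which is immediate once $h=1$ is inserted — but making rigorous the use of the non-decaying test function $1$; this is precisely why the reference \cite{BLS} is cited, and I would follow that argument, verifying that the vertical-translation cancellation passes to the limit in the approximation.
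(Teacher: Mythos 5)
Your approach---specializing the shape-derivative formula of Proposition~\ref{Lannes1} to $h\equiv 1$ and arguing that $dG(\eta)\psi\cdot 1 = 0$ by vertical translation invariance---is genuinely different from the paper's. The paper instead observes that $\mathfrak{B} = (\partial_y\phi)\arrowvert_{y=\eta}$ and $V = (\partialx\phi)\arrowvert_{y=\eta}$, declares $\Phi \defn \partial_y\phi$ to be the harmonic extension of $\mathfrak{B}$, and then cancels the two sides term by term using $\Deltayx\phi = 0$ together with the chain rule applied to $\Delta\psi$.

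There is, however, a genuine gap in your argument, and it is not the $1\notin H^{s+\mez}$ issue you flag (that is a routine cut-off matter). The real problem is the step ``for the fixed bottom the computation is purely infinitesimal and still valid since only the strip near the top matters.'' The Dirichlet--Neumann operator sees the whole domain, and the shape derivative $dG(\eta)\psi\cdot h$ in Proposition~\ref{Lannes1} is taken with $\Gamma$ held \emph{fixed}: replacing $\eta$ by $\eta+\eps$ while keeping the bottom still changes the depth and hence changes $G$. Concretely, take $d=1$, $\eta\equiv 0$, flat bottom $\Gamma=\{y=-H\}$. Then $G(\eps)\psi=\la D_x\ra\tanh\bigl((H+\eps)\la D_x\ra\bigr)\psi$, so $dG(0)\psi\cdot 1=\la D_x\ra^2\operatorname{sech}^2(H\la D_x\ra)\psi\neq 0$; correspondingly $\mathfrak{B}=\la D_x\ra\tanh(H\la D_x\ra)\psi$, $V=\partialx\psi$, and
\begin{equation*}
G(0)\mathfrak{B}+\cnx V=\la D_x\ra^2\bigl(\tanh^2(H\la D_x\ra)-1\bigr)\psi=-\la D_x\ra^2\operatorname{sech}^2(H\la D_x\ra)\psi\neq 0,
\end{equation*}
a nonzero (exponentially smoothing) operator. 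So the translation-invariance identity you invoke is exact only in the infinite-depth case, where it indeed gives a very clean proof; with a fixed bottom, $G(\eta)\mathfrak{B}+\cnx V$ is nonzero, and the ``only the top strip matters'' heuristic is valid only modulo infinitely smoothing remainders. (For what it is worth, the paper's own proof has the analogous soft spot: the asserted Neumann condition $\partial_n(\partial_y\phi)\arrowvert_\Gamma=0$ is not justified, and in the example above one has $(\partial_y\phi)\arrowvert_\Gamma=0$, not a Neumann condition.) To make either route rigorous for the paper's general geometry you would have to track the smoothing error explicitly and check it is harmless at the point where the cancellation is later used (the estimate of $I_4$ in the uniqueness proof).
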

%\begin{rema}
%This formula  was also established in \cite{BLS}.
%\end{rema}
\begin{proof}
Recalling that, by definition,
$$
G(\eta)\psi=( \partial_y \phi-\nabla\eta\cdot\nabla\phi)\big\arrowvert_{y=\eta},
$$
and using the chain rule to write 
$$
\nabla \psi=\nabla(\phi\arrowvert_{y=\eta})=(\partialx \phi + \partial_y \phi \partialx\eta)\big\arrowvert_{y=\eta},
$$
we obtain
\begin{align*}
\mathfrak{B}&\defn
\frac{\partialx\eta\cdot\partialx\psi+G(\eta)\psi}{1+\la \partialx\eta\ra^2}\\
&=\frac{1}{1+\la \partialx\eta\ra^2}\left\{ \partialx\eta\cdot (\partialx \phi + \partial_y \phi \partialx\eta)+\partial_y \phi-\nabla\eta\cdot\nabla\phi\right\}
\big\arrowvert_{y=\eta}
=(\partial_y \phi)\big\arrowvert_{y=\eta}.
\end{align*}
Therefore the function $\Phi$ defined by $\Phi (x,y)=\partial_y \phi (x,y)$  is the solution to the system
$$
\Deltayx \Phi =0,\quad \Phi\arrowvert_{y=\eta}=\mathfrak{B},\quad
\partial_n \Phi\arrowvert_{\Gamma}=0.
$$
Consequently, directly from the definition of the Dirichlet-Neumann operator, we  have
$$
G(\eta)\mathfrak{B}=
\partial_y \Phi - \partialx\eta\cdot \partialx \Phi \big\arrowvert_{y=\eta}.
$$
Now we have $\partial_y \Phi=\partial_y^2 \phi=-\Delta\phi$ and hence
$$
G(\eta)\mathfrak{B}=
-\Delta \phi - \partialx\eta\cdot \partialx \Phi \big\arrowvert_{y=\eta}.
$$
On the other hand, directly from the definition of $V$, we have
$$
\cnx V = \cnx (\partialx\psi-\mathfrak{B}\partialx\eta) 
=\Delta \psi -\cnx(\mathfrak{B}\nabla\eta).
$$
Using that $\psi(x)=\phi(x,\eta(x))$, we check that
\begin{align*}
\Delta \psi &= \cnx \nabla \psi = \cnx \big( \nabla\phi \big\arrowvert_{y=\eta}+  \partial_y \phi  \big\arrowvert_{y=\eta}  \nabla\eta\big)\\
&= \left(\Delta \phi + \partialx\partial_y \phi \cdot \partialx \eta \right) \big\arrowvert_{y=\eta}
+\cnx \big( \partial_y \phi  \big\arrowvert_{y=\eta}  \nabla\eta \big)\\
&=\left(\Delta \phi + \partialx\partial_y \phi \cdot \partialx \eta \right) \big\arrowvert_{y=\eta}+\cnx ( \mathfrak{B}\nabla\eta )
\end{align*}
so that
\begin{align*}
\cnx V &=\Delta\psi - \cnx ( \mathfrak{B}\nabla\eta )=\left(\Delta\phi + \partialx\partial_y \phi \cdot \partialx \eta \right)\big\arrowvert_{y=\eta}\\
&=(\Delta\phi +  \partialx \Phi\cdot\partialx\eta) \big\arrowvert_{y=\eta}
=-G(\eta)\mathfrak{B},
\end{align*}
which is the desired identity.
\end{proof}

\section{Paralinearization}\label{s3}

\subsection{Paradifferential calculus}\label{s2}

In this paragraph we 
review notations and results about Bony's paradifferential calculus. 
We refer to \cite{Bony,Hormander,MePise,Meyer,Taylor} for the general theory. 
Here we follow the presentation by M\'etivier in \cite{MePise}. 

\smallbreak

For $\rho\in\xN$, according to the usual definition, we denote 
by $W^{\rho,\infty}(\xR^d)$ the Sobolev spaces of $L^\infty$ functions 
whose derivatives of order $\leo \rho$ are in $L^\infty$. 
For $\rho\in ]0,+\infty[\setminus \xN$, we denote 
by $W^{\rho,\infty}(\xR^d)$ the 
space of bounded functions whose derivatives of order $[\rho]$  are uniformly H\"older continuous with 
exponent $\rho- [\rho]$. Recall also that, for all $C^\infty$ function $F$, if 
$u \in W^{\rho,\infty}(\xR^d)$ for some $\rho\ge 0$ then $F(u)\in W^{\rho,\infty}(\xR^d)$.

\begin{defi}
Given $\rho\ge 0$ and $m\in\xR$, $\Gamma_{\rho}^{m}(\xR^d)$ denotes the space of
locally bounded functions $a(x,\xi)$
on $\xR^d\times(\xR^d\setminus 0)$,
which are $C^\infty$ with respect to $\xi$ for $\xi\neq 0$ and
such that, for all $\alpha\in\xN^d$ and all $\xi\neq 0$, the function
$x\mapsto \partial_\xi^\alpha a(x,\xi)$ belongs to $W^{\rho,\infty}(\xR^d)$ and there exists a constant
$C_\alpha$ such that,
\begin{equation}\label{para:10}
\forall\la \xi\ra\ge \mez,\quad \lA \partial_\xi^\alpha a(\cdot,\xi)\rA_{W^{\rho,\infty}}\le C_\alpha
(1+\la\xi\ra)^{m-\la\alpha\ra}.
\end{equation}
\end{defi}

We next introduce the spaces of (poly)homogeneous symbols.
\begin{defi}
i) $\dot\Gamma_{\rho}^{m}(\xR^d)$ denotes the subspace of 
$\Gamma_{\rho}^{m}(\xR^d)$ which consists of symbols 
$a(x,\xi)$ which are homogeneous of degree $m$  
with respect to $\xi$. 

\noindent ii) If 
$$
a=\sum_{0\le j <\rho}a^{(m-j)} \qquad (j\in \xN),
$$
where $a^{(m-j)}\in \dot\Gamma^{m-j}_{\rho-j}(\xR^d)$, then we say that 
$a^{(m)}$ is the principal symbol of $a$.
\end{defi}

Given a symbol $a$, we define
the paradifferential operator $T_a$ by
\begin{equation}\label{eq.para}
\widehat{T_a u}(\xi)=(2\pi)^{-d}\int \chi(\xi-\eta,\eta)\widehat{a}(\xi-\eta,\eta)\psi(\eta)\widehat{u}(\eta)
\, d\eta,
\end{equation}
where
$\widehat{a}(\theta,\xi)=\int e^{-ix\cdot\theta}a(x,\xi)\, dx$
is the Fourier transform of $a$ with respect to the first variable; 
$\chi$ and $\psi$ are two fixed $C^\infty$ functions such that:
$$
\psi(\eta)=0\quad \text{for } \la\eta\ra\le 1,\qquad
\psi(\eta)=1\quad \text{for }\la\eta\ra\geq 2,
$$
and $\chi(\theta,\eta)$ is 
homogeneous of degree $0$ and satisfies, for $0<\eps_1<\eps_2$ small enough,
$$
\chi(\theta,\eta)=1 \quad \text{if}\quad \la\theta\ra\le \eps_1\la \eta\ra,\qquad
\chi(\theta,\eta)=0 \quad \text{if}\quad \la\theta\ra\geq \eps_2\la\eta\ra.
$$ 

\smallbreak

We shall use quantitative results from \cite{MePise} about operator norms estimates in symbolic calculus. 
To do so, introduce the following semi-norms.
\begin{defi}
For $m\in\xR$, $\rho\ge 0$ and $a\in \Gamma^m_{\rho}(\xR^d)$, we set
\begin{equation}\label{defi:norms}
M_{\rho}^{m}(a)= 
\sup_{\la\alpha\ra\le \frac{d}{2}+1+\rho ~}\sup_{\la\xi\ra \ge 1/2~}
\lA (1+\la\xi\ra)^{\la\alpha\ra-m}\partial_\xi^\alpha a(\cdot,\xi)\rA_{W^{\rho,\infty}(\xR^d)}.
\end{equation}
\end{defi}
\begin{rema}
If $a$ is homogeneous of degree $m$ in $\xi$, then 
$$
M_{\rho}^{m}(a)\le   K_{d,m}\sup_{\la\alpha\ra\le \frac{d}{2}+1+\rho ~}\sup_{\la\xi\ra =1 }
\lA \partial_\xi^\alpha a(\cdot,\xi)\rA_{W^{\rho,\infty}(\xR^d)}.
$$
\end{rema}

The main features of symbolic calculus for paradifferential operators are given by the following theorems.
\begin{defi}\label{defi:order}
Let $m\in\xR$.
An operator $T$ is said of order $\leo m$ if, for all $\mu\in\xR$,
it is bounded from $H^{\mu}$ to $H^{\mu-m}$.
\end{defi}
\begin{theo}\label{theo:sc0}
Let $m\in\xR$. If $a \in \Gamma^m_0(\xR^d)$, then $T_a$ is of order $\leo m$. 
Moreover, for all $\mu\in\xR$ there exists a constant $K$ such that
\begin{equation}\label{esti:quant1}
\lA T_a \rA_{H^{\mu}\rightarrow H^{\mu-m}}\le K M_{0}^{m}(a).
\end{equation}
\end{theo}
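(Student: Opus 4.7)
The plan is to combine the spectral localization built into the definition of $T_a$ with a Littlewood--Paley decomposition. I will first rewrite
$$
T_a u(x)=(2\pi)^{-d}\int e^{ix\eta}a^\sharp(x,\eta)\psi(\eta)\widehat u(\eta)\,d\eta,\qquad
a^\sharp(x,\eta)\defn \int \check\chi(x-y,\eta)\,a(y,\eta)\,dy,
$$
where $\check\chi(\cdot,\eta)$ denotes the inverse Fourier transform of $\chi(\cdot,\eta)$ in the first variable. Two properties of $a^\sharp$ will be exploited: its $x$-Fourier transform is supported in the cone $\{|\tau|\le\eps_2|\eta|\}$ (spectral localization in $x$), and $\lA a^\sharp(\cdot,\eta)\rA_{L^\infty_x}\le CM_0^m(a)(1+|\eta|)^m$ uniformly in $\eta$. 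The latter bound follows from the $L^\infty$-boundedness of convolution with $\check\chi(\cdot,\eta)$, whose operator norm is $\eta$-uniform thanks to the homogeneity of $\chi$ in its first variable.

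Next I will use a dyadic decomposition $u=\sum_{k\ge 0}u_k$ with $u_k=\varphi_k(D)u$ spectrally supported in $\{|\eta|\sim 2^k\}$ for $k\ge 1$; the cutoff $\psi(D)$ in the definition of $T_a$ kills the $k=0$ contribution. The $x$-spectral localization of $a^\sharp$ at scale $\sim 2^k$, combined with the $\eta$-localization of $u_k$, ensures that $\widehat{T_a u_k}$ is supported in an annulus of size $\sim 2^k$. By the Littlewood--Paley characterization of $H^s$ and the almost-orthogonality of the pieces $T_a u_k$ that it provides, the target inequality $\lA T_a u\rA_{H^{\mu-m}}\le KM_0^m(a)\lA u\rA_{H^\mu}$ reduces to the dyadic $L^2$ estimate
$$
\lA T_a u_k\rA_{L^2}\le CM_0^m(a)\,2^{km}\lA u_k\rA_{L^2}.
$$

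For this dyadic estimate, I will view $T_a u_k$ as a pseudodifferential operator with symbol $\sigma^k(x,\eta)=a^\sharp(x,\eta)\psi(\eta)\varphi_k(\eta)$, doubly spectrally localized at scale $2^k$ in both $x$ and $\eta$. Writing $T_a u_k(x)=\int K_k(x,y)u(y)\,dy$ and integrating by parts against the phase $e^{i(x-y)\eta}$ produces $\eta$-derivatives of $a^\sharp$, which by the chain rule and the homogeneity of $\chi$ obey $|\partial_\eta^\alpha a^\sharp(\cdot,\eta)|\le CM_0^m(a)(1+|\eta|)^{m-|\alpha|}$. The resulting pointwise decay of $K_k$ yields, via Schur's test applied to the scale-$2^{-k}$ kernel, the required $L^2$ bound with prefactor $M_0^m(a)\,2^{km}$. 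The main obstacle I foresee is the quantitative matching between the $|\alpha|\le d/2+1$ $\eta$-derivatives encoded in the seminorm $M_0^m(a)$ and the number required to close the Schur argument on a $d$-dimensional kernel; carrying this out sharply is precisely the optimal formulation adopted in~\cite{MePise}, and is the only genuinely technical point of the proof.
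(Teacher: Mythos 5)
The paper does not prove Theorem~\ref{theo:sc0}: it is stated, like Theorems~\ref{theo:sc} and~\ref{theo:sc2}, as a background result of Bony's paradifferential calculus, with the quantitative seminorm formulation quoted from M\'etivier~\cite{MePise}. Your plan reproduces the correct skeleton of such a proof: the reformulation $T_au(x)=(2\pi)^{-d}\int e^{ix\cdot\eta}a^\sharp(x,\eta)\psi(\eta)\widehat u(\eta)\,d\eta$, the two structural facts about $a^\sharp$ (spectral confinement of $a^\sharp(\cdot,\eta)$ to $\{\la\theta\ra\le\eps_2\la\eta\ra\}$ and the uniform $L^\infty_x$ bound by $M^m_0(a)(1+\la\eta\ra)^m$), the almost-orthogonality in $H^{\mu-m}$ of the pieces $T_au_k$, and the reduction to a uniform dyadic $L^2$ estimate are all right.

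Where the plan breaks down is the final dyadic bound $\lA T_au_k\rA_{L^2}\le CM^m_0(a)\,2^{km}\lA u_k\rA_{L^2}$. Integrating by parts $N$ times against the phase gives the pointwise bound $\la K_k(x,z)\ra \le C M^m_0(a)\,2^{k(m+d)}(1+2^k\la z\ra)^{-N}$ for the $z$-kernel of the dyadic block; Schur's test based on this pointwise decay needs $(1+\la w\ra)^{-N}\in L^1(\xR^d)$, i.e.\ $N>d$. But $M^m_0(a)$ only controls $\xi$-derivatives of order $\la\alpha\ra\le d/2+1$, so the available $N$ is $\lfloor d/2\rfloor+1\le d$, and the pointwise-decay Schur argument fails for every $d\ge 1$. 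The familiar salvage is a Cauchy--Schwarz/Plancherel refinement: since $(1+2^k\la z\ra)^{-N}$ is already in $L^2$ once $N>d/2$, one obtains $\sup_x\int\la K_k(x,z)\ra\,dz\le CM^m_0(a)2^{km}$ after Plancherel in $z$. That, however, controls only one half of Schur's test; the other half involves $z\mapsto K_k(y+z,z)$, where both arguments of $K_k$ move with $z$, so the Plancherel trick no longer applies and one is pushed back to the pointwise bound and its $N>d$ requirement. Likewise a Minkowski argument asks for $\int\sup_x\la K_k(x,z)\ra\,dz$, again needing $N>d$. To close the dyadic $L^2$ estimate with only $d/2+1$ $\xi$-derivatives one has to use the $x$-spectral localization of $a^\sharp$ as a genuine analytic ingredient (a frozen-coefficient/almost-orthogonality argument at scale $2^{-k}$, or a Fourier-series-in-$\xi$ expansion in the Coifman--Meyer style), not merely as bookkeeping. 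Your closing remark that ``carrying this out sharply is precisely the optimal formulation adopted in~\cite{MePise}'' therefore defers the only non-routine step, which is exactly the point where the estimate~\eqref{esti:quant1} lives or dies.
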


\begin{theo}[Composition]\label{theo:sc}
Let $m\in\xR$ and $\rho>0$. 
If $a\in \Gamma^{m}_{\rho}(\xR^d), b\in \Gamma^{m'}_{\rho}(\xR^d)$ then 
$T_a T_b -T_{a\# b}$ is of order $\leo m+m'-\rho$ where
$$
a\# b=
\sum_{\la \alpha\ra < \rho} \frac{1}{i^{\la\alpha\ra} \alpha !} \partial_\xi^{\alpha} a \partial_{x}^\alpha b.
$$
Moreover, for all $\mu\in\xR$ there exists a constant $K$ such that
\begin{equation}\label{esti:quant2}
\lA T_a T_b  - T_{a\# b}   \rA_{H^{\mu}\rightarrow H^{\mu-m-m'+\rho}}\le 
K M_{\rho}^{m}(a)M_{\rho}^{m'}(b).
\end{equation}
\end{theo}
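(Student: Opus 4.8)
\textbf{Proof plan for the composition theorem (Theorem~\ref{theo:sc}).}

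The plan is to follow the standard route in M\'etivier's presentation~\cite{MePise}, reducing the statement to symbolic estimates on the Fourier side. First I would dispose of the easy reduction: since each symbol $a\in\Gamma^m_\rho$ can be written as a sum of a symbol supported in $\{\la\xi\ra\ge 1/2\}$ plus a smoothing remainder, and since $T_a$ only depends on $a$ through the cutoff region (because of the factor $\psi(\eta)$ in~\eqref{eq.para}), it suffices to treat symbols genuinely defined and satisfying~\eqref{para:10} for all $\xi$. I would also record the spectral localization property of paraproducts: $\widehat{T_a u}(\xi)$ is an integral over $\eta$ with $\la\xi-\eta\ra\le \eps_2\la\eta\ra$, so $T_a$ maps functions to functions whose frequencies are comparable at each ``interaction'', which is what makes the Sobolev mapping estimates of Theorem~\ref{theo:sc0} work and what controls the remainder.

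The core computation is to expand the symbol of the composition. Writing $\widehat{T_aT_bu}(\xi)$ as a double integral and performing a Taylor expansion of $\widehat a(\xi-\eta,\eta)$ in the second variable around $\eta$ replaced by the output frequency — equivalently, expanding $a(x,\eta)$ in $\eta$ around the frequency carried by $T_b u$ — produces the leading terms $\sum_{\la\alpha\ra<\rho}\frac{1}{i^{\la\alpha\ra}\alpha!}\partial_\xi^\alpha a\,\partial_x^\alpha b$, which is exactly $a\#b$, plus an integral remainder of Taylor order $\lceil\rho\rceil$. Each term $\partial_\xi^\alpha a\,\partial_x^\alpha b$ lies in $\Gamma^{m+m'-\la\alpha\ra}_{\rho-\la\alpha\ra}$ because $\partial_x^\alpha$ costs $\la\alpha\ra$ derivatives on $b$, hence $\la\alpha\ra$ units of H\"older/Sobolev regularity, while $\partial_\xi^\alpha a$ lowers the order by $\la\alpha\ra$; so $a\#b$ is a well-defined element of $\Gamma^{m+m'}_\rho$ (as a polyhomogeneous-type sum) and $T_{a\#b}$ makes sense. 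The quantitative bound~\eqref{esti:quant2} then follows by estimating the remainder: its symbol, when one keeps track of the $W^{\rho,\infty}$ norms of the relevant $\xi$-derivatives of $a$ and $b$, is controlled by $M^m_\rho(a)M^{m'}_\rho(b)$ times $(1+\la\xi\ra)^{m+m'-\rho}$, and applying Theorem~\ref{theo:sc0} to that remainder symbol gives the claimed operator norm from $H^\mu$ to $H^{\mu-m-m'+\rho}$. The number of $\alpha$-derivatives that must be tracked is exactly the $\frac d2+1+\rho$ appearing in the definition of $M^m_\rho$, which is why that many derivatives are built into the seminorm.

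The main obstacle, and the place where care is genuinely needed, is the remainder estimate when $\rho$ is not an integer: the Taylor expansion must be carried to order $\lfloor\rho\rfloor$ (or $\lceil\rho\rceil$) and the fractional part $\rho-\lfloor\rho\rfloor$ of H\"older regularity must be extracted from the integral remainder by a commutator/difference argument rather than an extra genuine derivative. Concretely one writes the remainder as an average of $\partial_x^\alpha b(\cdot)-\partial_x^\alpha b(\cdot)$-type differences against $\partial_\xi^\alpha a$ and uses the $W^{\rho,\infty}$ modulus of continuity to gain the missing $\rho-\lfloor\rho\rfloor$; combined with the spectral localization $\la\xi-\eta\ra\lesssim\la\eta\ra$ this yields the decay $(1+\la\xi\ra)^{m+m'-\rho}$. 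Bookkeeping the frequency cutoffs $\chi,\psi$ through all of this — making sure the low-frequency pieces only contribute smoothing operators and do not spoil the homogeneity count — is the other delicate point. Since this is precisely the content of the cited results in~\cite{MePise}, I would present the argument at the level of these reductions and refer there for the remaining routine (but lengthy) estimates.
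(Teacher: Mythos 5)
The paper does not prove this theorem: it is quoted in the review section on paradifferential calculus and deferred to the cited references, in particular M\'etivier's lecture notes, for the proof. Your plan correctly sketches the standard argument from that source (reduction to symbols defined for all $\xi$ via the cutoff $\psi$, spectral localization, Taylor expansion of $a(x,\cdot)$ around the input frequency of $u$ to produce $a\#b$, and the H\"older modulus-of-continuity device for the non-integer part of $\rho$), so the two take essentially the same route, namely deferral to the same reference, with your proposal supplying the outline the paper omits.
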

\begin{rema}
We have the following corollary for poly-homogeneous symbols: if 
$$
a=\sum_{0\le j <\rho}a^{(m-j)}\in \sum_{0\le j <\rho} \Gamma^{m-j}_{\rho-j}(\xR^d),\quad 
b=\sum_{0\le k <\rho}b^{(m-k)}\in \sum_{0\le k <\rho} \Gamma^{m'-k}_{\rho-k}(\xR^d),
$$
with $m,m'\in\xR$ and $\rho>0$, then 
$T_a T_b -T_{c}$ is of order $\leo m+m'-\rho$ with
$$
c=
\sum_{ \la \alpha\ra +j+k < \rho} \frac{1}{i^{\la\alpha\ra} \alpha !} \partial_\xi^{\alpha} a^{(m-j)} \partial_{x}^\alpha b^{(m'-k)}.
$$
\end{rema}
\begin{rema}\label{R3.9}
Clearly a paradifferential operator is not invertible ($T_a u=0$ for any function $u$ whose spectrum is included in the ball 
$\la \xi\ra\le 1/2$). However, the previous result implies that there are left and right parametrix for elliptic symbols. 
Namely, assume that $a\in \Gamma^m_\rho$ is an elliptic symbol (such that $\la a \ra \ge K \la \xi\ra^m$ for some $K>0$), 
then there exists $b,b'\in \Gamma^{-m}_\rho$ such that
$$
T_b T_a -I \text{Ê and  } T_a T_{b'}-I \quad \text{are of order }\leo -\rho.
$$
Consequently, 
if $u\in H^s$ and $T_a u \in H^{\mu}$ then $u\in H^{r}$ with $r=\min \{\mu+m, s+\rho\}$.
\end{rema}

\begin{theo}[Adjoint]\label{theo:sc2}
Let $m\in\xR$, $\rho>0$ and $a\in \Gamma^{m}_{\rho}(\xR^d)$. Denote by 
$(T_a)^*$ the adjoint operator of $T_a$ and by $\overline{a}$ the complex-conjugated of $a$. Then 
$(T_a)^* -T_{a^*}$ is of order $\leo m-\rho$ where
$$
a^*=
\sum_{\la \alpha\ra < \rho} \frac{1}{i^{\la\alpha\ra} \alpha !} \partial_\xi^\alpha \partial_x^{\alpha} \overline{a} .
$$
Moreover, for all $\mu$ there exists a constant $K$ such that
\begin{equation}\label{esti:quant3}
\lA (T_a)^*   - T_{a^*}   \rA_{H^{\mu}\rightarrow H^{\mu-m+\rho}}\le 
K M_{\rho}^{m}(a).
\end{equation}
\end{theo}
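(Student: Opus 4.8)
The plan is to compute the Schwartz kernel of $(T_a)^{*}$ by hand from the defining formula~\eqref{eq.para}, to read off the symbol $a^{*}$ after one Taylor expansion, and to estimate the remaining terms by oscillatory integral bounds of the same type as those behind Theorem~\ref{theo:sc0}. \emph{Step 1 (kernel of the adjoint).} Since $\chi$ and $\psi$ are real-valued, taking the adjoint in~\eqref{eq.para} and relabelling the frequency variables gives, for $u$ in the Schwartz class,
$$
\widehat{(T_a)^{*}u}(\xi)=(2\pi)^{-d}\int \chi(\eta-\xi,\xi)\,\widehat{\overline a}(\xi-\eta,\xi)\,\psi(\xi)\,\widehat u(\eta)\,d\eta,
$$
where I use the elementary identity $\overline{\widehat a(\theta,\xi)}=\widehat{\overline a}(-\theta,\xi)$. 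This is to be compared with the same formula for $T_{a^{*}}$, in which $\chi(\eta-\xi,\xi)$, $\widehat{\overline a}(\xi-\eta,\xi)$ and $\psi(\xi)$ are replaced respectively by $\chi(\xi-\eta,\eta)$, $\widehat{a^{*}}(\xi-\eta,\eta)$ and $\psi(\eta)$.

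\emph{Step 2 (Taylor expansion and integration by parts).} Setting $\theta=\xi-\eta$ and expanding $\widehat{\overline a}(\theta,\eta+\theta)$ in its second slot around $\eta$ up to order $<\rho$,
$$
\widehat{\overline a}(\theta,\eta+\theta)=\sum_{|\alpha|<\rho}\frac{\theta^{\alpha}}{\alpha!}\,(\partial_\xi^{\alpha}\widehat{\overline a})(\theta,\eta)+r_{\rho}(\theta,\eta).
$$
The identity $\theta^{\alpha}e^{-ix\cdot\theta}=i^{|\alpha|}\partial_x^{\alpha}e^{-ix\cdot\theta}$ together with an integration by parts turns each monomial into $\theta^{\alpha}(\partial_\xi^{\alpha}\widehat{\overline a})(\theta,\eta)=i^{-|\alpha|}\widehat{\partial_\xi^{\alpha}\partial_x^{\alpha}\overline a}(\theta,\eta)$, so that the sum above is exactly $\widehat{a^{*}}(\theta,\eta)$. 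Hence $(T_a)^{*}-T_{a^{*}}$ is the sum of (i) the operator attached to the remainder $r_{\rho}$, and (ii) the operators accounting for the replacement of $\chi(\eta-\xi,\xi)$ by $\chi(\xi-\eta,\eta)$ and of $\psi(\xi)$ by $\psi(\eta)$ in the principal terms.

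\emph{Step 3 (estimating the errors).} For (ii): the difference of the two cut-offs $\chi$ is supported where $|\xi-\eta|$ is comparable to $\langle\eta\rangle$, hence away from the diagonal, and on that region $\widehat{a^{*}}(\xi-\eta,\eta)$ is $O(\langle\eta\rangle^{m-\rho})$ by the $W^{\rho,\infty}$ bound on $a(\cdot,\xi)$; the difference of the two $\psi$'s is supported in a fixed frequency shell and yields an operator that is in fact smoothing of infinite order. In both cases one gets, by direct estimates on the Fourier kernel, an operator of order $\le m-\rho$ with norm $\lesssim M_{\rho}^{m}(a)$. For (i): the remainder $r_{\rho}$ carries $[\rho]+1$ derivatives in $x$; decomposing $a$ dyadically in the $x$ variable and using that $x\mapsto\partial_\xi^{\alpha}a(x,\xi)$ lies uniformly in $W^{\rho,\infty}(\xR^d)$ (so that the $x$-frequency block at scale $2^{j}$ is $O(2^{-j\rho})$, the non-integer part of $\rho$ being absorbed by the Hölder bound), one checks that the associated operator has symbol in $\Gamma_0^{m-\rho}(\xR^d)$ with $M_{0}^{m-\rho}$-seminorm $\lesssim M_{\rho}^{m}(a)$, so it is of order $\le m-\rho$ by Theorem~\ref{theo:sc0}. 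Summing all contributions gives the quantitative estimate~\eqref{esti:quant3}.

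\emph{Main obstacle.} The only delicate step is Step~3(i) when $\rho\notin\xN$: one cannot afford the $[\rho]+1$ integrations by parts that a plain integral form of the Taylor remainder would require, so the sharp gain of exactly $\rho$ derivatives must be extracted from the Hölder regularity of $x\mapsto a(x,\xi)$ through the dyadic-in-$x$ analysis indicated above. This is precisely the mechanism turning $W^{\rho,\infty}$ control of $a$ into an $O(m-\rho)$ error; the rest is routine bookkeeping of the constants in terms of $M_{\rho}^{m}(a)$.
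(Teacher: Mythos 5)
The paper does not prove this statement: Theorem~\ref{theo:sc2} appears in the review subsection on paradifferential calculus, where the paper explicitly says it is ``following the presentation by M\'etivier in \cite{MePise}'' and cites the quantitative estimate from there. There is therefore no proof in the paper to compare against.

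On the merits of your sketch: Steps~1 and 2 are correct. The formula for $\widehat{(T_a)^*u}$ is right (one needs $\chi,\psi$ real, which the paper's construction permits), the identity $\theta^{\alpha}(\partial_\xi^{\alpha}\widehat{\overline a})(\theta,\eta)=i^{-|\alpha|}\widehat{\partial_\xi^{\alpha}\partial_x^{\alpha}\overline a}(\theta,\eta)$ is correct, and the Taylor sum does reproduce $\widehat{a^*}(\xi-\eta,\eta)$. The difference $(T_a)^*-T_{a^*}$ is indeed the sum of the two error sources you identify. The observation that the $\psi(\xi)\mapsto\psi(\eta)$ mismatch gives an infinitely smoothing operator is also right, since on the relevant support $|\xi|\sim|\eta|\lesssim 1$.

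The gap is in Step~3. For the cutoff-mismatch term (ii) you assert that ``$\widehat{a^{*}}(\xi-\eta,\eta)$ is $O(\langle\eta\rangle^{m-\rho})$ by the $W^{\rho,\infty}$ bound.'' This pointwise bound on the Fourier transform is not available: $a(\cdot,\xi)\in W^{\rho,\infty}$ does not imply $|\widehat{a}(\theta,\xi)|\lesssim|\theta|^{-\rho}\langle\xi\rangle^{m}$ (the Fourier transform of a bounded H\"older function need not even be a locally integrable function, and $W^{\rho,\infty}$ does not control $W^{\rho,1}$). What is true is the Littlewood--Paley version, $\|\Delta_k^x a(\cdot,\xi)\|_{L^\infty}\lesssim 2^{-k\rho}\langle\xi\rangle^{m}$, and turning that into an $H^\mu\to H^{\mu-m+\rho}$ operator bound requires exactly the dyadic-in-$x$ and dyadic-in-frequency decomposition plus Schur-type estimates on the resulting blocks. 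You invoke that dyadic mechanism only for part (i), and even there you state it as a plan rather than carry it out (one needs to stop the integral Taylor remainder at order $[\rho]$, use the H\"older bound for the fractional gain, and then sum the operator-norm estimates over the dyadic blocks --- all of which is where the content of the theorem lives). In short, the structural reduction in Steps~1--2 is correct, but Step~3 as written contains an incorrect pointwise claim and leaves the actual operator-norm estimates to a gestured-at dyadic argument; this is the part of the proof one would have to supply, and it is not a routine filling-in.
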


If $a=a(x)$ is a function of $x$ only, the paradifferential operator $T_a$ is a called a paraproduct. 
It follows from Theorem~\ref{theo:sc} and Theorem~\ref{theo:sc2} that:
\begin{enumerate}[(i)]
\item 
If $a\in H^{\alpha}(\xR^d)$ and $b\in H^{\beta}(\xR^d)$ with $\alpha>\frac{d}{2}$, $\beta>\frac{d}{2}$,
then
\begin{equation}\label{iii}
T_a T_b - T_{a b} \text{ is of order  }\leo - \left(\min \{\alpha,\beta\}-\frac{d}{2}\right).
\end{equation}
\item 
If $a\in H^{\alpha}(\xR^d)$ with $\alpha>\frac{d}{2}$, then
\begin{equation}\label{iv}
(T_a)^* -T_{\overline{a}} \text{ is of order  }\leo - \left(\alpha-\frac{d}{2}\right).
\end{equation}
\end{enumerate}
We also have operator norm estimates in terms of the Sobolev norms of the functions. 

A first nice feature of paraproducts is that they are well defined for functions $a=a(x)$ which are not in $L^\infty$ but merely in some 
Sobolev spaces $H^r$ with $r<d/2$.
\begin{lemm}\label{negmu}
Let $m\ge 0$. If $a\in H^{\frac{d}{2}-m}(\xR^d)$ and $u\in H^\mu(\xR^d)$ then 
$$
T_a u \in H^{\mu-m}(\xR^d).
$$
Moreover, 
$$
\lA T_a u \rA_{H^{\mu-m}}\le K \lA a \rA_{H^{\frac{d}{2}-m}}\lA u\rA_{H^{\mu}},
$$
for some positive constant $K$ independent of $a$ and $u$.
\end{lemm}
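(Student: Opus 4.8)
The plan is to unfold the definition~\eqref{eq.para} by means of a Littlewood--Paley decomposition and to exploit the frequency localization built into the paraproduct. Fix a dyadic decomposition $\mathrm{Id}=\sum_{j\ge -1}\Delta_j$, with $\Delta_j$ spectrally supported in $\{|\xi|\sim 2^j\}$ for $j\ge 0$ and $\Delta_{-1}$ in the unit ball, and set $S_k=\sum_{j<k}\Delta_j$. The first step is the observation that, because of the cutoffs $\chi$ and $\psi$ in~\eqref{eq.para}, each block $T_a\Delta_j u$ has spectrum contained in an annulus $\{c\,2^j\le |\xi|\le C\,2^j\}$: the frequency $\eta$ is forced by $\psi$ and $\Delta_j$ to be of size $\sim 2^j$, while $a$ only contributes a frequency $\xi-\eta$ with $|\xi-\eta|\le \eps_2|\eta|$, so that $|\xi|\sim 2^j$ once $\eps_2$ is small. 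Summing these almost orthogonal pieces gives
$$
\|T_a u\|_{H^{\mu-m}}^2 \simeq \sum_{j\ge -1} 2^{2j(\mu-m)}\|T_a\Delta_j u\|_{L^2}^2 .
$$
Moreover, from the support of $\chi(\cdot,\eta)$ one has $T_a\Delta_j u=T_{a_j}\Delta_j u$, where $a_j$ denotes $a$ truncated in frequency to $\{|\zeta|\lesssim \eps_2\,2^j\}$, and since $a_j$ is a low-frequency function while $\Delta_j u$ is spectrally localized, the elementary bound $\|T_a\Delta_j u\|_{L^2}\lesssim \|a_j\|_{L^\infty}\|\Delta_j u\|_{L^2}$ holds.

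The second step is to estimate $\|a_j\|_{L^\infty}$. By Bernstein's inequality $\|\Delta_k a\|_{L^\infty}\lesssim 2^{kd/2}\|\Delta_k a\|_{L^2}$; writing $\|\Delta_k a\|_{L^2}=2^{-k(d/2-m)}c_k$ with $(c_k)_{k\ge -1}\in\ell^2$ and $\|(c_k)\|_{\ell^2}\lesssim \|a\|_{H^{d/2-m}}$ (the $\Delta_{-1}$ block being controlled directly, since all norms are equivalent on the unit ball of frequencies), one gets $\|\Delta_k a\|_{L^\infty}\lesssim 2^{km}c_k$ and hence $\|a_j\|_{L^\infty}\lesssim \sum_{k\le j+O(1)}2^{km}c_k$. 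For $m>0$ a Cauchy--Schwarz inequality together with the geometric sum $\sum_{k\le j}2^{2km}\lesssim 2^{2jm}$ yields $\|a_j\|_{L^\infty}\lesssim 2^{jm}\|a\|_{H^{d/2-m}}$, so that $\|T_a\Delta_j u\|_{L^2}\lesssim 2^{jm}\|a\|_{H^{d/2-m}}\|\Delta_j u\|_{L^2}$, and plugging this into the identity above produces
$$
\|T_a u\|_{H^{\mu-m}}^2 \lesssim \|a\|_{H^{d/2-m}}^2 \sum_{j\ge -1}2^{2j\mu}\|\Delta_j u\|_{L^2}^2 \simeq \|a\|_{H^{d/2-m}}^2\|u\|_{H^\mu}^2,
$$
which is the claimed estimate. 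All constants produced this way depend only on $d$, $\mu$, $m$ and the fixed functions $\chi,\psi$, which matches the seminorm-free form of the statement.

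The delicate point is the endpoint $m=0$, for which $a\in H^{d/2}(\xR^d)$ need not belong to $L^\infty$ and the naive bound $\|a_j\|_{L^\infty}\lesssim \sum_{k\le j}c_k$ only gives a logarithmically divergent factor; a constructive superposition of bumps at scales $2^{-k}$ for $k\le j$ (a truncation of $\log(1/|x|)$) shows that $\|a_j\|_{L^\infty}$ can be as large as $\|a\|_{H^{d/2}}^2$, so the argument above would lose a power of $\|a\|$. Here I would instead use the embedding $H^{d/2}(\xR^d)\hookrightarrow \mathrm{BMO}(\xR^d)$ together with the classical $L^2$-continuity of the paraproduct by a $\mathrm{BMO}$ symbol (Coifman--Meyer; see also~\cite{MePise}), namely $\|T_a v\|_{L^2}\lesssim \|a\|_{\mathrm{BMO}}\|v\|_{L^2}\lesssim \|a\|_{H^{d/2}}\|v\|_{L^2}$ for all $v\in L^2$. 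Applying this with $v=\Delta_j u$ and summing as before, using once more the spectral localization of the blocks $T_a\Delta_j u$, gives $\|T_a u\|_{H^{\mu}}\lesssim \|a\|_{H^{d/2}}\|u\|_{H^{\mu}}$. I expect this endpoint, rather than the easy range $m>0$, to be the only real obstacle: the $\mathrm{BMO}$ (Carleson-measure) structure is genuinely needed there, whereas for $m>0$ the geometric gain $2^{km}$ makes the whole argument elementary.
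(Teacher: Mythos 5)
Your argument is correct, and it is the standard proof of this statement; the paper states the lemma without proof, citing it as a ``nice feature of paraproducts,'' so there is no alternative route in the paper to compare against.

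A few remarks for completeness. For $m>0$ the chain of estimates is tight: the annular localization of $T_a\Delta_j u$ comes exactly from the support conditions on $\chi$ and $\psi$ in~\eqref{eq.para}, the reduction $T_a\Delta_j u=T_{a_j}\Delta_j u$ is a direct reading of the support of $\chi(\cdot,\eta)$, and the bound $\lA T_{a_j}\Delta_j u\rA_{L^2}\lesssim\lA a_j\rA_{L^\infty}\lA\Delta_j u\rA_{L^2}$ is just Theorem~\ref{theo:sc0} with $m=0$. Your Cauchy--Schwarz step is a slightly overpowered way to get $\lA S_j a\rA_{L^\infty}\lesssim 2^{jm}\lA a\rA_{H^{d/2-m}}$: in fact only the embedding $H^{d/2-m}\subset B^{-m}_{\infty,\infty}$ (Bernstein plus $\ell^2\hookrightarrow\ell^\infty$) followed by the geometric sum $\sum_{k\le j}2^{km}\lesssim 2^{jm}$ is needed, but the two routes are equivalent and there is no harm. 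The argument correctly covers $d/2-m<0$, since $\Delta_k a$ is a smooth function for any tempered distribution $a$ and the normalization $\lA\Delta_k a\rA_{L^2}=2^{-k(d/2-m)}c_k$ is sign-agnostic.

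Your diagnosis of the endpoint $m=0$ is also accurate: the crude bound $\lA S_j a\rA_{L^\infty}\lesssim\sum_{k\le j}c_k$ does lose (the truncated logarithm shows it loses a factor $\lA a\rA_{H^{d/2}}$), and passing through $H^{d/2}(\xR^d)\hookrightarrow\mathrm{BMO}(\xR^d)$ combined with the Coifman--Meyer/Carleson-measure bound $\lA T_a\rA_{L^2\to L^2}\lesssim\lA a\rA_{\mathrm{BMO}}$ (applied blockwise, then resummed with the spectral localization as in the $m>0$ case) is the right fix; that is indeed the content of the $m=0$ case. One could also note that the paper almost certainly only invokes the lemma for $m>0$ in the regime $2+d/2<s\le 3+d/2$, but since the statement is $m\ge 0$, addressing the endpoint was the right call.
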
 

On the other hand, a key feature of paraproducts is that one can replace 
nonlinear expressions by paradifferential expressions, 
to the price of error terms which are smoother than the main terms. 
%(and thus presumed to be harmless in the derivation of estimates). 

\begin{theo}\label{lemPa} Let $\alpha,\beta\in\xR$ be such that 
$\alpha>\frac{d}{2}$, $\beta>\frac{d}{2}$, then
\begin{enumerate}[(i)]
\item 
For all $C^\infty$ function $F$, if $a \in H^{\alpha}(\xR^d)$ then 
\begin{equation*}
F( a)-F(0) - T_{F'(a)}a \in H^{2\alpha-\frac{d}{2}}(\xR^d).
\end{equation*}
%Moreover, 
%\begin{equation*}
%\lA F( a)-F(0) - T_{F'(a)}a \rA_{H^{2\alpha-\frac{d}{2}}(\xR^d)}
%\le C\left( \lA a\rA_{H^{\alpha}(\xR^d)}\right),
%\end{equation*}
%for some non-decreasing function depending only on $F$.
\item If $a \in H^{\alpha}(\xR^d)$ and 
$b \in H^{\beta}(\xR^d)$, then $ab - T_a b- T_b a \in H^{\alpha + \beta-\frac{d}{2}}(\xR^d)$. Moreover,
\begin{equation*}
\lA ab - T_a b- T_b a \rA _{H^{\alpha + \beta-\frac{d}{2}}(\xR^d)} 
\leq K \lA a \rA _{H^{\alpha}(\xR^d)}\lA b\rA _{H^{\beta}(\xR^d)},
\end{equation*}
for some positive constant $K$ independent of $a$, $b$.
\end{enumerate}
\end{theo}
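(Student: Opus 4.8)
The plan is to prove Theorem~\ref{lemPa} by reducing everything to the symbolic calculus results already established, in particular Theorems~\ref{theo:sc0}, \ref{theo:sc}, and the paraproduct properties \eqref{iii} and Lemma~\ref{negmu}. The two statements are classical Bony-type results, and the most natural route is to handle (ii) first, since (i) can be derived from it via Meyer's linearization identity for composition with a smooth function.

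\textbf{Step 1: the product estimate (ii).} For $a\in H^\alpha$ and $b\in H^\beta$ with $\alpha,\beta>d/2$, write the Bony decomposition $ab = T_a b + T_b a + R(a,b)$, where $R(a,b)$ is the remainder bilinear operator. The content of (ii) is precisely that $R(a,b)\in H^{\alpha+\beta-d/2}$ with the stated bound. Since $\alpha,\beta>d/2$ both functions are in $L^\infty\cap H^{\min(\alpha,\beta)}$, so $ab$, $T_ab$, $T_ba$ all make sense. I would estimate $R$ by its standard Littlewood--Paley characterization: $R(a,b)=\sum_{|j-k|\le N_0}\Delta_j a\,\Delta_k b$, and for each dyadic block the product $\Delta_j a\,\Delta_k b$ is spectrally supported in a ball of radius $\sim 2^{\max(j,k)}$; summing the $L^2$ norms against the weight $2^{(\alpha+\beta-d/2)\max(j,k)}$ and using $\sum_j 2^{-jd/2}\|\Delta_j a\|_{L^\infty}\lesssim \|a\|_{H^\alpha}$ (valid because $\alpha>d/2$) together with Cauchy--Schwarz in the almost-diagonal sum gives the bound. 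Alternatively, and perhaps cleaner given what is available in the paper, I would note that $ab - T_ab = T_b a + R(a,b)$ and interpret $a \mapsto ab$ acting on $b$: the operator $u\mapsto au$ differs from $T_a$ by an operator built from the high-frequency part of $a$, which by Lemma~\ref{negmu} and the fact that $a\in H^\alpha\subset H^{d/2-m}$ for $m=\alpha-d/2\ge 0$ gains exactly $\alpha-d/2$ derivatives; symmetrizing in $a,b$ yields the claim. The quantitative constant is tracked through $M_0^0$ of the relevant paraproduct symbols, which are controlled by $\|a\|_{H^\alpha}$, $\|b\|_{H^\beta}$ via Sobolev embedding.

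\textbf{Step 2: the composition estimate (i).} Given (ii), I would use the classical identity: writing $F(a)-F(0)=\int_0^1 F'(ta)a\,dt$ is not directly the cleanest; instead use the Littlewood--Paley telescoping $F(a)-F(0)=\sum_{j} \big(F(S_{j+1}a)-F(S_j a)\big)$ where $S_j$ is the low-frequency cutoff. A first-order Taylor expansion gives $F(S_{j+1}a)-F(S_ja)=F'(S_ja)\Delta_j a + r_j$ with $|r_j|\lesssim \|F''\|_\infty |\Delta_j a|^2$, and the sum $\sum_j F'(S_j a)\Delta_j a$ is, up to an $H^{2\alpha-d/2}$ remainder, exactly $T_{F'(a)}a$ (here one compares $F'(S_ja)$ with $S_jF'(a)$, using that $F'(a)\in H^\alpha$ since $\alpha>d/2$, and that $F'(S_j a)-S_j F'(a)$ is small in the appropriate sense — again a paraproduct-remainder argument reducing to (ii)). The quadratic remainder $\sum_j r_j$ is controlled in $H^{2\alpha-d/2}$: each $r_j$ is spectrally in a ball of radius $\sim 2^j$ with $\|r_j\|_{L^1}$ or $\|r_j\|_{L^2}$ estimated by $\|\Delta_j a\|_{L^\infty}\|\Delta_j a\|_{L^2}$, and summing against $2^{(2\alpha-d/2)j}$ gives $\lesssim \|a\|_{H^\alpha}^2$ by Cauchy--Schwarz, using $2\alpha-d/2 < 2\alpha - d/2 $... more precisely $\sum_j 2^{(2\alpha-d/2)j}\|\Delta_j a\|_{L^\infty}^2\|\Delta_j a\|_{L^2}^2\lesssim (\sup_j 2^{(d/2-?)j}\|\Delta_j a\|_{L^\infty})^2 \sum 2^{2\alpha j}\|\Delta_j a\|_{L^2}^2$, which closes since $\alpha>d/2$.

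\textbf{Main obstacle.} The genuinely delicate point is the bookkeeping in Step 2 showing that $\sum_j F'(S_ja)\Delta_j a$ agrees with $T_{F'(a)}a$ modulo $H^{2\alpha-d/2}$: this requires controlling the difference $F'(S_j a)-S_j(F'(a))$, which is where one must invoke that composition with $F'$ does not lose regularity (the remark recalled before Definition~\ref{defi:norms} that $F(u)\in W^{\rho,\infty}$ when $u\in W^{\rho,\infty}$) and that $\alpha>d/2$ gives $a\in W^{\varepsilon,\infty}$ for some $\varepsilon>0$. Everything else is routine dyadic summation, but this comparison is the step where the exponent $2\alpha-d/2$ (rather than something worse) is actually extracted, and it is the natural place for the argument to be spelled out carefully or, as the paper may prefer, simply attributed to the references \cite{Bony,Meyer,MePise,Taylor} cited for the general theory.
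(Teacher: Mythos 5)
The paper does not prove Theorem~\ref{lemPa} at all: it is recalled as a standard result of Bony's theory, with the references \cite{Bony,Meyer,MePise,Taylor} cited at the head of \S\ref{s2} doing the work. So there is no proof in the paper to compare against; what follows is an assessment of your sketch on its own merits.

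Your Step~1 (part (ii)) is the standard argument and is essentially sound via the first route: the remainder $R(a,b)=\sum_{|j-k|\le N_0}\Delta_j a\,\Delta_k b$ has each summand supported in a ball $|\xi|\lesssim 2^{\max(j,k)}$, the exponent $\alpha+\beta-d/2>0$ is what permits the ``support in a ball'' characterization of $H^s$ to apply, and Bernstein plus Cauchy--Schwarz in $j$ gives the bound. The alternative route you mention through Lemma~\ref{negmu} is, however, muddled: that lemma concerns $T_a u$ when $a$ has fewer than $d/2$ derivatives and states that $T_a$ \emph{loses} $m$ derivatives, which is neither the regularity regime ($a\in H^\alpha$ with $\alpha>d/2$) nor the direction of the estimate (a \emph{gain} on the remainder) that you need; it cannot substitute for the dyadic computation.

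Step~2 contains a genuine gap. You assert that the Taylor remainder $r_j=F(S_{j+1}a)-F(S_j a)-F'(S_j a)\Delta_j a$ is ``spectrally in a ball of radius $\sim 2^j$.'' This is false: $r_j=\tfrac12 F''(S_j a+\theta_j\Delta_j a)(\Delta_j a)^2$ involves composition with a generic $C^\infty$ function and is not band-limited, so the dyadic summation you invoke does not directly apply to $\sum_j r_j$. Closing this requires the additional step (present in Meyer's and M\'etivier's proofs) of splitting $F(a)-F(0)=\sum_j m_j\,\Delta_j a$ with $m_j=\int_0^1 F'(S_j a+t\Delta_j a)\,dt$, comparing $m_j$ to $S_{j-N_0}F'(a)$ in $L^\infty$ using $\|(I-S_j)a\|_{L^\infty}\lesssim 2^{-j(\alpha-d/2)}\|a\|_{H^\alpha}$, and then summing via a non-band-limited criterion for membership in $H^s$ (or by a further Littlewood--Paley cut of each $m_j$). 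You do flag this as the main obstacle, which is the right instinct; but as written, the spectral-support claim would let a falsely simple argument close, so it should be corrected rather than left as a ``routine'' gap.
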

%In the above theorem, the third and fourth points follow from Theorem~\ref{theo:sc} and Theorem~\ref{theo:sc2}. 
%As a result,On the other hand, one interesting features of paraproducts is that they are 
%well-defined for functions $a=a(x)$ which are not in $L^\infty(\xR^d)$. 
%In this case, though, they are not of order $\leo 0$, they are operator of positive order. 
%\begin{prop}\label{prop:no}
%Let $r\ge0$. 
%If $a\in H^{\frac{d}{2}-r}(\xR^d)$, then $T_{a}$ is of order $\leo r$.
%\end{prop}

We also recall the usual nonlinear estimates in Sobolev spaces (see chapter 8 in \cite{Hormander}): 
\begin{itemize}
\item If $u_j\in H^{s_j}(\xR^d)$, $j=1,2$, and $s_1+s_2>0$ then $u_1 u_2\in H^{s_0}(\xR^d)$ and 
\begin{equation}\label{pr}
\lA u_1 u_2 \rA_{H^{s_0}}\le K \lA u_1\rA_{H^{s_1}}\lA u_2\rA_{H^{s_2}},
\end{equation}
if 
$$
s_0\le s_j, \quad j=1,2,\quad\text{and}\quad s_0\le s_1+s_2-d/2,
$$
where the last inequality is strict if $s_1$ or $s_2$ or $-s_0$ is equal to $d/2$.
\item For all $C^\infty$ function $F$ vanishing at the origin, if $u \in H^{s}(\xR^d)$ with $s>d/2$ then
\begin{equation}\label{Fr}
\lA F(u)\rA_{H^s} \le C\left(\lA u\rA_{H^s}\right),
\end{equation}
for some non-decreasing function $C$ depending only on $F$.
\end{itemize}

\subsection{Symbol of the Dirichlet-Neumann operator}

Given $\eta\in C^\infty(\xR^d)$, consider the domain (without bottom)
$$
\Omega=\{ (x,y)\in\xR^d\times \xR\,:\,y<\eta(x)\}.
$$
It is well known that the Dirichlet-Neumann operator associated to $\Omega$ is a classical elliptic 
pseudo-differential
operator of order $1$, 
whose symbol has an asymptotic expansion of the form
\begin{equation*}
\lambda^{(1)}(x,\xi)+\lambda^{(0)}(x,\xi)+\lambda^{(-1)}(x,\xi)+\cdots
\end{equation*}
where $\lambda^{(k)}$ are homogeneous of degree $k$ in $\xi$, and
the principal symbol $\lambda^{(1)}$ and the sub-principal symbol $\lambda^{(0)}$ are given by (cf \cite{IP})
\begin{equation}\label{dmu10}
\begin{aligned}
\lambda^{(1)}&=\sqrt{(1+\la\nabla\eta\ra^2)\la\xi\ra^2-(\nabla \eta\cdot\xi)^2},\\
\lambda^{(0)}&=\frac{1+\la \partialx\eta\ra^2}{2\lambda^{(1)}}
\left\{ \cnx \left( \alpha^{(1)}  \partialx \eta\right) 
+i\partial_\xi \lambda^{(1)} \cdot\partialx \alpha^{(1)} \right\},
\end{aligned}
\end{equation}
with
$$
\alpha^{(1)}= \frac{1}{1+\la  \partialx\eta\ra^2}\left(  \lambda^{(1)}+i\partialx \eta\cdot\xi \right).
$$
The symbols $\lambda^{(-1)},\ldots$ are defined by induction and we can prove that 
$\lambda^{(k)}$ involves only derivatives of $\eta$ of order $\leo \la k\ra+2$. 
 
In our case the function $ \eta$ will not be $C^\infty$ but only at least $C^2$, so we shall set 
\begin{equation}\label{lambda}
\lambda=\lambda^{(1)}+\lambda^{(0)},
\end{equation}
which will be well-defined in the  $C^2$ case.

The following observation contains 
one of the key dichotomy between 2D waves and 3D waves.
\begin{prop}\label{A2D}
If $d=1$ then $\lambda$ simplifies to
$$
\lambda (x,\xi)=\la \xi\ra.
$$
\end{prop}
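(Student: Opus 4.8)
The plan is to verify the simplification by direct computation in dimension $d=1$, since in that case $\eta$ is a function of a single variable $x\in\xR$ and $\xi\in\xR$ is a scalar, so the vector quantities appearing in~\eqref{dmu10} collapse. First I would compute the principal symbol: with $d=1$ we have $\nabla\eta\cdot\xi=\eta'(x)\xi$ and $\la\nabla\eta\ra^2\la\xi\ra^2=(\eta')^2\xi^2$, hence
$$
\lambda^{(1)}=\sqrt{(1+(\eta')^2)\xi^2-(\eta'\xi)^2}=\sqrt{\xi^2}=\la\xi\ra.
$$
Thus the principal symbol already loses all dependence on $\eta$. Next I would turn to $\alpha^{(1)}$, which becomes $\alpha^{(1)}=\frac{1}{1+(\eta')^2}(\la\xi\ra+i\eta'\xi)$, and observe that the crucial computation is that the subprincipal symbol $\lambda^{(0)}$ vanishes identically.

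To see $\lambda^{(0)}=0$, I would expand the bracket in~\eqref{dmu10}. The first term is $\cnx(\alpha^{(1)}\eta')=\partial_x\bigl(\frac{\eta'}{1+(\eta')^2}(\la\xi\ra+i\eta'\xi)\bigr)$; the second is $i\,\partial_\xi\lambda^{(1)}\cdot\partial_x\alpha^{(1)}=i\,\partial_\xi\la\xi\ra\cdot\partial_x\alpha^{(1)}=i\,\sgn(\xi)\,\partial_x\alpha^{(1)}$. Writing $\alpha^{(1)}=\frac{\eta'\la\xi\ra}{1+(\eta')^2}+i\,\frac{(\eta')^2\xi}{1+(\eta')^2}$ and using $\la\xi\ra=\sgn(\xi)\,\xi$ for $\xi\neq0$, one checks that $\partial_x\alpha^{(1)}=\sgn(\xi)\,\partial_x\bigl(\frac{\eta'\xi}{1+(\eta')^2}\bigr)+i\,\partial_x\bigl(\frac{(\eta')^2\xi}{1+(\eta')^2}\bigr)$. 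Multiplying by $i\,\sgn(\xi)$ gives $i\,\partial_x\bigl(\frac{\eta'\xi}{1+(\eta')^2}\bigr)-\sgn(\xi)\,\partial_x\bigl(\frac{(\eta')^2\xi}{1+(\eta')^2}\bigr)$, while the first term $\cnx(\alpha^{(1)}\eta')=\partial_x\bigl(\frac{\eta'\la\xi\ra}{1+(\eta')^2}\bigr)+i\,\partial_x\bigl(\frac{(\eta')^2\xi}{1+(\eta')^2}\bigr)=\sgn(\xi)\,\partial_x\bigl(\frac{(\eta')^2\xi}{1+(\eta')^2}\bigr)\cdot\frac{\la\xi\ra}{\xi}\cdots$ — more carefully, since $\la\xi\ra=\sgn(\xi)\xi$, $\frac{\eta'\la\xi\ra}{1+(\eta')^2}=\sgn(\xi)\frac{\eta'\xi}{1+(\eta')^2}$, so the real part of the first term is $\sgn(\xi)\,\partial_x\bigl(\frac{\eta'\xi}{1+(\eta')^2}\bigr)$ and its imaginary part is $\partial_x\bigl(\frac{(\eta')^2\xi}{1+(\eta')^2}\bigr)$. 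Adding the two contributions, the real parts cancel ($\sgn(\xi)\,\partial_x(\cdots)-\sgn(\xi)\,\partial_x(\cdots)=0$) and the imaginary parts cancel ($\partial_x(\cdots)-\partial_x(\cdots)=0$), so the whole bracket is zero and therefore $\lambda^{(0)}=0$.

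Combining the two computations, $\lambda=\lambda^{(1)}+\lambda^{(0)}=\la\xi\ra+0=\la\xi\ra$, which is the claim. I do not expect any genuine obstacle here: the statement is an explicit algebraic identity, and the only mild subtlety is bookkeeping the factors of $\sgn(\xi)$ (equivalently $\partial_\xi\la\xi\ra$ away from $\xi=0$) and separating real and imaginary parts carefully so as to see the cancellation. One should note that the homogeneous symbols $\lambda^{(1)}$ and $\lambda^{(0)}$ are only defined for $\xi\neq0$, which is consistent with the paradifferential framework where symbols are cut off near $\xi=0$, so evaluating $\partial_\xi\la\xi\ra=\sgn(\xi)$ away from the origin is legitimate.
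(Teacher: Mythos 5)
Your approach is the right one: verify $\lambda^{(1)}=\la\xi\ra$ by direct substitution, then check that the bracket defining $\lambda^{(0)}$ in \eqref{dmu10} vanishes. The first computation is correct. However, your key step — the verification that $\lambda^{(0)}=0$ — contains a genuine arithmetic error that breaks the claimed cancellation.

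You correctly state $\alpha^{(1)}=\frac{1}{1+(\eta')^2}\bigl(\la\xi\ra+i\eta'\xi\bigr)$, whose real--imaginary expansion is $\alpha^{(1)}=\frac{\la\xi\ra}{1+(\eta')^2}+i\,\frac{\eta'\xi}{1+(\eta')^2}$. Two sentences later you instead write $\alpha^{(1)}=\frac{\eta'\la\xi\ra}{1+(\eta')^2}+i\,\frac{(\eta')^2\xi}{1+(\eta')^2}$: that expression is $\eta'\alpha^{(1)}$, not $\alpha^{(1)}$, and it propagates into your formula for $\partial_x\alpha^{(1)}$ and hence into $i\sgn(\xi)\partial_x\alpha^{(1)}$. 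With the expressions you actually wrote, the real part of the bracket is $\sgn(\xi)\,\partial_x\bigl(\frac{\eta'\xi}{1+(\eta')^2}\bigr)-\sgn(\xi)\,\partial_x\bigl(\frac{(\eta')^2\xi}{1+(\eta')^2}\bigr)$ and the imaginary part is $\partial_x\bigl(\frac{(\eta')^2\xi}{1+(\eta')^2}\bigr)+\partial_x\bigl(\frac{\eta'\xi}{1+(\eta')^2}\bigr)$; neither of these is identically zero (they would only vanish if $\eta'\equiv(\eta')^2$), so the cancellation you claim does not happen as written.

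With the correct expansion $\alpha^{(1)}=\frac{\la\xi\ra}{1+(\eta')^2}+i\,\frac{\eta'\xi}{1+(\eta')^2}$, one gets $i\sgn(\xi)\partial_x\alpha^{(1)}= i\xi\,\partial_x\bigl(\frac{1}{1+(\eta')^2}\bigr)-\la\xi\ra\,\partial_x\bigl(\frac{\eta'}{1+(\eta')^2}\bigr)$, and the real part indeed cancels the real part $\la\xi\ra\,\partial_x\bigl(\frac{\eta'}{1+(\eta')^2}\bigr)$ of $\cnx(\alpha^{(1)}\eta')$. Be aware, though, that the imaginary parts do \emph{not} cancel by being negatives of each other; rather they \emph{sum} to $i\xi\,\partial_x\bigl(\frac{(\eta')^2+1}{1+(\eta')^2}\bigr)=i\xi\,\partial_x(1)=0$. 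Your phrasing ``$\partial_x(\cdots)-\partial_x(\cdots)=0$'' suggests a direct cancellation which is not what happens. Incidentally, the vanishing of the imaginary part follows more cheaply from \eqref{Adlambda}: since $\lambda^{(1)}=\la\xi\ra$ has no $x$-dependence, $\IM\lambda^{(0)}=-\frac12(\partial_\xi\cdot\partial_x)\lambda^{(1)}=0$ immediately, so only the real part requires the explicit computation.
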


Also, directly from \eqref{dmu10}, one can check the following formula (which holds for all $d\ge 1$)
\begin{equation}\label{Adlambda}
\IM \lambda^{(0)}=-\mez (\partial_\xi\cdot\partial_x) \lambda^{(1)},
\end{equation}
which reflects the fact that the Dirichlet-Neumann operator is a symmetric operator.

\subsection{Paralinearization of the Dirichlet-Neumann operator}\label{s3.2}
Here is the main result of this section. 
Following the analysis in \cite{AM}, we shall paralinearize the Dirichlet-Neumann operator. 
The main novelties are that we consider the case of finite 
depth (with a general bottom) and that we lower the regularity assumptions.

\begin{prop}
\label{prop:csystem}
Let $d\ge 1$ and $s>2+d/2$. 
Assume that 
$$
(\eta,\psi)\in H^{s+\mez}(\xR^d)\times H^{s}(\xR^d),
$$
and that $\eta$ is such that $\dist (\Sigma,\Gamma)>0$. Then 
\begin{equation}\label{t1}
G(\eta)\psi= T_{\lambda} \bigl(\psi-T_{\mathfrak{B}}\eta \bigr) - T_{V} \cdot\partialx \eta 
+f(\eta,\psi),
\end{equation}
where $\lambda$ is given by \eqref{dmu10} and \eqref{lambda}, 
$$
\mathfrak{B}\defn\frac{\partialx \eta \cdot\partialx \psi+G(\eta) \psi}{1+|\partialx  \eta|^2}, 
\quad 
V\defn \partialx \psi -\mathfrak{B} \partialx\eta,
$$
and 
$f(\eta,\psi)\in H^{s+\frac{1}{2}}(\xR^d)$. Moreover, we have the estimate
$$
\lA f(\eta,\psi)\rA_{H^{s+\frac{1}{2}}}\le 
C\left( \lA \partialx\eta\rA_{H^{s-\mez}}\right)
 \lA \partialx\psi\rA_{H^{s-1}},
$$
for some non-decreasing function $C$ depending only on $\dist (\Sigma,\Gamma)>0$.
\end{prop}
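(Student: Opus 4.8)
\textbf{Proof plan for Proposition~\ref{prop:csystem}.}
The plan is to transfer the problem to the flat strip $\xR^d\times[-1,0]$ as in the proof of Proposition~\ref{estDN}, and to run a paralinearization of the elliptic equation \eqref{dnint1}--\eqref{dnbord} in the $z$ variable, factoring the second-order operator $\alpha\partial_z^2+\Delta+\beta\cdot\partialx\partial_z-\gamma\partial_z$ into a forward and backward first-order paradifferential evolution. First I would recall that with $v(x,z)=\Phi(x,\rho(x,z))$, $\rho(x,z)=hz+\eta(x)$, the function $v$ solves \eqref{dnint1} with the boundary data $v|_{z=0}=\psi$ (mod a smoothing lift supported away from the top), and that $G(\eta)\psi=\big(\tfrac{1+|\partialx\eta|^2}{h}\partial_zv-\partialx\eta\cdot\partialx v\big)|_{z=0}$. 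By Lemma~\ref{L1} and interpolation, $(\partialx v,\partial_z v)\in C^0_z([-1,0];H^{\sigma-1})$ with $\sigma=s$; the contribution of the $H^\infty$ remainder $g$ is harmless and ends up in $f$. The symbol of the factorization: writing the operator as $\alpha(\partial_z-a)(\partial_z-A)+R$ with $A$ the elliptic root, one finds the principal symbol of $hA$ is exactly $\lambda^{(1)}$ of \eqref{dmu10} (indeed $h^2\alpha=1+|\partialx\eta|^2$, and the discriminant of $\alpha\tau^2-i\beta\cdot\xi\tau+|\xi|^2$ reproduces $(1+|\partialx\eta|^2)|\xi|^2-(\partialx\eta\cdot\xi)^2$), and the next order term reconstructs $\lambda^{(0)}$; here one uses the quantitative symbolic calculus, Theorems~\ref{theo:sc0}, \ref{theo:sc}, \ref{theo:sc2}, together with the paralinearization Theorem~\ref{lemPa} to replace the coefficients $\alpha,\beta,\gamma$ (which involve $\partialx\eta\in H^{s-1/2}$, so $W^{1,\infty}$ with room to spare since $s-1/2>3/2+d/2$) by paraproducts modulo $H^{s+1/2}$ errors.

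Next I would establish the good-sign root: one shows that $\partial_z v = T_A v + (\text{smoother})$ near $z=0$, i.e. the Dirichlet trace determines the Neumann trace through the elliptic factor, with the remainder in $H^{\sigma+1/2}$ at $z=0$ gaining from the elliptic regularity of the backward problem (the factor $(\partial_z-a)$ contributes a forward parabolic-type smoothing since $\RE a>0$ there, decoupling the spurious branch). This is where the regularity bookkeeping is delicate: the factorization produces a paradifferential ODE in $z$ of the form $\partial_z v - T_A v = F$ with $F\in L^2_z H^{\sigma-1/2}$-type bounds plus contributions from the commutator $[\alpha,\partial_z]$ and from the low-regularity of $A$'s symbol, and one must track that the trace at $z=0$ of the solution operator gains a full derivative relative to the forcing, landing the error in $H^{s+1/2}$. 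Then $G(\eta)\psi = T_{h\alpha}T_A v|_{z=0} - T_{\partialx\eta}\cdot\partialx v|_{z=0} + f_1 = T_{\lambda}(v|_{z=0}) - T_V\cdot\partialx\eta + f_2$ after expressing $\partialx v|_{z=0}$ in terms of $\partialx\psi$ and $\partial_z v|_{z=0}$ and invoking the definition $\mathfrak B=(\partialx\eta\cdot\partialx\psi+G(\eta)\psi)/(1+|\partialx\eta|^2)$ (Lemma~\ref{cancellation} and Proposition~\ref{Lannes1} are the conceptual source of these combinations). Finally $v|_{z=0}=\psi$ would give $G(\eta)\psi=T_\lambda\psi-T_V\cdot\partialx\eta+f$, and the refinement $\psi\mapsto\psi-T_{\mathfrak B}\eta$ in \eqref{t1} comes from absorbing into $f$ the difference between $T_\lambda$ acting before versus after the change of unknown — using $T_\lambda T_{\mathfrak B}\eta\in H^{s+1/2}$? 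No: $\lambda$ has order $1$ and $\mathfrak B\in H^{s-1}$, $\eta\in H^{s+1/2}$, so $T_{\mathfrak B}\eta\in H^{s-1}$ only; the point instead is that the genuine linearization (Proposition~\ref{Lannes1}) shows the shape-derivative structure forces precisely the combination $\psi-T_{\mathfrak B}\eta$, and one checks $T_\lambda(\psi-T_{\mathfrak B}\eta)$ matches $G(\eta)\psi+T_V\cdot\partialx\eta$ modulo $H^{s+1/2}$ by comparing with the paralinearized Taylor expansion of $\psi\mapsto G(\eta)\psi$ around the flat case, all remainder estimates being the quantitative ones from Theorem~\ref{lemPa}(i)--(ii) and \eqref{pr}.

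The estimate on $\|f\|_{H^{s+1/2}}$ then follows by propagating the quantitative symbolic-calculus bounds (the $M^m_\rho$ semi-norms of $\lambda$, $A$, etc.\ are controlled by $C(\|\partialx\eta\|_{H^{s-1/2}})$ since $s-1/2>d/2+1$) together with the elliptic estimate of Lemma~\ref{L1}, whose constant depends only on $\dist(\Sigma,\Gamma)$; every bilinear remainder is linear in $\partialx\psi$ in $H^{s-1}$ and the rest is packaged into the nondecreasing function $C(\|\partialx\eta\|_{H^{s-1/2}})$.

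\textbf{Main obstacle.} The hard part will be the low-regularity factorization of the elliptic operator and the precise accounting of how many derivatives are gained by the Dirichlet-to-Neumann passage through the ``good'' elliptic branch: one needs the remainder $f$ to be $1/2$ derivative \emph{smoother} than the principal term $G(\eta)\psi\in H^{s-1}$, i.e. in $H^{s+1/2}$, and this forces working with symbols of limited smoothness ($\eta$ only in $H^{s+1/2}$, so coefficients only $C^{1+\epsilon}$ roughly) while still extracting the two-term expansion $\lambda^{(1)}+\lambda^{(0)}$; keeping the commutator terms $[\partial_z,T_A]$ and the discrepancy $(T_A)^2-T_{A^2}$ within the $H^{s+1/2}$ budget — using exactly the quantitative estimates \eqref{esti:quant2}, \eqref{esti:quant3} with $\rho$ taken as large as the regularity of $\partialx\eta$ permits — is the crux of the argument, and is the reason the threshold $s>2+d/2$ appears.
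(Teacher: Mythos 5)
Your skeleton --- flatten to the strip $\xR^d\times[-1,0]$, factorize the paralinearized second-order operator into forward/backward elliptic branches, and read off the Dirichlet-to-Neumann parametrix from the good branch --- matches the paper's outline, but the plan omits the central technical device: the \emph{good unknown} $u := v - T_{\mathfrak{b}}\rho$ with $\mathfrak{b} := \partial_z v/h$, so that $u|_{z=0} = \psi - T_{\mathfrak{B}}\eta$. You propose to factorize the equation satisfied by $v$ itself. That fails at the required level: paralinearizing the variable coefficients creates the paraproducts $T_{\partial_z^2 v}\alpha + T_{\partialx\partial_z v}\cdot\beta - T_{\partial_z v}\gamma$, which lie only in $\CZ{s-\mez}$ (here $\alpha,\beta,\gamma\in H^{s-\mez}$ while $\partial_z^2 v,\partialx\partial_z v,\partial_z v$ are merely bounded, with no spare decay), and these terms survive the factorization and the elliptic solve, leaving the boundary identity for $G(\eta)\psi$ with a remainder at level $H^{s-\mez}$ rather than $H^{s+\mez}$. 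You gesture at absorbing the mismatch by ``comparing with the paralinearized Taylor expansion around the flat case'' and Proposition~\ref{Lannes1}, but this is not a paradifferential computation, and the discrepancy is a full order larger than what $H^{s+\mez}$ tolerates.

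The paper's fix is the lemma establishing \eqref{2m.11}: the Leibniz expansion of $PT_{\mathfrak{b}}\eta$, combined with the identity $E\mathfrak{b}=\partial_z g/h$ and the explicit formulas \eqref{abc2}, cancels the offending paraproducts term by term, leaving a remainder in $\CZ{2s-\frac{5+d}{2}}\subset\CZ{s-\mez+\delta}$ with $\delta=\min\{\mez,s-2-d/2\}>0$. It is precisely this $\delta$-gain that lets the elliptic-evolution trace estimate (Corollary~\ref{prop:total}) deliver $(\partial_z u - T_A u)|_{z=0}\in H^{s+\mez}$. Once this holds, \eqref{t1} follows mechanically by substituting $v=u+T_{\mathfrak{b}}\rho$ into the boundary expression for $G(\eta)\psi$, with $-T_V\cdot\partialx\eta$ emerging from the conjugate paraproducts. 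One small side error in your reasoning: since $\mathfrak{B}\in H^{s-1}$ and $s-1>d/2$, the operator $T_{\mathfrak{B}}$ has order $0$, so $T_{\mathfrak{B}}\eta\in H^{s+\mez}$, not $H^{s-1}$ as you state; the genuine obstruction to a direct absorption is that $T_\lambda T_{\mathfrak{B}}\eta$ lies only in $H^{s-\mez}$.
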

\begin{rema}
It is well known that $\mathfrak{B}$ and $V$ play a key role 
in the study of the water waves (these are simply the projection 
of the velocity field on the vertical and horizontal directions). 
The reason to introduce the unknown $\psi-T_{\mathfrak{B}}\eta$, 
which is related to the so-called 
good unknown of Alinhac (\cite{Ali}), is explained in \cite{AM} (see also~\cite{LannesJAMS,Trakhinin}). 
\end{rema}

\subsection{Proof of Proposition~\ref{prop:csystem}}

Let $v$ be given by \eqref{defiv}. According to~\eqref{dnint1}, $v$ solves 
\begin{equation*}
\alpha \partial_z^2 v +\Delta v + \beta \cdot\partialx\partial_z v  - \gamma \partial_z v=g,
\end{equation*}
where $g\in C^2_z([-1,0];H^{s+\mez}(\xR^d))$ is given by \eqref{evpf} and 
\begin{equation}\label{abc2}
\alpha\defn \frac{1+|\partialx  \eta |^2 }{ h^2},\quad 
\beta\defn  -2 \frac{\partialx \eta}{ h},\quad 
\gamma \defn \frac{\Delta \eta}{ h}  .
\end{equation}
Also $v$ satisfies the boundary conditions 
\begin{equation*}
v\big\arrowvert_{z=0}= \psi, \qquad  v  \arrowvert_{z=-1}=0,\quad 
\partial_z v  \arrowvert_{z=-1}=0.
\end{equation*}

Henceforth we make intensive use of the following notations.
\begin{nota}\label{sim}
$\CZ{r}$ denotes the space of continuous functions 
in $z\in [-1,0]$ with values in $H^r(\xR^d)$.
\end{nota}

It follows from Proposition~\ref{L1} and a 
classical interpolation argument that
\begin{equation*}
(\partialx v,\partial_{z}v)\in \CZ{s-1}.
\end{equation*}
In addition, directly from the equation \eqref{dnint1} and the usual product rule in Sobolev spaces (cf \eqref{pr}), we obtain
$$
\partial_z^2  v\in \CZ{s-2}.
$$

\subsubsection{The good unknown of Alinhac}
Below, we use the tangential  paradifferential calculus, that is the paradifferential
quantization $T_a$  of symbols $a(z;x,\xi)$ depending on the phase space variables
$(x, \xi) \in T^*\xR^{d}$ and the parameter $z \in [-1, 0]$. In particular, 
denote by $T_a u$ the operator acting on functions $u=u(z;x)$ so that for each fixed $z$, 
$(T_a u)(z)=T_{a(z)}u(z)$.

Note that a simple computation shows 
\begin{equation*}
G(\eta)\psi=\frac{1+|\partialx  \rho|^2}{ h}\partial_z v
-\partialx \eta\cdot\partialx  v \Big\arrowvert_{z=0}.
\end{equation*}
Our purpose is to express $\partial_z v\arrowvert _{z=0}$ in terms of tangential derivatives. 
To do this, the key technical point is to obtain an equation for 
$\psi-T_{\mathfrak{B}}\eta$. 

Note that
\begin{equation*}
\psi-T_{\mathfrak{B}}\eta=v - T_{\frac{\partial_z v}{ h}}  \rho \big\arrowvert _{z=0}.
\end{equation*}
We thus introduce 
$$
\mathfrak{b}\defn \frac{\partial_z v}{ h} \quad\text{and}\quad
u\defn  v - T_{\mathfrak{b}}\rho= v - T_{\mathfrak{b}}\eta,
$$
since $T_\mathfrak{b}(hz)=0$, so that $\psi-T_{\mathfrak{B}}\eta=u\arrowvert_{z=0}$.
\begin{lemm}
The good unknown $u = v - T_{\mathfrak{b}} \rho$
satisfies the paradifferential equation
\begin{equation}\label{2m.11}
T_{\alpha}\partial_z^2 u  +\Deltax u +T_{\beta}\cdot \partialx \partial_z u 
- T_{\gamma}\partial_z u =g+f,
\end{equation}
where $\alpha,\beta,\gamma$ are as defined in \eqref{abc2}, $g\in C^1_z(H^{s+\mez}_x)$ is given by \eqref{evpf} and
$$
f\in \CZ{2s-\frac{5+d}{2}}.
$$ 
\end{lemm}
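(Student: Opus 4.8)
The plan is to derive the equation for $u = v - T_{\mathfrak{b}}\rho$ by substituting $v = u + T_{\mathfrak{b}}\rho$ into the equation \eqref{dnint1} satisfied by $v$ and carefully tracking the commutators and the paraproduct error terms. First I would paralinearize the coefficients: since $\alpha, \gamma$ involve $\eta \in H^{s+\mez}$ and its derivatives, write $\alpha \partial_z^2 v = T_\alpha \partial_z^2 v + (\text{paraproduct remainder})$ using Theorem~\ref{lemPa}(ii), where the remainder gains $s - \mez - d/2$ derivatives relative to $\partial_z^2 v \in \CZ{s-2}$, hence lies in $\CZ{2s - \mez - d/2 - 2}$; similarly for $\beta \cdot \partialx \partial_z v$ and $\gamma \partial_z v$. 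The key identity $\psi - T_{\mathfrak{B}}\eta = u\arrowvert_{z=0}$ is already noted, so the point is that $u$ is the natural variable for which the boundary trace is the good unknown.

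Next I would compute $T_\alpha \partial_z^2(T_{\mathfrak{b}}\rho)$, $\Deltax(T_{\mathfrak{b}}\rho)$, $T_\beta \cdot \partialx \partial_z(T_{\mathfrak{b}}\rho)$ and $T_\gamma \partial_z(T_{\mathfrak{b}}\rho)$, so that the equation for $u$ becomes the equation for $v$ minus these four terms. Since $\rho(x,z) = hz + \eta(x)$ is affine in $z$, $\partial_z \rho = h$ is constant, so $\partial_z(T_{\mathfrak{b}}\rho) = T_{\partial_z \mathfrak{b}}\rho + h T_{\mathfrak{b}} 1 = T_{\partial_z \mathfrak{b}}\rho$ (the paraproduct of a constant vanishes), and $\partial_z^2(T_{\mathfrak{b}}\rho) = T_{\partial_z^2 \mathfrak{b}}\rho$. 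The bulk of the work is then to estimate the operators obtained by composing paradifferential operators: e.g. $T_\alpha T_{\partial_z^2 \mathfrak{b}}\rho - T_{\alpha \partial_z^2 \mathfrak{b}}\rho$ is controlled by Theorem~\ref{theo:sc} (composition), and the various terms of the form $T_\alpha T_{\mathfrak{b}} \partial_z^2 \rho$ (which would be the "expected" leading contribution) vanish because $\partial_z^2 \rho = 0$. One must also handle terms like $\Deltax(T_{\mathfrak{b}}\eta) = T_{\Delta \mathfrak{b}}\eta + 2 T_{\partialx \mathfrak{b}} \cdot \partialx \eta + T_{\mathfrak{b}} \Delta \eta$, and recognize that $T_{\mathfrak{b}}\Delta\eta$ combines with the $-T_\gamma \partial_z v$-type terms (recall $\gamma = \Delta\eta/h$ and $\mathfrak{b} = \partial_z v/h$) to produce cancellations up to smoothing remainders. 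Throughout, the regularity bookkeeping uses $(\partialx v, \partial_z v) \in \CZ{s-1}$, $\partial_z^2 v \in \CZ{s-2}$, and $\mathfrak{b} = \partial_z v/h \in \CZ{s-1}$, together with $\eta \in H^{s+\mez}$, so that each remainder is measured against the worst term $T_\alpha \partial_z^2 u$ and shown to gain enough derivatives to land in $\CZ{2s - \frac{5+d}{2}}$.

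The main obstacle I expect is the precise accounting of which composition and paraproduct remainders are genuinely of the claimed order $2s - \frac{5+d}{2}$, since there are several competing losses: the symbol $\alpha$ (and $\beta$) have regularity only $W^{s-\mez,\infty} \subset \Gamma^0_{s-\mez-d/2}$-type (after accounting for the $d/2$ drop from $H^{s-\mez}$ to $W^{\cdot,\infty}$), while $\mathfrak{b}$ and its $z$-derivatives sit at decreasing Sobolev levels $s-1, s-2, \dots$, and the worst term $T_{\partial_z^2 \mathfrak{b}}\rho$ pairs the lowest-regularity factor $\partial_z^2\mathfrak{b} \in \CZ{s-3}$ (one would need $\partial_z^3 v$, so more care: actually one only needs the trace structure, not $\partial_z^2$ of $\mathfrak{b}$ hitting $\rho$ which is smooth in $x$... this needs thought) against the fixed function $\eta$. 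Concretely, the sharp term is something like $T_{\partialx\mathfrak{b}} \cdot \partialx\eta$ with $\partialx\mathfrak{b} \in \CZ{s-2}$ and $\partialx\eta \in H^{s-\mez}$, giving a product in $H^{s - 2 + s - \mez - d/2} = H^{2s - \mez - d/2 - 2} = H^{2s - \frac{5+d}{2} + \frac{1}{2}}$ — slightly better than claimed, so the binding constraint comes from elsewhere, presumably the composition error $T_\alpha T_{\mathfrak{b}} - T_{\alpha\mathfrak{b}}$ acting on $\partial_z^2\rho$-type quantities or on $\Delta\eta$. I would therefore organize the proof as a lemma-by-lemma reduction: (i) replace all products by paraproducts plus $\CZ{2s - \frac{5+d}{2}}$ errors; (ii) expand $T_{\mathfrak{b}}\rho$ and use $\partial_z^2\rho = 0$; (iii) collect the surviving paradifferential terms and show each leftover is of order $\le -(s - \frac{5+d}{2} + 2)$ relative to the $\CZ{s-2}$-scale, invoking Theorems~\ref{theo:sc0}, \ref{theo:sc} and Lemma~\ref{negmu} as needed, and absorbing the genuinely smooth source $g \in C^2_z(H^{s+\mez}_x)$ into the right-hand side unchanged.
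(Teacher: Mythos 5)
Your overall strategy — substitute $v = u + T_{\mathfrak{b}}\eta$ (which equals $T_{\mathfrak{b}}\rho$, as you note), paralinearize the coefficients via Theorem~\ref{lemPa}(ii), apply Leibniz, and hunt for cancellations — is the same as the paper's, and your bookkeeping of the first-order error terms is essentially correct. However there is a genuine gap, and you have in fact identified where it lies: the term $T_\alpha\partial_z^2(T_{\mathfrak{b}}\eta) = T_\alpha T_{\partial_z^2 \mathfrak{b}}\,\eta$, which brings in $\partial_z^2\mathfrak{b}=\partial_z^3 v/h$. You flag this ("this needs thought") but do not resolve it, and the lemma cannot be proven without resolving it: the stated regularity on $v$ only gives $\partial_z^k v\in\CZ{s-k}$ for $k\in\{1,2\}$, so you do not a priori have $\partial_z^3 v\in\CZ{s-3}$, and $s>2+d/2$ is not enough for the composition estimate \eqref{iii} to apply to $T_\alpha T_{\partial_z^2\mathfrak{b}}$.

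The paper circumvents this with a structural observation you have not used: since the operator $E = (\frac{1}{h}\partial_z)^2 + (\partialx-\frac{\partialx\eta}{h}\partial_z)^2$ has $z$-independent coefficients, it commutes with $\frac{1}{h}\partial_z$, so $E\mathfrak{b} = \frac{1}{h}\partial_z(Ev) = \frac{1}{h}\partial_z g\in\CZ{s+\mez}$. Thus the combination $\alpha\partial_z^2\mathfrak{b}+\Delta\mathfrak{b}+\beta\cdot\partialx\partial_z\mathfrak{b}-\gamma\partial_z\mathfrak{b}$ appearing after Leibniz is not a collection of rough pieces to be estimated individually — it is exactly $E\mathfrak{b}$, which is smooth because $g$ is. This is the "first key observation." The second is a cancellation you do not mention: you correctly see that $T_{\partial_z v}\gamma = T_{\mathfrak{b}}\Delta\eta$ (so those cancel) and you should similarly note $T_{\nabla\partial_z v}\cdot\beta = -2T_{\nabla\mathfrak{b}}\cdot\nabla\eta$; but the remaining surviving pair $T_{\partial_z^2 v}\alpha + T_{\beta\partial_z\mathfrak{b}}\cdot\partialx\eta$ only vanishes modulo the stated class because $\alpha = \frac{1+|\nabla\eta|^2}{h^2}$ and $\beta\partial_z\mathfrak{b}=-\frac{2}{h^2}\nabla\eta\,\partial_z^2 v$, so $T_{\beta\partial_z\mathfrak{b}}\cdot\nabla\eta \sim -T_{\partial_z^2 v}\alpha$ by Theorem~\ref{lemPa}(i) applied to $|\nabla\eta|^2$ together with \eqref{iii}. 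Without these two observations, your bookkeeping leaves unabsorbed terms; with them, everything cancels and the only genuine loss is the $\CZ{2s-\frac{5+d}{2}}$ error from the initial paralinearization of $Ev$, which is why that exponent appears in the lemma — not, as you speculate, from $T_{\partialx\mathfrak{b}}\cdot\partialx\eta$, which in fact cancels exactly.
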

\begin{proof}
We shall use the notation $f_1\sim f_2 $ to say that $f_1-f_2 \in \CZ{2s-\frac{5+d}{2}}$. 

Introduce the operators
$$
E\defn \alpha \partial_z^2 +\Deltax +\beta\cdot \partialx \partial_z     - \gamma \partial_z,
$$
and
\begin{equation*}
P \defn  T_{\alpha}\partial_z^2   +\Deltax + T_{\beta}\cdot \partialx \partial_z - T_{\gamma}\partial_z.
\end{equation*}
We shall prove that $P u\sim g_1$, where $g_1 \in \CZ{s+\mez}$. To do so, we begin with 
the paralinearization formula for products. Recall that
$$
\eta \in H^{s+\mez}(\xR^d)\quad \text{and}\quad \partial_{z}^k v\in \CZ{s-k} \text{ for } k\in\{1,2\}.
$$
According to Theorem~\ref{lemPa}, $ii)$, we have
\begin{equation*}
E v  \sim P v +T_{\partial_{z}^2 v} \alpha + T_{\partialx \partial_z v}\cdot \beta - T_{\partial_z v} \gamma.
\end{equation*}

Since $Ev=g\in \CZ{s+\mez}$ and since $v=u+T_\mathfrak{b}\eta$, this yields
$$
P u + P T_\mathfrak{b}\eta+T_{\partial_{z}^2 v} \alpha + 
T_{\partialx \partial_z v}\cdot \beta - T_{\partial_z v} \gamma \sim g.
$$
Hence, we need only prove that
\begin{equation}\label{3121}
P T_\mathfrak{b}\eta+T_{\partial_{z}^2 v} \alpha + 
T_{\partialx \partial_z v}\cdot \beta - T_{\partial_z v} \gamma \sim g_2\in \CZ{s+\mez}.
\end{equation}

By using the Leibniz rule and \eqref{iii}, we have
\begin{align*}
PT_\mathfrak{b}\eta&\sim 
T_{E \mathfrak{b}}\eta 
+2T_{\partialx \mathfrak{b}}\cdot\partialx \eta
+T_{\beta \partial_z \mathfrak{b}}\cdot\partialx \eta
+T_\mathfrak{b} \Deltax\eta.
\end{align*}
The first key observation is that
$$
E\mathfrak{b}=\frac{\partial_z g} h\in \CZ{s+\mez}.
$$
To establish this identity, note that by definition (cf  \eqref{evpf}) we have
$$
E  \mathfrak{b}= 
\Bigl[\bigl(\frac{1}{ h}\partial_z \bigr)^2 
+ \bigl( \partialx -\frac{\partialx \eta}{ h}\partial_z\bigr)^2 \Bigr]\frac{1}{ h}\partial_z  v= \frac{1}{ h}\partial_z E v,= \frac{1}{ h}\partial_z g .$$
It follows that 
$$ T_{E\mathfrak{b}} \eta \in \CZ{s+\mez}.$$

On the other hand, according to~\eqref{abc2}, we have 
$$ T_{\partial_z v} \gamma = T_\mathfrak{b} \Delta \eta, \quad T_{\nabla\partial _z v} \beta = -2 T_{\nabla \mathfrak{b} } \nabla \eta, $$
$$ T_{\beta \partial _z b} \nabla \eta= - \frac 2 {h^2} T_{\partial ^2 _z v \nabla \eta} \nabla \eta\sim - T_{\partial ^2 _z v} \alpha,$$
where the last equivalence is a consequence of $(i)$ in Theorem~\ref{lemPa} and~\eqref{iii}.

Consequently, we end up with the second key cancelation
\begin{equation*}
 T_{\partial_{z}^2 v} \alpha + T_{\partialx \partial_z v}\cdot \beta - T_{\partial_z v} \gamma +2T_{\partialx b}\cdot\partialx \eta
+T_{\beta  \partial_z \mathfrak{b}}\cdot\partialx \eta  +T_\mathfrak{b} \Deltax\eta\sim g_3 \in \CZ{s+\mez}.
\end{equation*}
This concludes the proof of \eqref{3121} and hence of the lemma.
\end{proof}

\subsubsection{Reduction to the boundary}
Our next task is to perform a decoupling into forward and backward elliptic evolution equations.

\begin{lemm}\label{lemm:total}
Assume that $\eta\in H^{s+\mez}(\xR^d)$. Set
$$
\delta =\min\Bigl\{ \mez , s-2-\frac{d}{2}\Bigr\}>0.
$$
There exist two symbols $a=a(x,\xi)$, $A=A(x,\xi)$ (independent of $z$) with
\begin{align*}
a&=a^{(1)}+a^{(0)}\in \dot\Gamma^{1}_{3/2+\delta}(\xR^d)+\dot\Gamma^{0}_{1/2+\delta}(\xR^d),\\
A&=A^{(1)}+A^{(0)}\in \dot\Gamma^{1}_{3/2+\delta}(\xR^d)+\dot\Gamma^{0}_{1/2+\delta}(\xR^d),
\end{align*}
such that,
\begin{equation}\label{2m2.b}
\begin{aligned}
T_{\alpha}\partial_z^2   +\Deltax  +T_{\beta}\cdot \partialx \partial_z 
- T_{\gamma}\partial_z
=T_{\alpha}( \partial_z - T_ a ) (\partial_z - T_A)u 
+R_0+R_1 \partial_z,
\end{aligned}
\end{equation}
where $R_0$ is of order $\leo 1/2-\delta$ and $R_1$ is of order $\leo -1/2-\delta$.
\end{lemm}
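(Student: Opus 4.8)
The plan is to factor the second-order (in $z$) paradifferential operator $T_\alpha \partial_z^2 + \Deltax + T_\beta\cdot\partialx\partial_z - T_\gamma\partial_z$ as a product of two first-order (in $z$) paradifferential evolution operators, up to acceptable remainders. First I would divide by the elliptic symbol $\alpha$: since $\alpha = (1+|\partialx\eta|^2)/h^2$ is bounded from below and lies in $H^{s-\mez}\subset \Gamma^0_{3/2+\delta}$ (because $s-\mez > \tdm+\delta$ when $s>2+d/2$; here we use $\delta=\min\{\mez,s-2-d/2\}$), Remark~\ref{R3.9} gives a parametrix so that modulo operators of order $\leo -\rho$ we may write $\Deltax = T_\alpha T_{\alpha^{-1}}\Deltax + (\text{smoothing})$, etc. So the target factorization amounts to finding $a,A$ with
\begin{equation*}
\partial_z^2 + T_{\alpha^{-1}}\Deltax + T_{\alpha^{-1}\beta}\cdot\partialx\partial_z - T_{\alpha^{-1}\gamma}\partial_z \sim (\partial_z - T_a)(\partial_z - T_A),
\end{equation*}
where $\sim$ allows the remainder $R_0 + R_1\partial_z$ with the stated orders. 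Expanding the right side and using that $a,A$ are independent of $z$ gives $\partial_z^2 - T_a\partial_z - (\partial_z T_A) - T_A\partial_z + T_a T_A = \partial_z^2 - (T_a + T_A)\partial_z + T_a T_A$ (note $\partial_z T_A = T_A\partial_z$ since $A$ does not depend on $z$), so at the symbol level we need
\begin{equation*}
a + A = -T_{\alpha^{-1}\beta}\cdot\partialx + T_{\alpha^{-1}\gamma} \pmod{\text{lower order}}, \qquad T_a T_A = T_{\alpha^{-1}}\Deltax \pmod{\text{order }\leo 1/2-\delta}.
\end{equation*}

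Next I would solve these two equations by a symbol-expansion ansatz $a = a^{(1)} + a^{(0)}$, $A = A^{(1)} + A^{(0)}$. The principal level gives the quadratic equation $a^{(1)} A^{(1)} = -\alpha^{-1}|\xi|^2$ together with $a^{(1)} + A^{(1)} = -i\alpha^{-1}\beta\cdot\xi$ (the principal part of the first-order terms), so $a^{(1)}, A^{(1)}$ are the two roots of $X^2 + i\alpha^{-1}(\beta\cdot\xi)X - \alpha^{-1}|\xi|^2 = 0$; one checks from \eqref{abc2} that the discriminant is $-\alpha^{-2}(\beta\cdot\xi)^2 + 4\alpha^{-1}|\xi|^2$, which is $\ge c|\xi|^2$ (this is exactly the ellipticity coming from $\dist(\Sigma,\Gamma)>0$ and the structure $\alpha h^2 = 1+|\partialx\eta|^2$, $\beta h = -2\partialx\eta$), so that $\RE A^{(1)} < 0 < \RE a^{(1)}$ — this sign is what makes $(\partial_z - T_A)$ a \emph{forward} and $(\partial_z - T_a)$ a \emph{backward} elliptic evolution. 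Since $\alpha, \beta$ involve $\partialx\eta \in H^{s-\mez}$, the symbols $a^{(1)}, A^{(1)}$ have $W^{\tdm + \delta,\infty}$ coefficients in $x$, i.e. lie in $\dot\Gamma^1_{3/2+\delta}$. The subprincipal level: matching the order-zero part of $a+A$ and the order-one part of $T_{a^{(1)}}T_{A^{(1)}} - T_{a^{(1)}A^{(1)}}$ (computed via Theorem~\ref{theo:sc}, which contributes $\frac{1}{i}\partial_\xi a^{(1)}\partial_x A^{(1)}$) gives a \emph{linear} $2\times 2$ system for $(a^{(0)}, A^{(0)})$ whose matrix is $\begin{pmatrix} 1 & 1 \\ A^{(1)} & a^{(1)}\end{pmatrix}$, invertible precisely because $a^{(1)} \ne A^{(1)}$ (the discriminant being nonvanishing). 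Solving it, and noting that $\partialx a^{(1)}$ costs one derivative of $\partialx\eta$, places $a^{(0)}, A^{(0)}$ in $\dot\Gamma^0_{1/2+\delta}$.

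Then I would assemble the remainder and check the orders. Plugging the constructed $a, A$ back in, the discrepancy between $(\partial_z - T_a)(\partial_z - T_A)$ and $\partial_z^2 + T_{\alpha^{-1}}\Deltax + T_{\alpha^{-1}\beta}\cdot\partialx\partial_z - T_{\alpha^{-1}\gamma}\partial_z$ splits into a part not involving $\partial_z$ (this is $R_0$: it collects the order-$\leo 1/2-\delta$ errors from Theorem~\ref{theo:sc} applied to $T_{a^{(1)}}T_{A^{(1)}}$ with $\rho = 3/2+\delta$, giving order $1+1-(3/2+\delta) = 1/2-\delta$, plus the cross terms $T_{a^{(1)}}T_{A^{(0)}}$, $T_{a^{(0)}}T_{A^{(1)}}$ with errors of order $1 + 0 - (1/2+\delta) = 1/2-\delta$, plus $T_{a^{(0)}}T_{A^{(0)}}$ which is already order $0$) and a part proportional to $\partial_z$ (this is $R_1$: the order-$\leo -1/2-\delta$ errors, namely $T_{a^{(0)}+A^{(0)}} - T_{\alpha^{-1}\gamma} - (\text{order }0\text{ part correctly matched})$, which by our subprincipal matching reduces to order $-1/2-\delta$). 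Finally I multiply back by $T_\alpha$ on the left, absorbing the commutator errors and the parametrix errors from Remark~\ref{R3.9} into $R_0, R_1$ (these are even smoother, of order $\leo -\rho$ with $\rho$ large), which produces \eqref{2m2.b} in the stated form. The main obstacle is bookkeeping at the subprincipal level with limited regularity: one must carefully track that every application of the composition theorem is legitimate (the symbols genuinely lie in the claimed $\dot\Gamma$ classes, with the $W^{\rho,\infty}$ regularity $\rho = 3/2+\delta$ respectively $1/2+\delta$ exactly matching what $\partialx\eta\in H^{s-\mez}$ provides for $s>2+d/2$), and that the remainder orders $1/2-\delta$ and $-1/2-\delta$ come out correctly from the $\rho$-loss — in particular verifying that $\delta = \min\{\mez, s-2-d/2\}$ is exactly the threshold making both the symbol classes nonempty ($\rho>0$) and the composition losses land where claimed.
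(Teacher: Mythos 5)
Your proposal is correct and follows essentially the same route as the paper: normalize by $\alpha$ (which is clean because $\alpha$ has no $\xi$-dependence), set up the same two cascade equations (the paper's \eqref{cascade1}), solve the principal level by the same quadratic formula with ellipticity from the positive discriminant, solve the subprincipal level by the same invertible linear system, and read off the regularity classes and remainder orders from the composition theorem. One small caveat: you state $\RE A^{(1)} < 0 < \RE a^{(1)}$, but with the paper's explicit choice of roots it is the other way around, $\RE a^{(1)} < 0 < \RE A^{(1)}$; this labeling is immaterial for the factorization lemma itself, but the later Corollary~\ref{prop:total} uses specifically that $\RE a^{(1)} < -c\la\xi\ra$ in order to apply the elliptic-evolution estimate, so the convention needs to match.
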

\begin{proof}
We seek $a$ and $A$  such that
\begin{equation}\label{cascade1}
\begin{aligned}
&a^{(1)}A^{(1)}+\frac{1}{i}\partial_\xi a^{(1)}\cdot\partial_x A^{(1)} +a^{(1)}A^{(0)}+a^{(0)}A^{(1)}
=-\frac{\la \xi\ra^2}{\alpha},\\
&a+A=\frac{1}{\alpha}\left(  -i \beta\cdot\xi  + \gamma\right).
\end{aligned}
\end{equation}
According to Theorem~\ref{theo:sc} and \eqref{iii},
$$
R_0\defn T_{\alpha} T_a T_A   - \Delta  \quad\text{is of order }2- \tdm -\delta=\mez-\delta,
$$
while the second equation gives
$$
R_1\defn -T_{\alpha} (T_a + T_A)  
+\big( T_{\beta}\cdot \partialx  - T_{\gamma} \big)
\quad\text{is of order } 1- \tdm -\delta=-\mez-\delta.
$$
We thus obtain the desired result \eqref{2m2.b} from \eqref{2m.11}.

\smallbreak
To solve \eqref{cascade1}, we first solve the principal system:
\begin{align*}
a^{(1)}A^{(1)}&=-\frac{\la \xi\ra^2}{\alpha},\\
a^{(1)}+A^{(1)}&=-\frac{i\beta\cdot\xi}{\alpha},
\end{align*}
by setting
\begin{align*}
a^{(1)} (z, x, \xi) &= \frac{1}{2\alpha}\left( -i  \beta\cdot \xi
-   \sqrt{ 4\alpha \la \xi \ra^2 -  (\beta \cdot \xi)^2}\right),\\
A^{(1)}(z,x, \xi)  &= \frac{1}{2\alpha}\left( -i  \beta\cdot \xi
+   \sqrt{ 4\alpha \la \xi \ra^2 -  (\beta \cdot \xi)^2}\right).
\end{align*}
Directly from the definition of $\alpha$ and $\beta$ note that
$$
\sqrt{ 4\alpha \la \xi \ra^2 -  (\beta \cdot \xi)^2}  \geq  \frac{2}{h}\la\xi\ra,
$$
so that the symbols $a^{(1)},A^{(1)}$ belong to $\dot\Gamma^1_{3/2+\delta}(\xR^d)$ 
(actually $a^{(1)},A^{(1)}$ belong to $\dot\Gamma^1_{s-(d+1)/2}(\xR^d)$ provided that $s-(d+1)/2$ is not an integer).

We next solve the system
\begin{align*}
&a^{(0)}A^{(1)}+a^{(1)}A^{(0)}+
\frac{1}{i}\partial_\xi a^{(1)} \partial_x A^{(1)}=0,\\
&a^{(0)}+A^{(0)}= \frac{\gamma}{\alpha}.
\end{align*}
It is found that
\begin{align*}
a^{(0)}&=\frac{1}{A^{(1)}-a^{(1)}}\left(i\partial_\xi a^{(1)} \cdot\partial_x A^{(1)}
-\frac{\gamma}{\alpha}a^{(1)}\right) ,\\
A^{(0)}&=
\frac{1}{a^{(1)}-A^{(1)}}\left(i\partial_\xi a^{(1)} \cdot\partial_x A^{(1)}
-\frac{\gamma}{\alpha}A^{(1)}\right) ,
\end{align*}
so that the symbols $a^{(0)},A^{(0)}$ belong to $\dot\Gamma^0_{1/2+\delta}(\xR^d)$.  
\end{proof}

We shall need the following elliptic regularity result.

\begin{prop}
Let $a\in \Gamma_{1}^1(\xR^d)$ and $b\in \Gamma^0_{0}(\xR^d)$, 
with the assumption that
$$
\RE a (x,\xi) \geq c \la\xi\ra,
$$
for some positive constant $c$. If $w\in C_{z}^{1}(H_{x}^{-\infty})$ 
solves the elliptic evolution equation
$$
\partial_z w + T_a w =T_b w +f,
$$
with
$f\in \CZ{r}$ for some $r\in\xR$, then
\begin{equation}\label{conclusion}
w(0) \in H^{r+1-\eps}(\xR^d),
\end{equation}
for all $\eps>0$.
\end{prop}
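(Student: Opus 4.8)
The plan is to run a forward parabolic (elliptic-in-$z$) energy argument, integrating from the bottom $z=-1$ where no information is available, up to $z=0$. First I would reduce to the case $b=0$: since $T_b$ is of order $0$, the term $T_b w$ can be absorbed into the source $f$ at the cost of replacing $f\in\CZ{r}$ by $f\in\CZ{r-\eps'}$ only if we already know $w\in\CZ{r}$, so this reduction has to be run in tandem with a bootstrap on the regularity of $w$ in the interior. Concretely, I would first prove that if $w$ solves the equation and $f\in\CZ{r}$, then $w\in\CZ{r'}$ on any compact subinterval $[-1+\tau,0]$ for a suitable $r'$, and iterate. The mechanism is the standard one for a backward-stable (forward-parabolic) equation: the symbol $a$ has $\RE a\ge c\L\xi$, so $\partial_z + T_a$ is a "forward heat-type" operator and one gains regularity moving in the direction of increasing $z$.

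The core estimate is an energy inequality for $\varphi(z) := \lA w(z)\rA_{H^{\mu}}^2$ for an appropriate $\mu$ slightly below $r+1$. Pairing the equation with $\L{D_x}^{2\mu} w$ in $L^2_x$, one computes $\tfrac{d}{dz}\lA \L{D_x}^{\mu} w\rA_{L^2}^2 = 2\RE\langle \L{D_x}^\mu \partial_z w, \L{D_x}^\mu w\rangle$ and substitutes $\partial_z w = -T_a w + T_b w + f$. The main term is $-2\RE\langle \L{D_x}^\mu T_a w, \L{D_x}^\mu w\rangle$. Using Theorems~\ref{theo:sc} and~\ref{theo:sc2} (composition and adjoint) to commute $\L{D_x}^\mu$ with $T_a$ and to compare $T_a$ with $(T_a)^*$, this quantity is $-2\lA (\RE T_a)^{1/2} \L{D_x}^\mu w\rA^2$-like up to lower-order errors, hence bounded above by $-2c\lA w\rA_{H^{\mu+1/2}}^2$ plus $C\lA w\rA_{H^\mu}^2$ (here $\RE a\ge c\L\xi$ via the Gårding-type lower bound for paradifferential operators, which follows from the sharp Gårding inequality or can be obtained directly since $a$ has positive real part by Theorem~\ref{theo:sc0} applied to $\RE a - c\L\xi$). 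The source term contributes $2\RE\langle \L{D_x}^\mu f, \L{D_x}^\mu w\rangle \le \lA f\rA_{H^{\mu-1/2}}\lA w\rA_{H^{\mu+1/2}}$, absorbed into the good negative term by Young's inequality. One arrives at a differential inequality of the form
\begin{equation*}
\frac{d}{dz}\varphi(z) + c\lA w(z)\rA_{H^{\mu+1/2}}^2 \le C\varphi(z) + C\lA f(z)\rA_{H^{\mu-1/2}}^2.
\end{equation*}
Multiplying by a cutoff $\theta(z)$ vanishing near $z=-1$ and equal to $1$ near $z=0$ (so that the unknown $w(-1)$, about which nothing is assumed, never enters), then integrating in $z$ over $[-1,0]$, Grönwall's lemma gives $w\in C^0_z([-1+\tau,0];H^\mu) \cap L^2_z([-1+\tau,0];H^{\mu+1/2})$ with the bound controlled by $\lA f\rA_{L^2_z H^{\mu-1/2}}$. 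Taking $\mu = r + 1/2 - \eps$ yields $w(0)\in H^{r+1-\eps}$, which is the claim~\eqref{conclusion}; the $L^2_z$ gain of half a derivative is what lets one bootstrap: each application of the estimate raises the interior regularity, eventually reaching any $\mu \le r+1-\eps$ so that the continuous trace at $z=0$ makes sense.

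The main obstacle I anticipate is making the Gårding-type positivity $-2\RE\langle T_a w, \L{D_x}^{?}w\rangle \le -c\lA w\rA^2 + C\lA w\rA^2_{\text{lower}}$ rigorous at the limited regularity $a\in\Gamma^1_1$ (only Lipschitz in $x$), since the classical sharp Gårding inequality wants more smoothness; the fix is that $\RE a - c\L\xi \ge 0$ is itself a symbol in $\Gamma^1_1$ with a smooth nonnegative square root $\big(\RE a - \tfrac{c}{2}\L\xi\big)^{1/2} \in \Gamma^{1/2}_1$ (away from $\xi=0$, using the uniform lower bound on $\RE a$), so one writes $\RE T_a \ge \tfrac c2 T_{\L\xi} + (T_q)^*T_q + (\text{order }0)$ with $q = (\RE a - \tfrac c2\L\xi)^{1/2}$, using the composition and adjoint theorems to control the remainder; positivity of $T_{\L\xi}$ up to order $0$ then finishes it. A secondary technical point is that $w$ is only assumed $C^1_z(H^{-\infty}_x)$, so all the pairings above must first be justified for regularized data (mollify in $x$) and the estimates, being uniform, pass to the limit; and one must keep careful track of the loss of $\delta$ coming from the errors $R_0, R_1$ of order $\tfrac12-\delta$ and $-\tfrac12-\delta$ in Lemma~\ref{lemm:total}, which is exactly the source of the $-\eps$ loss in the conclusion (and why the statement has "for all $\eps>0$" rather than a clean $r+1$).
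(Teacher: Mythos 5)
Your proposal takes a genuinely different route from the paper, but it has a quantitative gap: as written, the energy argument only proves $w(0)\in H^{r+1/2}$, not $w(0)\in H^{r+1-\eps}$, so it does not establish the statement.

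The issue is in the last step. Your differential inequality gives, after Gr\"onwall and the cutoff, $w\in C^0_z([-1+\tau,0];H^\mu)\cap L^2_z([-1+\tau,0];H^{\mu+1/2})$ provided $f\in L^2_z H^{\mu-1/2}$, i.e.\ $\mu\le r+1/2$. The $L^2_z H^{\mu+1/2}$ membership does \emph{not} provide a pointwise trace at $z=0$; the trace you actually get (via $w\in L^2_z H^{\mu+1/2}$, $\partial_z w\in L^2_z H^{\mu-1/2}$ from the equation) is in $H^\mu$, which is precisely your $C^0_z H^\mu$ control. Thus $w(0)\in H^{r+1/2}$ at best. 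This is the familiar factor-of-$1/2$ deficit of the bare energy method for parabolic/elliptic-in-$z$ evolutions: energy estimates give ``$L^2$-in-time with half a derivative better'' on top of ``$L^\infty$-in-time with the same regularity as the data,'' whereas the full parabolic gain of $1-\eps$ derivatives requires the singular-kernel smoothing of the semigroup. This is not a presentational oversight: without a new mechanism (weighted-in-$z$ energies, maximal regularity, or an explicit Duhamel representation) the scheme you describe cannot reach $r+1-\eps$.

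The paper captures the full gain with an integrating-factor/Duhamel argument. It reduces to $b=0$, $w\in\CZ{r}$, $w\equiv 0$ for $z\le -1/2$, and introduces the symbol $e(z;x,\xi)=\exp(z\,a(x,\xi))$, which satisfies $\partial_z e = ea$ and, thanks to $\RE a\ge c\la\xi\ra$, the decisive bound $(\la z\ra\la\xi\ra)^m e(z;x,\xi)\le C_m$ uniformly. Writing $\partial_z(T_e w)=T_e f+(T_{\partial_z e}-T_e T_a)w$ and integrating over $[-1,0]$ expresses $T_1 w(0)$ (hence $w(0)$ up to a smoothing error) as two integrals whose integrands gain $1-\eps$ derivatives with a weight $\la y\ra^{-(1-\eps)}$: the family $\{(\la y\ra\la\xi\ra)^{1-\eps}e(y;\cdot,\cdot)\}$ is bounded in $\Gamma^0_1$, so $\lA T_e f(y)\rA_{H^{r+1-\eps}}\lesssim\la y\ra^{-(1-\eps)}\lA f(y)\rA_{H^r}$, and the composition estimate \eqref{esti:quant2} with $(m,m',\rho)=(-1+\eps,1,1)$ gives $\lA(T_{\partial_z e}-T_e T_a)(y)\rA_{H^r\to H^{r+1-\eps}}\lesssim\la y\ra^{-(1-\eps)}$; since $\la y\ra^{-(1-\eps)}\in L^1(]-1,0[)$ the conclusion follows. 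The $\eps$-loss in the statement comes from needing $m<1$ to keep the weight integrable, not (as you suggest) from the errors $R_0,R_1$ of Lemma~\ref{lemm:total} (that lemma is part of a different reduction and is not invoked in this proposition). If you want to salvage an energy-type proof, you would need a weighted estimate producing control of $\la z\ra^{1-\eps}\lA w(z)\rA_{H^{r+1-\eps}}$, which effectively re-derives the semigroup decay; the direct symbol construction is cleaner here and also sidesteps the low-regularity G\aa rding step that you correctly flagged as delicate for $a\in\Gamma^1_1$.
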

\begin{rema}
This is a local result which means that 
the conclusion \eqref{conclusion} 
remains true if we only assume that, for some $\delta>0$, 
$$
f\arrowvert_{-1 \le z \le -\delta} \in C^{0}([-1,-\delta];H^{-\infty}(\xR^d)),\quad
f\arrowvert_{-\delta \le z \le 0} \in C^{0}([-\delta,0];H^{r}(\xR^d)).
$$
In addition, the result still holds true for symbols depending on $z$, such that 
$a\in C_{z}^0(\Gamma_{1}^1)$ and $b\in C_{z}^0(\Gamma^0_{0})$, 
with the assumption that
$\RE a \geq c \la\xi\ra$, for some positive constant $c$.
\end{rema}
\begin{proof}
The following proof gives the stronger conclusion that $w$ is continuous 
in $z\in ]-1,0]$ with values in 
$H^{r+1-\eps}(\xR^d)$. Therefore, by an elementary induction argument, 
we can assume without loss of generality
that $b=0$ and $w\in \CZ{r}$. In addition one can assume that 
there exists $\delta >0$ such that 
$w(x,z)=0$ for $z\le -1/2$.
\smallbreak
For $z\in [-1,0]$, introduce the symbol
\begin{equation*}
e(z;x,\xi)\defn \exp \left( z a(x,\xi) \right),%-\int_{z}^{0} a(z';x,\xi) \,dz'\right)
\end{equation*}
so that $e\arrowvert_{z=0}=1$ and 
\begin{equation*}%\label{eqe}
\partial_{z}e=ea.
\end{equation*}
According
to our assumption that $\RE a \geq c\la\xi\ra$, we have the simple estimates
$$
( \la z\ra \la \xi\ra)^m e(z;x,\xi)  \le C_{m}.
$$
Write
$$
\partial_{z}\left(T_{e} w \right)
=T_{e} f  +(T_{\partial_z e}-T_e T_a ) w,
$$
and integrate on $[-1,0]$ to obtain
\begin{equation*}
T_{1}w(0)=
\int_{-1}^{0} (T_{\partial_z e}-T_e T_a ) w (y)\, dy+
\int_{-1}^{0} (T_{e} f)(y)\, dy.
\end{equation*}
Since 
$w(0)-T_{1}w(0)\in H^{+\infty}(\xR^d)$ 
it remains only to prove that the right-hand side belongs to $H^{r+1-\eps}(\xR^d)$. Set 
$$
w_1(0)=\int_{-1}^{0} (T_{\partial_z e}-T_e T_a ) w (y)\, dy,\qquad
w_2(0)= \int_{-1}^{0} (T_{e} f)(y)\, dy.
$$

To prove that $w_2(0)$ belongs to $H^{r+1-\eps}(\xR^d)$,
the key observation is that, since $\RE a \geq c\la\xi\ra$,
the family
$$
\left\{\,  (\la y\ra \la \xi\ra)^{1-\eps} e(y;x,\xi) \,:\, -1 \le y \le 0 \,\right\}
$$
is bounded in~$\Gamma^{0}_{1}(\xR^d)$.
According to the operator norm estimate \eqref{esti:quant1}, 
we thus obtain that there is a constant $K$ such that,
for all $-1\le y\le 0$ and all $v\in H^r(\xR^d)$,
$$
\lA (\la y\ra \la D_x\ra)^{1-\eps} (T_{e} v)\rA_{H^{r}}\le K
\lA v\rA_{H^{r}}.
$$
Consequently, there is a constant $K$ such that, for all $y\in [-1,0[$,
$$
\lA (T_{e} f)(y)\rA_{H^{r+1-\eps}}
\le \frac{K}{\la y\ra^{1-\eps}}\lA f(y)\rA_{H^{r}}.
$$
Since $\la y\ra^{-(1-\eps)} \in L^{1}(]-1,0[)$, this implies that 
$w_2(0)\in H^{r+1-\eps}(\xR^d)$. 

With regards to the first term, we claim that, similarly, 
$$
\lA (T_{\partial_z e}-T_e T_a)(y) \rA _{H^{r}\rightarrow H^{r+1-\eps}}\le \frac{K}{\la y\ra^{1-\eps}}.
$$
Indeed, since $\partial_z e =ea$, this follows from \eqref{esti:quant2} applied with $(m,m',r)=(-1+\eps,1,1)$ and the fact that 
$M^{-1+\eps}_1 ( (\la y\ra ^{1-\eps} e(y;\cdot,\cdot))$ is uniformly bounded for $-1 \le y \le 0 $. 
This yields the desired result.
\end{proof}

We are now in position to describe 
the boundary value of $\partial_z u$ up to an error in
$H^{s+\frac{1}{2}}(\xR^d)$.

\begin{coro}\label{prop:total}
Let $A$ be as given by Lemma~$\ref{lemm:total}$. Then, on the boundary $\{ z = 0 \}$, there holds
\begin{equation*}
(\partial_z u  -    T_A u)\arrowvert_{z=0}  \in H^{s+\frac{1}{2}}(\xR^d).
\end{equation*}
\end{coro}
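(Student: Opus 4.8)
The plan is to isolate the dissipative half of the factorization provided by Lemma~\ref{lemm:total} and feed it into the elliptic regularity proposition proved just above. Set $w\defn(\partial_z-T_A)u=\partial_z u-T_A u$, so that the assertion to be proved is precisely $w\arrowvert_{z=0}\in H^{s+\mez}(\xR^d)$. Combining the good-unknown equation~\eqref{2m.11} with the factorization~\eqref{2m2.b}, I get
\begin{equation*}
T_{\alpha}(\partial_z-T_a)w=g+f-R_0u-R_1\partial_z u,
\end{equation*}
where $g\in C^1_z(H^{s+\mez}_x)$, $f\in\CZ{2s-\frac{5+d}{2}}$, and $R_0$, $R_1$ have orders $\le\mez-\delta$ and $\le-\mez-\delta$. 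From Lemma~\ref{L1}, interpolation, elliptic estimates for $v$ and for~\eqref{2m.11}, and the paraproduct rules one has $u\in\CZ{s}$, $\partial_z u\in\CZ{s-1}$ and $\partial_z^2u\in\CZ{s-2}$; in particular $w\in C^1_z(H^{-\infty}_x)$, so the elliptic regularity proposition will apply once the equation is put in the right form.

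To do so I would divide by $T_\alpha$. Since $\alpha=(1+\la\partialx\eta\ra^2)/h^2\ge h^{-2}>0$ is elliptic and $\alpha-h^{-2}\in H^{s-\mez}$ with $s-\mez>1+d/2$, Theorem~\ref{theo:sc} together with~\eqref{iii} gives that $T_{1/\alpha}T_\alpha-I$ has order $\le-(s-\mez-d/2)$. Applying $T_{1/\alpha}$ and splitting $a=a^{(1)}+a^{(0)}$, the equation becomes
\begin{equation*}
\partial_z w+T_{-a^{(1)}}w=T_{a^{(0)}}w+F,
\end{equation*}
and the point is that, with $r\defn s-\mez+\delta$, one has $F\in\CZ{r}$. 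This is a bookkeeping verification: $T_{1/\alpha}g\in C^0_z(H^{s+\mez})$; since $u\in H^s$ and $\partial_z u\in H^{s-1}$, the terms $T_{1/\alpha}R_0u$ and $T_{1/\alpha}R_1\partial_z u$ lie in $H^{s-\mez+\delta}$; $T_{1/\alpha}f\in H^{2s-(5+d)/2}$; and $(T_{1/\alpha}T_\alpha-I)(\partial_z-T_a)w\in H^{2s-5/2-d/2}$ because $(\partial_z-T_a)w\in\CZ{s-2}$. Since $\delta=\min\{\mez,\,s-2-d/2\}$, each of the exponents $s+\mez$, $2s-(5+d)/2$ and $2s-5/2-d/2$ is $\ge r$, so indeed $F\in\CZ{r}$.

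It remains to check that the principal symbol $-a^{(1)}$ is dissipative at $z=0$. From the explicit formula for $a^{(1)}$ in the proof of Lemma~\ref{lemm:total}, and since $4\alpha\la\xi\ra^2-(\beta\cdot\xi)^2\ge4h^{-2}\la\xi\ra^2$,
\begin{equation*}
\RE a^{(1)}=-\frac{1}{2\alpha}\sqrt{4\alpha\la\xi\ra^2-(\beta\cdot\xi)^2}\le-\frac{h}{1+\lA\partialx\eta\rA_{L^\infty}^2}\la\xi\ra,
\end{equation*}
hence $\RE(-a^{(1)})\ge c\la\xi\ra$ for some $c>0$, while $-a^{(1)}\in\dot\Gamma^1_{3/2+\delta}\subset\Gamma^1_1$ and $a^{(0)}\in\Gamma^0_0$. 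Thus the equation for $w$ has exactly the form treated by the elliptic regularity proposition stated above, with right-hand side in $\CZ{r}$; it yields $w\arrowvert_{z=0}\in H^{r+1-\eps}(\xR^d)=H^{s+\mez+\delta-\eps}(\xR^d)$ for every $\eps>0$. Choosing $\eps<\delta$ (possible since $\delta>0$, as $s>2+d/2$) gives $(\partial_z u-T_A u)\arrowvert_{z=0}=w\arrowvert_{z=0}\in H^{s+\mez}(\xR^d)$, as claimed.

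The main obstacle is the regularity bookkeeping of the second step: I must check that every term surviving the division by $T_\alpha$ — the smoothing remainders $R_0u$ and $R_1\partial_z u$, the source $f$, and the parametrix error — has Sobolev index at least $s-\mez+\delta$, which is exactly why $\delta$ is taken to be $\min\{\mez,\,s-2-d/2\}$ and why the threshold $s>2+d/2$ is used to the full (a measure-zero set of exceptional $s$, where a Hölder index is an integer, costs only a harmless extra $\eps$ loss, absorbed since $\delta>0$). A secondary but essential point is the dichotomy between the two roots: it is $a$ (not $A$) whose principal part becomes dissipative after the sign flip, so the unknown to propagate up to the top boundary must be $w=(\partial_z-T_A)u$ rather than $(\partial_z-T_a)u$.
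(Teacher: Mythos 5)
Your proof is correct and follows exactly the paper's route: set $w=(\partial_z-T_A)u$, plug into the factorization from Lemma~\ref{lemm:total}, and apply the elliptic regularity proposition with $a=-a^{(1)}$, $b=a^{(0)}$, $\eps=\delta$. What you add beyond the paper's three-line proof is the explicit bookkeeping producing the right-hand side $f'\in\CZ{s-\mez+\delta}$ — namely the division by $T_\alpha$, the parametrix error, and the term-by-term Sobolev indices — all of which is correct and matches what the paper implicitly claims; the sign check on $\RE a^{(1)}$ and the observation that it is the $a$-root (not the $A$-root) whose flipped sign is dissipative are likewise exactly the paper's choice.
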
 
\begin{proof}   
Introduce $ w \defn (\partial_z - T_A)u$ and write
\begin{equation*}
\partial_z w  - T_{a^{(1)}} w = T_{a^{(0)}}w+f',
\end{equation*}
with $f'\in \CZ{s-\frac{1}{2}+\delta}$. Since $\RE a^{(1)}<-c \la \xi\ra$, 
the previous proposition applied with $a=-a^{(1)}$, $b=a^{(0)}$ and $\eps=\delta>0$ 
implies that  
$w\arrowvert_{z=0}\in H^{s+\frac{1}{2}}(\xR^d)$.
\end{proof}

By definition
\begin{equation*}
G(\eta)\psi=
\frac{1 + | \partialx \eta |^2}{  h} \partial_z v  - \partialx \eta \cdot\partialx v
\Big\arrowvert_{z=0}.
\end{equation*}
As before, we find that
\begin{align*}
&\frac{1 + | \partialx \eta |^2}{  h} \partial_z v  - \partialx \eta \cdot\partialx v\\
&\quad =
T_{ \frac{1 + | \partialx \eta |^2}{  h}}  \partial_z v  + 2T_{ \mathfrak{b}\partialx \eta}\cdot  \partialx \eta 
- T_{\mathfrak{b} \frac{1 + | \partialx \eta |^2}{  h} }  h\\
&\quad\quad -\left( T_{ \partialx \eta}  \cdot\partialx v + T_ {\partialx v} \cdot\partialx \eta\right)+R,
\end{align*}
where $R\in \CZ{2s-\frac{3+d}{2}}$. 
We next replace $\partial_{z}v$ and $\partialx v$ by 
$\partial_{z}(u+T_{\mathfrak{b}}\rho)$ and $\partialx(u+T_{\mathfrak{b}}\rho)$ in the right hand-side 
to obtain, after a few computations,
\begin{align*}
&\frac{1 + | \partialx \eta |^2}{  h} \partial_z v  - \partialx \eta \cdot\partialx v\\
&\quad =
T_{ \frac{1 + | \partialx \eta |^2}{  h}}  \partial_z u  - T_{ \partialx \eta}  \cdot\partialx u
-T_{\partialx  v-\mathfrak{b}\partialx \eta}\cdot\partialx  \rho -T_{\cnx \left(\partialx  v-\mathfrak{b}\partialx \eta\right)} \rho+R',
\end{align*}
with $R'\in \CZ{2s-\frac{3+d}{2}}$. Furthermore, Corollary~\ref{prop:total} implies that
\begin{equation}\label{IVc2}
T_{\frac{1 + | \partialx \eta |^2}{  h}}  \partial_z u - T_{\partialx \eta}\cdot \partialx  u
\Big\arrowvert_{z=0}
= T_{\lambda}  U +r,
\end{equation}
with $U=u\arrowvert_{z=0}=v-T_{\mathfrak{b}}\rho\arrowvert _{z=0}=\psi-T_\mathfrak{B}\eta$, $r\in H^{s+\frac{1}{2}}(\xR^d)$ and
\begin{equation}\label{defidns}
\lambda=\frac{1 + | \partialx \eta |^2}{  h} A -i \partialx \eta\cdot  \xi\Big\arrowvert_{z=0}.
\end{equation}
After a few computations, we check that $\lambda$ is as given by \eqref{dmu10}--\eqref{lambda}.

This concludes the analysis of the Dirichlet-Neumann operator. Indeed, we have obtained 
$$
G(\eta)\psi =T_{\lambda}U - T_{\partialx  v-\mathfrak{b}\partialx \eta}\cdot\partialx \eta
-T_{\cnx \left(\partialx  v-\mathfrak{b}\partialx \eta\right)}\rho + f(\eta,\psi),
$$
with $f(\eta,\psi) \in  H^{s+\frac{1}{2}}(\xR^d)$. This 
yields the first equation in \eqref{t1} since
$$
V= \partialx  v-\mathfrak{b}\partialx \eta \arrowvert_{z=0},\quad
\partialx \eta\arrowvert_{z=0}=\partialx\eta,
$$
and since 
$$
T_{\cnx V} \eta \in H^{s+\frac{1}{2}}(\xR^d).
$$

\subsection{A simpler case}

Let us remark that if $(\eta, \psi )\in H^{s+ \mez }(\xR^d) \times H^{s-1}( \xR^d)$, 
the expressions above can be simplified and we have the following result that we shall use in Section~\ref{sec.unique}.
\begin{prop}
\label{prop:csystem3}
Let $d\ge 1$, $s>2+d/2$ and $1\le \sigma\le s-1$. 
Assume that 
$$
(\eta,\psi)\in H^{s+\mez}(\xR^d)\times H^{\sigma}(\xR^d),
$$
and that $\eta$ is such that $\dist (\Sigma,\Gamma)>0$. Then 
\begin{equation*}%\label{t1}
G(\eta)\psi= T_{\lambda^{(1)}} \psi
+F(\eta,\psi),
\end{equation*}
where $F(\eta,\psi)\in H^{\sigma}(\xR^d)$ (and recall that $\lambda^{(1)}$ denotes the principal symbol ,of the Dirichlet-Neumann operator). 
Moreover, 
$$
\lA F(\eta,\psi)\rA_{H^{\sigma}}\le 
C\left( \lA \partialx\eta\rA_{H^{s-\mez}}\right)
 \lA \partialx\psi\rA_{H^{\sigma-2}},
$$
for some non-decreasing function $C$ depending only on $\dist (\Sigma,\Gamma)>0$.
\end{prop}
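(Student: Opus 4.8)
The plan is to run a cheaper variant of the proof of Proposition~\ref{prop:csystem}. Since we only claim an $H^\sigma$ smoothing, and not the sharp $H^{s+\mez}$ of Proposition~\ref{prop:csystem}, I would not introduce the good unknown of Alinhac at all: one may work directly with $v$, the crude losses caused by the low regularity of $\psi$ being recovered by the one‑derivative gain of the elliptic‑regularity proposition used in Corollary~\ref{prop:total}. To set things up, let $v$ be given by \eqref{defiv}, so that $v$ solves \eqref{dnint1} with $\alpha,\beta,\gamma$ as in \eqref{abc2}, $g\in C^2_z(H^{s+\mez}_x)$ and $v\arrowvert_{z=0}=\psi$. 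As $1\le\sigma\le s-1\le s$, Lemma~\ref{L1} and interpolation give $(\partialx v,\partial_z v)\in\CZ{\sigma-1}$, and then \eqref{dnint1} and the product rule give $\partial_z^2 v\in\CZ{\sigma-2}$.

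First I would paralinearize \eqref{dnint1}. Since $\eta\in H^{s+\mez}(\xR^d)$ one has $\alpha,\beta\in H^{s-\mez}$ and $\gamma\in H^{s-\tdm}$, so, applying Bony's decomposition to each product together with the paraproduct and product estimates that allow (possibly negative) Sobolev indices (Lemma~\ref{negmu}, Theorem~\ref{lemPa}), one gets
\begin{equation*}
T_{\alpha}\partial_z^2 v +\Deltax v +T_{\beta}\cdot\partialx\partial_z v -T_{\gamma}\partial_z v = g+f_1,\qquad f_1\in\CZ{\sigma-\mez+\delta'}
\end{equation*}
for some $\delta'>0$: each of the ``bad'' terms $T_{\partial_z^2 v}\alpha$, $T_{\partialx\partial_z v}\cdot\beta$, $T_{\partial_z v}\gamma$ and its resonant remainder loses at most $\mez-\delta'$ derivatives relative to $H^\sigma$ (in the sharp statement these are precisely the terms cancelled by the good unknown, but here they are harmless). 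Then I would apply the factorization \eqref{2m2.b} of Lemma~\ref{lemm:total}, whose hypotheses hold verbatim, invert the elliptic operator $T_\alpha$, and set $w\defn(\partial_z-T_A)v$, which solves $\partial_z w-T_a w=f_2$ with $f_2\in\CZ{\sigma-\mez+\delta'}$. Since $\RE a^{(1)}<-c\langle\xi\rangle$, the elliptic‑regularity proposition applied to the forward equation, exactly as in Corollary~\ref{prop:total}, gives $w\arrowvert_{z=0}\in H^{\sigma+\mez}(\xR^d)$, i.e.
\begin{equation*}
\partial_z v\arrowvert_{z=0}=T_A\psi+r_1,\qquad r_1\in H^{\sigma+\mez}(\xR^d).
\end{equation*}

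To conclude I would insert this into $G(\eta)\psi=\frac{1+|\partialx\eta|^2}{h}\partial_z v-\partialx\eta\cdot\partialx v\,\big\arrowvert_{z=0}$. Because we kept $v$ rather than the good unknown, $\partialx v\arrowvert_{z=0}=\partialx\psi$ exactly. Paralinearizing the two products (their $\eta$‑paraproduct parts and resonant remainders again landing in $H^{\sigma+\mez}$), then using the symbolic calculus of Theorem~\ref{theo:sc} to write $T_aT_A=T_{aA}+(\text{an operator of order }\le-\mez-\delta')$ and $T_{\partialx\eta}\cdot\partialx\psi=T_{i\partialx\eta\cdot\xi}\psi$ modulo $H^{\sigma+\mez}$, and observing that $T_aT_{A^{(0)}}\psi$, $T_{aA^{(0)}}\psi$ and $T_{\lambda^{(0)}}\psi$ all lie in $H^\sigma$ because they are of order zero, I would be left with $G(\eta)\psi=T_{aA^{(1)}-i\partialx\eta\cdot\xi}\psi+F$ with $F\in H^\sigma$. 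By \eqref{defidns} and the identification of $\lambda$ with \eqref{dmu10}--\eqref{lambda} carried out in the proof of Proposition~\ref{prop:csystem}, $aA^{(1)}-i\partialx\eta\cdot\xi=\lambda^{(1)}+\lambda^{(0)}-aA^{(0)}$, whence $G(\eta)\psi=T_{\lambda^{(1)}}\psi+F$, $F\in H^\sigma(\xR^d)$. The quantitative bound then follows by carrying, through each step, the constant $C\bigl(\lA\partialx\eta\rA_{H^{s-\mez}}\bigr)$ and the a priori estimate of $(\partialx v,\partial_z v)$ in $\CZ{\sigma-1}$ provided by the proof of Proposition~\ref{estDN}.

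The hard part will be the regularity bookkeeping at the low value of $\sigma$: one must use the paraproduct and commutator estimates for symbols and functions of negative Sobolev regularity (for example $\partial_z^2 v\in\CZ{\sigma-2}$), and check that the $\mez-\delta'$ derivative loss they incur is always recovered by the one‑derivative gain of the elliptic‑regularity proposition. This is exactly where the assumption $\sigma\le s-1$ (equivalently, the gap $s+\mez-\sigma\ge\tdm$) is used, and it is what lets the good unknown, the sub‑principal symbol $\lambda^{(0)}$ and all the lower‑order contributions be thrown into the $H^\sigma$ remainder instead of being genuine obstructions.
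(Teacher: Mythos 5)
Your proposal matches the paper's own proof in its essentials: paralinearize the elliptic equation directly with respect to $v$ (rather than introducing the good unknown), invoke the factorization of Lemma~\ref{lemm:total}, apply the one-derivative smoothing of the elliptic-regularity proposition as in Corollary~\ref{prop:total}, and discard the order-zero pieces $T_{A^{(0)}}$, $T_{\lambda^{(0)}}$, which map $H^{\sigma}$ to itself. The only discrepancy is quantitative: the paper records the factorization remainder $R_0 v + R_1\partial_z v$ only in $\CZ{\sigma-1+\delta}$, hence $(\partial_z v - T_A v)\arrowvert_{z=0}\in H^{\sigma}$ rather than your claimed $H^{\sigma+\mez}$, but this is exactly what the conclusion $F\in H^{\sigma}$ requires, so your argument goes through unchanged.
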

\begin{rema}
Notice that the proof below would still work assuming only  
$$
\eta\in H^{s+\eps}(\xR^d),\quad v\in \CZ{\sigma},
$$
with the same conclusion. A more involved proof (using regularized lifting for the function $\eta$ following Lannes~\cite{LannesJAMS}) would give the result assuming only 
$$
(\eta,\psi)\in H^{s}(\xR^d)\times H^{\sigma}(\xR^d).
$$ 
\end{rema}
\begin{proof}
We follow the proof of Proposition~\ref{prop:csystem}. 
Let $v$ be as given by \eqref{defiv}: $v$ solves 
\begin{equation*}
\alpha \partial_z^2 v +\Delta v + \beta \cdot\partialx\partial_z v  - \gamma \partial_z v=g,
\end{equation*}
where $g\in C^0([-1,0];H^{s+\mez}(\xR^d))$ is given by \eqref{evpf} and 
\begin{equation*}
\alpha\defn \frac{(1+|\partialx  \eta |^2 )}{ h^2},\quad 
\beta\defn  -2 \frac{\partialx \eta}{ h},\quad 
\gamma \defn \frac{\Delta \eta}{ h}  .
\end{equation*}
Comparing with the proof of Proposition~\ref{prop:csystem}, an important simplification is that we need only in this proof to paralinearize with respect to $v$. 
In this direction, we claim that 
\begin{equation}\label{2m.11ii} 
T_{\alpha}\partial_z^2 v  +\Deltax v +T_{\beta}\cdot \partialx \partial_z v 
- T_{\gamma}\partial_z v \in \CZ{\sigma-\mez}.
\end{equation}
To see this we first apply point (ii) in Theorem~\ref{lemPa} to obtain
\begin{align*}
&\alpha \partial_z^2 v- T_\alpha \partial_z^2 v- T_{\partial_z^2 v}\alpha \in \CZ{(s-\mez)+\sigma-2-d/2}\subset \CZ{\sigma-\mez},
\intertext{and similarly}
&\beta \cdot\partialx \partial_z v- T_\beta \cdot\partialx\partial_z v - T_{\partialx\partial_z v}\cdot\beta \in \CZ{\sigma-\mez},\\
&\gamma \partial_z v- T_\gamma \partial_z v - T_{\partial_z v}\gamma \in \CZ{\sigma-\mez}.
\end{align*}
Moreover, writing $\sigma-2=d/2-(d/2+2-\sigma)$, using Lemma~\ref{negmu} with $m=d/2+2-\sigma$, we obtain 
$$
T_{\partial_z^2 v}\alpha \in \CZ{s-\mez-(d/2+2-\sigma)}\subset \CZ{\sigma-\mez},
$$
and
$$
T_{\partialx\partial_z v}\cdot\beta \in \CZ{\sigma-\mez}.
$$
Similarly, we have
%On the other hand, since $\gamma\in \CZ{s-\tdm}$, and since 
%$\partial_z v\in \CZ{\sigma-1}\subset C^0_z(L^\infty_x)$, we have
$$
T_{\partial_z v}\gamma \in \CZ{\sigma-\mez}.
$$
Therefore, summing up directly gives the desired result \eqref{2m.11ii}.

Now, by applying Lemma~\ref{lemm:total}, we obtain that 
$$
T_{\alpha}\partial_z^2   +\Deltax v +T_{\beta}\cdot \partialx \partial_z  v
- T_{\gamma}\partial_z v=
T_{\alpha}( \partial_z - T_{a} ) (\partial_z - T_{A})v 
+f
$$
with $f=R_0 v +R_1 \partial_z v\in \CZ{\sigma-1+\delta}$ where $\delta =\min\left\{ \mez , s-2-\frac{d}{2}\right\}>0$. 

Then, as in Corollary~\ref{prop:total}, we deduce that
\begin{equation*}
(\partial_z v  -    T_{A} v)\arrowvert_{z=0}  \in H^{\sigma}(\xR^d).
\end{equation*}
Since $v(0)\in H^{s-1}(\xR^d)$ we deduce 
$T_{A^{(0)}}v \arrowvert_{z=0}\in H^{s-1}(\xR^d)$ ($A^{(0)}$ is the subprincipal symbol of $A$, which is of order $0$) 
and hence
\begin{equation*} 
(\partial_z v  -    T_{A^{(0)}} v)\arrowvert_{z=0}  \in H^{\sigma}(\xR^d).
\end{equation*}
The rest of the proof is as in the proof of Proposition~\ref{prop:csystem}
\end{proof}

\subsection{Paralinearization of the full system}
Consider a given solution 
$(\eta,\psi)$ of ~\eqref{system} on the time interval $[0,T]$ with $0<T<+\infty$, 
such that 
$$
(\eta,\psi)\in C^0\big([0,T];H^{s+\mez}(\xR^d)\times H^{s}(\xR^d)\big),
$$
for some $s>2+d/2$, with $d\ge 1$.

In the sequel we consider functions of $(t,x)$, considered as 
functions of $t$ with values in various spaces of functions of $x$. In particular, 
denote by $T_a u$ the operator acting on $u$ so that for each fixed $t$, 
$(T_a u)(t)=T_{a(t)}u(t)$. 

Our first result is a paralinearization of the water-waves system \eqref{system}. 
%we deduce from Proposition~\ref{prop:csystem} and Lemma~\ref{lemmDBC} the following paradifferential reduction.
\begin{prop}
\label{prop:csystem2}
Introduce
$$
U\defn \psi-T_{\mathfrak{B}}\eta.
$$
Then $(\eta,U)$ satisfies a system of the form 
\begin{equation}\label{t1bis}
\left\{
\begin{aligned}
&\partial_{t}\eta   +T_V \cdot\partialx \eta -T_{\lambda } U  =f_1,
\\
&\partial_{t}U+  T_V \cdot\partialx U + T_{h}\eta=f_2,
\end{aligned}
\right.
\end{equation}
with
\begin{equation*}
f_{1}\in L^\infty\big(0,T;H^{s+\frac{1}{2}}(\xR^d)\big),\quad 
f_{2}\in L^\infty\big(0,T;H^{s}(\xR^d)\big).
\end{equation*}
Moreover,
$$
\lA (f_{1},f_{2})\rA_{L^{\infty}(0,T;H^{s+\mez}\times H^{s})}\le 
C \left( \lA (\eta,\psi)\rA_{L^\infty(0,T;H^{s+\mez}\times H^{s})}\right),
$$
for some function $C$ depending only on $\dist (\Sigma_0,\Gamma)$.
\end{prop}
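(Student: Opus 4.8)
The plan is to paralinearize each of the two equations in \eqref{system} separately, using the already-established paralinearization of the Dirichlet--Neumann operator (Proposition~\ref{prop:csystem}) for the first equation and the paralinearization rules for products and $C^\infty$ functions (Theorem~\ref{lemPa}) for the nonlinear terms in the second, and then check that after introducing the good unknown $U=\psi-T_{\mathfrak B}\eta$ the various remainder terms land in the asserted spaces with the asserted estimates.

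\textbf{First equation.} Starting from $\partial_t\eta=G(\eta)\psi$, I would simply insert the identity
$$
G(\eta)\psi=T_\lambda\bigl(\psi-T_{\mathfrak B}\eta\bigr)-T_V\cdot\partialx\eta+f(\eta,\psi)
$$
from Proposition~\ref{prop:csystem}, with $f(\eta,\psi)\in H^{s+\mez}$ and the stated bound. Since $\psi-T_{\mathfrak B}\eta=U$ by the very definition of $U$, this immediately gives $\partial_t\eta+T_V\cdot\partialx\eta-T_\lambda U=f_1$ with $f_1:=f(\eta,\psi)\in L^\infty(0,T;H^{s+\mez})$ and the corresponding estimate. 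The only mild subtlety is that all estimates must be read uniformly in $t\in[0,T]$, which is fine because the constants in Proposition~\ref{prop:csystem} depend only on $\dist(\Sigma,\Gamma)$, and the latter stays bounded below on $[0,T]$ by assumption $H_t$.

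\textbf{Second equation.} This is the computational heart. I would paralinearize the equation for $\partial_t\psi$: the term $g\eta$ is already linear; $H(\eta)=\cnx(\partialx\eta/\sqrt{1+|\partialx\eta|^2})$ is paralinearized via Theorem~\ref{lemPa}(i) applied to the $C^\infty$ map, producing a leading paradifferential operator of order $2$ acting on $\eta$ plus an $H^{\ldots}$ remainder; and the two quadratic terms $\tfrac12|\partialx\psi|^2$ and $\tfrac12(\partialx\eta\cdot\partialx\psi+G(\eta)\psi)^2/(1+|\partialx\eta|^2)$ are handled by Theorem~\ref{lemPa}(ii), writing each product $ab=T_ab+T_ba+(\text{smoother})$, together with the paralinearization of $G(\eta)\psi$ again. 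This produces a paradifferential equation for $\psi$; the transport term $T_V\cdot\partialx\psi$ and the second-order term $T_h\eta$ (with $h$ the symbol built from $g$, the curvature symbol, and $V,\mathfrak B$) appear as the top-order contributions. The crucial algebraic step is then to pass from $\psi$ to the good unknown $U=\psi-T_{\mathfrak B}\eta$: one computes $\partial_t U=\partial_t\psi-T_{\partial_t\mathfrak B}\eta-T_{\mathfrak B}\partial_t\eta$ and checks that the extra terms generated by commuting $T_V\cdot\partialx$ past $T_{\mathfrak B}$ and by the $\partial_t$ hitting the symbol $\mathfrak B$ either cancel against contributions from the paralinearized nonlinearities (this is exactly the "good unknown of Alinhac" cancellation, already exploited in \S\ref{s3.2} and in Lemma~\ref{cancellation}) or are lower order, hence absorbable into $f_2$. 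One uses $\partial_t\eta=G(\eta)\psi$ to rewrite $\partial_t\mathfrak B$ and $\partial_t\eta$ in terms of spatial quantities so that no time derivatives of $\eta,\psi$ remain in the remainder, only their $L^\infty_t H^{s+\mez}_x\times L^\infty_t H^s_x$ norms.

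\textbf{Bookkeeping and the main obstacle.} After these reductions I would collect every error term and verify it lies in $L^\infty(0,T;H^{s+\mez})$ (first equation) or $L^\infty(0,T;H^s)$ (second equation), using the symbolic calculus estimates (Theorems~\ref{theo:sc0}, \ref{theo:sc}, \ref{theo:sc2}), the paraproduct mapping properties \eqref{iii}, \eqref{iv}, Lemma~\ref{negmu}, and the nonlinear estimates \eqref{pr}, \eqref{Fr}; the regularity budget is tight precisely because $s>2+d/2$ is the bare threshold, so one must track the $d/2$ losses carefully — e.g. the remainder in Theorem~\ref{lemPa}(ii) applied to $\partialx\psi\cdot\partialx\psi$ gains $(s-1)+(s-1)-d/2=2s-2-d/2\ge s$ exactly when $s\ge 2+d/2$. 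I expect the main obstacle to be organizing the many commutator and remainder terms in the second equation so that the order-$\tfrac32$ symbol $h$ emerges cleanly and all leftover pieces genuinely gain at least $s$ derivatives; keeping the good-unknown cancellation transparent (rather than drowning in symbol computations) is what makes this manageable, and the fact that the analogous cancellations were already isolated in the proof of Proposition~\ref{prop:csystem} is what I would lean on most heavily.
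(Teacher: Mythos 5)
Your plan is essentially the paper's proof, which is carried out through Lemma~\ref{paraH}, Lemma~\ref{lemmDBC} and Lemma~\ref{lemm:be0}, together with Proposition~\ref{prop:csystem} for the $\eta$-equation.

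One small but genuine difference in bookkeeping is worth flagging. For the term $\tfrac12(\partialx\eta\cdot\partialx\psi+G(\eta)\psi)^2/(1+|\partialx\eta|^2)$, the paper does not re-paralinearize $G(\eta)\psi$. It instead applies Theorem~\ref{lemPa}(i) to the $C^\infty$ function $F(a,b,c)=\tfrac12(a\cdot b+c)^2/(1+|a|^2)$, treating $c=G(\eta)\psi\in H^{s-1}$ as an undifferentiated argument. Because $\partial_c F=\mathfrak{B}$, the paraproduct $T_{\mathfrak{B}}G(\eta)\psi$ appears directly in the output, and when you compute $\partial_t U=\partial_t\psi-T_{\partial_t\mathfrak{B}}\eta-T_{\mathfrak{B}}\partial_t\eta$ with $\partial_t\eta=G(\eta)\psi$, that term cancels \emph{exactly} against $T_{\mathfrak{B}}\partial_t\eta$ with no paradifferential manipulation. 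Your phrase ``together with the paralinearization of $G(\eta)\psi$ again'' suggests decomposing $G(\eta)\psi$ \emph{inside} the square; that route would still close, but the cancellation would then be buried inside expressions like $T_{\mathfrak{B}}T_\lambda U$ and you would have to extract it by hand. Note also that this exact cancellation is not the identity $G(\eta)\mathfrak{B}=-\cnx V$ of Lemma~\ref{cancellation}, which the paper reserves for the uniqueness argument; it is simply the algebraic matching of $T_{\mathfrak{B}}G(\eta)\psi$. The remaining steps you outline — using $\partial_t\eta=G(\eta)\psi$ and the shape-derivative formula to rewrite $\partial_t\mathfrak{B}$ in spatial terms (this is Lemma~\ref{lemm:be0}), commuting $T_V\cdot\partialx$ past $T_{\mathfrak{B}}$, and verifying that the $d/2$-loss of the paralinearization remainders keeps everything in $H^{s+\frac12}\times H^s$ when $s>2+d/2$ — are exactly as in the paper.
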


We have already performed the paralinearization of the Dirichlet-Neumann operator. 
We now paralinearize the nonlinear terms which appear in the 
dynamic boundary condition. This step is much easier. 

\begin{lemm}\label{paraH}
There holds
$$
H( \eta)=-T_{h}\eta + f,
$$
where $h=h ^{(2)}+h ^{(1)}$ with
\begin{equation}\label{dh21}
\begin{aligned}
&h ^{(2)}= \left(1+|\partialx \eta |^2\right)^{-\frac{1}{2}}
\left(\la \xi\ra^2 - \frac{(\partialx \eta\cdot \xi)^2}{1+|\partialx \eta |^2}\right),\\
&h ^{(1)}=-\frac{i}{2}(\partial_x\cdot\partial_\xi) h ^{(2)}, 
\end{aligned}
\end{equation}
and 
$f\in L^\infty(0,T;H^{2s-2-d/2})$ is such that
\begin{equation}\label{ef}
\lA f\rA_{L^\infty(0,T;H^{2s-2-\frac{d}{2}})} \le C ( \lA \eta\rA_{L^{\infty}(0,T;H^{s+1/2})}),
\end{equation}
for some non-decreasing function $C$.
\end{lemm}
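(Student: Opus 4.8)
The plan is to paralinearize the mean curvature operator $H(\eta)=\cnx\!\bigl(\partialx\eta/\sqrt{1+|\partialx\eta|^2}\bigr)$ in two stages. First I would introduce the vector field $N(\eta)\defn \partialx\eta/\sqrt{1+|\partialx\eta|^2}$ and apply the Bony linearization formula for composition of a $C^\infty$ function with a Sobolev function, i.e. point (i) of Theorem~\ref{lemPa}, to each component. Writing $F_j(p)=p_j/\sqrt{1+|p|^2}$, and noting that $\partialx\eta\in H^{s-\mez}(\xR^d)$ with $s-\mez>\frac d2+1>\frac d2$, we get
$$
N_j(\eta)=F_j(0)+T_{F_j'(\partialx\eta)}\partialx\eta + R_j, \qquad R_j\in H^{2(s-\mez)-\frac d2}(\xR^d),
$$
and $F_j(0)=0$. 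Composing $T_{F_j'(\partialx\eta)}$ with $\partialx=\partial_{x_k}$ (a Fourier multiplier, which is an exact paradifferential operator) and using the composition Theorem~\ref{theo:sc} together with \eqref{iii}, the operator $T_{F_j'(\partialx\eta)}\partial_{x_k}$ can be written as $T_{c_{jk}}$ modulo an operator of order $\le -(s-\mez-\tfrac d2-1)$, where $c_{jk}(x,\xi)=F_j'(\partialx\eta)_k\,(i\xi_k)$ plus a subprincipal term coming from the one-term expansion in the composition formula. Then applying $\cnx=\sum_j\partial_{x_j}$ once more and again invoking Theorem~\ref{theo:sc}, I would collect everything into $H(\eta)=-T_{h}\eta+f$ with $h=h^{(2)}+h^{(1)}$, the principal symbol being
$$
h^{(2)}=-\sum_{j,k}(i\xi_j)(i\xi_k)\,\partial_{p_k}\!\Bigl(\tfrac{p_j}{\sqrt{1+|p|^2}}\Bigr)\Big|_{p=\partialx\eta}
=\bigl(1+|\partialx\eta|^2\bigr)^{-\mez}\Bigl(\la\xi\ra^2-\tfrac{(\partialx\eta\cdot\xi)^2}{1+|\partialx\eta|^2}\Bigr),
$$
where the last equality is a direct computation of the Hessian of $p\mapsto \text{(curvature integrand)}$ — equivalently, $h^{(2)}$ is the principal symbol of the linearized mean-curvature (minimal-surface-type) operator, which is classical. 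The subprincipal symbol $h^{(1)}$ is the sum of the $\tfrac1i\partial_\xi\partial_x$ correction terms produced by the two applications of Theorem~\ref{theo:sc}; a bookkeeping computation shows these combine to $-\tfrac i2(\partial_x\cdot\partial_\xi)h^{(2)}$, which is the expected form reflecting that $H$ differs from a self-adjoint operator only at lower order (cf. the analogous identity \eqref{Adlambda} for the Dirichlet--Neumann operator).

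For the remainder estimate, the error $f$ is assembled from: the remainders $R_j$ from Theorem~\ref{lemPa}(i), which lie in $H^{2(s-\mez)-\frac d2}=H^{2s-1-\frac d2}\subset H^{2s-2-\frac d2}$; the error terms of order $\le -(s-\mez-\tfrac d2-1)$ from the two composition steps applied to $\eta\in H^{s+\mez}$, which land in $H^{(s+\mez)+(s-\mez-\frac d2-1)}=H^{2s-1-\frac d2}\subset H^{2s-2-\frac d2}$; and the term $\cnx\bigl((T_{F_j'(\partialx\eta)}-T_{F_j'(\partialx\eta)})\cdots\bigr)$ — more precisely any contribution where a paraproduct symbol is truncated. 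Since $s>2+\tfrac d2$ guarantees $2s-2-\tfrac d2>s+\mez>\tfrac d2$, all of these are genuinely in the claimed space. The quantitative bound \eqref{ef} follows because every operator-norm constant produced by Theorems~\ref{theo:sc0}--\ref{theo:sc} is controlled by the semi-norms $M^m_\rho$ of the symbols $F_j'(\partialx\eta)$ and their derivatives, which in turn are bounded by $\|\partialx\eta\|_{W^{s-\mez,\infty}}\le C(\|\eta\|_{H^{s+\mez}})$ via the Sobolev embedding and the tame estimate \eqref{Fr} for $F_j'(\partialx\eta)$; taking the supremum over $t\in[0,T]$ (all bounds being uniform in $t$ once $\|\eta(t,\cdot)\|_{H^{s+\mez}}$ is controlled) yields \eqref{ef} with $C$ nondecreasing, depending only on the dimension and on $F$.

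The main obstacle — really the only delicate point — is the correct bookkeeping of the subprincipal symbol: one must verify that the $O(1)$-order corrections generated by applying Theorem~\ref{theo:sc} twice (once to turn $T_{F'(\partialx\eta)}\partial_{x_k}$ into a single paradifferential operator, once to apply the outer divergence) add up exactly to $h^{(1)}=-\tfrac i2(\partial_x\cdot\partial_\xi)h^{(2)}$ rather than to some other first-order symbol. I would handle this by working directly with the two-term symbolic expansion $a\#b=ab+\tfrac1i\partial_\xi a\,\partial_x b+\cdots$: write $N_j(\eta)\sim T_{\mu_j}\eta$ with $\mu_j(x,\xi)=i\xi_j\,G_j(\partialx\eta)$ for the appropriate scalar function $G_j$ obtained after the first composition (so $\mu_j$ has a principal part of order $1$ and a subprincipal part of order $0$), then $H(\eta)=\cnx N(\eta)\sim \sum_j T_{i\xi_j}T_{\mu_j}\eta \sim T_{\sum_j i\xi_j\#\mu_j}\eta$, and expand. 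Because the principal symbol $h^{(2)}$ is real and even in $\xi$, its $\tfrac1i(\partial_x\cdot\partial_\xi)$ correction is purely imaginary and matches the imaginary part forced by symmetry; checking that the real part of $h^{(1)}$ vanishes is then the residual computation, and it does because the second derivatives of the curvature integrand are symmetric. Once this symbol identity is pinned down, the rest is the routine regularity accounting described above.
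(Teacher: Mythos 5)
Your proposal is correct and its first step is exactly the paper's: apply Theorem~\ref{lemPa}(i) with $\alpha=s-\mez$ to get $\partialx\eta/\sqrt{1+|\partialx\eta|^2}=T_M\partialx\eta+\widetilde f$, where $M=DF(\partialx\eta)$ is the (symmetric) Jacobian and $\widetilde f\in H^{2s-1-d/2}$. Where you diverge is in the second step. You compute the symbol of $\cnx(T_M\partialx\eta)$ by two applications of the composition Theorem~\ref{theo:sc}, generating extra remainder terms and then flag the resulting subprincipal bookkeeping as "the only delicate point." The paper sidesteps all of this with an \emph{exact} identity: paraproducts obey an exact Leibniz rule $\partial_j(T_au)=T_{\partial_ja}u+T_a\partial_ju$, and $T_a\partial_ku=T_{a\,i\xi_k}u$ exactly when $a=a(x)$; putting these together gives $\cnx(T_M\partialx\eta)=T_{-M\xi\cdot\xi+i(\cnx M)\cdot\xi}\,\eta$ with no error at all. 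This hands you $h^{(2)}=M\xi\cdot\xi$ and $h^{(1)}=-i(\cnx M)\cdot\xi$ at once, and the relation $h^{(1)}=-\tfrac i2(\partial_x\cdot\partial_\xi)h^{(2)}$ is then a one-line consequence of the symmetry of $M$ (since $\partial_{\xi_j}(M\xi\cdot\xi)=2(M\xi)_j$), not the output of a two-step $\#$-expansion. The paper's remainder is simply $f=\cnx\widetilde f\in H^{2s-2-d/2}$, which is tight; your accounting reaches the same space but by a looser route, and it lumps $R_j$ itself (rather than $\cnx R_j$, which is what actually enters $f$) into the tally. Both arguments work, but recognizing the exact Leibniz identity collapses the proof to two lines and dissolves the step you singled out as delicate.
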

\begin{proof}
Theorem~\ref{lemPa} applied with $\alpha=s-1/2$ implies that
$$
\frac{\partialx\eta}{\sqrt{1 + | \partialx\eta |^2}} 
=T_M \partialx\eta + \widetilde{f}  
$$
where
$$
M= \frac{1}{\sqrt{1 + | \partialx \eta  |^2}}I-\frac{\partialx\eta\otimes\partialx\eta}{(1+|\partialx\eta|^2)^{3/2}},
$$
and $\tilde{f}\in L^\infty(0,T;H^{2s-1-\frac{d}{2}})$ is such that
\begin{equation*}
\lA \widetilde{f}\rA_{L^\infty(0,T;H^{2s-1-\frac{d}{2}})} \le C ( \lA \eta\rA_{L^{\infty}(0,T;H^{s+\mez})}),
\end{equation*}
for some non-decreasing function $C$. Since  
$$
\cnx (T_M \partialx \eta) = T_{- M \xi \cdot \xi+i \cnx M \xi} \eta,
$$
we obtain the desired result with  
$h^{(2)}=  M \xi \cdot \xi$, $h^{(1)}=-i \cnx M \xi$ and $f=\cnx \tilde{f}$.
\end{proof}

Recall the notations
$$
\mathfrak{B}=\frac{\partialx \eta \cdot\partialx \psi+G(\eta) \psi}{1+|\partialx  \eta|^2}, 
\quad 
V=\partialx \psi -\mathfrak{B} \partialx\eta.
$$

\begin{lemm}\label{lemmDBC}
We have
\begin{align*}
&\mez \la \partialx \psi\ra^2 -\mez \frac{\left( \partialx \eta \cdot \partialx \psi + G(\eta)\psi \right)^2}{1+\la \partialx\eta\ra^2} 
\\
&\qquad= T_V\cdot \partialx \psi - T_\mathfrak{B}T_V\cdot \partialx  \eta  -T_{\mathfrak{B}} G(\eta)\psi 
 +f',
\end{align*}
with $f'\in L^{\infty}(0,T;H^{2s-2-\frac{d}{2}}(\xR^d))$ satisfies
$$
\lA f'\rA_{L^\infty(0,T;H^{2s-2-\frac{d}{2}})} \le C \left( \lA (\eta,\psi)\rA_{L^{\infty}(0,T;H^{s+\mez}\times H^s)}\right),
$$
for some non-decreasing function $C$.
\end{lemm}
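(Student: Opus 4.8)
The strategy is to paralinearize each of the two quadratic expressions appearing on the left-hand side, using the product rule (ii) of Theorem~\ref{lemPa} together with the paralinearization of the Dirichlet-Neumann operator already obtained in Proposition~\ref{prop:csystem}, and then to reorganize the paraproducts so that the combination $V=\partialx\psi-\mathfrak{B}\partialx\eta$ emerges as the symbol. First I would treat $\mez\la\partialx\psi\ra^2$: by Theorem~\ref{lemPa}~(ii), $\partial_{x_i}\psi\,\partial_{x_i}\psi = 2T_{\partialx\psi}\cdot\partialx\psi$ modulo $H^{2(s-1)-d/2}\subset H^{2s-2-d/2}$, hence $\mez\la\partialx\psi\ra^2 = T_{\partialx\psi}\cdot\partialx\psi + f'_1$. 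Next I would expand the second term. Writing $W\defn\partialx\eta\cdot\partialx\psi+G(\eta)\psi$, note that $W=(1+\la\partialx\eta\ra^2)\mathfrak{B}$, so that $\mez\, W^2/(1+\la\partialx\eta\ra^2)=\mez\,\mathfrak{B}\,W$. Paralinearizing the product $\mathfrak{B}\,W$ and then $W=\partialx\eta\cdot\partialx\psi+G(\eta)\psi$ term by term (again via Theorem~\ref{lemPa}~(ii), noting $\mathfrak{B}\in H^{s-1}$, $G(\eta)\psi\in H^{s-1}$ by Proposition~\ref{estDN}, and $\partialx\eta,\partialx\psi\in H^{s-1/2},H^{s-1}$ respectively, all with products landing in $H^{2s-2-d/2}$) produces, modulo the allowed remainder,
$$
\mez\frac{W^2}{1+\la\partialx\eta\ra^2}
= T_{\mathfrak{B}}\bigl(T_{\partialx\eta}\cdot\partialx\psi\bigr)
+ T_{\mathfrak{B}}\bigl(T_{\partialx\psi}\cdot\partialx\eta\bigr)
+ T_{\mathfrak{B}}G(\eta)\psi
+ T_{W}\mathfrak{B} + f'_2 .
$$

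The remaining algebra is to combine these two expansions and collapse the $\mathfrak{B}$-dependent paraproducts into $T_V$. Using the definition $V=\partialx\psi-\mathfrak{B}\partialx\eta$ and the paraproduct identities $T_{\partialx\psi}=T_V+T_{\mathfrak{B}\partialx\eta}$ (valid modulo smoothing operators by \eqref{iii}, since $\mathfrak{B}\partialx\eta\in H^{s-1}$ with $s-1>d/2$), one checks
$$
T_{\partialx\psi}\cdot\partialx\psi - T_{\mathfrak{B}}\bigl(T_{\partialx\eta}\cdot\partialx\psi\bigr)
= T_V\cdot\partialx\psi
$$
modulo $H^{2s-2-d/2}$, using also that $T_{\mathfrak{B}\partialx\eta}\cdot\partialx\psi - T_{\mathfrak{B}}(T_{\partialx\eta}\cdot\partialx\psi)$ is smoothing. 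Likewise the two terms $-T_{\mathfrak{B}}\bigl(T_{\partialx\psi}\cdot\partialx\eta\bigr)$ and $-T_{W}\mathfrak{B}$ must be matched against $-T_{\mathfrak{B}}T_V\cdot\partialx\eta$: here I would again substitute $T_{\partialx\psi}\cdot\partialx\eta=T_V\cdot\partialx\eta+T_{\mathfrak{B}\partialx\eta}\cdot\partialx\eta$ and absorb $-T_W\mathfrak{B}-T_{\mathfrak{B}}(T_{\mathfrak{B}\partialx\eta}\cdot\partialx\eta)$ into the remainder, which requires checking that these extra terms are either smoother than $H^{2s-2-d/2}$ or cancel among themselves at the level of principal symbols (the cancellation being precisely the reason the good unknown is natural). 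Collecting everything gives exactly
$$
T_V\cdot\partialx\psi - T_{\mathfrak{B}}T_V\cdot\partialx\eta - T_{\mathfrak{B}}G(\eta)\psi + f',
$$
as claimed.

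Throughout, the quantitative bound on $f'$ follows by tracking constants: each application of Theorem~\ref{lemPa}~(ii) and of the paraproduct rules \eqref{iii}, \eqref{iv}, together with the estimate in Proposition~\ref{estDN} for $\lA G(\eta)\psi\rA_{H^{s-1}}$ and the elementary estimate $\lA\mathfrak{B}\rA_{H^{s-1}}\le C(\lA\eta\rA_{H^{s+\mez}})\lA\partialx\psi\rA_{H^{s-1}}$ coming from \eqref{pr} and \eqref{Fr}, contributes a factor of the stated nondecreasing form; the dependence on $\dist(\Sigma_0,\Gamma)$ enters only through the constant in Proposition~\ref{estDN}. The main obstacle, and the only genuinely delicate point, is the bookkeeping in the second paragraph: one must be careful that the non-associativity of paraproducts ($T_{\mathfrak{B}}(T_a b)\ne T_{\mathfrak{B}a}b$ in general) only produces errors of order $\le -(s-1-d/2)$ relative to the main term — i.e. in $H^{2s-2-d/2}$ — which is exactly where the hypothesis $s>2+d/2$ is used to guarantee $s-1-d/2>0$ so that these commutator-type remainders are indeed smoothing. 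Getting every one of the roughly half-dozen such rearrangements to land in $H^{2s-2-d/2}$ rather than merely $H^{s+\text{something}}$ is where the care is needed, but each individual step is a routine application of the composition and paraproduct theorems recalled in \S\ref{s2}.
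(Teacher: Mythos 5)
Your approach is genuinely different from the paper's and, as written, has a gap that is not merely a matter of bookkeeping. The paper applies the \emph{chain-rule} paralinearization (Theorem~\ref{lemPa}~(i), in its multivariable form) directly to the $C^\infty$ function
$$
F(a,b,c)=\mez\,\frac{(a\cdot b+c)^2}{1+|a|^2},
$$
computes $\partial_a F=\mathfrak{B}V$, $\partial_b F=\mathfrak{B}\partialx\eta$, $\partial_c F=\mathfrak{B}$ at $(a,b,c)=(\partialx\eta,\partialx\psi,G(\eta)\psi)$, and thus obtains in a single stroke
$$
\mez\,\frac{W^2}{1+|\partialx\eta|^2}=T_{\mathfrak{B}V}\cdot\partialx\eta+T_{\mathfrak{B}\partialx\eta}\cdot\partialx\psi+T_{\mathfrak{B}}G(\eta)\psi+r,
$$
with the crucial term $T_{\mathfrak{B}V}\cdot\partialx\eta$ arising automatically because the derivative $\partial_a F$ sees \emph{both} the $a$-dependence in the numerator $W$ and in the denominator $1+|a|^2$. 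Combining with $\mez|\partialx\psi|^2=T_{\partialx\psi}\cdot\partialx\psi+\text{(smooth)}$ and $T_{\partialx\psi}-T_{\mathfrak{B}\partialx\eta}=T_V$, then replacing $T_{\mathfrak{B}V}$ by $T_{\mathfrak{B}}T_V$ via~\eqref{iii}, finishes the proof.

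Your route instead writes $\mez W^2/(1+|\partialx\eta|^2)=\mez\,\mathfrak{B}W$ and applies the \emph{product-rule} paralinearization (ii) iteratively. Two things go wrong with your intermediate formula. First, the factor $\mez$ on the left does not survive to the right: paralinearizing $\mez\,\mathfrak{B}W$ by (ii) yields $\mez T_{\mathfrak{B}}W+\mez T_W\mathfrak{B}+r_1$, so every term on your right-hand side should carry a $\mez$; as displayed, the two sides do not even agree at the level of the Bony decomposition. Second, and more seriously, the term $T_W\mathfrak{B}$ (or $\mez T_W\mathfrak{B}$) cannot be ``absorbed into the remainder'': $W$ and $\mathfrak{B}$ are both only in $H^{s-1}$, so $T_W\mathfrak{B}\in H^{s-1}$, while the remainder class is $H^{2s-2-d/2}$, which under $s>2+d/2$ (indeed under $s>1+d/2$) is \emph{strictly smaller} than $H^{s-1}$. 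Nor do the offending terms ``cancel at the level of principal symbols'' as you assert: to extract the missing contribution to $-T_{\mathfrak{B}V}\cdot\partialx\eta$ from $T_W\mathfrak{B}$, one has to paralinearize $\mathfrak{B}$ itself as a nonlinear function of $(\partialx\eta,\partialx\psi,G(\eta)\psi)$ --- which re-introduces exactly the division by $1+|\partialx\eta|^2$ that your substitution $\mez W^2/(1+|\partialx\eta|^2)=\mez\,\mathfrak{B}W$ was meant to sidestep, and then to assemble the halves carefully. Done correctly this does recover the paper's formula, but it is noticeably more work; the single chain-rule application in the paper avoids the entire issue.
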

\begin{proof}
Again, we shall use the paralinearization lemma. Note that for
$$
F (a, b,c) = \mez   \frac{ (a \cdot b+c)^2}{1 + | a |^2} \quad (a\in\xR^d,b\in\xR^d,c\in\xR)
$$
there holds
$$
\partial_ a F  =  \frac{  (a \cdot b+c) }{1 + | a |^2}  \Big( b - \frac{  (a \cdot b+c) }{1 + | a |^2} a \Big), 
~
\partial_b F =   \frac{   (a \cdot b+c)}{1 + | a |^2}  a ,
~
\partial_c F =   \frac{   (a \cdot b+c)}{1 + | a |^2}. 
$$
Using these identities for $a = \partialx \eta$, $b =\partialx \psi$ and $c=G(\eta)\psi$, 
the paralinearization lemma (cf $(i)$ in Theorem~\ref{lemPa})
implies that
$$
\mez \frac{\left( \partialx \eta \cdot \partialx \psi + G(\eta)\psi \right)^2}{1+\la \partialx\eta\ra^2}
=
\left\{ T _{ V \mathfrak{B}} \cdot \partialx \eta + T_{\mathfrak{B}\partialx\eta}\partialx\psi 
+ T_{\mathfrak{B}}G(\eta)\psi \right\} 
+r,
$$
with $r \in L^{\infty}(0,T;H^{2s-2-\frac{d}{2}}(\xR^d))$ satisfies the desired estimate. 
Since $V=\partialx\psi -\mathfrak{B}\partialx\eta$, this yields
$$
\mez \la \partialx \psi\ra^2 -\mez \frac{\left( \partialx \eta \cdot \partialx \psi + G(\eta)\psi \right)^2}{1+\la \partialx\eta\ra^2}
=\left\{T_{V } \cdot\partialx \psi
- T _{ V \mathfrak{B}} \cdot \partialx \eta 
-T_{\mathfrak{B}}G(\eta)\psi \right\} 
+r'
$$
with $r'\in  L^{\infty}(0,T;H^{2s-2-\frac{d}{2}}(\xR^d))$. 
Since by \eqref{iii} 
$$
T _{ \mathfrak{B}V} - T_{\mathfrak{B}}T_{V}\quad\text{is of order }-\left(s -1-\frac{d}{2}\right),
$$
this completes the proof. 
\end{proof}

\begin{lemm}\label{lemm:be0}
There exists a function $C$ such that,
$$
\lA T_{\partial_{t}\mathfrak{B}}^{} \eta\rA _{H^s}\le 
C\left( \lA (\eta,\psi)\rA_{H^{s+\mez}\times H^{s}}\right).
$$
\end{lemm}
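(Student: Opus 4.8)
The plan is to eliminate the time derivatives in $\partial_t\mathfrak{B}$ by means of the equations \eqref{system}, to show that $\partial_t\mathfrak{B}$ lies in $H^{s-\frac52}(\xR^d)$ with norm controlled by a nondecreasing function of $\lA(\eta,\psi)\rA_{H^{s+\mez}\times H^{s}}$, and then to conclude with the low-regularity paraproduct estimate of Lemma~\ref{negmu}. The final step is painless: since $s>2+\frac d2$ one has $s-\frac52>\frac d2-\mez$, hence $\partial_t\mathfrak{B}\in H^{\frac d2-\mez}(\xR^d)$, and Lemma~\ref{negmu} with $m=\mez$ and $\mu=s+\mez$ gives $T_{\partial_t\mathfrak{B}}\eta\in H^s$ together with $\lA T_{\partial_t\mathfrak{B}}\eta\rA_{H^s}\le K\lA\partial_t\mathfrak{B}\rA_{H^{\frac d2-\mez}}\lA\eta\rA_{H^{s+\mez}}$.

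So the work is to bound $\partial_t\mathfrak{B}$ in $H^{s-\frac52}$. First I would record, from \eqref{system}, that $\partial_t\eta=G(\eta)\psi\in H^{s-1}(\xR^d)$ by Proposition~\ref{estDN}, and that
$$\partial_t\psi=-g\eta+H(\eta)-\mez|\partialx\psi|^2+\mez\frac{(\partialx\eta\cdot\partialx\psi+G(\eta)\psi)^2}{1+|\partialx\eta|^2}\in H^{s-\frac32}(\xR^d),$$
the limiting contribution being $H(\eta)\in H^{s-\frac32}$ (two derivatives falling on $\eta\in H^{s+\mez}$, composed with smooth functions of $\partialx\eta$), the quadratic terms being better, in $H^{s-1}$, by the product rule \eqref{pr} since $s-1>\frac d2$. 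All the norms appearing below will then be bounded by a nondecreasing function of $\lA(\eta,\psi)\rA_{H^{s+\mez}\times H^{s}}$.

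Next I would differentiate $\mathfrak{B}=(\partialx\eta\cdot\partialx\psi+G(\eta)\psi)/(1+|\partialx\eta|^2)$ in $t$ by the chain and quotient rules. Every resulting term is a product of factors drawn from $\partialx\partial_t\eta\in H^{s-2}$, $\partialx\partial_t\psi\in H^{s-\frac52}$, $\partialx\eta\in H^{s-\mez}$, $\partialx\psi\in H^{s-1}$, and $1/(1+|\partialx\eta|^2)$ together with its powers (which lie in $H^{s-\mez}$ by \eqref{Fr}), plus the one genuinely nonlinear term $\partial_t(G(\eta)\psi)=(dG(\eta)\psi)\cdot\partial_t\eta+G(\eta)\partial_t\psi$. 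For the first piece I would use the shape-derivative formula of Proposition~\ref{Lannes1}, namely $dG(\eta)\psi\cdot h=-G(\eta)(\mathfrak{B}h)-\cnx(V h)$ with $h=\partial_t\eta\in H^{s-1}$: since $\mathfrak{B},V\in H^{s-1}$ and $s-1>\frac d2$, the products $\mathfrak{B}h$ and $Vh$ lie in $H^{s-1}$, so this piece lies in $H^{s-2}$ by Proposition~\ref{estDN}. For the second piece, $\partialx\partial_t\psi\in H^{s-\frac52}$ gives $G(\eta)\partial_t\psi\in H^{s-\frac52}$ by Proposition~\ref{estDN}, applied with $\sigma=s-\frac32$ (which satisfies $1\le\sigma\le s$ because $s>\frac52$). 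Feeding these back and using \eqref{pr} repeatedly, every summand of $\partial_t\mathfrak{B}$ belongs to $H^{s-\frac52}$, the weakest summand being the one containing $G(\eta)\partial_t\psi$; this yields $\lA\partial_t\mathfrak{B}\rA_{H^{s-\frac52}}\le C\bigl(\lA(\eta,\psi)\rA_{H^{s+\mez}\times H^{s}}\bigr)$ and hence the lemma.

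The only real difficulty is the bookkeeping of regularity indices: the worst loss comes from $G(\eta)\partial_t\psi$, which costs two derivatives of $\psi$ (through $\partial_t\psi$) plus the one derivative of $G(\eta)$, landing in $H^{s-\frac52}$, and one must check that this is still admissible, i.e. $s-\frac52\ge\frac d2-\mez$. This inequality is exactly the hypothesis $s>2+\frac d2$, and it is what makes the application of Lemma~\ref{negmu} in the last step possible.
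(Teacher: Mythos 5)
Your proof is correct and follows the same three-step route as the paper: first establish $\partial_t\eta\in H^{s-1}$, $\partial_t\psi\in H^{s-\frac32}$, $\mathfrak{B},V\in H^{s-1}$; then bound $\partial_t\mathfrak{B}$ in $H^{s-\frac52}$ via the shape-derivative formula and Proposition~\ref{estDN} (the paper writes the single identity $\partial_t[G(\eta)\psi]=G(\eta)(\partial_t\psi-\mathfrak{B}\partial_t\eta)-\cnx(V\partial_t\eta)$ where you split into $(dG(\eta)\psi)\cdot\partial_t\eta$ and $G(\eta)\partial_t\psi$, but this is the same computation); and finally apply Lemma~\ref{negmu} with $m=\mez$, using that $s-\frac52>\frac d2-\mez$ is equivalent to $s>2+\frac d2$.
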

\begin{proof}\textbf{a)}
We claim that
\begin{equation}\label{c43}
\lA \partial_{t}\eta\rA_{H^{s-1}}+\lA \partial_{t}\psi\rA_{H^{s-\tdm}}+
\lA \mathfrak{B}\rA_{H^{s-1}}
+\lA V\rA_{H^{s-1}}
\le  C\left( \lA(\eta,\psi)\rA_{H^{s+\mez}\times H^s}\right).
\end{equation}
The proof of this claim is straightforward. Indeed, recall that 
$$
\mathfrak{B}=\frac{\partialx \eta \cdot\partialx \psi+G(\eta)\psi}{1+|\partialx  \eta|^2}.
$$
It follows from Proposition~\ref{estDN} that we have the estimate
$$
\lA G(\eta)\psi\rA_{H^{s-1}}\le  C\left( \lA ( \eta,\psi)\rA_{H^{s+\mez}\times H^s}\right).
$$
Using that $H^{s-1}$ is an algebra since $s-1>d/2$, we thus get the desired estimate for $\mathfrak{B}$. 
This in turn implies that $V= \partialx \psi-\mathfrak{B}\partialx\eta$ satisfies the desired estimate. In addition, 
since $\partial_t \eta=G(\eta)\psi$, this gives the estimate of $\lA \partial_{t}\eta\rA_{H^{s-1}}$. 
To estimate $\partial_t \psi$ we simply write that
$$
\partial_t \psi = F(\partialx\psi,\partialx\eta,\partialx^2\eta),
$$
for some $C^\infty$ function $F$ vanishing at the origin. Consequently, since $s-3/2>d/2$, the usual nonlinear rule in Sobolev space implies that
$$
\lA \partial_{t}\psi\rA_{H^{s-3/2}}\le 
C\left( \lA (\partialx\psi,\partialx\eta,\partialx^2\eta)\rA_{H^{s-3/2}}\right) \le 
C\left( \lA ( \eta,\psi)\rA_{H^{s+\mez}\times H^s}\right).
$$

\smallbreak
\noindent \textbf{b)} We are now in position to estimate $\partial_t \mathfrak{B}$. We claim that
\begin{equation}\label{c44}
\lA \partial_t \mathfrak{B}\rA _{H^{s- \frac 5 2}}\le 
C\left( \lA (\eta,\psi)\rA_{H^{s+\mez}\times H^{s}}\right).
\end{equation}
In view of \eqref{c43} and the product rule \eqref{pr}, the only non 
trivial point is to estimate $\partial_{t}[G(\eta)\psi]$. To do so, we use the identity for the 
shape derivative of the Dirichlet-Neumann (see \S\ref{s.2.3}) to obtain
$$
\partial_{t}[G(\eta)\psi] = 
G(\eta)\left( \partial_t \psi-\mathfrak{B}\partial_{t}\eta \right) - \cnx( V\partial_{t}\eta). 
$$
Therefore \eqref{c43} and  the boundedness of $G(\eta)$ on Sobolev spaces (cf  Proposition~\ref{estDN}) 
imply that 
$$
\lA \partial_{t}[G(\eta)\psi]\rA _{H^{s- \frac 5 2}}\le 
C\left( \lA (\eta,\psi)\rA_{H^{s+\mez}\times H^{s}}\right).
$$
This proves \eqref{c44}.

\smallbreak
\noindent \textbf{c)} Next we use Lemma~\ref{negmu} with $m=1/2$ (which asserts that 
if $a\in H^{\frac{d}{2}-\mez}(\xR^d)$ then the paraproduct $T_{a}$ is of order $\leo 1/2$). 
Therefore, since by assumption $s-5/2>d/2-1/2$ for all $d\ge 1$, we conclude
$$
\lA T_{\partial_{t}\mathfrak{B}}^{}\eta\rA _{H^s}\le 
 \lA T_{\partial_{t}\mathfrak{B}}\rA _{H^{s+\mez}\rightarrow H^s}
\lA \eta\rA _{H^{s+\mez}}\le 
C\left( \lA (\eta,\psi)\rA_{H^{s+\mez}\times H^{s}}\right).
$$
This completes the proof.
\end{proof}

\section{Symmetrization}\label{Sym}
Consider a given solution 
$(\eta,\psi)$ of ~\eqref{system} on the time interval $[0,T]$ with $0<T<+\infty$, 
such that 
$$
(\eta,\psi)\in C^0\big([0,T];H^{s+\mez}(\xR^d)\times H^{s}(\xR^d)\big),
$$
for some $s>2+d/2$, with $d\ge 1$. We proved in Proposition~\ref{prop:csystem2} that $\eta$ and $U=\psi-T_\mathfrak{B} \eta$ 
satisfy the system 
\begin{equation}\label{t10}
\left(\partial_{t}   +T_V \cdot\partialx\right)\begin{pmatrix}  \eta \\ U \end{pmatrix} 
+ \begin{pmatrix} 0 & -T_\lambda \\ T_h & 0 \end{pmatrix} \begin{pmatrix}  \eta \\ U \end{pmatrix} =f,
\end{equation}
where $f\in L^{\infty}(0,T;H^{s+\mez}(\xR^d)\times H^s(\xR^d))$. 
The main result of this section is that there exists a symmetrizer $S$ of the form  
$$
S=\begin{pmatrix} T_p & 0 \\ 0 & T_q \end{pmatrix},
$$
which conjugates $\left(\begin{smallmatrix} 0 &-T_\lambda  \\  T_h & 0 \end{smallmatrix}\right)$ 
to a skew-symmetric operator. Indeed we shall prove that there exists $S$ such that, modulo admissible remainders, 
$$
S 
\begin{pmatrix} 0 &-T_\lambda  \\  T_h & 0 \end{pmatrix} 
\simeq \begin{pmatrix} 0 & -T_\gamma \\ (T_\gamma)^* & 0 \end{pmatrix} 
S.
$$
In addition, we shall obtain that the new unknown 
$$
\Phi=S \begin{pmatrix}  \eta \\ U \end{pmatrix}
$$
satisfies a system of the form
\begin{equation}\label{t11}
\partial_{t}\Phi   +T_V \cdot\partialx \Phi
+\begin{pmatrix} 0 & -T_\gamma \\ (T_\gamma)^* & 0 \end{pmatrix}\Phi =F,
\end{equation}
with $F\in L^{\infty}(0,T;H^{s}(\xR^d)\times H^s(\xR^d))$; morevoer 
$\lA (\eta,\psi)\rA_{H^{s+\mez}\times H^s}$ is controlled by means 
of~$\lA \Phi\rA_{H^s}$. 

This symmetrization has many consequences. 
In particular, in the following sections, we shall deduce our two main results from this symmetrization.

\subsection{Symbolic calculus with low regularity}
All the symbols which we consider below are of the form
$$
a=a^{(m)}+a^{(m-1)}
$$ 
where
\begin{enumerate}[(i)]
\item $a^{(m)}$ is a real-valued  elliptic symbol, homogenous of degree $m$ in $\xi$ and depends only on the first order-derivatives of $\eta$;
\item $a^{(m-1)}$ is homogenous of degree $m-1$ in $\xi$ and depends also, but only linearly, on the second order-derivatives of $\eta$.
\end{enumerate}
Recall that in this section 
$\eta\in C^{0}([0,T];H^{s+\mez}(\xR^d))$ is a fixed given function.

%As we shall explain in the proofs, 
%Lemma~\ref{lemm:be0},  Proposition~\ref{2d21}, Proposition~\ref{2d22} and Lemma~\ref{comdtp} have simpler proofs for $s>3+d/2$. 

\begin{defi}\label{defiSigma}
Given $m\in \xR$, $\Sigma^m$ denotes the class of symbols $a$ of the form
$$
a=a^{(m)}+a^{(m-1)}
$$
with
$$
a^{(m)}(t,x,\xi)= F(\partialx\eta(t,x),\xi), \quad 
a^{(m-1)}(t,x,\xi)=\sum_{\la \alpha\ra=2}G_\alpha(\partialx\eta(t,x),\xi)\partial_x^\alpha \eta (t,x),
$$
such that
\begin{enumerate}[(i)]
\item $T_a$ maps real-valued functions to real valued functions;
\item $F$ is a $C^\infty$ real-valued function of $(\zeta,\xi)\in \xR^d\times (\xR^d\setminus 0)$, 
homogeneous of order $m$  in $\xi$; and such that there exists a continuous function $K=K(\zeta)>0$ such that 
$$
F(\zeta,\xi)\ge K (\zeta)\la \xi\ra^m,
$$
for all $(\zeta,\xi)\in \xR^d \times  (\xR^d\setminus 0)$;
\item$G_\alpha$ is a $C^\infty$ complex-valued function of $(\zeta,\xi)\in \xR^d\times (\xR^d\setminus 0)$, 
homogeneous of order $m-1$ in $\xi$. 
\end{enumerate}
\end{defi}

Notice that, as we only assume $s>2 +d/2$, some technical difficulties appear. 
To overcome these problems, the observation that for all our symbols, the sub-principal terms 
have only a linear dependence on the second order derivative of $\eta$ will play a crucial role. 

Our first result contains the important observation that the previous class of symbols is stable 
by the standard rules of symbolic calculus (this explains why all the symbols which we shall introduce below 
are of this form). We shall state a symbolic calculus result modulo admissible remainders. 
To clarify the meaning of admissible remainder, we introduce the following notation.
\begin{defi}\label{defisym}
Let $m\in\xR$ and consider two families of operators order $\leo m$, 
$$
\{ A(t) \,:\, t\in [0,T]\},\quad 
\{ B(t) \,:\, t\in [0,T]\}.
$$
We shall say that $A\sim B$ if $A-B$ is of order $m-3/2$ (see Definition~\ref{defi:order}) and satisfies the following estimate: for all $\mu\in\xR$, 
there exists a continuous function $C$ such that
$$
\lA A(t)-B(t)\rA_{H^{\mu}\rightarrow H^{\mu-m+\tdm}}\le C\left( \lA \eta(t)\rA_{H^{s+\mez}}\right),
$$
for all $t\in [0,T]$.
\end{defi}

\begin{prop}\label{pcs}
Let $m,m'\in\xR$. Then
\begin{enumerate}
\item If $a\in \Sigma^m$ and $b\in \Sigma^{m'}$ then 
$T_a T_b\sim T_{a\sharp b}$ where $a\sharp b\in \Sigma^{m+m'}$ is given by 
$$
a\sharp b =a^{(m)}b^{(m')}+a^{(m-1)}b^{(m')}+a^{(m)}b^{(m'-1)}+\frac{1}{i}
\partial_\xi a^{(m)}\cdot \partial_x b^{(m')}.
$$
\item If $a\in \Sigma^m$ then $(T_a)^* \sim T_b$ where $b\in \Sigma^m$ is given by
$$
b=a^{(m)}+\overline{a^{(m-1)}} + \frac{1}{i}(\partial_x\cdot \partial_\xi) a^{(m)}.
$$
\end{enumerate}
\end{prop}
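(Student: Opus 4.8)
The plan is to deduce both statements from the quantitative symbolic calculus of M\'etivier (Theorems~\ref{theo:sc} and~\ref{theo:sc2}), the only subtlety being that our symbols have merely Sobolev regularity, so we must track how many derivatives of $\eta$ enter each term and check that the remainders land in the class described in Definition~\ref{defisym}. First I would record the regularity budget: if $a=a^{(m)}+a^{(m-1)}\in\Sigma^m$, then $a^{(m)}=F(\partialx\eta,\xi)$ depends on $\partialx\eta\in H^{s-\mez}$, and since $s-\mez>\tdm+d/2$ we get $a^{(m)}\in\Gamma^m_{\tdm}$ (in fact $\Gamma^m_{s-\mez}$, but $\tdm$ is all we need and all we can use uniformly), with $M^m_{\tdm}(a^{(m)})\le C(\lA\eta\rA_{H^{s+\mez}})$ by the chain rule for $W^{\rho,\infty}$ composition. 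The subprincipal term $a^{(m-1)}=\sum_{|\alpha|=2}G_\alpha(\partialx\eta,\xi)\partial_x^\alpha\eta$ is a product of a factor in $\Gamma^{m-1}_{\tdm}$ (smooth in $\partialx\eta$) with $\partial_x^\alpha\eta\in H^{s-\tdm}$; since $s-\tdm>\mez+d/2$ this gives $a^{(m-1)}\in\Gamma^{m-1}_{\mez}$ with semi-norm controlled by $\lA\eta\rA_{H^{s+\mez}}$. These are exactly the two orders of regularity, $\tdm$ and $\mez+$, appearing in the definition of $\sim$ (loss of $\tdm$).

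Next, for part (1), I would write $T_aT_b=(T_{a^{(m)}}+T_{a^{(m-1)}})(T_{b^{(m')}}+T_{b^{(m'-1)}})$ and treat the four products. For $T_{a^{(m)}}T_{b^{(m')}}$, apply Theorem~\ref{theo:sc} with $\rho=\tdm$: the remainder $T_{a^{(m)}}T_{b^{(m')}}-T_{a^{(m)}\sharp_{3/2} b^{(m')}}$ is of order $m+m'-\tdm$ with norm $\le K M^m_{\tdm}(a^{(m)})M^{m'}_{\tdm}(b^{(m')})$, hence admissible; and $a^{(m)}\sharp_{3/2}b^{(m')}=a^{(m)}b^{(m')}+\frac1i\partial_\xi a^{(m)}\cdot\partial_x b^{(m')}$ plus a second-order term of order $m+m'-2$, which is itself an admissible remainder. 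For $T_{a^{(m-1)}}T_{b^{(m')}}$ and $T_{a^{(m)}}T_{b^{(m'-1)}}$, use Theorem~\ref{theo:sc} with $\rho=\mez+\delta$ (permitted since $a^{(m-1)}\in\Gamma^{m-1}_{\mez+\delta}$): the remainder has order $m+m'-1-(\mez+\delta)<m+m'-\tdm$, admissible, and the leading terms are $a^{(m-1)}b^{(m')}$ and $a^{(m)}b^{(m'-1)}$. Finally $T_{a^{(m-1)}}T_{b^{(m'-1)}}$ is directly of order $m+m'-2$, admissible. Summing and collecting the terms of order $\ge m+m'-\tdm$ gives $a\sharp b$ as stated; one then checks $a\sharp b\in\Sigma^{m+m'}$, i.e. that $a^{(m)}b^{(m')}$ and $\frac1i\partial_\xi a^{(m)}\cdot\partial_x b^{(m')}$ have the required structural form (the second depends linearly on $\partial_x^2\eta$ through $\partial_x b^{(m')}=\partial_x[F_b(\partialx\eta,\xi)]$ which is linear in $\partial_x^2\eta$, times a smooth function of $\partialx\eta$), and that $a^{(m-1)}b^{(m')}+a^{(m)}b^{(m'-1)}$ is of the required form; reality of $T_{a\sharp b}$ on real functions follows from composing the reality statements for $T_a$ and $T_b$.

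For part (2), the argument is parallel using Theorem~\ref{theo:sc2}: write $(T_a)^*=(T_{a^{(m)}})^*+(T_{a^{(m-1)}})^*$. Apply the adjoint theorem to $a^{(m)}\in\Gamma^m_{\tdm}$ with $\rho=\tdm$: the remainder has order $m-\tdm$ with the right quantitative bound, and the leading terms are $\overline{a^{(m)}}+\frac1i(\partial_x\cdot\partial_\xi)\overline{a^{(m)}}$ plus a term of order $m-2$; since $a^{(m)}$ is real-valued this is $a^{(m)}+\frac1i(\partial_x\cdot\partial_\xi)a^{(m)}$ up to an admissible remainder. Apply the adjoint theorem to $a^{(m-1)}\in\Gamma^{m-1}_{\mez+\delta}$ with $\rho=\mez+\delta$: the remainder has order $m-1-(\mez+\delta)<m-\tdm$, admissible, and the leading term is $\overline{a^{(m-1)}}$. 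Summing gives $b=a^{(m)}+\overline{a^{(m-1)}}+\frac1i(\partial_x\cdot\partial_\xi)a^{(m)}$; one checks $b\in\Sigma^m$ (the term $\frac1i(\partial_x\cdot\partial_\xi)a^{(m)}$ is again linear in $\partial_x^2\eta$ with smooth coefficient in $\partialx\eta$, hence of subprincipal type) and that $T_b$ preserves real-valuedness, which follows since $(T_a)^*$ does whenever $T_a$ does on the real Hilbert-space structure. The main obstacle here is bookkeeping rather than ideas: one must be careful that the ``extra'' derivative hidden in the $x$-derivatives of the symbolic calculus formulas ($\partial_x b^{(m')}$, $\partial_x\cdot\partial_\xi a^{(m)}$) does not push the subprincipal symbols out of the class, which is precisely why the linear dependence on $\partial_x^2\eta$ assumed in Definition~\ref{defiSigma} is exactly the right hypothesis; and that the M\'etivier bounds are applied with the regularity index ($\tdm$ or $\mez+\delta$) matched to the factor being differentiated, so that all constants are of the form $C(\lA\eta\rA_{H^{s+\mez}})$.
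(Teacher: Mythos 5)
Your proof is correct and follows essentially the same route as the paper's: apply the composition theorem (\ref{theo:sc}) with $\rho=\tdm$ to the pair of principal symbols, with $\rho=\mez$ (or $\mez+\delta$, as you do) to the mixed principal/subprincipal pairs, bound the subprincipal-subprincipal product directly via \eqref{esti:quant1}, and finish with the Sobolev embedding $H^{s+\mez}\subset W^{5/2,\infty}$; part (2) is the parallel argument with Theorem~\ref{theo:sc2} and the reality of $a^{(m)}$. One tiny slip worth flagging: when you write $a^{(m)}\sharp_{3/2}b^{(m')}$ "plus a second-order term of order $m+m'-2$," that extra term does not actually appear — the sum in Theorem~\ref{theo:sc} ranges over $|\alpha|<\rho=\tdm$, hence stops at $|\alpha|\le 1$ — but this has no bearing on the argument since the term you invoke would in any case be an admissible remainder.
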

\begin{proof}It follows from \eqref{esti:quant2} applied with $\rho=3/2$ that 
$$
\lA T_{a^{(m)}}T_{b^{(m')}}-T_{a^{(m)}b^{(m')}+\frac{1}{i}
\partial_\xi a^{(m)}\cdot \partial_x b^{(m')}}\rA_{H^{\mu}\rightarrow H^{\mu-m-m'+3/2}} \le 
C( \lA \partialx\eta\rA_{W^{3/2,\infty}}).
$$
On the other hand, \eqref{esti:quant2} applied with $\rho=1/2$ implies that
\begin{align*}
&\lA T_{a^{(m)}}T_{b^{(m'-1)}}-T_{a^{(m)}b^{(m'-1)}}\rA_{H^{\mu}\rightarrow H^{\mu-m-m'+3/2}} \le 
C( \lA \partialx\eta\rA_{W^{3/2,\infty}}), \\
&\lA T_{a^{(m-1)}}T_{b^{(m')}}-T_{a^{(m-1)}b^{(m')}}\rA_{H^{\mu}\rightarrow H^{\mu-m-m'+3/2}} \le 
C( \lA \partialx\eta\rA_{W^{3/2,\infty}}).
\end{align*}
Eventually \eqref{esti:quant1} implies that
$$
\lA T_{a^{(m-1)}}T_{b^{(m'-1)}}\rA_{H^{\mu}\rightarrow H^{\mu-m-m'+2}} \le 
C( \lA \partialx\eta\rA_{W^{1,\infty}}).
$$
The first point in the proposition then follows from the Sobolev embedding $H^{s+\mez}(\xR^d)\subset W^{\frac{5}{2},\infty}(\xR^d)$. 
Furthermore, we easily verify that $a\sharp b\in\Sigma^{m+m'}$.

Similarly, the second point is a straightforward consequence of Theorem~\ref{theo:sc2} and the fact that 
$a^{(m)}$ is, by assumption, a real-valued symbol.
\end{proof}

Given that $a\in \Sigma^{m}$, since $a^{(m-1)}$ involves two derivatives of 
$\eta$, the usual boundedness result for paradifferential operators and the embedding 
$H^{s}(\xR^d)\subset W^{2,\infty}(\xR^d)$ implies that we have estimates of the form
\begin{equation}\label{ong}
\lA T_{a(t)} \rA_{H^{\mu}\rightarrow H^{\mu-m}}\les \sup_{\la \xi\ra=1}\lA a(t,\cdot,\xi)\rA_{L^\infty}\le C\left( \lA \eta(t)\rA_{H^s}\right). 
\end{equation}
Our second observation concerning the class $\Sigma^m$ is that one can prove a continuity result which requires 
only an estimate of $\lA \eta\rA_{H^{s-1}}$.

\begin{prop}\label{2d21}
Let $m\in\xR$ and $\mu\in \xR$. Then 
there exists a function $C$ such that for all symbol $a\in \Sigma^m$ and all $t\in [0,T]$,
$$
\lA T_{a(t)} u \rA_{H^{\mu-m}} \le C( \lA\eta(t)\rA_{H^{s-1}}) \lA u \rA_{H^{\mu}}.
$$
\end{prop}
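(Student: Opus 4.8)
The plan is to exploit the very special structure of symbols in $\Sigma^m$: the principal part $a^{(m)}=F(\partialx\eta,\xi)$ depends only on $\partialx\eta$, while the subprincipal part $a^{(m-1)}=\sum_{|\alpha|=2}G_\alpha(\partialx\eta,\xi)\partial_x^\alpha\eta$ depends \emph{linearly} on the second derivatives of $\eta$. For the principal part, the point is that $\partialx\eta\in H^{s-\mez}(\xR^d)\subset W^{1,\infty}(\xR^d)$ (indeed more), so $F(\partialx\eta,\xi)\in\Gamma^m_0(\xR^d)$ with $M^m_0(a^{(m)})\le C(\lA\partialx\eta\rA_{L^\infty})\le C(\lA\eta\rA_{H^{s-1}})$, using that $s-1>1+d/2$ so that $H^{s-1}\hookrightarrow W^{1,\infty}$. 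Then Theorem~\ref{theo:sc0} (the estimate \eqref{esti:quant1}) gives $\lA T_{a^{(m)}}u\rA_{H^{\mu-m}}\le KM^m_0(a^{(m)})\lA u\rA_{H^\mu}\le C(\lA\eta\rA_{H^{s-1}})\lA u\rA_{H^\mu}$.

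For the subprincipal part the naive route fails: $\partial_x^\alpha\eta$ with $|\alpha|=2$ need not be in $L^\infty$ when $s-2<d/2$, so $a^{(m-1)}$ is not in $\Gamma^{m-1}_0$. Here the linearity is essential. Writing $a^{(m-1)}(t,x,\xi)=\sum_{|\alpha|=2}G_\alpha(\partialx\eta(t,x),\xi)\,\partial_x^\alpha\eta(t,x)$, I would first handle the coefficient $G_\alpha(\partialx\eta,\xi)$, which is again a function of $\partialx\eta$ only, hence lies in $\Gamma^{m-1}_0$ with seminorm bounded by $C(\lA\eta\rA_{H^{s-1}})$. The factor $\partial_x^\alpha\eta$ is in $H^{s-\frac32}(\xR^d)$, and $s-\frac32$ may be less than $d/2$; this is exactly the situation covered by Lemma~\ref{negmu}. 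Concretely, set $m_0\defn d/2-(s-\frac32)$; if $m_0\le 0$ then $\partial_x^\alpha\eta\in L^\infty$ and one is back in the easy case, while if $m_0>0$ then $\partial_x^\alpha\eta\in H^{\frac d2-m_0}$ and Lemma~\ref{negmu} (applied after composing with the order-$(m-1)$ operator $T_{G_\alpha(\partialx\eta,\xi)}$) shows that multiplication by $\partial_x^\alpha\eta$, paralinearized, costs only $m_0$ derivatives with norm $\le K\lA\partial_x^\alpha\eta\rA_{H^{\frac d2-m_0}}\le C(\lA\eta\rA_{H^{s-1}})$. More care is needed because $a^{(m-1)}$ is a genuine product of a symbol with a function rather than a paraproduct; I would decompose $G_\alpha(\partialx\eta,\xi)\,\partial_x^\alpha\eta = T_{\partial_x^\alpha\eta}\,G_\alpha(\partialx\eta,\cdot)+\text{(remainder)}$ à la Bony in the $x$-variable for each fixed $\xi$, controlling the remainder and the low-frequency part of $\partial_x^\alpha\eta$ (which is smooth and bounded by $\lA\eta\rA_{H^{s-1}}$ since $s-1>2$ is false in general — rather, the low frequencies of $\partial_x^\alpha\eta$ are bounded in $L^\infty$ by $\lA\partial_x^\alpha\eta\rA_{H^{-N}}\le\lA\eta\rA_{H^{s-1}}$ for suitable truncation), and using \eqref{esti:quant1} together with Lemma~\ref{negmu} on the high-frequency paraproduct part.

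The main obstacle, then, is the bookkeeping in this last step: ensuring that the full operator $T_{a^{(m-1)}}$, built from a product of a function of borderline Sobolev regularity with a symbol, is bounded $H^\mu\to H^{\mu-m}$ (one \emph{gains} one derivative relative to $m$ here, since $a^{(m-1)}$ has order $m-1$, and that extra derivative is precisely what absorbs the possible loss $m_0$ coming from the low regularity of $\partial_x^\alpha\eta$, because $s-1>1+d/2$ forces $m_0<\mez<1$). Once the estimate $\lA T_{a^{(m-1)}(t)}u\rA_{H^{\mu-m}}\le\lA T_{a^{(m-1)}(t)}u\rA_{H^{\mu-(m-1)-m_0}}\le C(\lA\eta(t)\rA_{H^{s-1}})\lA u\rA_{H^\mu}$ is in place, adding the two contributions gives $\lA T_{a(t)}u\rA_{H^{\mu-m}}\le C(\lA\eta(t)\rA_{H^{s-1}})\lA u\rA_{H^\mu}$, which is the assertion. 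Since all constants depend on $\eta$ only through a continuous function of $\lA\eta\rA_{H^{s-1}}$ and the estimates are pointwise in $t\in[0,T]$, the statement follows uniformly in $t$.
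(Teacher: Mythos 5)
Your overall framing is right and matches the paper's: split $a=a^{(m)}+a^{(m-1)}$, handle the principal part via Theorem~\ref{theo:sc0} and the embedding $H^{s-1}\hookrightarrow W^{1,\infty}$, and recognize that the delicate point is the subprincipal term, where $\partial_x^\alpha\eta$ cannot be bounded in $L^\infty$ by $\lA\eta\rA_{H^{s-1}}$ unless $s>3+d/2$. But the crucial estimate for $T_{a^{(m-1)}}$ is left unfinished: you yourself flag it as an ``obstacle,'' and your proposed device (a Bony decomposition of the symbol $G_\alpha(\nabla\eta,\xi)\,\partial_x^\alpha\eta$ in the $x$-variable at fixed $\xi$, then requantize) is not carried out and does not obviously close, since you would then be applying the paradifferential quantization $T_{(\cdot)}$ to a symbol that is itself a paraproduct, without the error estimates to simplify it. There is also an arithmetic slip that obscures the issue: you write $m_0=d/2-(s-\tfrac32)$ and say ``$s-\tfrac32$ may be less than $d/2$,'' but $s>2+d/2$ forces $s-\tfrac32>\tfrac12+d/2>d/2$, so this $m_0$ is always negative. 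The index you should be tracking is $s-3$, not $s-\tfrac32$, because the target norm is $\lA\eta\rA_{H^{s-1}}$ and only $\lA\partial_x^\alpha\eta\rA_{H^{s-3}}\les\lA\eta\rA_{H^{s-1}}$ is available; and $s-3<d/2$ is indeed possible when $s<3+d/2$. Likewise the bound $\lA\partial_x^\alpha\eta\rA_{H^{d/2-m_0}}\le C(\lA\eta\rA_{H^{s-1}})$ you invoke is false with your value of $m_0$.

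What makes the paper's argument go through is a different, cleaner device that you are missing: a Fourier-side factorization $T_p=T_q\,\la D_x\ra$. Given $p(\cdot,\xi)\in H^{s-3}$ homogeneous of degree $r$, define $q$ by $\widehat q(\theta,\xi)=\frac{\chi_1(\theta,\xi)\psi_1(\xi)}{\la\xi\ra}\widehat p(\theta,\xi)$ with $\chi_1,\psi_1$ slightly enlarged cutoffs agreeing with $\chi,\psi$ of \eqref{eq.para} on their supports; this gives $T_q\la D_x\ra=T_p$ exactly. On the support of $\chi_1$ one has $\la\theta\ra\les\la\xi\ra$, so the $\la\xi\ra^{-1}$ converts to a $\L{\theta}^{-1}$ weight, yielding $\lA\partial_\xi^\alpha q(\cdot,\xi)\rA_{H^{s-2}}\les\sum_{\beta\le\alpha}\lA\partial_\xi^\beta p(\cdot,\xi)\rA_{H^{s-3}}$. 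Since $H^{s-2}\hookrightarrow L^\infty$, one can then treat $q$ as a bounded symbol and apply \eqref{esti:quant1}, getting for $v=\la D_x\ra u$ the estimate $\lA T_p u\rA_{H^{\mu-r-1}}\les\sup_{|\alpha|\le d/2+1}\sup_{|\xi|=1}\lA\partial_\xi^\alpha p(\cdot,\xi)\rA_{H^{s-3}}\lA u\rA_{H^\mu}$. Applying this with $p=a^{(m-1)}$ and $r=m-1$ gives exactly $T_{a^{(m-1)}}:H^\mu\to H^{\mu-m}$ with operator norm controlled by $\sup_{|\xi|=1}\lA a^{(m-1)}(\cdot,\xi)\rA_{H^{s-3}}$, and the product rule \eqref{pr} (with $(s_0,s_1,s_2)=(s-3,s-2,s-3)$) then bounds this by $C(\lA\eta\rA_{H^{s-1}})$. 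This factorization is the step your sketch needs in place of the Bony decomposition of the symbol.
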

\begin{rema}This result is obvious for $s>3+d/2$ since the $L^\infty$-norm of $a(t,\cdot,\xi)$ is controlled by 
$\lA \eta(t)\rA_{H^{s-1}}$ in this case. 
As alluded to above, this proposition solves the technical difficulty which appears since we only assume $s>2+d/2$.
\end{rema}
\begin{proof}
By abuse of notations, we omit the dependence in time.

\textbf{a)} Consider a symbol $p=p(x,\xi)$ homogeneous of degree $r$ in $\xi$ such that 
$$
x\mapsto \partial_\xi^\alpha p(\cdot,\xi) \quad\text{ belongs to }H^{s-3}(\xR^d) \quad \forall\alpha\in \xN^d.
$$
Let $q$ be defined by 
$$
\widehat{q}(\theta,\xi) = \frac{\chi_1(\theta,\xi)\psi_1(\xi)}{\la\xi\ra} \widehat{p}(\theta,\xi)
$$
where $\chi_1=1$ on $\supp\chi$, $\psi_1=1$ on $\supp\psi$ (see \eqref{eq.para}), 
$\psi_1(\xi)=0$ for $\la\xi\ra\le \frac{1}{3}$, 
$\chi_1(\theta,\xi)=0$ for $\la\theta\ra \ge \la\xi\ra$ and 
$\widehat{f}(\theta,\xi)=\int e^{-ix\cdot\theta}f(x,\xi)\, dx$. Then
\begin{equation}\label{nom0}
T_q \la D_x\ra =T_p,
\end{equation}
and
$$
\la \partial_\xi^\alpha \widehat{q}(\theta,\xi) \ra 
\les \L{\theta}^{-1}\sum_{\beta \le \alpha}\la \partial_\xi^\beta \widehat{p}(\theta,\xi) \ra.
$$
Therefore we have
\begin{equation}\label{nom1}
\lA \partial_\xi^\alpha q(\cdot,\xi)\rA_{H^{s-2}}\les 
\sum_{\beta\le \alpha}\lA \partial_\xi^\beta p(\cdot,\xi)\rA_{H^{s-3}}.
\end{equation}
Now, it follows from the above estimate and the 
embedding $H^{s-2}(\xR^d)\subset L^\infty(\xR^d)$ that 
$q$ is $L^\infty$ in $x$ and hence 
 $q\in \Gamma^{r-1}_0\subset \Gamma^r_0$. Then,  
according to \eqref{esti:quant1} applied with $m=r$ (and not $m=r-1$), we have for all $\sigma\in\xR$,
$$
\lA T_q v\rA_{H^{\sigma-r}}\les \sup_{\la\alpha\ra\le \frac{d}{2}+1} 
\sup_{\la \xi\ra\ge \mez} \la\xi\ra^{\la\alpha\ra-r} \lA  \partial_\xi^\alpha q(\cdot,\xi)\rA_{L^{\infty}}
\lA v\rA_{H^\sigma}.
$$
Applying this inequality with $v=\la D_x\ra u$, $\sigma=\mu-1$ 
and using again the Sobolev embedding and \eqref{nom0}, 
\eqref{nom1}, we obtain
\begin{equation}\label{esti:quants}
\lA T_p u\rA_{H^{\mu-r-1}}\les  \sup_{\la\alpha\ra\le \frac{d}{2}+1} 
\sup_{\la \xi\ra =1} \lA \partial_\xi^\alpha p(\cdot,\xi)\rA_{H^{s-3}} \lA 
u\rA_{H^\mu}.
\end{equation}

\smallbreak
\noindent \textbf{b)} Consider a symbol $a\in\Sigma^m$ of the form
\begin{equation}
\label{eq.symbol}
a=a^{(m)}+a^{(m-1)}= F(\partialx\eta,\xi) +\sum_{\la \alpha\ra=2}G_\alpha(\partialx\eta,\xi)\partial_x^\alpha \eta.
\end{equation}
Up to substracting the symbol of a Fourier multiplier of order $m$, 
we can assume without loss of generality that $F(0,\xi)=0$. 

It follows from the previous estimates that
$$
\lA T_{a^{(m)}} u  \rA_{H^{\mu-m}} \les \sup_{\la \xi\ra=1 }\| a^{(m)}(\cdot,\xi)\|_{H^{s-2}} \lA u \rA_{H^{\mu}},
$$
and
$$
\lA T_{a^{(m-1)}} u  \rA_{H^{\mu-m}} \les \sup_{\la \xi\ra=1 } \| a^{(m-1)}(\cdot,\xi)\|_{H^{s-3}} \lA u \rA_{H^{\mu}}.
$$
Now since $s>2+d/2$ it follows from the usual nonlinear estimates in Sobolev spaces (see \eqref{Fr}) that
$$
 \sup_{\la \xi\ra=1 } \| a^{(m)}(\cdot,\xi)\|_{H^{s-2}} =  \sup_{\la \xi\ra=1 } \| F(\partialx\eta ,\xi) \|_{H^{s-2}} \le C\left(\lA \eta\rA_{H^{s-1}}\right).
$$
On the other hand, by using the product rule \eqref{pr} with $(s_0,s_1,s_2)=(s-3,s-2,s-3)$ we obtain
\begin{multline*}
 \| a^{(m-1)}(\cdot,\xi)\|_{H^{s-3}}\le  \sum_{\la\alpha\ra=2} \lA G_\alpha(\partialx\eta,\xi)\partial_x^\alpha \eta\rA_{H^{s-3}} \\
 \les \Bigl( \la G_\alpha(0,\xi)\ra + 
 \sum_{\la\alpha\ra=2}\lA G_\alpha(\partialx\eta,\xi)-G_\alpha(0,\xi)\rA_{H^{s-2}}\Bigr)\lA \partial_x^\alpha \eta\rA_{H^{s-3}},
 \end{multline*}
for all $\la\xi\ra\le 1$. Therefore, \eqref{Fr} implies that
$$
 \| a^{(m-1)}(\cdot,\xi)\|_{H^{s-3}}\le C(\lA \eta\rA_{H^{s-1}}).
$$
This completes the proof.
\end{proof}

Similarly we have the following result about elliptic regularity 
where one controls the various constants by the $H^{s-1}$-norm of $\eta$ only.
\begin{prop}\label{2d22}
Let $m\in\xR$ and $\mu\in \xR$. Then 
there exists a function $C$ such that for all 
$a\in \Sigma^m$ 
and all $t\in [0,T]$, we have
$$
\lA u \rA_{H^{\mu+m}}\le  C(\lA \eta(t)\rA_{H^{s-1}}) \left\{ \lA T_{a(t)} u\rA_{H^\mu}+  \lA u\rA_{L^2}\right\}.
$$
\end{prop}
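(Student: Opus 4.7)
The strategy is to construct a parametrix $T_b$ for $T_a$ within the class $\Sigma^*$ and then exploit the low-regularity boundedness result of Proposition~\ref{2d21}. Setting $b^{(-m)}:=1/a^{(m)}$, which is well-defined by the ellipticity $a^{(m)}\ge K(\partialx\eta)\la\xi\ra^m$, I would choose
$$
b^{(-m-1)}:=-\frac{1}{a^{(m)}}\Bigl(\frac{a^{(m-1)}}{a^{(m)}} + \frac{1}{i}\partial_\xi\Bigl(\frac{1}{a^{(m)}}\Bigr)\cdot \partial_x a^{(m)}\Bigr),
$$
so that, by the composition formula of Proposition~\ref{pcs}, the symbol $b\sharp a$ equals $1$. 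A direct check using the chain rule shows that $b=b^{(-m)}+b^{(-m-1)}$ belongs to $\Sigma^{-m}$: the principal symbol is a smooth function of $(\partialx\eta,\xi)$, and the subprincipal symbol depends linearly on $\partial_x^2\eta$, inheriting this structure from $a^{(m-1)}$ and from $\partial_x a^{(m)}=\partial_\zeta F(\partialx\eta,\xi)\cdot\partial_x^2\eta$.

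Applying Proposition~\ref{pcs}, we obtain $T_b T_a = T_1+R$ where $R$ is of order $-3/2$, hence the identity
$$
u=T_b(T_a u)-Ru-(T_1-I)u.
$$
For the first term, Proposition~\ref{2d21} applied to $b\in\Sigma^{-m}$ gives $\lA T_b(T_a u)\rA_{H^{\mu+m}}\le C(\lA\eta\rA_{H^{s-1}})\lA T_a u\rA_{H^\mu}$. The operator $T_1-I$ acts as a Fourier multiplier supported where $\la\xi\ra\lesssim 1$, so $\lA(T_1-I)u\rA_{H^{\mu+m}}\lesssim \lA u\rA_{L^2}$ independently of $\eta$. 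Finally, the order $-3/2$ bound for $R$ combined with the standard Sobolev interpolation
$$
\lA u\rA_{H^{\mu+m-3/2}}\le \varepsilon \lA u\rA_{H^{\mu+m}}+C_\varepsilon \lA u\rA_{L^2}
$$
allows us, after choosing $\varepsilon$ small, to absorb the $\lA u\rA_{H^{\mu+m}}$ term into the left-hand side and conclude.

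The main obstacle is to ensure that every constant depends only on $\lA\eta\rA_{H^{s-1}}$ rather than on the higher norm $\lA\eta\rA_{H^{s+\mez}}$, since the naive symbolic-calculus bound on the remainder $R$ from Proposition~\ref{pcs} involves $W^{3/2,\infty}$ norms of the symbols, hence $\lA\eta\rA_{H^{s+\mez}}$. To overcome this I would decompose $R$ explicitly as a finite sum of products of paradifferential operators whose symbols each belong to some $\Sigma^{*}$ (by tracking the four compositions $T_{b^{(-m-i)}}T_{a^{(m-j)}}$ for $i,j\in\{0,1\}$ against the corresponding terms in $b\sharp a$), and apply Proposition~\ref{2d21} to each factor; this recovers operator-norm bounds that depend only on $\lA\eta\rA_{H^{s-1}}$, since the $\langle D\rangle$-trick in the proof of Proposition~\ref{2d21} is precisely designed to absorb the loss coming from the second-derivative dependence of the subprincipal symbols.
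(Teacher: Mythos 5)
Your overall strategy — build a parametrix, apply Proposition~\ref{2d21}, and absorb the remainder — is close in spirit to the paper's, but the way you build the parametrix introduces a gap that your final paragraph correctly flags but does not actually close.

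The paper inverts \emph{only} the principal symbol: it sets $b = 1/a^{(m)}$ and works with $T_b$, not with a full two-term parametrix. This is not a cosmetic choice. Your two-term $b = b^{(-m)} + b^{(-m-1)}$ requires the composition rule of Proposition~\ref{pcs}, whose remainder estimate (via Definition~\ref{defisym}) is of order $-3/2$ but with constants controlled by $\lA\eta\rA_{H^{s+1/2}}$, because it uses \eqref{esti:quant2} with $\rho = 3/2$ and hence $W^{3/2,\infty}$ norms of the symbols. Your absorption step then forces the interpolation parameter to depend on this stronger norm, so the final constant in front of $\lA u\rA_{L^2}$ depends on $\lA\eta\rA_{H^{s+1/2}}$, not $\lA\eta\rA_{H^{s-1}}$. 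You correctly identify exactly this obstruction.

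Your proposed repair, however, does not work. You suggest decomposing $R$ as a finite sum of products of paradifferential operators whose symbols lie in $\Sigma^*$ and applying Proposition~\ref{2d21} to each factor. Two things go wrong. First, the piece $T_{b^{(-m)}}T_{a^{(m)}} - T_{b^{(-m)}a^{(m)}}$ is a genuine composition remainder, not a paradifferential operator $T_c$; it cannot be placed in $\Sigma^*$, and if you want its operator norm controlled by $\lA\eta\rA_{H^{s-1}}$ you are restricted to $\rho = \eps < s-2-d/2$, which yields only a gain of $\eps$ derivatives, not the full subprincipal term $\frac{1}{i}\partial_\xi b^{(-m)}\cdot\partial_x a^{(m)}$. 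Second, Proposition~\ref{2d21} as stated bounds $T_{a^{(m-1)}}$ as an operator of order $m$ (i.e.\ it \emph{loses a full derivative} compared with its nominal order $m-1$, because $a^{(m-1)}(\cdot,\xi)$ lives only in $H^{s-3}$ and need not be in $L^\infty$ when $2+d/2 < s < 3+d/2$). Consequently the products such as $T_{b^{(-m)}}T_{a^{(m-1)}}$ are, via Proposition~\ref{2d21} alone, only of order $0$ with $H^{s-1}$-constants — there is nothing negative to absorb. The $\eps$-improvement $T_{a^{(m-1)}}\colon H^\mu\to H^{\mu-m+\eps}$, needed to make the remainder strictly smoothing, comes from the refined decomposition $T_p = T_q\la D_x\ra^{1-\eps}$, which is introduced only inside the paper's proof of the present proposition, not available by merely citing Proposition~\ref{2d21}.

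The paper's route avoids all of this: take $b = 1/a^{(m)}$, apply \eqref{esti:quant2} with $\rho = \eps \in (0,\min\{1,s-2-d/2\})$ so that $T_b T_{a^{(m)}} = I + r$ with $r$ of order $-\eps$ and $\lA r\rA$ controlled by $\lA\eta\rA_{H^{s-1}}$; establish the $\eps$-gain for $T_{a^{(m-1)}}$ via the modified $\la D_x\ra^{1-\eps}$ trick; set $R = -r - T_b T_{a^{(m-1)}}$, which is of order $-\eps$ with $H^{s-1}$-constants; and invert $I-R$ by a Neumann series, choosing $N$ with $(N+1)\eps$ large enough. Your write-up would need to replace the full parametrix with the principal inverse and import the $\la D_x\ra^{1-\eps}$ refinement to be correct.
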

\begin{rema}
As mentionned in Remark~\ref{R3.9}, the classical result is that, for all elliptic symbol $a\in \Gamma^{m}_\rho(\xR^d)$ with $\rho>0$, there holds
$$
\lA f \rA_{H^m}\le K \left\{ \lA T_a f \rA_{L^2}+ \lA f\rA_{L^2}\right\},
$$
where $K$ depends only on $M^{m}_\rho(a)$. Hence, if we use the natural estimate 
$$
M^{m-1}_\rho(a^{(m-1)}(t))\le C(\lA \eta(t)\rA_{W^{2+\rho}})\le C(\lA \eta(t)\rA_{H^{s}})
$$ 
for $\rho>0$ small enough, then we obtain an estimate which is worse than the one just stated 
for $2+d/2<s<3+d/2$.
\end{rema}
\begin{proof}
Again, by abuse of notations, we omit the dependence in time. 

Introduce $b=1/a^{(m)}$ and consider $\eps$ such that
$$
0< \eps<\min\{s-2-d/2,1\}.
$$
By applying \eqref{esti:quant2} with $\rho=\eps$ we find that 
$T_{b} T_{a^{(m)}} = I +r$ where $r$ is of order $-\eps$ and satisfies 
$$
\lA r u \rA_{H^{\mu+\eps}}\le C(\lA \partialx \eta\rA_{W^{\eps,\infty}}) \lA  u\rA_{H^{\mu}} 
\le C(\lA \eta\rA_{H^{s-1}})  \lA  u\rA_{H^\mu}.
$$
Then
$$
u  = T_b T_{a} u -r u - T_b T_{a^{(m-1)}}.
$$
Set
$$
R=-r-T_b T_{a^{(m-1)}}.
$$
Then 
$$
(I-R)u=T_b T_a u.
$$
We claim that there exists a function $C$ such that
$$
\lA T_{a^{(m-1)}} u  \rA_{H^{\mu-m+\eps}} \le C( \lA  \eta\rA_{H^{s-1}}) \lA u \rA_{H^{\mu}}.
$$
To see this, notice that the previous proof applies with the decomposition $T_p = T_{q} \la D_x\ra^{1-\eps}$ where 
$$
\widehat{q}(\theta,\xi) = \frac{\chi_1(\theta,\xi)\psi_1(\xi)}{\la\xi\ra^{1-\eps}} \widehat{p}(\theta,\xi).
$$
Once this claim is granted, since $T_b$ is of order $-m$, we find that $R$ satisfies
$$
\lA R u \rA_{H^{\mu+\eps}}\le C( \lA  \eta\rA_{H^{s-1}}) \lA  u\rA_{H^{\mu}}.
$$
Writing 
$$
(I+R+\cdots +R^N)(I-R)u =
(I+R+\cdots +R^N) T_b T_{a}u
$$
we get
$$
u = (I+R+\cdots +R^N) T_b T_{a} u + R^{N+1} u.
$$
The first term in the right hand side is estimated by means of the obvious inequality
\begin{multline*}
\lA (I+R+\cdots +R^N) T_b  \rA_{H^{\mu}\rightarrow H^{\mu+m}}\\
\le 
\lA (I+R+\cdots +R^N)  \rA_{H^{\mu+m}\rightarrow H^{\mu+m}}\lA  T_b \rA_{H^{\mu}\rightarrow H^{\mu+m}},
\end{multline*}
so that
$$
\lA (I+R+\cdots +R^N) T_b T_{a} u \rA_{H^{\mu+m}}\le C(\lA \eta\rA_{H^{s-1}})  \lA  T_{a} u\rA_{H^\mu}.
$$
Choosing $N$ so large that $(N+1)\eps > \mu +m$, we obtain that
$$
\lA R^{N+1} \rA_{H^\mu\rightarrow H^{\mu+m}}\les \lA R\rA_{H^{\mu+m-\eps}\rightarrow H^{\mu+m}}\cdots 
\lA R\rA_{H^{\mu}\rightarrow H^{\mu+\eps}}\le C( \lA  \eta\rA_{H^{s-1}}),
$$
which yields the desired estimate for the second term.
\end{proof}

\subsection{Symmetrization}
The main result of this section is that one can symmetrize the equations. 
Namely, we shall prove that there exist three symbols $p,q,\gamma$ such that 
\begin{equation}\label{sy}
\left\{
\begin{aligned}
&T_p T_\lambda \sim T_\gamma T_q ,\\
&T_{q} T_{h} \sim T_{\gamma} T_{p} ,\\
&T_\gamma \sim (T_{\gamma})^*,
\end{aligned}
\right.
\end{equation}
where recall that the notation $A\sim B$ was introduced in Definition~\ref{defisym}.

We want to explain how we find $p,q,\gamma$ 
by a systematic method. We first observe that if \eqref{sy} holds true then $\gamma$ is of order $3/2$. 
To be definite, we chose $q$ of order $0$, and then necessarily $p$ is of order $1/2$. 
Therefore 
we seek $p,q,\gamma$ under the form
\begin{equation}\label{12-12}
p=p^{(1/2)}+p^{(-1/2)},\quad q=q^{(0)}+q^{(-1)},\quad 
\gamma=\gamma^{(3/2)}
+\gamma^{(1/2)},
\end{equation}
where $a^{(m)}$ is a symbol homogeneous in $\xi$ of order $m\in \xR$.

Let us list some necessary contrainsts on these symbols. Firstly, we seek real elliptic symbols such that, 
$$
p^{(1/2)}\ge K \la \xi\ra^{1/2}, \quad q^{(0)} \ge K ,\quad \gamma^{(3/2)} \ge K \la \xi\ra^{3/2},
$$
for some positive constant 
$K$. Secondly, in order for $T_p,T_q,T_\gamma$ to map real valued functions to real valued functions, we must have
\begin{equation}\label{cr}
\overline{p(t,x,\xi)}=p(t,x,-\xi),\quad \overline{q(t,x,\xi)}=q(t,x,\xi),\quad \overline{\gamma(t,x,\xi)}=\gamma(t,x,-\xi).
\end{equation}
According to Proposition~\ref{pcs}, in order for $T_\gamma$ to satisfy the last identity in \eqref{sy}, 
$\gamma^{(1/2)}$ must satisfy 
\begin{equation}\label{g12}
\IM \gamma^{(1/2)} = -\mez (\partial_\xi\cdot\partial_x ) \gamma^{(3/2)}.
\end{equation}
Our strategy is then to seek $q$ and $\gamma$ such that
\begin{equation}\label{qhl}
T_{q} T_{h}T_\lambda \sim T_{\gamma} T_\gamma T_{q} .
\end{equation}
The idea is that if this identity is satisfied then the first two equations in \eqref{sy} are compatible; this means that 
if any of these two equations is satisfied, then the second one is automaticaly satisfied. 
Therefore, once $q$ and $\gamma$ are so chosen that \eqref{qhl} is satisfied, 
then one can define $p$ by solving either one of the first two equations. The latter task being immediate. 

Recall that the symbol $\lambda=\lambda^{(1)}+\lambda^{(0)}$ (resp.\ $h=h^{(2)}+h^{(1)}$)  is defined by $\eqref{dmu10}$ (resp.\ \eqref{dh21}). In particular, by notation, 
\begin{equation}\label{dnsh}
\begin{aligned}
\lambda^{(1)}&=\sqrt{(1+\la\nabla\eta\ra^2)\la\xi\ra^2-(\nabla \eta\cdot\xi)^2}, \\
h ^{(2)}&= \left(1+|\partialx \eta |^2\right)^{-\frac{1}{2}}
\left(\la \xi\ra^2 - \frac{(\partialx \eta\cdot \xi)^2}{1+|\partialx \eta |^2}\right).
\end{aligned}
\end{equation}
Introduce the notations
$$
h\sharp \lambda = h^{(2)}\lambda^{(1)} + h^{(1)}\lambda^{(1)} +h^{(2)}\lambda^{(0)} +
\frac{1}{i}\partial_\xi h^{(2)}\cdot\partial_x\lambda^{(1)},
$$
and
$$
\gamma\sharp \gamma =  \left(\gamma^{(3/2)}\right)^2 + 2  \gamma^{(1/2)}\gamma^{(3/2)} +
\frac{1}{i}\partial_\xi \gamma^{(3/2)}\cdot\partial_x\gamma^{(3/2)}.
$$
By symbolic calculus, to solve \eqref{qhl}, it is enough to find $q$ and $\gamma$ such that
\begin{multline}\label{symbp}
q ^{(0)} (h\sharp \lambda)  + q ^{(-1)}h^{(2)}\lambda^{(1)}+\frac{1}{i}\partial_\xi q^{(0)}\cdot\partial_x ( h^{(2)} \lambda^{(1)}) \\
= (\gamma\sharp \gamma ) q^{(0)} + \left(\gamma^{(3/2)}\right)^2 q^{(-1)} + \frac{1}{i}\partial_\xi ( \gamma^{(3/2)}\gamma^{(3/2)} ) \cdot\partial_x q^{(0)}.
\end{multline}
We set  
$$
\gamma^{(3/2)}= \sqrt{h^{(2)}\lambda^{(1)}},
$$
so that the leading symbols of both sides of \eqref{symbp} are equal. 
Then $\IM \gamma^{(1/2)}$ has to be fixed by means of \eqref{g12}. We set
$$
\IM \gamma^{(1/2)}=-\frac{1}{2}(\partial_x\cdot\partial_\xi)\gamma^{(3/2)}.
$$
It next remains only to determine $q^{(0)},q^{(-1)}$ and $\RE \gamma^{(1/2)}$ such that
\begin{equation}\label{tauq}
\tau q^{(0)} = \frac{1}{i} \left\{ h^{(2)}\lambda^{(1)} , q^{(0)}\right\}= 
\frac{1}{i}\partial_\xi (h^{(2)}\lambda^{(1)}) \cdot\partial_x q^{(0)} -\frac{1}{i}\partial_\xi q^{(0)}\cdot\partial_x ( h^{(2)} \lambda^{(1)}) ,
\end{equation}
where
$$
\tau = \frac{1}{i}\partial_\xi h^{(2)}\cdot\partial_x\lambda^{(1)}+h^{(1)}\lambda^{(1)} +h^{(2)}\lambda^{(0)} -2  \gamma^{(1/2)}\gamma^{(3/2)} 
+i \partial_\xi \gamma^{(3/2)}\cdot\partial_x\gamma^{(3/2)}.
$$
Since $q^{(-1)}$ does not appear in this equation, one can freely set $q^{(-1)}=0$. We next take the real part of the right-hand side of \eqref{tauq}. 
Since $q^{(0)},h^{(2)},\lambda^{(1)}$ are real-valued symbol we find $\RE \tau =0$. Since $h^{(1)}(t,x,\xi)\in i \xR$, we deduce that 
$\RE  \gamma^{(1/2)}$ must be given by solving the equation
$$
h^{(2)}\RE \lambda^{(0)}= 2\gamma^{(3/2)} \RE  \gamma^{(1/2)},
$$
that is
$$
\RE  \gamma^{(1/2)} = \frac{h^{(2)}\RE \lambda^{(0)}}{2 \gamma^{(3/2)}}=\sqrt{\frac{h^{(2)}}{\lambda^{(1)}}}\frac{ \RE \lambda^{(0)}}{2 }.
$$
It remains only to define $q^{(0)}$ such that 
\begin{equation}\label{mustq}
q^{(0)} \IM \tau =- \left\{ h^{(2)}\lambda^{(1)} , q^{(0)}\right\}.
\end{equation}
Since
$$
h^{(1)}=-\frac{i}{2} (\partial_x\cdot\partial_\xi)h^{(2)},\, 
\IM \lambda^{(0)}=-\frac{1}{2} (\partial_x\cdot\partial_\xi)\lambda^{(1)},\,
\IM \gamma^{(1/2)}=-\frac{1}{2} (\partial_x\cdot\partial_\xi)\gamma^{(3/2)},
$$
we find
\begin{align*}
\IM \tau 
&=-\partial_\xi h^{(2)}\cdot\partial_x\lambda^{(1)}
-\mez \lambda^{(1)}(\partial_\xi\cdot\partial_x ) h^{(2)} -\mez h^{(2)} (\partial_\xi\cdot\partial_x ) \lambda^{(1)}\\
&\quad +\gamma^{(3/2)} (\partial_\xi\cdot\partial_x )\gamma^{(3/2)}+\partial_\xi \gamma^{(3/2)}\cdot\partial_x\gamma^{(3/2)}.
\end{align*}
Writing
$$
\gamma^{(3/2)} (\partial_\xi\cdot\partial_x )\gamma^{(3/2)}+\partial_\xi \gamma^{(3/2)}\cdot\partial_x\gamma^{(3/2)}
=\mez \partial_x \cdot\partial_\xi \left(\gamma^{(3/2)}\right)^2
=\mez \partial_x \cdot\partial_\xi (h^{(2)}\lambda^{(1)}),
$$
we thus obtain
$$
\IM \tau 
=\mez 
\partial_\xi \lambda^{(1)} \cdot \partial_x h^{(2)}-\mez \partial_\xi h^{(2)} \cdot \partial_x \lambda^{(1)},
$$
and hence \eqref{mustq} simplifies to %it remains only to solve the equation
\begin{equation}\label{fre}
\mez  \left\{ h^{(2)},\lambda^{(1)} \right\} q^{(0)}+ \left\{ h^{(2)}\lambda^{(1)} , q^{(0)}\right\}=0.
\end{equation}
The key observation is the following relation between $h^{(2)}$ and $\lambda^{(1)}$ (see \eqref{dnsh}): 
$$
h ^{(2)}=  \left( c \lambda^{(1)}\right)^2
\quad\text{with}\quad c= \left(1+|\partialx \eta |^2\right)^{-\frac{3}{4}}.
$$
Consequently \eqref{fre} reduces to
$$
- q^{(0)} (\lambda^{(1)})^2 \partial_x c^2 \cdot\partial_\xi \lambda^{(1)} 
+ 3 c^{2} (\lambda^{(1)})^2 \partial_\xi \lambda^{(1)}\cdot\partial_x q^{(0)} - 
\partial_\xi q^{(0)} \cdot \partial_x \left( c^2 ( \lambda^{(1)})^3\right)=0.
$$
Seeking a solution $q^{(0)}$ which does not depend on $\xi$, we are led to solve
$$
\frac{\partial_\xi \lambda^{(1)}\cdot\partial_x q^{(0)}}{q^{(0)}} = \frac{1}{3} 
\frac{\partial_\xi \lambda^{(1)} \cdot\partial_x c}{c}.
$$
We find the following explicit solution: 
$$
q^{(0)}=c^{\frac{1}{3}}=\left(1+|\partialx \eta |^2\right)^{-\mez}.
$$

Then, we define $p$ by solving the equation
$$
T_q T_h \sim T_\gamma T_p .
$$
By symbolic calculus, this yields
$$
q h^{(2)} + q h^{(1)} = \gamma^{(3/2)} p^{(1/2)} + \gamma^{(1/2)} p^{(1/2)} 
+ \gamma^{(3/2)} p^{(-1/2)}+\frac{1}{i}\partial_\xi \gamma^{(3/2)}\cdot\partial_x p^{(1/2)}.
$$
Therefore, by identifying terms with the same homogeneity in $\xi$, we successively find that
$$
p^{(1/2)}= \frac{q^{(0)} h^{(2)}}{\gamma^{(3/2)}}= 
q^{(0)}\sqrt{\frac{h^{(2)}}{\lambda^{(1)}}}=
\left(1+|\partialx \eta |^2\right)^{-\frac{5}{4}}\sqrt{\lambda^{(1)}},
$$
and
\begin{equation}\label{defip-12}
p^{(-1/2)}=\frac{1}{\gamma^{(3/2)}} \left\{ q^{(0)} h^{(1)}  - \gamma^{(1/2)} p^{(1/2)}
+i \partial_\xi \gamma^{(3/2)}\cdot\partial_x p^{(1/2)}\right\}.
\end{equation}
Note that the precise value of $p^{(-1/2)}$ is meaningless since we have freely imposed $q^{(-1)}=0$.

Gathering the previous results, and noting that $\gamma^{(1/2)}$ and $p^{(-1/2)}$ depend 
only linearly on the second order derivatives of $\eta$, we have proved the following result.

\begin{prop}\label{key}
Let $q\in\Sigma^0$, $p\in \Sigma^{1/2}$, $\gamma\in \Sigma^{3/2}$ be defined by
\begin{align*}
&q=\left(1+|\partialx \eta |^2\right)^{-\mez},\\
&p=\left(1+|\partialx \eta |^2\right)^{-\frac{5}{4}}\sqrt{\lambda^{(1)}}+p^{(-1/2)},\\
&\gamma=\sqrt{h^{(2)}\lambda^{(1)}}+\sqrt{\frac{h^{(2)}}{\lambda^{(1)}}}\frac{ \RE \lambda^{(0)}}{2 }
-\frac{i}{2}(\partial_\xi\cdot\partial_x) \sqrt{h^{(2)}\lambda^{(1)}}, 
\end{align*}
where $p^{(-1/2)}$ is given by \eqref{defip-12}. 
Then
\begin{equation*}
\left\{
\begin{aligned}
&T_p T_\lambda \sim T_\gamma T_q ,\\
&T_{q} T_{h}\sim T_{\gamma} T_{p} ,\\
&T_\gamma \sim (T_{\gamma})^* .
\end{aligned}
\right.
\end{equation*}
\end{prop}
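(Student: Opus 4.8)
The plan is to reduce the three operator identities, which are all relations modulo admissible remainders in the sense of Definition~\ref{defisym}, to symbol identities via the symbolic calculus of Proposition~\ref{pcs}; most of the actual computation has already been carried out in the discussion preceding the statement, so the work is to organize it and to check the structural conditions. First I would record that $\lambda\in\Sigma^1$ and $h\in\Sigma^2$ (their principal symbols $\lambda^{(1)},h^{(2)}$ are elliptic functions of $\partialx\eta$ of the correct homogeneity, while $\lambda^{(0)},h^{(1)}$ are homogeneous of degree $0$ resp.\ $1$ and depend linearly on $\partialx^2\eta$), so that every product of $T_q,T_p,T_\gamma,T_\lambda,T_h$ below is covered by Proposition~\ref{pcs}; and that $q\in\Sigma^0$, $p\in\Sigma^{1/2}$, $\gamma\in\Sigma^{3/2}$ together with the reality conditions \eqref{cr}, which is immediate from the explicit formulas (for instance $p^{(-1/2)}$, given by \eqref{defip-12}, is homogeneous of degree $-1/2$ and depends linearly on $\partialx^2\eta$, being built from $h^{(1)}$, $\gamma^{(1/2)}$, $\partial_x p^{(1/2)}$ and $\partial_x\gamma^{(3/2)}$).

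For the adjoint relation $T_\gamma\sim(T_\gamma)^*$, Proposition~\ref{pcs}$(2)$ gives $(T_\gamma)^*\sim T_{\gamma^*}$ with $\gamma^*=\gamma^{(3/2)}+\overline{\gamma^{(1/2)}}+\frac{1}{i}(\partial_x\cdot\partial_\xi)\gamma^{(3/2)}$, the principal symbol being real; hence $\gamma-\gamma^*=2i\,\IM\gamma^{(1/2)}+i(\partial_x\cdot\partial_\xi)\gamma^{(3/2)}$, which vanishes precisely because of the choice $\IM\gamma^{(1/2)}=-\mez(\partial_x\cdot\partial_\xi)\gamma^{(3/2)}$ dictated by \eqref{g12}. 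Next I would establish the compatibility identity $T_qT_hT_\lambda\sim T_\gamma T_\gamma T_q$. Using Proposition~\ref{pcs} and the stability of the classes $\Sigma^m$ under $\sharp$, this reduces to the symbol identity \eqref{symbp} in $\Sigma^3$. The degree-$3$ terms agree exactly because $\gamma^{(3/2)}=\sqrt{h^{(2)}\lambda^{(1)}}$; taking the real part of the degree-$2$ terms, using that $q^{(0)},h^{(2)},\lambda^{(1)}$ are real and $h^{(1)}\in i\xR$, forces $\RE\gamma^{(1/2)}=\frac{h^{(2)}\RE\lambda^{(0)}}{2\gamma^{(3/2)}}$, while taking the imaginary part and substituting $h^{(1)}=-\frac{i}{2}(\partial_x\cdot\partial_\xi)h^{(2)}$, $\IM\lambda^{(0)}=-\mez(\partial_x\cdot\partial_\xi)\lambda^{(1)}$ and $\IM\gamma^{(1/2)}=-\mez(\partial_x\cdot\partial_\xi)\gamma^{(3/2)}$ reduces the identity to \eqref{fre}. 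At this stage the decisive algebraic fact $h^{(2)}=(c\lambda^{(1)})^2$ with $c=(1+|\partialx\eta|^2)^{-3/4}$, read off from \eqref{dnsh}, turns \eqref{fre} into a transport equation along the Hamiltonian field of $\lambda^{(1)}$ whose $\xi$-independent solution is $q^{(0)}=c^{1/3}=(1+|\partialx\eta|^2)^{-\mez}$.

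Finally, with $q$ and $\gamma$ fixed, I would define $p$ as the solution of the symbol equation $q\sharp h=\gamma\sharp p$, solvable order by order because $\gamma^{(3/2)}$ is elliptic, which yields $p^{(1/2)}=q^{(0)}h^{(2)}/\gamma^{(3/2)}$ and $p^{(-1/2)}$ as in \eqref{defip-12}; by Proposition~\ref{pcs} this is exactly $T_qT_h\sim T_\gamma T_p$, the second relation. Composing on the right with $T_\lambda$ and invoking the compatibility identity just proved gives $T_\gamma(T_pT_\lambda)\sim T_qT_hT_\lambda\sim T_\gamma(T_\gamma T_q)$; since $\gamma$ is elliptic of order $3/2$ one constructs a left parametrix $T_{\gamma'}$ with $\gamma'\in\Sigma^{-3/2}$ (solve $\gamma'\sharp\gamma=1$) so that $T_{\gamma'}T_\gamma\sim\mathrm{Id}$, and applying it — together with the a priori fact that $T_pT_\lambda-T_\gamma T_q$ is already of order $3/2$ — yields $T_pT_\lambda\sim T_\gamma T_q$, the first relation. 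I expect the two main obstacles to be the explicit but lengthy verification that \eqref{fre} is indeed solved by the stated $q^{(0)}$, and the careful bookkeeping of admissible remainders at the threshold regularity $s>2+d/2$: one must check that the merely linear dependence of all the subprincipal symbols on $\partialx^2\eta$ makes every application of Proposition~\ref{pcs}, and the parametrix construction, legitimate, so that the error terms genuinely gain $3/2$ derivatives with constants controlled by $\lA\eta\rA_{H^{s+\mez}}$.
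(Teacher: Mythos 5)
Your proposal reproduces, in the right order and with the right justifications, the derivation the paper carries out in the paragraphs preceding Proposition~\ref{key}: fix $\gamma^{(3/2)}=\sqrt{h^{(2)}\lambda^{(1)}}$, impose $\IM\gamma^{(1/2)}$ via the adjoint constraint \eqref{g12}, fix $\RE\gamma^{(1/2)}$ from the real part of \eqref{symbp}, reduce the imaginary part to the transport equation \eqref{fre} and solve it using $h^{(2)}=(c\lambda^{(1)})^2$, then define $p$ from $q\sharp h=\gamma\sharp p$. This is exactly the paper's argument; the only point where you are more explicit than the paper is the final step, where you spell out the parametrix $T_{\gamma'}T_\gamma\sim\mathrm{Id}$ to pass from $T_\gamma T_pT_\lambda\sim T_\gamma T_\gamma T_q$ to $T_pT_\lambda\sim T_\gamma T_q$ (the paper simply asserts that the compatibility identity \eqref{qhl} makes the two first equations of \eqref{sy} equivalent), and that you record explicitly that $\lambda\in\Sigma^1$ and $h\in\Sigma^2$ so that Proposition~\ref{pcs} applies. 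Both are harmless elaborations of the same route.
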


By combining this symmetrization with the paralinearization, we thus obtain the following symmetrization of the equations.

\begin{coro}\label{psym}
Introduce the new unknowns
$$
\Phi_{1} =  T_{p} \eta  \quad\text{and} \quad\Phi_{2}=T_{q} U.
$$
Then $\Phi_1,\Phi_2 \in C^{0}([0,T];H^s(\xR^d))$ and
\begin{equation}\label{systreduit}
\left\{
\begin{aligned}
\partial_{t}\Phi_{1}+T_{V}\cdot\partialx\Phi_{1} & - T_\gamma \Phi_{2}  = F_1,
\\
\partial_{t}\Phi_{2}+T_{V}\cdot\partialx\Phi_{2} &+ T_\gamma \Phi_{1}=F_2,
\end{aligned}
\right.
\end{equation}
where $F_{1},F_{2}\in L^{\infty}(0,T;H^{s}(\xR^d))$. 
%Consequently, the complex-valued unknown 
%$\Phi=\Phi_{1} + i \Phi_{2}$ satisfies a {\em scalar equation} of the form
%$$
%\partial_{t}\Phi   +T_V\cdot\partialx \Phi  +i T_\gamma \Phi =F,
%$$
%with  $F=F_{1}+iF_{2}\in L^{\infty}(0,T;H^{s}(\xR^d))$. 
Moreover 
$$
\lA (F_1,F_2)\rA_{L^{\infty}(0,T;H^s\times H^s)}\le 
C \left( \lA (\eta,\psi)\rA_{L^\infty(0,T;H^{s+\mez}\times H^s)}\right),
$$
for some function $C$ depending only on $\dist (\Sigma_0,\Gamma)$.
\end{coro}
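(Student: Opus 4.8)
The plan is to conjugate the paralinearized system \eqref{t1bis}--\eqref{t10} by the symmetrizer $S=\operatorname{diag}(T_p,T_q)$ of Proposition~\ref{key}, and to check that every term produced by the conjugation other than those already present in \eqref{systreduit} belongs to $L^\infty(0,T;H^s)$. Concretely, I would apply $T_p$ to the first equation of \eqref{t1bis} and $T_q$ to the second. Using the elementary identities
$$
T_p\partial_t\eta=\partial_t\Phi_1-T_{\partial_t p}\eta,\qquad
T_p(T_V\cdot\partialx\eta)=T_V\cdot\partialx\Phi_1+[T_p,T_V\cdot\partialx]\eta,
$$
and the analogous ones for $q$ and $U$, together with the two conjugation relations $T_pT_\lambda\sim T_\gamma T_q$ and $T_qT_h\sim T_\gamma T_p$ of Proposition~\ref{key}, one arrives precisely at \eqref{systreduit} with
\begin{align*}
F_1&=T_pf_1+T_{\partial_t p}\eta-[T_p,T_V\cdot\partialx]\eta+(T_pT_\lambda-T_\gamma T_q)U,\\
F_2&=T_qf_2+T_{\partial_t q}U-[T_q,T_V\cdot\partialx]U-(T_qT_h-T_\gamma T_p)\eta.
\end{align*}
It then remains to bound in $H^s$ the four types of remainder terms, keeping track of the constants.

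The ``main'' remainders are handled directly by the symmetrization. By Proposition~\ref{key} and Definition~\ref{defisym}, $T_pT_\lambda-T_\gamma T_q$ is of order $0$ and $T_qT_h-T_\gamma T_p$ is of order $\mez$, with operator norms controlled by $C(\lA\eta(t)\rA_{H^{s+\mez}})$. Since $U\in C^0([0,T];H^s)$ — indeed $U=\psi-T_\mathfrak{B}\eta$ with $\psi\in C^0([0,T];H^s)$ and $T_\mathfrak{B}$ of order $0$ — and $\eta\in C^0([0,T];H^{s+\mez})$, both $(T_pT_\lambda-T_\gamma T_q)U$ and $(T_qT_h-T_\gamma T_p)\eta$ lie in $L^\infty(0,T;H^s)$ with the announced bound. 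Likewise $T_pf_1\in L^\infty(0,T;H^s)$ since $T_p$ is of order $\mez$ and $f_1\in L^\infty(0,T;H^{s+\mez})$ by Proposition~\ref{prop:csystem2}, and $T_qf_2\in L^\infty(0,T;H^s)$ since $T_q$ is of order $0$ and $f_2\in L^\infty(0,T;H^s)$; the continuity in time of $\Phi_1=T_p\eta$ and $\Phi_2=T_qU$ follows in the same way from the continuity of $\eta$ and $U$.

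For the commutators, write $T_V\cdot\partialx=\sum_jT_{V_j}\partial_{x_j}=T_{iV\cdot\xi}$ (using $T_a\circ\partial_{x_j}=T_{i\xi_ja}$), a paradifferential operator of order $1$ whose symbol has coefficients in $H^{s-1}$ by \eqref{c43}. The composition Theorem~\ref{theo:sc} then shows that the contribution of the principal parts $p^{(1/2)}$, resp.\ $q^{(0)}$, to $[T_p,T_V\cdot\partialx]$, resp.\ $[T_q,T_V\cdot\partialx]$, is an operator of order $\mez$, resp.\ $0$, while the contribution of the subprincipal parts $p^{(-1/2)}$ and $q^{(-1)}=0$ is of negative order; the low-regularity derivatives of $\eta$ that appear in these symbols are absorbed exactly as in the proof of Proposition~\ref{2d21}, using Lemma~\ref{negmu}, \eqref{pr} and the inequality $s>2+d/2$. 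Hence $[T_p,T_V\cdot\partialx]\eta\in L^\infty(0,T;H^s)$ (gaining $\mez$ from $H^{s+\mez}$) and $[T_q,T_V\cdot\partialx]U\in L^\infty(0,T;H^s)$.

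Finally, $T_{\partial_t p}\eta$ and $T_{\partial_t q}U$ are the analogue of Lemma~\ref{lemm:be0}, and this is where the main obstacle lies. Since $p\in\Sigma^{1/2}$ and $q\in\Sigma^0$ depend on $\partialx\eta$ and only \emph{linearly} on $\partialx^2\eta$, the symbols $\partial_t p$, $\partial_t q$ depend on $\partial_t\partialx\eta$ and $\partial_t\partialx^2\eta$; by \eqref{c43}, $\partial_t\eta=G(\eta)\psi\in H^{s-1}$, so $\partial_t\partialx\eta\in H^{s-2}$ and $\partial_t\partialx^2\eta\in H^{s-3}$, both possibly of very low regularity at the threshold $s>2+d/2$. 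Splitting $\partial_t p=\partial_t p^{(1/2)}+\partial_t p^{(-1/2)}$ and applying Lemma~\ref{negmu} with the appropriate exponents (together with $s>2+d/2$), one gets $T_{\partial_t p^{(1/2)}}\eta\in H^s$ and $T_{\partial_t p^{(-1/2)}}\eta\in H^{2s-2-d/2}\subset H^s$, with norms bounded by $C(\lA(\eta,\psi)\rA_{H^{s+\mez}\times H^s})$; the same argument, easier since $q$ is of order $0$, gives $T_{\partial_t q}U\in H^s$. Collecting all the bounds — and recalling that the constants in Proposition~\ref{prop:csystem2} and in \eqref{c43} depend only on $\lA(\eta,\psi)\rA_{L^\infty(0,T;H^{s+\mez}\times H^s)}$ and on $\dist(\Sigma_0,\Gamma)$ — yields the stated estimate for $(F_1,F_2)$. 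In short, the only genuinely delicate point is the control of $\partial_t p$ and $\partial_t q$ at regularity $s>2+d/2$, which is precisely why the subprincipal symbols of the whole construction were arranged to be linear in $\partialx^2\eta$ and why Propositions~\ref{2d21}--\ref{2d22} were needed; everything else is routine symbolic calculus.
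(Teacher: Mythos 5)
Your proof follows the paper's strategy exactly: conjugate \eqref{t1bis} by $\operatorname{diag}(T_p,T_q)$, invoke the relations $T_pT_\lambda\sim T_\gamma T_q$, $T_qT_h\sim T_\gamma T_p$ from Proposition~\ref{key}, and reduce everything to bounding the commutators $[\partial_t,T_p]$, $[\partial_t,T_q]$, $[T_V\cdot\partialx,T_p]$, $[T_V\cdot\partialx,T_q]$ — which is precisely what the paper does (the $\partial_t$-commutators being the subject of its Lemma~\ref{comdtp}). One small imprecision: for the delicate term $T_{\partial_t p^{(-1/2)}}\eta$, Lemma~\ref{negmu} is a paraproduct estimate (symbol depending on $x$ only) and cannot be applied directly to the $\xi$-dependent symbol $\partial_t p^{(-1/2)}$; what is actually needed is the mixed operator-norm estimate \eqref{esti:quants} from the proof of Proposition~\ref{2d21}, after splitting $\partial_t p^{(-1/2)}$ into $(\partial_t P_\alpha)\partial_x^\alpha\eta$ and $P_\alpha\partial_t\partial_x^\alpha\eta$ as in Lemma~\ref{comdtp}. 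This gives $T_{\partial_t p^{(-1/2)}}\eta\in H^s$ rather than the stronger $H^{2s-2-d/2}$ you claim, but $H^s$ is all the corollary needs, and you do cite Propositions~\ref{2d21}--\ref{2d22} at the end, so the argument closes correctly.
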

To prove Corollary~\ref{psym}, we first note that it follows from Proposition~\ref{key} and Proposition~\ref{prop:csystem2} that
\begin{equation*}
\left\{
\begin{aligned}
\partial_{t}\Phi_{1}+T_{V}\cdot\partialx\Phi_{1} & - T_\gamma \Phi_2 = B_{1}\eta+f_1,
\\
\partial_{t}\Phi_{2}+T_{V}\cdot\partialx\Phi_{2} &+ T_\gamma \Phi_1 = B_{2}U+f_2,
\end{aligned}
\right.
\end{equation*}
with  
$f_{1},f_{2}\in L^{\infty}(0,T;H^{s}(\xR^d))$,
$$
\lA (f_1,f_2) \rA_{L^{\infty}(0,T;H^{s}(\xR^d))}\le 
C \left( \lA (\eta,\psi)\rA_{L^\infty(0,T;H^{s+\mez}(\xR^d)\times H^{s}(\xR^d))}\right),
$$
and 
\begin{align*}
B_{1}&\defn [ \partial_{t}, T_p]+\left[ T_{V}\cdot\partialx,T_{p}\right] ,\\
B_{2}&\defn  [ \partial_{t}, T_q] +\left[ T_{V}\cdot\partialx,T_{q}\right] .
\end{align*}
Writing
\begin{align*}
&\lA B_{1}\eta \rA_{H^s}\le \lA B_1 \rA_{H^{s+\mez}\rightarrow H^s} \lA \eta\rA_{H^{s+\mez}},\\
&\lA B_{2} U \rA_{H^s}\le \lA B_2 \rA_{H^{s}\rightarrow H^s} \lA U\rA_{H^{s}},
\end{align*}
it remains only to estimate $\lA B_1 \rA_{H^{s+\mez}\rightarrow H^s}$ and $\lA B_2 \rA_{H^{s}\rightarrow H^s}$. 
To do so, the only non trivial point is to prove the following lemma.

\begin{lemm}\label{comdtp}
For all $\mu\in\xR$ there exists a non-decreasing function $C$ such that, for 
all $t\in [0,T]$,
$$
\lA T_{ \partial_{t}p(t)} \rA_{H^{\mu}\rightarrow H^{\mu-\mez}}+
\lA T_{\partial_{t}q(t)} \rA_{H^{\mu}\rightarrow H^{\mu}}
\le C\left( \lA (\eta(t),\psi(t))\rA_{H^{s+\mez}\times H^s}\right).
$$
\end{lemm}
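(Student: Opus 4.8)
The plan is to differentiate the explicit formulas for $p$ and $q$ in time, to record that every symbol so produced is a finite sum of terms of the form $\Theta(\partialx\eta,\xi)\,c(x)$ with $\Theta$ smooth and homogeneous in $\xi$ and $c$ a product of spatial derivatives of $\eta$ and of $\partial_t\eta$, and then to estimate the associated paradifferential operators term by term. The only input on the time derivative is that $\partial_t\eta=G(\eta)\psi$: by Proposition~\ref{estDN}, $\lA\partial_t\eta\rA_{H^{s-1}}\le C(\lA(\eta,\psi)\rA_{H^{s+\mez}\times H^s})$, hence $\partialx\partial_t\eta\in H^{s-2}\subset L^\infty$ and $\partialx^2\partial_t\eta\in H^{s-3}$, with norms bounded by the same right-hand side (this uses the strict inequality $s>2+\frac d2$).

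First I would dispose of $q=(1+|\partialx\eta|^2)^{-\mez}$: here $\partial_t q=-(1+|\partialx\eta|^2)^{-\frac32}\,\partialx\eta\cdot\partialx\partial_t\eta$ is a function of $x$ alone lying in $L^\infty$, so $M^0_0(\partial_t q)=\lA\partial_t q\rA_{L^\infty}$ and Theorem~\ref{theo:sc0} gives the estimate for $\lA T_{\partial_t q}\rA_{H^\mu\to H^\mu}$. For $p=p^{(1/2)}+p^{(-1/2)}$ I would split. The principal part $p^{(1/2)}=(1+|\partialx\eta|^2)^{-5/4}\sqrt{\lambda^{(1)}}$ depends only on $\partialx\eta$ and $\xi$, so $\partial_t p^{(1/2)}=\Theta(\partialx\eta,\xi)\cdot\partialx\partial_t\eta$ with $\Theta$ smooth and homogeneous of degree $\mez$ in $\xi$; then $M^{1/2}_0(\partial_t p^{(1/2)})\le C(\lA\partialx\eta\rA_{L^\infty})\lA\partialx\partial_t\eta\rA_{L^\infty}$, and Theorem~\ref{theo:sc0} gives a bound from $H^\mu$ to $H^{\mu-\mez}$. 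The subprincipal part $p^{(-1/2)}$, given by \eqref{defip-12} (together with \eqref{dh21}, \eqref{dmu10} and the formula for $\gamma^{(1/2)}$), is a finite sum of terms $G_\alpha(\partialx\eta,\xi)\,\partial_x^\alpha\eta$ with $|\alpha|=2$ and $G_\alpha$ smooth and homogeneous of degree $-\mez$ in $\xi$; differentiating in $t$ produces two kinds of terms. In the terms where $\partial_t$ falls on the factor $G_\alpha(\partialx\eta,\xi)$, one gets $\widetilde\Theta(\partialx\eta,\xi)\,(\partialx\partial_t\eta)\,\partial_x^\alpha\eta$, a symbol of order $-\mez$ whose coefficient belongs to $L^\infty$ (as $\partialx\partial_t\eta\in H^{s-2}$ and $\partial_x^\alpha\eta\in H^{s-3/2}$, both embedded in $L^\infty$); this is bounded as above. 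The remaining term $G_\alpha(\partialx\eta,\xi)\,\partial_x^\alpha\partial_t\eta$ has a coefficient $\partial_x^\alpha\partial_t\eta$ lying only in $H^{s-3}$, which in general is not contained in $L^\infty$, and is the crux of the argument.

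I would treat this last term exactly as the subprincipal symbols are treated in the proof of Proposition~\ref{2d21}. The symbol $G_\alpha(\partialx\eta,\xi)\,\partial_x^\alpha\partial_t\eta$ is homogeneous of degree $-\mez$ in $\xi$, and, after subtracting the $x$-independent contribution $G_\alpha(0,\xi)$, the product rule \eqref{pr} applied with $(s_0,s_1,s_2)=(s-3,s-2,s-3)$ together with the nonlinear estimate \eqref{Fr} yields
$$
\sup_{\la\xi\ra=1}\ \sup_{|\beta|\le \frac d2+1}\lA\partial_\xi^\beta\bigl(G_\alpha(\partialx\eta,\cdot)\,\partial_x^\alpha\partial_t\eta\bigr)\rA_{H^{s-3}}\le C\bigl(\lA\eta\rA_{H^{s-1}}\bigr)\lA\partial_t\eta\rA_{H^{s-1}}\le C\bigl(\lA(\eta,\psi)\rA_{H^{s+\mez}\times H^s}\bigr).
$$
Then estimate \eqref{esti:quants} shows that the associated operator is of order $\mez$, i.e.\ bounded from $H^\mu$ to $H^{\mu-\mez}$ by this quantity. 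The main obstacle is precisely this one-derivative loss forced by the low regularity $s>2+\frac d2$: it is only because $p^{(-1/2)}$ — like $\gamma^{(1/2)}$ — depends \emph{linearly} on the second derivatives of $\eta$ that $\partial_x^\alpha\partial_t\eta$ enters to the first power and \eqref{esti:quants} is applicable; for $s>3+\frac d2$ one has $H^{s-3}\subset L^\infty$ and this term is routine. Summing the contributions of $\partial_t p^{(1/2)}$, of the two families of terms coming from $\partial_t p^{(-1/2)}$, and of $\partial_t q$ completes the proof.
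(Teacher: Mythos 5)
Your proof is correct and follows essentially the same route as the paper's: you split $p$ into $p^{(1/2)}+p^{(-1/2)}$, handle $\partial_t q$ and $\partial_t p^{(1/2)}$ by $L^\infty$ bounds and Theorem~\ref{theo:sc0}, write $p^{(-1/2)}=\sum_{|\alpha|=2}P_\alpha(\partialx\eta,\xi)\partial_x^\alpha\eta$, split $\partial_t p^{(-1/2)}$ into the two families of terms according to whether $\partial_t$ falls on the coefficient or on $\partial_x^\alpha\eta$, and treat the delicate term $P_\alpha(\partialx\eta,\xi)\partial_t\partial_x^\alpha\eta$ via \eqref{esti:quants} together with the product rule \eqref{pr} at the level $(s-3,s-2,s-3)$. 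This is precisely the paper's argument (the paper gets $\lA\partial_t\eta\rA_{H^{s-1}}$ from its earlier estimate \eqref{c43}, which itself rests on $\partial_t\eta=G(\eta)\psi$ and Proposition~\ref{estDN}, the same input you use).
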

\begin{proof}It follows from the Sobolev embedding and \eqref{c43} that
$$
\lA \partial_{t}\eta\rA_{W^{1,\infty}}\les\lA \partial_{t}\eta\rA_{H^{s-1}}\le 
C\left( \lA(  \eta,\psi)\rA_{H^{s+\mez}\times H^s}\right).
$$
This implies that
$$
\lA \partial_t q(t,\cdot)\rA_{L^\infty}+\sup_{\la\xi\ra=1} \lA \partial_{t}p^{(1/2)}(t,\cdot,\xi)\rA_{L^\infty}
\le C\left( \lA( \eta ,\psi )\rA_{H^{s+\mez}\times H^s}\right).
$$
On applying Theorem~\ref{theo:sc0}, this bound implies that 
$$
\lA T_{ \partial_{t}p^{(1/2)}(t)} \rA_{H^{\mu}\rightarrow H^{\mu-\mez}}+
\lA T_{\partial_{t}q(t)} \rA_{H^{\mu}\rightarrow H^{\mu}}
\le C\left( \lA (\eta(t),\psi(t))\rA_{H^{s+\mez}\times H^s}\right).
$$

It remains only to estimate $\lA T_{ \partial_{t}p^{(-1/2)}(t)}\rA_{H^{\mu}\rightarrow H^{\mu-\mez}}$. 
Since we only assume $s>2 +d/2$, a technical difficulty appears. Indeed, 
since $\partial_t$ has the weight of $3/2$ spatial derivatives, and since 
the explicit definition of $p^{(-1/2)}$ involves $2$ spatial derivatives of $\eta$, 
the symbol $\partial_{t}p^{(-1/2)}$ do not belong to $L^\infty$ in general. 
To overcome this technical problem, write $p^{(-1/2)}$ under the form
\begin{align*}
p^{(-1/2)}=\sum_{\la \alpha\ra=2}P_{\alpha}(\partialx\eta,\xi)\partial_{x}^\alpha \eta,
\end{align*}
where the $P_{\alpha}$ are smooth functions of their arguments for $\xi\neq 0$, homogeneous of degree $-1/2$ in $\xi$. 
Now write
\begin{equation}\label{ComSdt}
T_{\partial_{t} p^{(-1/2)}} 
=\sum_{\la\alpha\ra=2}
T_{(\partial_{t}P_{\alpha}(\partialx\eta,\xi) )\partial_x^\alpha\eta}
+\sum_{\la\alpha\ra=2}T_{P_{\alpha}( \partialx\eta,\xi)\partial_{t}\partial_x^\alpha\eta}.
\end{equation}
As above, we obtain
$$
\sup_{\la\xi\ra=1} \lA \partial_{t}P_{\alpha}(\partialx\eta(t),\xi)\rA_{L^{\infty}}
\le C\left( \lA( \eta,\psi)\rA_{H^{s+\mez}\times H^s}\right).
$$
On the other hand we have the obvious estimate  
$\lA \partial_x^\alpha\eta\rA_{L^\infty} \les \lA \eta\rA_{H^{s+\mez}}$. 
On applying Theorem~\ref{theo:sc0}, these bounds imply that the first term 
in the right hand side of \eqref{ComSdt} is uniformly of order $-1/2$. 

The analysis of the second term 
in the right hand side of \eqref{ComSdt} is based on the operator norm estimate \eqref{esti:quants}. 
By applying this estimate with $r=-1/2$, we obtain 
$$
\lA T_{P_{\alpha}( \partialx\eta,\xi)\partial_{t}\partial_x^\alpha\eta}\rA_{H^\mu\rightarrow H^{\mu-1/2}}
\les \lA P_{\alpha}( \partialx\eta,\xi)\partial_{t}\partial_x^\alpha\eta\rA_{H^{s-3}}.
$$
Now the product rule \eqref{pr} implies that
\begin{align*}
&\lA P_{\alpha}( \partialx\eta,\xi)\partial_{t}\partial_x^\alpha\eta\rA_{H^{s-3}}\\
&\qquad\les \left\{ \la P_\alpha(0,\xi)\ra+\lA P_{\alpha}( \partialx\eta,\xi)-P_\alpha(0,\xi)\rA_{H^{s-1}}\right\}
\lA \partial_{t}\partial_x^\alpha\eta\rA_{H^{s-3}},
\end{align*}
and hence
$$
\lA T_{P_{\alpha}( \partialx\eta,\xi)\partial_{t}\partial_x^\alpha\eta}\rA_{H^\mu\rightarrow H^{\mu-m}}
\le  C( \lA \eta\rA_{H^{s}})\lA \partial_t \eta\rA_{H^{s-1}}\le C\left( \lA( \eta,\psi)\rA_{H^{s+\mez}\times H^s}\right) .
$$
This completes the proof. 
\end{proof}

\section{A priori estimates}\label{sle}
Consider the Cauchy problem
\begin{equation}\label{A1}
\left\{
\begin{aligned}
&\partial_{t}\eta-G(\eta)\psi=0,\\
&\partial_{t}\psi+g \eta- H(\eta)
+ \frac{1}{2}\la\partialx \psi\ra^2  -\frac{1}{2}
\frac{\bigl(\partialx  \eta\cdot\partialx \psi +G(\eta) \psi \bigr)^2}{1+|\partialx  \eta|^2}
= 0,
\end{aligned}
\right.
\end{equation}
with initial data
$$
\eta\arrowvert_{t=0}=\eta_0,\quad \psi\arrowvert_{t=0}=\psi_0.
$$
In this section we prove a priori estimates for solutions to the system~\eqref{A1} 
and approximates systems. These estimates are crucial in the proof of existence and uniqueness of solutions to~\eqref{A1} .

\subsection{Reformulation}\label{s.5.1}
The first step is the following reformulation, whose proof is an immediate computation.

\begin{lemm}
$(\eta,\psi)$ solves \eqref{A1} if and only if 
\begin{equation*}
\begin{pmatrix}
I & 0 \\
-T_\mathfrak{B} & I \end{pmatrix}
\left( \partial_t+ T_V \cdot\partialx\right) 
 \begin{pmatrix} \eta \\ \psi \end{pmatrix}
+\begin{pmatrix}
0 & -T_{\lambda} \\
T_{h} & 0 \end{pmatrix}
\begin{pmatrix}
I & 0 \\
-T_\mathfrak{B} & I \end{pmatrix}
\begin{pmatrix} \eta \\ \psi \end{pmatrix}
=\begin{pmatrix} f^1 \\ f^2 \end{pmatrix},
\end{equation*}
where
\begin{equation}\label{f1f2}
\begin{aligned}
f^1&=G(\eta)\psi -\big\{ T_{\lambda} ( \psi -T_\mathfrak{B} \eta) - T_V\cdot\partialx \eta \big\},\\
f^2&=-\mez \la \partialx \psi\ra^2 +\mez \frac{\left( \partialx \eta \cdot \partialx \psi + G(\eta)\psi \right)^2}{1+\la \partialx\eta\ra^2} 
+ H(\eta) \\
&\quad + T_V \partialx \psi - T_\mathfrak{B}T_V\cdot \partialx  \eta  -T_{\mathfrak{B}}G(\eta)\psi 
+ T_{h}\eta -g\eta.
\end{aligned}
\end{equation}
\end{lemm}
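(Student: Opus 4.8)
The plan is to prove the equivalence by a direct algebraic expansion of the $2\times2$ paradifferential system, with no recourse to symbolic calculus: the point is that $f^1$ and $f^2$ in \eqref{f1f2} are defined precisely so as to absorb every paradifferential term that appears on the left-hand side, so that the system collapses exactly onto \eqref{A1}. First I would compute the matrix product
\[
\begin{pmatrix} 0 & -T_{\lambda} \\ T_{h} & 0 \end{pmatrix}
\begin{pmatrix} I & 0 \\ -T_\mathfrak{B} & I \end{pmatrix}
=\begin{pmatrix} T_{\lambda}T_\mathfrak{B} & -T_{\lambda} \\ T_{h} & 0 \end{pmatrix},
\]
and read off the two scalar equations. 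The first one is $(\partial_{t}+T_V\cdot\partialx)\eta-T_{\lambda}(\psi-T_\mathfrak{B}\eta)=f^1$; substituting the definition of $f^1$, the terms $T_{\lambda}(\psi-T_\mathfrak{B}\eta)$ and $T_V\cdot\partialx\eta$ cancel between the two sides and one is left with $\partial_{t}\eta=G(\eta)\psi$, i.e.\ the first equation of \eqref{A1}.

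Next I would plug this into the second scalar equation, which after the matrix expansion reads
\[
-T_\mathfrak{B}(\partial_{t}+T_V\cdot\partialx)\eta+(\partial_{t}+T_V\cdot\partialx)\psi+T_{h}\eta=f^2.
\]
Using $\partial_{t}\eta=G(\eta)\psi$ rewrites $-T_\mathfrak{B}\partial_{t}\eta$ as $-T_\mathfrak{B}G(\eta)\psi$; inserting the definition of $f^2$ from \eqref{f1f2} then cancels the common terms $-T_\mathfrak{B}T_V\cdot\partialx\eta$, $T_V\cdot\partialx\psi$, $T_{h}\eta$ and $-T_\mathfrak{B}G(\eta)\psi$, leaving exactly
\[
\partial_{t}\psi+g\eta-H(\eta)+\frac{1}{2}|\partialx\psi|^2
-\frac{1}{2}\frac{(\partialx\eta\cdot\partialx\psi+G(\eta)\psi)^2}{1+|\partialx\eta|^2}=0,
\]
the second equation of \eqref{A1}. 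The converse implication is the same chain of identities run in reverse.

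There is no genuine obstacle in this lemma; the only point requiring care is the bookkeeping of the various paraproducts and the sign conventions, since every operator occurring on the left-hand side reappears, by construction, inside $f^1$ or $f^2$. In particular no remainder estimates, commutator bounds or composition formulas are used here — those were already incorporated in the paralinearizations of $G(\eta)\psi$ (Proposition~\ref{prop:csystem}), of $H(\eta)$ (Lemma~\ref{paraH}) and of the quadratic nonlinearity (Lemma~\ref{lemmDBC}) established in Section~\ref{s3} — so the statement is a genuine identity rather than an approximate one.
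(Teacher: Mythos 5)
Your proof is correct, and it is essentially the paper's own argument: the paper states the lemma with the remark that its proof is an immediate computation, and you have simply carried that computation out explicitly, expanding the $2\times2$ system, using the first row to substitute $\partial_t\eta=G(\eta)\psi$ into the second, and observing that every paradifferential term cancels against its counterpart in $f^1$, $f^2$.
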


Since
$$
\begin{pmatrix}
I & 0 \\
T_\mathfrak{B} & I \end{pmatrix}
\begin{pmatrix}
I & 0 \\
-T_\mathfrak{B} & I \end{pmatrix}
=
\begin{pmatrix}
I & 0 \\
0 & I \end{pmatrix},
$$
we thus find that $(\eta,\psi)$ solves \eqref{A1} 
if and only if
\begin{equation}\label{A2}
\left\{
\begin{aligned}
&\left( \partial_t+ T_V\cdot \partialx 
+
\mathcal{L}\right) \begin{pmatrix}\eta \\ \psi\end{pmatrix}
=f(\eta,\psi),\\
& (\eta,\psi)\arrowvert_{t=0}=( \eta_{0} , \psi_{0}),
\end{aligned}
\right.
\end{equation}
with
$$
\mathcal{L}\defn \begin{pmatrix}
I & 0 \\
T_\mathfrak{B} & I \end{pmatrix}
\begin{pmatrix}
0 & -T_{\lambda}\\
T_{h} & 0 \end{pmatrix}
\begin{pmatrix}
I & 0 \\
-T_\mathfrak{B} & I \end{pmatrix},\quad
f(\eta,\psi)\defn\begin{pmatrix}
I & 0 \\
T_\mathfrak{B} & I \end{pmatrix}\begin{pmatrix} f^1 \\ f^2 \end{pmatrix}.
$$

\subsection{Approximate equations}\label{SAE}
We shall seek solutions of the Cauchy problem~\eqref{A2} as limits of solutions of approximating systems. 
The definition depends on two operators. The first one is a well-chosen mollifier. The second one is an approximate 
right-parametrix for the symmetrizer $S=\left(\begin{smallmatrix} T_p & 0 \\ 0 & T_q \end{smallmatrix}\right)$ defined in Section~\ref{Sym}.

\smallbreak
\noindent\textbf{Mollifiers.} To regularize the equations, we cannot use 
usual mollifiers of the form $\chi(\eps D_x)$. Instead we use the following variant. 
Given $\eps\in [0,1]$, we define $J_\eps$ as the 
paradifferential operator with symbol $\jmath_\eps=\jmath_\eps(t,x,\xi)$ given by
$$
\jmath_\eps=\jmath_\eps^{(0)}+\jmath_\eps^{(-1)}= \exp \big( -\eps \gamma^{(3/2)}\big) -\frac{i}{2}(\partial_x\cdot\partial_\xi)\exp \big( -\eps \gamma^{(3/2)}\big).
$$
The important facts are that 
$$
\jmath_\eps\in C^{0}([0,T];\Gamma^0_{3/2}(\xR^d)),\quad \{\jmath_\eps^{(0)}, \gamma^{(3/2)}\}=0,\quad
\IM \jmath_\eps^{(-1)} = -\frac{1}{2}(\partial_x\cdot\partial_\xi) \jmath_\eps^{(0)}.
$$
Of course, for any $\eps >0$, $\jmath_\eps \in C^{0}([0,T];\Gamma^m_{3/2}(\xR^d))$ for all $m\le 0$. However, the important fact is that
$\jmath_\eps$ is uniformly bounded in $C^{0}([0,T];\Gamma^0_{3/2}(\xR^d))$ for all $\eps\in [0,1]$.  Therefore, 
we have the following uniform estimates:
\begin{align*}
&\lA J_\eps T_\gamma -T_\gamma J_\eps\rA_{H^\mu\rightarrow H^\mu} \le C(\lA \partialx\eta\rA_{W^{3/2,\infty}}),\\
&\lA (J_\eps)^* -J_\eps\rA_{H^\mu\rightarrow H^{\mu+3/2}} \le C(\lA \partialx\eta\rA_{W^{3/2,\infty}}),
\end{align*}
for some non-decreasing function $C$ independent of $\eps\in [0,1]$.
In other words, we have
$$
J_\eps T_\gamma \sim T_\gamma J_\eps, \quad (J_\eps)^*\sim J_\eps,
$$
uniformly in $\eps$.

\smallbreak
\noindent\textbf{Parametrix for the symmetrizer.} Recall that the class of symbols $\Sigma^m$ have been defined in Definition~\ref{defiSigma}. We seek
$$
\wp=\wp^{(-1/2)}+\wp^{(-3/2)}\in \Sigma^{-1/2}
$$
such that 
$$
p\sharp \wp = p^{(1/2)}\wp^{(-1/2)}+p^{(1/2)}\wp^{(-3/2)}+p^{(-1/2)}\wp^{(-1/2)}+\frac{1}{i}\partial_\xi p^{(1/2)}\cdot \partial_x \wp^{(-1/2)} =1.
$$
To solve this equation we explicitely set
\begin{equation}\label{dmu-1}
\begin{aligned}
\wp^{(-1/2)}&=\frac{1}{p^{(1/2)}},\\ 
\wp^{(-3/2)}&= -\frac{1}{p^{(1/2)}}\left( \wp^{(-1/2)}p^{(-1/2)} +\frac{1}{i} \partial_\xi \wp^{(-1/2)} \cdot \partial_x p^{(1/2)} \right).
\end{aligned}
\end{equation}
Therefore
$$
T_p T_\wp  \sim I,
$$
where recall that the notation $A\sim B$ is as defined in Definition~\ref{defisym}.

On the other hand, since $q=\left(1+|\partialx \eta |^2\right)^{-\mez}$ does not depend on $\xi$, it follows from \eqref{iii} that we have
$$
T_q T_{1/q}\sim I.
$$
Hence, with $\wp$ and $q$ as defined above, we have
$$
\begin{pmatrix} T_p & 0 \\ 0 & T_q\end{pmatrix} 
\begin{pmatrix} T_\wp & 0 \\ 0 & T_{1/q} \end{pmatrix}
\sim
\begin{pmatrix} I & 0 \\ 0 & I\end{pmatrix}. 
$$

\smallbreak
\noindent\textbf{Approximate system.} We then define
$$
\mathcal{L}^\eps\defn 
\begin{pmatrix}
I & 0 \\
T_{\mathfrak{B}} & I \end{pmatrix}
\begin{pmatrix}
0 & -  T_{\lambda}   \\
T_{h}  & 0 \end{pmatrix}
\begin{pmatrix}
T_{\wp}  J_\eps T_p & 0\\
0 & T_{1/q}J_\eps T_q   \end{pmatrix}
\begin{pmatrix}
I&0\\
-T_{\mathfrak{B}} & I 
\end{pmatrix}.
$$
(At first one may not expect to have to introduce $J_\eps$ and $\mathcal{L}^\eps$. 
We explain the reason to introduce these operators in \S\ref{keyi} below.) 
We seek solutions $(\eta,\psi)$ of~\eqref{A2} as limits of solutions 
of the following Cauchy problems
\begin{equation}\label{A3}
\left\{
\begin{aligned}
&\left( \partial_t+   T_{V} \cdot\partialx J_\eps 
+
\mathcal{L}^\eps\right)\begin{pmatrix}\eta \\ \psi\end{pmatrix}
=f( J_\eps \eta,J_\eps \psi),\\
& (\eta,\psi)\arrowvert_{t=0}=(\eta_{0},\psi_0).
\end{aligned}
\right.
\end{equation}

\subsection{Uniform estimates}
Our main task will consist in proving uniform estimates for this system. Namely, we shall prove the following proposition.
\begin{prop}\label{ueps}
Let $d\ge 1$ and $s>2+d/2$. Then there exist a non-decreasing function $C$ such that, for all $\eps\in ]0,1$, all $T\in ]0,1]$ and all 
solution $(\eta,\psi)$ of \eqref{A3} such that
$$
(\eta,\psi)\in C^{1}([0,T];H^{s+\mez}(\xR^d)\times H^{s}(\xR^d)),
$$
the norm
$$
M(T) = \lA (\eta,\psi)\rA_{L^{\infty}(0,T;H^{s+\mez}\times H^{s})},
$$
satisfies the estimate
$$
M(T)\le C(M_0) + T C(M(T)),
$$
with $M_0\defn  \lA (\eta_0,\psi_0)\rA_{H^{s+\mez}\times H^{s}}$.
\end{prop}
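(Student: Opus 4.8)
The plan is to conjugate the approximate system~\eqref{A3} into the symmetric, exactly skew-adjoint form produced in Corollary~\ref{psym}, to run a weighted energy estimate there, and to transfer the resulting bound back to $(\eta,\psi)$.

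\textbf{Step 1: reduction to the symmetrized system.} Following \S\ref{s.5.1}, \eqref{A3} is equivalent to an equation for $(\eta,\psi)$ involving the triangular factor $\bigl(\begin{smallmatrix}I&0\\-T_\mathfrak{B}&I\end{smallmatrix}\bigr)$; applying this factor and then the symmetrizer $S=\bigl(\begin{smallmatrix}T_p&0\\0&T_q\end{smallmatrix}\bigr)$ of Proposition~\ref{key} to the pair $(\eta,U)$ with $U=\psi-T_\mathfrak{B}\eta$, I set $\Phi_\eps=(\Phi_1,\Phi_2)\defn(T_p\eta,T_qU)$. Using the three relations of Proposition~\ref{key} ($T_pT_\lambda\sim T_\gamma T_q$, $T_qT_h\sim T_\gamma T_p$, $T_\gamma\sim(T_\gamma)^*$), Lemma~\ref{comdtp} for $[\partial_t,T_p]$ and $[\partial_t,T_q]$, Proposition~\ref{prop:csystem2} together with the bounds of \S\ref{s3} for the source term $f(J_\eps\eta,J_\eps\psi)$ (which are tame, hence uniform under the action of $J_\eps$), and keeping track of where the mollifier sits --- this is precisely why $\mathcal L^\eps$ is built with the parametrices $T_\wp,T_{1/q}$, so that $ST_\wp\sim I$ and $T_qT_{1/q}\sim I$ --- I obtain
\begin{equation*}
\partial_t\Phi_\eps+T_V\cdot\partialx J_\eps\Phi_\eps+\begin{pmatrix}0&-T_\gamma J_\eps\\ (T_\gamma)^*J_\eps&0\end{pmatrix}\Phi_\eps=G_\eps,
\end{equation*}
where every remainder operator introduced is an admissible remainder in the sense of Definition~\ref{defisym} with operator norm bounded by $C(\lA\eta(t)\rA_{H^{s+\mez}})$ \emph{uniformly in} $\eps\in[0,1]$, and $G_\eps\in L^\infty(0,T;H^s)$ with $\lA G_\eps\rA_{L^\infty H^s}\le C(M(T))$. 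Finally, the elliptic parametrices $T_\wp,T_{1/q}$ and Proposition~\ref{2d22}, the bound $\lA G(\eta)\psi\rA_{H^{s-1}}\le C(\lA\eta\rA_{H^{s+\mez}})\lA\partialx\psi\rA_{H^{s-1}}$ of Proposition~\ref{estDN}, and an elementary $L^2$-level estimate, show that $\lA(\eta,\psi)(t)\rA_{H^{s+\mez}\times H^s}$ and $\lA\Phi_\eps(t)\rA_{H^s}$ are equivalent, with constants depending continuously on $\lA\eta(t)\rA_{H^{s+\mez}}$; in particular $\lA\Phi_\eps(0)\rA_{H^s}\le C(M_0)$.

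\textbf{Step 2: the weighted energy estimate.} The leading operator $\mathcal A\defn\bigl(\begin{smallmatrix}0&-T_\gamma\\ (T_\gamma)^*&0\end{smallmatrix}\bigr)$ is exactly skew-adjoint, but $[\langle D_x\rangle^s,T_\gamma]$ is of order $s+\mez$; I therefore measure $\Phi_\eps$ not with $\langle D_x\rangle^s$ but with $\Lambda\defn T_{(\gamma^{(3/2)})^{2s/3}}\in\Sigma^s$, an elliptic operator of order $s$ whose principal symbol Poisson-commutes with $\gamma^{(3/2)}$ --- exactly the property built into $J_\eps$, where $\{\jmath_\eps^{(0)},\gamma^{(3/2)}\}=0$. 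By Propositions~\ref{pcs} and~\ref{2d21} this forces $[\Lambda,T_\gamma]$, $[\Lambda,J_\eps]$ and $[\Lambda,\mathcal A J_\eps]$ to be of order $\le s$ (the order-$(s+\mez)$ Poisson-bracket term cancels), with norms $\le C(\lA\eta\rA_{H^{s+\mez}})$ uniformly in $\eps$. I then estimate $\frac{d}{dt}\bigl(\lA\Lambda\Phi_\eps\rA_{L^2}^2+\lA\Phi_\eps\rA_{L^2}^2\bigr)$. After commuting $\Lambda$ through $\mathcal A J_\eps$ (the error maps $\Phi_\eps\in H^s$ into $L^2$ with norm $\le C(M(T))\lA\Phi_\eps\rA_{H^s}$), the remaining bilinear term vanishes up to a lower-order contribution because $\mathcal A$ is skew-adjoint and $(J_\eps)^*\sim J_\eps$, $J_\eps T_\gamma\sim T_\gamma J_\eps$ uniformly in $\eps$; the term from $T_V\cdot\partialx J_\eps$ is handled by the same manipulations (using that $V$ is real, that $[\partial_{x_j},T_{V_j}]=T_{\partial_{x_j}V_j}$ is of order $0$ with norm $\les\lA V\rA_{W^{1,\infty}}\le C(M(T))$, and $(J_\eps)^*\sim J_\eps$), contributing at most $C(M(T))\lA\Phi_\eps\rA_{H^s}^2$; the $G_\eps$ term contributes at most $\lA G_\eps\rA_{H^s}\lA\Phi_\eps\rA_{H^s}$. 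Using the norm equivalence and $\lA\Phi_\eps(t)\rA_{H^s}\le C(M(t))\le C(M(T))$ this gives $\frac{d}{dt}\bigl(\lA\Lambda\Phi_\eps\rA_{L^2}^2+\lA\Phi_\eps\rA_{L^2}^2\bigr)\le C(M(T))$; integrating on $[0,t]\subset[0,T]$, using $\lA\Lambda\Phi_\eps(0)\rA_{L^2}\le C(M_0)$, and reverting to $(\eta,\psi)$ via Step~1, yields $M(T)\le C(M_0)+TC(M(T))$.

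\textbf{The main obstacle.} Everything above must be carried out at the threshold $s>2+d/2$ and uniformly in $\eps\in[0,1]$: no composition or commutator may cost more than $3/2$ derivatives of $\eta$. This is exactly what the calculus of \S\ref{Sym} provides, since all the symbols occurring lie in the classes $\Sigma^m$ whose sub-principal parts depend only linearly on $\partial_x^2\eta\in H^{s-3/2}$, so that Propositions~\ref{pcs}, \ref{2d21} and~\ref{2d22} apply with constants controlled by $\lA\eta\rA_{H^{s+\mez}}$ (sometimes only by $\lA\eta\rA_{H^{s-1}}$). The crucial point --- and the reason $\mathcal L^\eps$ and $J_\eps$ are defined exactly as they are --- is that the reduction of Step~1 and the cancellations of Step~2 must go through without ever using the smoothing of $J_\eps$, only its uniform boundedness in $C^0([0,T];\Gamma^0_{3/2}(\xR^d))$, the identity $\{\jmath_\eps^{(0)},\gamma^{(3/2)}\}=0$, and $(J_\eps)^*\sim J_\eps$. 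By contrast, the explicit symbolic computations in Step~1 and the Gronwall bookkeeping at the end of Step~2 are routine.
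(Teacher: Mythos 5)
Your overall strategy --- symmetrize via $\bigl(\begin{smallmatrix}T_p&0\\0&T_q\end{smallmatrix}\bigr)\bigl(\begin{smallmatrix}I&0\\-T_\mathfrak{B}&I\end{smallmatrix}\bigr)$, run an $L^2$-energy estimate after commuting with the order-$s$ elliptic operator built from $\gamma^{(3/2)}$, and then revert to $(\eta,\psi)$ --- is the one used in the paper, and Steps~1 and~2 are essentially as in \S\ref{sueps}. However, the final clause ``reverting to $(\eta,\psi)$ via Step~1'' hides a genuine gap.

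You assert that $\lA(\eta,\psi)(t)\rA_{H^{s+\mez}\times H^s}$ and $\lA\Lambda\Phi_\eps(t)\rA_{L^2}+\lA\Phi_\eps(t)\rA_{L^2}$ are equivalent with constants depending continuously on $\lA\eta(t)\rA_{H^{s+\mez}}$. With that dependence, the best your energy bound can give is
$$
M(T)\le C\bigl(M(T)\bigr)\bigl[C(M_0)+T\,C(M(T))\bigr],
$$
which is not of the required form $M(T)\le C(M_0)+T\,C(M(T))$: the factor $C(M(T))C(M_0)$ survives at $T=0$ and grows with $M(T)$, so the continuation argument of the next lemma (choosing $T_0$ small with $M_1=2C(M_0)$) does not close. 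Avoiding this is exactly why the paper develops Proposition~\ref{2d22}, which gives the elliptic recovery with constants depending only on the \emph{lower} norm $\lA\eta(t)\rA_{H^{s-1}}$, and why all the symbols are kept in the classes $\Sigma^m$. But even that is not enough on its own: one cannot simply bound $\sup_t C(\lA\eta(t)\rA_{H^{s-1}})$ by $C(M(T))$. The paper inserts two further steps you omit. First, a low-norm bound $\lA(\eta,\psi)\rA_{L^\infty(H^{s-1}\times H^{s-\tdm})}\le C(M_0)+T\,C(M(T))$, obtained by reading $\lA\partial_t\eta\rA_{L^\infty H^{s-1}}\le C(M(T))$ and $\lA\partial_t\psi\rA_{L^\infty H^{s-\tdm}}\le C(M(T))$ directly off the equation \eqref{A3} and integrating in time (step b) of \S\ref{sueps}). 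Second, the argument that the constant $K$ in the elliptic recovery is itself bounded by $C(M_0)+T\,C(M(T))$, obtained by writing $K\le F(\lA\eta(t)\rA_{H^{s-1}}^2)$ for a $C^1$ function $F$ and integrating $\frac{d}{dt}F(\lA\eta(t)\rA_{H^{s-1}}^2)$ in time against the low-norm time-derivative bound. Only with both of these in hand does the product $K\cdot[C(M_0)+T\,C(M(T))]$ fold back into the required form (using $T\le 1$ and $M_0\le M(T)$). Without the low-norm estimate and the $K$-estimate, the closing step of your argument genuinely fails.
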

\begin{rema}
Notice that the estimates holds for $\epsilon = 0$. In particular this proposition contains {\em a priori} 
estimates for the water waves 
system itself. 
\end{rema}

\subsection{The key identities}\label{keyi} To ease the reading, we here explain what are the key identities in the proof of 
Proposition~\ref{ueps}. 

By definition of $\mathcal{L}^\eps$, using that $\left(\begin{smallmatrix}
I&0\\
-T_{\mathfrak{B}} & I 
\end{smallmatrix}\right)\left(
\begin{smallmatrix}
I & 0 \\
T_{\mathfrak{B}} & I \end{smallmatrix}\right)=\left(
\begin{smallmatrix}
I & 0 \\
0 & I \end{smallmatrix}\right)$, 
we have
\begin{align*}
&\begin{pmatrix}
T_{p} & 0 \\ 0 & T_{q}\end{pmatrix}
\begin{pmatrix}
I&0\\
-T_{\mathfrak{B}} & I 
\end{pmatrix}
\mathcal{L}^\eps\\
%&=\begin{pmatrix}
%T_{p} & 0 \\ 0 & T_{q}\end{pmatrix}
%\begin{pmatrix}
%I&0\\
%-T_{\mathfrak{B}} & I 
%\end{pmatrix}
%\begin{pmatrix}
%I & 0 \\
%T_{\mathfrak{B}} & I \end{pmatrix}
%\begin{pmatrix}
%0 & -  T_{\lambda}   \\
%T_{h}  & 0 \end{pmatrix}
%\begin{pmatrix}
%T_{\wp}  J_\eps T_p & 0\\
%0 & T_{1/q}J_\eps T_q   \end{pmatrix}
%\begin{pmatrix}
%I&0\\
%-T_{\mathfrak{B}} & I 
%\end{pmatrix}\\
&\qquad=\begin{pmatrix}
T_{p} & 0 \\ 0 & T_{q}\end{pmatrix}
\begin{pmatrix}
0 & -  T_{\lambda}   \\
T_{h}  & 0 \end{pmatrix}
\begin{pmatrix}
T_{\wp}  J_\eps T_p & 0\\
0 & T_{1/q}J_\eps T_q   \end{pmatrix}
\begin{pmatrix}
I&0\\
-T_{\mathfrak{B}} & I 
\end{pmatrix}.
\end{align*}
Now recall that
$$
\begin{pmatrix}
T_{p} & 0 \\ 0 & T_{q}\end{pmatrix}
\begin{pmatrix} 0 &-T_\lambda  \\  T_h & 0 \end{pmatrix} 
\sim \begin{pmatrix} 0 & -T_\gamma \\ (T_\gamma)^* & 0 \end{pmatrix} 
\begin{pmatrix}
T_{p} & 0 \\ 0 & T_{q}\end{pmatrix},
$$
so that
\begin{align*}
&\begin{pmatrix}
T_{p} & 0 \\ 0 & T_{q}\end{pmatrix}
\begin{pmatrix}
I&0\\
-T_{\mathfrak{B}} & I 
\end{pmatrix}
\mathcal{L}^\eps\\
&\qquad \sim
\begin{pmatrix} 0 & -T_\gamma \\ (T_\gamma)^* & 0 \end{pmatrix} 
\begin{pmatrix}
T_{p} & 0 \\ 0 & T_{q}\end{pmatrix}
\begin{pmatrix}
T_{\wp}  J_\eps T_p & 0\\
0 & T_{1/q}J_\eps T_q   \end{pmatrix}
\begin{pmatrix}
I&0\\
-T_{\mathfrak{B}} & I 
\end{pmatrix}
%\\
%&\qquad \sim
%\begin{pmatrix} 0 & -T_\gamma \\ (T_\gamma)^* & 0 \end{pmatrix} 
%\begin{pmatrix}
%T_{p} & 0 \\ 0 & T_{q}\end{pmatrix}
%\begin{pmatrix} T_\wp & 0 \\ 0 & T_{1/q} \end{pmatrix}
%\begin{pmatrix}
% J_\eps & 0\\
%0 & J_\eps   \end{pmatrix}
%\begin{pmatrix}
%T_{p} & 0 \\ 0 & T_{q}\end{pmatrix}
%\begin{pmatrix}
%I&0\\
%-T_{\mathfrak{B}} & I 
%\end{pmatrix},
\end{align*}
uniformly in $\eps$ (notice that the remainders associated to the notation $\sim$ are uniformly bounded). 
We next use
$$
\begin{pmatrix} T_p & 0 \\ 0 & T_q\end{pmatrix} 
\begin{pmatrix} T_\wp & 0 \\ 0 & T_{1/q} \end{pmatrix}
\sim
\begin{pmatrix} I & 0 \\ 0 & I\end{pmatrix},
$$
to obtain that, uniformly in $\eps$, we have the key identity
\begin{align*}
\begin{pmatrix}
T_{p} & 0 \\ 0 & T_{q}\end{pmatrix}
\begin{pmatrix}
I&0\\
-T_{\mathfrak{B}} & I 
\end{pmatrix}
\mathcal{L}^\eps\sim
\begin{pmatrix} 0 & -T_\gamma J_\eps \\ (T_\gamma)^* J_\eps & 0 \end{pmatrix} 
\begin{pmatrix}
T_{p} & 0 \\ 0 & T_{q}\end{pmatrix}
\begin{pmatrix}
I&0\\
-T_{\mathfrak{B}} & I 
\end{pmatrix}.
\end{align*}
In other words, the symmetrizer 
$$
\begin{pmatrix}
T_{p} & 0 \\ 0 & T_{q}\end{pmatrix}
\begin{pmatrix}
I&0\\
-T_{\mathfrak{B}} & I 
\end{pmatrix}
$$
conjugates $\mathcal{L}^\eps$ to a simple operator which is skew symmetric in the following sense: 
%. This the first main argument.
%The second key identity is that
$$
\begin{pmatrix} 0 & -T_\gamma J_\eps \\ (T_\gamma)^* J_\eps & 0 \end{pmatrix} ^*
\sim-\begin{pmatrix} 0 & -T_\gamma J_\eps \\ (T_\gamma)^* J_\eps & 0 \end{pmatrix}.
$$
This is our second key identity, which comes from the fact that 
$$
(T_\gamma)^*\sim T_\gamma, \quad J_\eps^*\sim J_\eps, \quad T_\gamma J_\eps \sim J_\eps T_\gamma.
$$ 
In particular, it is essential to chose a good mollifier so that the last two identities hold true. 

\smallbreak

We could mention that, in the proof of Proposition~\ref{ueps} below, the main argument is the fact that 
the term $F_{2,\eps}$ in \eqref{defiF2} is uniformly bounded in $L^\infty(0,T;H^{s}\times H^{s})$. 
The other arguments are only technical arguments. However, since we only assume that $s>2+\frac{d}{2}$, this requires some care 
and we give a complete proof.

\subsection{Proof of Proposition~\ref{ueps}}\label{sueps} We now prove Proposition~\ref{ueps}.

\smallbreak 
\noindent \textbf{a)} Let us set
$$
U= \psi -T_{\mathfrak{B}} \eta,\quad
\Phi
=
\begin{pmatrix}
T_p \eta \\
T_q U
\end{pmatrix}
= \begin{pmatrix}
T_{p} & 0 \\ 0 & T_{q}\end{pmatrix}
\begin{pmatrix}
I&0\\
-T_{\mathfrak{B}} & I 
\end{pmatrix}\begin{pmatrix} \eta \\ \psi \end{pmatrix}.
$$
We claim that $\Phi$ satisfies an equation of the form
\begin{equation}\label{Ya}
\left(\partial_{t}+ T_{V}\cdot\partialx J_\eps \right)  \Phi +
\begin{pmatrix}
0 & - T_{\gamma} J_\eps \\
T_{\gamma} J_\eps  & 0 \end{pmatrix}
\Phi
= F_{\eps},
\end{equation}
where the remainder satisfies
\begin{equation}\label{RHSF}
\lA F_\eps\rA_{L^\infty(0,T;H^{s}\times H^{s})} \le 
C\left(  \lA (\eta,\psi)\rA_{L^\infty(0,T;H^{s+\mez}\times H^s)}\right),
\end{equation}
for some non-decreasing function $C$ independent of $\eps$. 
To prove this claim,

we begin by commuting the equation \eqref{A3} with the matrix 
$$
 \begin{pmatrix}
T_{p} & 0 \\ 0 & T_{q}\end{pmatrix}
\begin{pmatrix}
I&0\\
-T_{\mathfrak{B}} & I 
\end{pmatrix},
$$
to obtain that $\Phi$ satisfies \eqref{Ya} with $F_\eps=F_{1,\eps}+F_{2,\eps}+F_{3,\eps}$ where (cf \S\ref{keyi})
\begin{equation}\label{defiF2}
\begin{aligned}
&F_{1,\eps}=  
\begin{pmatrix}
T_{p}f^1( J_\eps \eta,J_\eps \psi) \\
T_{q}f^2 ( J_\eps \eta,J_\eps \psi)
\end{pmatrix},\\
&F_{2,\eps}= \begin{pmatrix}
0 & -    ( T_p T_\lambda T_{1/q}J_\eps- T_\gamma J_\eps) \\
(T_q T_h T_\wp J_\eps -T_\gamma J_\eps)& 0 
 \end{pmatrix} \Phi,\\
&F_{3,\eps}=\left[ \partial_{t}+ T_{V}\cdot\partialx J_\eps , 
\begin{pmatrix}
T_{p} & 0 \\ 0 & T_{q}\end{pmatrix}
\begin{pmatrix}
I&0\\
-T_{\mathfrak{B}} & I 
\end{pmatrix}
\right] \begin{pmatrix}\eta\\ \psi\end{pmatrix}.
\end{aligned}
\end{equation}
The estimate of the first term follows from 
Proposition~\ref{prop:csystem}, Lemma~\ref{paraH} and Lemma~\ref{lemmDBC} (clearly, these results applies with 
$(\eta,\psi)$ replaced by $(J_\eps \eta,J_\eps \psi)$). 
For the second term we use that, 
$$
T_pT_\lambda\sim T_\gamma T_q ,\quad T_qT_h \sim T_\gamma T_p,
\quad 
T_p T_\wp\sim I,\quad T_q T_{1/q}\sim I,
$$
to obtain
$$
T_p T_\lambda T_{1/q}\sim T_\gamma ,\quad T_q T_h T_\wp \sim T_\gamma.
$$
Eventually, we estimate the last term as in the proof of Corollary~\ref{psym}.

\smallbreak
\noindent \textbf{b)}
We next claim that
\begin{equation}\label{iM1}
\lA (\eta,\psi) \rA_{L^{\infty}(0,T;H^{s-1}\times H^{s-\tdm})}\le C(M_0) + T C(M(T)). 
\end{equation}
We prove the desired estimate for $\partial_t \eta$ only. To do so, 
using the obvious inequality
\begin{align*}
\lA \eta(t)\rA_{H^{s-1}}
&\le \lA \eta(0)\rA_{H^{s-1}}+\int_0^t \lA \partial_t \eta\rA_{H^{s-1}}\\
&\le M_0 + T \lA \partial_t \eta \rA_{L^{\infty}(0,T;H^{s-1})},
\end{align*}
we see that it is enough to prove that
\begin{equation}\label{eeta}
\lA \partial_t \eta \rA_{L^{\infty}(0,T;H^{s-1})}\le C(M(T)). 
\end{equation}
This in turn follows directly from the equation for $\eta$. Indeed, directly from \eqref{A3}, write
$$
\partial_t \eta 
=-T_{V}\cdot\partialx J_\eps \eta 
+T_{\lambda}T_{1/q}J_\eps T_q(\psi -T_{\mathfrak{B}}\eta) 
+f_{1}(J_\eps\eta,J_\eps\psi).
$$
The last term is estimated by means of Proposition~\ref{prop:csystem}. 
Moving to the first two terms, by the usual continuity estimate for paradifferential operators \eqref{esti:quant1}, we have
%\begin{align*}
$$
\lA T_{V}\cdot\partialx J_\eps \eta\rA_{H^{s-1}}\le  \lA V\rA_{L^\infty} \lA J_\eps \eta\rA_{H^{s}},%\\
$$
and
\begin{align*}
&\lA T_{\lambda}T_{1/q}J_\eps T_q (\psi -T_{\mathfrak{B}}\eta)\rA_{H^{s-1}}\\
&\qquad \le \lA T_{\lambda}T_{1/q}J_\eps T_q \rA_{H^{s}\rightarrow H^{s-1}}
\left\{\lA \psi\rA_{H^{s}}+\lA\mathfrak{B}\rA_{L^\infty}\lA \eta\rA_{H^{s}}\right\},
\end{align*}
and hence, since $H^{s-1}(\xR^d)\subset L^\infty(\xR^d)$, the estimates for $\mathfrak{B}$ and $V$ in \eqref{c43} imply that 
$\partial_t\eta$ satisfies the desired estimate \eqref{eeta}. The estimate of 
$\lA \psi\rA_{H^{s-3/2}}$ is analoguous. This completes the proof of the claim.

\smallbreak
\noindent \textbf{c)} To obtain estimates in Sobolev, we shall commute the equation with an elliptic operator of order $s$ and then use an $L^2$-energy estimate. 
Again, one has to chose carefully the elliptic operator. The most natural choice consists in introducing the paradifferential operator 
$T_\beta$ with symbol
\begin{equation}\label{defibeta}
\beta\defn \left( \gamma^{(3/2)}\right)^{\frac{2s}{3}} \in \Sigma^s.
\end{equation}
The key point is that, since $\beta$ and $\jmath_\eps^{(0)}$ are (nonlinear) functions of 
$\gamma^{(3/2)}$, we have
\begin{align*}
&\partial_\xi\beta\cdot\partial_x \gamma^{(3/2)} = \partial_\xi \gamma^{(3/2)}\cdot\partial_x \beta,\\
&\partial_\xi\beta\cdot\partial_x \jmath_\eps^{(0)} = \partial_\xi \jmath_\eps^{(0)}\cdot\partial_x \beta.
\end{align*}
Therefore, as above, we find that $\left[T_{\beta},T_{\gamma}\right]$ is of order $s$, while 
$\left[T_{\beta},J_\eps \right]$ is of order $s-3/2$. 
Also the commutator $\left[T_{\beta},T_{V}\cdot\partialx J_\eps \right]$ is clearly of order~$s$. 
With regards to the commutator $[T_\beta,T_{\partial_t }]=-T_{\partial_t \beta}$ notice that there is no difficulty. Indeed, since 
$\beta$ is of the form $\beta=B(\partialx\eta,\xi)$, 
the most direct estimate shows that the $L_x^\infty(\xR^d)$-norm of $\partial_t \beta$ is 
estimated by the $L_x^\infty(\xR^d)$-norm of $(\partialx\eta,\partial_t\partialx\eta)$ and hence by 
$C(M(T))$ in view of \eqref{eeta} and the Sobolev embedding $H^{s-1}(\xR^d)\subset W^{1,\infty}(\xR^d)$. We thus end up with the following 
uniform estimates
\begin{align*}
\lA \left[T_{\beta},T_{\gamma}\right]J_\eps \rA_{H^s\rightarrow L^2} &\le C(M(T)),\\
\lA T_\gamma \left[T_{\beta},J_\eps \right] \rA_{H^s\rightarrow L^2} &\le C(M(T)), \\
\lA \left[T_{\beta},T_{V}\cdot\partialx J_\eps \right]\rA_{H^s\rightarrow L^2}
&\le C(M(T)),\\
\lA \left[T_{\beta},\partial_t\right]\rA_{H^s\rightarrow L^2}
&\le C(M(T)),
\end{align*}
for some non-decreasing function $C$ independent of $\eps\in [0,1]$. 
Therefore, by commuting the equation \eqref{Ya} with $T_\beta$, we find that 
$$
\varphi\defn T_\beta \Phi
$$
satisfies
\begin{equation}\label{Yb}
\left(\partial_{t}+ T_{V}\cdot\partialx J_\eps \right) \varphi +
\begin{pmatrix}
0 & - T_{\gamma} J_\eps \\
T_{\gamma} J_\eps & 0 \end{pmatrix}
\varphi
= F'_{\eps},
\end{equation}
with
\begin{equation*}
\lA F'_\eps\rA_{L^{\infty}(0,T;L^{2}\times L^{2})} \le 
C(M(T)),
\end{equation*}
for some non-decreasing function $C$ independent of $\eps\in [0,1]$.

\smallbreak
\noindent \textbf{d)} 
Since by assumption $(\eta,\psi)$ is $C^1$ in time with values in $H^{s+\mez}(\xR^d)\times H^s(\xR^d)$, we have
$$
\varphi\in C^{1}([0,T];L^{2}(\xR^d)\times L^2(\xR^d)),
$$
and hence we can write
$$
\frac{d}{dt}\left\langle\varphi,\varphi\right\rangle = 2 \RE \left\langle \partial_{t}\varphi,\varphi\right\rangle,
$$ 
where $\langle\cdot,\cdot\rangle$ denotes the scalar product in $L^{2}(\xR^d)\times L^2(\xR^d)$. Therefore, \eqref{Yb} implies that
\begin{equation*}
\frac{d}{dt}\left\langle\varphi,\varphi\right\rangle
=2\RE \left\langle - T_{V}\cdot\partialx J_\eps\varphi - \begin{pmatrix}
0 & - T_{\gamma} J_\eps \\
T_{\gamma} J_\eps & 0 \end{pmatrix}\varphi+F'_{\eps},\varphi \right\rangle
\end{equation*}
and hence
\begin{equation*}
\frac{d}{dt}\left\langle\varphi,\varphi\right\rangle
= \left\langle   \mathscr{R}^\eps\varphi ,\varphi \right\rangle+2\RE \left\langle F'_{\eps},\varphi \right\rangle,
\end{equation*}
where $\mathscr{R}^\eps$ is the matrix-valued operator
$$
\mathscr{R}^\eps\defn 
-\left\{(T_{V}\cdot\partialx  J_\eps)^*+T_{V}\cdot\partialx J_\eps \right\} I+ \begin{pmatrix}
0 & - T_{\gamma} J_\eps \\
T_{\gamma} J_\eps & 0 \end{pmatrix} + \begin{pmatrix}
0 & - T_{\gamma} J_\eps \\
T_{\gamma} J_\eps & 0 \end{pmatrix}^*.
$$
Now recall that
$$ 
(T_\gamma)^*\sim T_\gamma,\quad (J_\eps)^*\sim J_\eps,\quad 
T_\gamma J_\eps \sim J_\eps T_\gamma.
$$
Moreover, we easily verify that 
$$
\sup_{\eps \in [0,1]}\sup_{t\in [0,T]}\lA \mathscr{R}^\eps(t)\rA_{L^2\times L^2\rightarrow L^2\times L^2}\le C(M(T)).
$$
Therefore, integrating in time we conclude that for all $t\in [0,T]$,
\begin{equation*}
\lA\varphi(t)\rA_{L^2\times L^2}^2-\lA\varphi(0)\rA_{L^2\times L^2}^2 
\le C(M(T))\int_0^T \left( \lA \varphi\rA_{L^2\times L^2}^2+\lA F'_{\eps}\rA_{L^2\times L^2}^2\right)  \, dt',
\end{equation*}
which immediately implies that
$$
\lA\varphi\rA_{L^\infty(0,T;L^2\times L^2)}\le C(M_0)+TC(M(T)).
$$
By definition of $\varphi$, this yields
\begin{equation}\label{im}
 \lA T_\beta T_p \eta \rA_{L^\infty(0,T:L^2)}+\lA T_\beta T_q U\rA_{L^\infty(0,T;L^2)}\le C(M_0)+TC(M(T)).
\end{equation}

\smallbreak 
First of all, we use Proposition~\ref{2d22} to obtain 
\begin{align}
&\lA \eta\rA_{L^\infty(0,T;H^{s+\mez})} \le K\left\{  \lA T_\beta T_p \eta \rA_{L^\infty(0,T;L^{2})} + 
\lA \eta\rA_{L^\infty(0,T;H^{\mez})}\right\},\label{defiK1}\\
&\lA \psi\rA_{L^\infty(0,T;H^{s})} \le  K \left\{\lA T_\beta T_q \psi \rA_{L^\infty(0,T;L^{2})} + 
\lA \psi\rA_{L^\infty(0,T;L^2)}\right\},\label{defiK2}
\end{align}
where $K$ depends only on $\lA \eta\rA_{L^\infty(0,T;H^{s-1})}$. 

Let us prove that the constant $K$ satisfies an inequality of the form
\begin{equation}\label{iM2}
K\le C(M_0)+TC(M(T)).
\end{equation}
To see this, notice that one can assume without loss of generality that
$$
K\le F(\lA \eta\rA_{L^\infty(0,T;H^{s-1})}^2)
$$
for some non-decreasing function $F\in C^1(\xR)$. Set $\Cr(t)= F(\lA \eta(t)\rA_{H^{s-1}}^2)$. 
We then obtain the desired bound \eqref{iM2} from \eqref{eeta} and the inequality
\begin{align*}
K&\le \Cr(0)+\int_0^T\la \Cr'(t)\ra \,dt\\
&\le F(M_0) + \int_{0}^T  2 F'(\lA \eta\rA_{H^{s-1}}^2) \lA \partial_{t} \eta\rA_{H^{s-1}} \lA \eta \rA_{H^{s-1}}\, dt.
\end{align*}
Consequently, \eqref{im} and \eqref{defiK1} imply that we have 
$$ 
\lA \eta\rA_{L^\infty(0,T;H^{s+\mez})} \le C(M_0)+T C(M(T)).
$$

It remains to prove an estimate for $\psi$. To do this, we begin by noting that, 
since $\psi=U+T_{\mathfrak{B}}\eta$, we have
\begin{align*}
&\lA T_\beta T_q \psi\rA_{L^{\infty}(0,T;L^{2})} \\
&\qquad\le  
\lA T_\beta T_q U \rA_{L^\infty(0,T;L^2)} + \lA T_\beta T_q T_\mathfrak{B}\rA_{L^\infty(0,T;H^{s}\rightarrow L^2)}\lA \eta\rA_{L^\infty(0,T;H^s)}.
\end{align*}
Now we have
\begin{align*}
&\lA T_\beta T_q T_\mathfrak{B}\rA_{L^\infty(0,T;H^{s}\rightarrow L^2)}\\
&\qquad \le 
 \sup_{t\in [0,T]}\sup_{\la\xi\ra=1}\lA \beta(t,\cdot,\xi)\rA_{L_x^\infty}
\lA q\rA_{L^\infty(0,T;L^\infty)}\lA \mathfrak{B}\rA_{L^\infty(0,T;L^\infty)}
\end{align*}
and hence
\begin{equation}
\lA \psi\rA_{H^{s}} \le  K' \left\{\lA T_q U \rA_{H^{s}} + \lA \psi\rA_{L^2}+\lA \eta\rA_{H^s}\right\},
\end{equation}
where $K'$ depends only on $\lA (\eta,\psi)\rA_{L^\infty(0,T;H^{s-1}\times H^{s-3/2})}$. By using the inequality \eqref{im} for 
$\lA T_\beta U\rA_{L^2}$, the estiumate \eqref{iM1} for $\lA \psi\rA_{L^2}$, the previous estimate for $\eta$, and the fact that $K'$ satisfies 
the same estimate as $K$ does, we conclude that
$$ 
\lA \psi\rA_{L^\infty(0,T;H^{s})} \le C(M_0)+T C(M(T)).
$$
We end up with $M(T)\le  C(M_0)+T C(M(T)) $. This completes the proof of Proposition~\ref{ueps}.

\subsection{}

Consider $(\eta,\psi)\in C^{0}([0,T];H^{s+\mez}(\xR^d)\times H^{s}(\xR^d))$ solution to the system
\begin{equation*}
\left\{
\begin{aligned}
&\left( \partial_t+   T_{V} \cdot\partialx J_\eps 
+
\mathcal{L}^\eps\right)\begin{pmatrix}\eta \\ \psi\end{pmatrix}
=f( J_\eps \eta,J_\eps \psi),\\
& (\eta,\psi)\arrowvert_{t=0}=(\eta_{0},\psi_0).
\end{aligned}
\right.
\end{equation*} 
We now prove uniform estimates for solutions $(\vareta,\varpsi)$ to the linear system  
\begin{equation}\label{A3bis}
\left\{
\begin{aligned}
&\left( \partial_t+   T_{V} \cdot\partialx J_\eps 
+
\mathcal{L}^\eps\right)\begin{pmatrix}\vareta \\ \varpsi\end{pmatrix}
=F,\\
& (\vareta,\varpsi)\arrowvert_{t=0}=(\vareta_{0},\varpsi_0).
\end{aligned}
\right.
\end{equation}
To clarify notations, write \eqref{A3} in the compact form
$$
E(\eps,\eta,\psi)\begin{pmatrix}\eta \\ \psi\end{pmatrix}=f (J_\eps \eta,J_\eps \psi)
$$
Then, with this notations, we shall prove estimates for the system
$$
E(\eps,\eta,\psi)\begin{pmatrix}\vareta \\ \varpsi\end{pmatrix}=F.
$$
We shall also use the following notation: given $r\ge 0$, $T>0$ and 
two real-valued functions $u_1,u_2$, we set
\begin{equation}
\lA (u_1,u_2)\rA_{X^r(T)}
\defn \lA (u_1,u_2) \rA_{L^\infty(0,T;H^{r+\mez}\times H^r)}
\end{equation}
We shall prove the following extension of Proposition~\ref{ueps}. 
\begin{prop}\label{ueps2}
Let $d\ge 1$, $s>2+d/2$ and $0\le \sigma\le s$. 
Then there exist a non-decreasing function $C$ such that, 
for all $\eps\in [0,1]$, all $T\in ]0,1]$ and all $\vareta,\varpsi,\eta,\psi,F$ such that
$$
E(\eps,\eta,\psi)\begin{pmatrix}\eta \\ \psi\end{pmatrix}=f (J_\eps \eta,J_\eps \psi),\quad 
E(\eps,\eta,\psi)\begin{pmatrix}\vareta \\ \varpsi\end{pmatrix}=F,
$$
and such that
\begin{align*}
&(\eta,\psi)\in C^{0}([0,T];H^{s+\mez}(\xR^d)\times H^{s}(\xR^d)),\\
&(\vareta,\varpsi)\in C^{1}([0,T];H^{\sigma+\mez}(\xR^d)\times H^{\sigma}(\xR^d)),\\
\quad
&F=(F_1,F_2)\in L^{\infty}([0,T];H^{\sigma+\mez}(\xR^d)\times H^{\sigma}(\xR^d)),
\end{align*}
we have
\begin{multline}\label{estuni2}
\lA (\vareta,\varpsi)\rA_{X^\sigma(T)}\le 
\widetilde{C}
\lA (\vareta_0,\varpsi_0)\rA_{H^{\sigma+\mez}\times H^\sigma}
\\
+T C\left( \lA (\eta,\psi)\rA_{X^s(T)}\right)
\lA (\vareta,\varpsi)\rA_{X^\sigma(T)}
+T\lA F \rA_{X^\sigma(T)},
\end{multline}
where 
$\widetilde{C}\defn C\left(\lA (\eta_0,\psi_0)\rA_{H^{s+\mez}\times H^s}\right)+T C\left( \lA (\eta,\psi)\rA_{X^s(T)}\right)$.
\end{prop}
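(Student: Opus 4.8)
The plan is to repeat the proof of Proposition~\ref{ueps} essentially word for word, the only new features being the external source $F$ and the need to track the dependence of the constants on the initial data. What makes this possible is that \eqref{A3bis} is \emph{linear} in $(\vareta,\varpsi)$, and that every coefficient and symbol entering $E(\eps,\eta,\psi)$ --- that is $V,\mathfrak{B},\lambda,h$ and the constructed symbols $p,q,\gamma,\wp,\jmath_\eps$ --- is built from the \emph{fixed} pair $(\eta,\psi)$, which solves \eqref{A3}. Hence, exactly as in steps (a)--(b) of \S\ref{sueps}, all of these are controlled by $\lA(\eta,\psi)\rA_{X^s(T)}$, and in particular $\lA\partial_t\eta\rA_{L^\infty(0,T;H^{s-1})}\le C(\lA(\eta,\psi)\rA_{X^s(T)})$ follows directly from the $\eta$-equation. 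Consequently every $\sim$ relation of \S\ref{keyi}, and notably the conjugation of $\mathcal{L}^\eps$ by the symmetrizer, holds here, uniformly in $\eps\in[0,1]$, precisely as stated there.

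First I would set, as in step (a) of \S\ref{sueps}, $\widetilde U=\varpsi-T_{\mathfrak{B}}\vareta$ and $\widetilde\Phi=\big(T_p\vareta,\,T_q\widetilde U\big)$, and commute \eqref{A3bis} with the matrix $\left(\begin{smallmatrix}T_p&0\\0&T_q\end{smallmatrix}\right)\left(\begin{smallmatrix}I&0\\-T_{\mathfrak{B}}&I\end{smallmatrix}\right)$. The key identity of \S\ref{keyi} then gives
\[ \left(\partial_t+T_V\cdot\partialx J_\eps\right)\widetilde\Phi+\begin{pmatrix}0&-T_\gamma J_\eps\\ T_\gamma J_\eps&0\end{pmatrix}\widetilde\Phi=\widetilde F_\eps,\qquad \widetilde F_\eps=\widetilde F_{1,\eps}+\widetilde F_{2,\eps}+\widetilde F_{3,\eps}, \]
where $\widetilde F_{1,\eps}=\left(\begin{smallmatrix}T_p&0\\0&T_q\end{smallmatrix}\right)\left(\begin{smallmatrix}I&0\\-T_{\mathfrak{B}}&I\end{smallmatrix}\right)F$, and $\widetilde F_{2,\eps},\widetilde F_{3,\eps}$ are obtained from $F_{2,\eps},F_{3,\eps}$ in \eqref{defiF2} upon replacing $\Phi$ by $\widetilde\Phi$ and $(\eta,\psi)$ by $(\vareta,\varpsi)$. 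Bounding $T_p,T_q,T_{\mathfrak{B}}$ with the help of Proposition~\ref{2d21} (so that the constants depend only on $\lA\eta\rA_{H^{s-1}}$, which is the whole purpose of the class $\Sigma^m$), and using the estimates of \S\ref{keyi} together with Lemma~\ref{comdtp} for the commutators in $\widetilde F_{3,\eps}$, one obtains
\[ \lA\widetilde F_\eps\rA_{L^\infty(0,T;H^\sigma\times H^\sigma)}\le C\!\left(\lA(\eta,\psi)\rA_{X^s(T)}\right)\Big(\lA(\vareta,\varpsi)\rA_{X^\sigma(T)}+\lA F\rA_{X^\sigma(T)}\Big). \]

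Next I would commute, as in step (c) of \S\ref{sueps}, with the elliptic operator $T_\beta$, $\beta=\big(\gamma^{(3/2)}\big)^{2\sigma/3}\in\Sigma^\sigma$ (for $\sigma=0$, $T_\beta$ differs from the identity by a smoothing operator and this step is void). Since $\beta$, $\jmath_\eps^{(0)}$ and $\gamma^{(3/2)}$ are all functions of $\gamma^{(3/2)}$, the relevant Poisson brackets vanish, so $[T_\beta,T_\gamma J_\eps]$, $[T_\beta,T_V\cdot\partialx J_\eps]$ and $[T_\beta,\partial_t]=-T_{\partial_t\beta}$ are of order $\sigma$ with $H^\sigma\to L^2$ norms bounded by $C(\lA(\eta,\psi)\rA_{X^s(T)})$. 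Therefore $\widetilde\varphi\defn T_\beta\widetilde\Phi\in C^1([0,T];L^2\times L^2)$ solves an equation of the form \eqref{Yb}, with $L^2$-level right-hand side obeying the same bound as $\widetilde F_\eps$; a standard $L^2$ energy estimate --- relying on the skew-symmetry $(T_\gamma)^*\sim T_\gamma$, $J_\eps^*\sim J_\eps$, $T_\gamma J_\eps\sim J_\eps T_\gamma$, uniform in $\eps$, which makes the remainder operator $\mathscr R^\eps$ bounded on $L^2\times L^2$ --- combined with integration in time over $[0,T]$, yields
\[ \lA\widetilde\varphi\rA_{L^\infty(0,T;L^2\times L^2)}\le\lA\widetilde\varphi(0)\rA_{L^2\times L^2}+T\,C\!\left(\lA(\eta,\psi)\rA_{X^s(T)}\right)\lA(\vareta,\varpsi)\rA_{X^\sigma(T)}+T\lA F\rA_{X^\sigma(T)}. \]
Finally, as in step (d) of \S\ref{sueps}, I would invert the symmetrizer: Proposition~\ref{2d22}, applied to $p\in\Sigma^{1/2}$ and to $q\in\Sigma^0$, recovers $\lA\vareta\rA_{L^\infty H^{\sigma+\mez}}$ and $\lA\varpsi\rA_{L^\infty H^\sigma}$ from $\lA\widetilde\varphi\rA_{L^\infty L^2}$ plus lower-order norms of $(\vareta,\varpsi)$; the elliptic-regularity constant there depends only on $\lA\eta\rA_{H^{s-1}}$ and is therefore bounded, exactly as in step (d) of \S\ref{sueps}, by $C(\lA\eta_0\rA_{H^{s-1}})+T\,C(\lA(\eta,\psi)\rA_{X^s(T)})\le\widetilde C$. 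The remaining lower-order norms of $(\vareta,\varpsi)$, together with $\lA\widetilde\varphi(0)\rA_{L^2}$, are estimated directly from \eqref{A3bis} (as in step (b) of \S\ref{sueps}; for small $\sigma$ one first reduces the $L^2$ lower-order term in Proposition~\ref{2d22} to an $H^{-N}$ one by interpolation) by $\widetilde C\,\lA(\vareta_0,\varpsi_0)\rA_{H^{\sigma+\mez}\times H^\sigma}$ plus terms already controlled; absorbing all the $T$-linear contributions then gives \eqref{estuni2}.

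The main difficulty throughout is the low regularity: since $\sigma$ may be as small as $0$ and $s$ is only assumed $>2+d/2$, the symbols $\partial_t p$, $\partial_t q$, $p^{(-1/2)}$, $\gamma^{(1/2)}$, $\beta$, $\wp$ need not be bounded, so the naive paradifferential bounds would produce constants of the form $C(\lA\eta\rA_{H^{s}})$ (or worse) without the $T$-weight, instead of $C(\lA(\eta,\psi)\rA_{X^s(T)})$ with the $T$-weight demanded by \eqref{estuni2}. This is precisely what the class $\Sigma^m$, Propositions~\ref{2d21} and~\ref{2d22}, and the refined operator-norm estimate \eqref{esti:quants} are designed to handle: they replace each ``$L^\infty$-norm of a symbol'' by an ``$H^{s-2}$- (or $H^{s-3}$-) norm of its coefficients'', which is controlled by $\lA\eta\rA_{H^{s-1}}$. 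With this in place, the bookkeeping --- which constants are $C(M_0)$, which are $C(\lA(\eta,\psi)\rA_{X^s(T)})$, and which carry a factor $T$ --- is identical to that in the proof of Proposition~\ref{ueps}, and it is exactly this bookkeeping that produces the stated form of $\widetilde C$ and of the $T$-linear terms in \eqref{estuni2}.
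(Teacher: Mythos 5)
Your proposal is correct and follows the paper's own proof of this proposition step by step: symmetrize $E(\eps,\eta,\psi)$ with $\bigl(\begin{smallmatrix}T_p&0\\0&T_q\end{smallmatrix}\bigr)\bigl(\begin{smallmatrix}I&0\\-T_{\mathfrak B}&I\end{smallmatrix}\bigr)$, commute with $T_\beta$ for $\beta=(\gamma^{(3/2)})^{2\sigma/3}\in\Sigma^\sigma$, run an $L^2$ energy estimate using the skew-symmetry relations, then invert the symmetrizer via Proposition~\ref{2d22} and Proposition~\ref{2d21} so that all constants are controlled by $\lA\eta\rA_{H^{s-1}}$, which is exactly what produces the $\widetilde C$ and $T$-weighted structure in \eqref{estuni2}. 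Your version of the source term $\widetilde F_{1,\eps}=\bigl(\begin{smallmatrix}T_p&0\\0&T_q\end{smallmatrix}\bigr)\bigl(\begin{smallmatrix}I&0\\-T_{\mathfrak B}&I\end{smallmatrix}\bigr)F$ is in fact slightly more precise than the shorthand $\F_1=(T_pF_1,T_qF_2)$ used in the paper (the paper implicitly absorbs the bounded term $T_qT_{\mathfrak B}F_1$), and your handling of the $\sigma=0$ endpoint and the lower-order norms in the inversion step fills in details that the paper dispatches with ``we end the proof as above''; this is essentially the same argument.
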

\begin{rema}
By applying this proposition with $(\eta,\psi)=(\vareta,\varpsi)$ we obtain Propositon~\ref{ueps}.
%We shall apply this result for three different purposes. Firstly, to obtain uniform estimates. 
\end{rema}
\begin{proof}We still denote by $p,q,\gamma,\wp$ the symbols already introduced above. 
They are functions of $\eta$ only. Similarly, $\mathfrak{B}$ and $V$ are functions of the coefficient 
$(\eta,\psi)$. We use tildas to indicate that the new unknowns we introduce 
depend linearly on $(\vareta,\varpsi)$, with some coefficients depending on the coefficients $(\eta,\psi)$.

i) Let us set
$$
\U= \varpsi -T_{\mathfrak{B}} \vareta,\quad
\vPhi
=
\begin{pmatrix}
T_p \vareta \\
T_q \U
\end{pmatrix}.
$$
As above, we begin by computing that 
$\vPhi$ satisfies 
\begin{equation*}
\left(\partial_{t}+ T_{V}\cdot\partialx J_\eps \right)  \vPhi +
\begin{pmatrix}
0 & - T_{\gamma} J_\eps \\
T_{\gamma} J_\eps  & 0 \end{pmatrix}
\vPhi
= \F,
\end{equation*}
with $\F=\F_{1}+\F_{2}+\F_{3}$ where
\begin{equation*}
\begin{aligned}
&\F_{1}=  
\begin{pmatrix}
T_{p}F_1  \\
T_{q}F_2 
\end{pmatrix},\\
&\F_{2}= \begin{pmatrix}
0 & -    ( T_p T_\lambda T_{1/q}J_\eps- T_\gamma J_\eps) \\
(T_q T_h T_\wp J_\eps -T_\gamma J_\eps)& 0 
 \end{pmatrix} \vPhi,\\
&\F_{3}=\left[ \partial_{t}+ T_{V}\cdot\partialx J_\eps , 
\begin{pmatrix}
T_{p} & 0 \\ 0 & T_{q}\end{pmatrix}
\begin{pmatrix}
I&0\\
-T_{\mathfrak{B}} & I 
\end{pmatrix}
\right] \begin{pmatrix}\vareta\\ \varpsi\end{pmatrix}.
\end{aligned}
\end{equation*}
Then we find that
\begin{equation*}
\lA \F\rA_{L^\infty(0,T;H^{\sigma}\times H^{\sigma})} \le 
C\left(  \lA (\eta,\psi)\rA_{X^s(T)}\right)\lA (\vareta,\varpsi)\rA_{X^\sigma(T)}
+\lA F \rA_{X^\sigma(T)},
\end{equation*}
for some non-decreasing function $C$ independent of $\eps$.

ii) Next, we introduce the symbol
\begin{equation*}
\beta\defn \left( \gamma^{(3/2)}\right)^{\frac{2\sigma}{3}} \in \Sigma^\sigma.
\end{equation*}
As above, we find that
\begin{align*}
\lA \left[T_{\beta},T_{\gamma}\right]J_\eps \rA_{H^\sigma\rightarrow L^2} &\le C(\lA (\eta,\psi)\rA_{X^{s}(T)}),\\
\lA T_\gamma \left[T_{\beta},J_\eps \right] \rA_{H^\sigma\rightarrow L^2} &\le C(\lA (\eta,\psi)\rA_{X^{s}(T)}), \\
\lA \left[T_{\beta},T_{V}\cdot\partialx J_\eps \right]\rA_{H^\sigma\rightarrow L^2}
&\le C(\lA (\eta,\psi)\rA_{X^{s}(T)}),\\
\lA \left[T_{\beta},\partial_t\right]\rA_{H^\sigma\rightarrow L^2}
&\le C(\lA (\eta,\psi)\rA_{X^{s}(T)}),
\end{align*}
for some non-decreasing function $C$ independent of $\eps\in [0,1]$. 
Therefore, by commuting the equation \eqref{Ya} with $T_\beta$, we find that 
$$
\vvarphi\defn T_\beta \vPhi
$$
satisfies
\begin{equation*}
\left(\partial_{t}+ T_{V}\cdot\partialx J_\eps \right) \vvarphi +
\begin{pmatrix}
0 & - T_{\gamma} J_\eps \\
T_{\gamma} J_\eps & 0 \end{pmatrix}
\vvarphi
= \F',
\end{equation*}
with
\begin{equation*}
\lA \F'\rA_{L^{\infty}(0,T;L^{2}\times L^{2})} \le 
C(\lA (\eta,\psi)\rA_{X^{s}(T)})
\| (\vareta,\varpsi)\|_{X^{\sigma}(T)}
+\lA F \rA_{X^\sigma(T)},
\end{equation*}
for some non-decreasing function $C$ independent of $\eps\in [0,1]$.

iii) Therefore, we obtain that for all $t\in [0,T]$, $\lA\vvarphi(t)\rA_{L^2\times L^2}^2-\lA\vvarphi(0)\rA_{L^2\times L^2}^2$ is bounded by
\begin{equation*}
%\lA\vvarphi(t)\rA_{L^2\times L^2}^2-\lA\vvarphi(0)\rA_{L^2\times L^2}^2 
%\le 
C(\lA (\eta,\psi)\rA_{X^{s}(T)})\int_0^T \left( \lA \vvarphi(t')\rA_{L^2\times L^2}^2+\lA F'(t')\rA_{L^2\times L^2}^2\right)  dt'
\end{equation*}
which immediately implies that $\lA\vvarphi\rA_{L^\infty(0,T;L^2\times L^2)}$ is bounded by
$$
%\lA\vvarphi\rA_{L^\infty(0,T;L^2\times L^2)}\le 
\lA\vvarphi(0)\rA_{L^2\times L^2}+
TC(\lA (\eta,\psi)\rA_{X^{s}(T)})\lA\vvarphi\rA_{L^\infty(0,T;L^2\times L^2)} 
+T \lA F\rA_{X^\sigma(T)}.
$$
Once this is granted, we end the proof as above.
\end{proof}

\section{Cauchy problem}
In this section we conclude the proof of Theorem~\ref{theo:Cauchy}.
We divide the proof into two independent parts: (a) Existence; (b) Uniqueness. 
We shall prove the uniqueness by an estimate for the difference of two solutions. 
With regards to the existence, as mentioned above, we shall obtain solutions to the system \eqref{system}
as limits of solutions to the approximate systems \eqref{A3} which were studied in the previous section. To do that, 
we shall begin by proving that:
\begin{enumerate}
\item For any $\eps>0$, the approximate systems \eqref{A3} are well-posed locally in time (ODE argument).
\item The solutions $(\eta_\eps,\psi_\eps)$ of the approximate system \eqref{A3} are uniformly 
bounded with respect to $\eps$ (by means of the uniform estimates in Proposition~\ref{ueps}).
\end{enumerate}

%Notice that, once the second point is proved, we know that the whole sequence $\{(\eta_\eps,\psi_\eps)\}$ converges.  
The next task is to show that the functions $\{(\eta_\eps,\psi_\eps)\}$ converge to a limit $(\eta,\psi)$ which is 
a solution of the water-waves system~\eqref{system}. To do this, one cannot apply standard compactness results since 
the Dirichlet-Neumann operator is not a local operator, at least with our very general geometric assumptions  (notice however that in the case of infinite depth or flat bottom, one can show this local property). 
To overcome this difficulty, as in \cite{LannesJAMS}, we shall prove that
\begin{enumerate}\setcounter{enumi}{2} 
\item The solutions $(\eta_\eps,\psi_\eps)$ form a Cauchy sequence in an appropriate bigger space (by an estimate of the difference 
of two solutions $(\eta_\eps,\psi_\eps)$ and $(\eta_{\eps'},\psi_{\eps'})$). 
\end{enumerate}
We next deduce that 
\begin{enumerate}\setcounter{enumi}{3} 
\item $(\eta,\psi)$ is a solution to \eqref{system}.
\end{enumerate}
The next task is to prove that 
\begin{enumerate}\setcounter{enumi}{4} 
\item $(\eta,\psi)\in C^{0}([0,T];H^{s+\mez}(\xR^d)\times H^{s}(\xR^d))$.
\end{enumerate}
Notice that, as usual once we know the uniqueness of the limit system, one can assert that the whole family $\{(\eta_\eps,\psi_\eps)\}$ converges 
to $(\eta,\psi)$.

\smallbreak

Clearly, to achieve these various goals, the main part of the work was already accomplished in the previous section. 
%The estimates we shall establish 
%to prove the uniqueness can be used to studied the point (4) in the above program, and we shall skip this part. 
%With regards to the existence we only prove points $(1)$, $(2)$ (the proof of the last three points are left to the reader). 

\subsection{Existence}
\begin{lemm}
For all $(\eta_0,\psi_0)\in H^{s+\mez}(\xR^d)\times H^s(\xR)$, and any $\eps>0$, the Cauchy problem
\begin{equation*}
\left\{
\begin{aligned}
&\left( \partial_t+   T_{V} \cdot\partialx J_\eps 
+
\mathcal{L}^\eps\right)\begin{pmatrix}\eta \\ \psi\end{pmatrix}
=f( J_\eps \eta,J_\eps \psi),\\
& (\eta,\psi)\arrowvert_{t=0}=(\eta_{0},\psi_0).
\end{aligned}
\right.
\end{equation*} 
has a unique maximal solution $(\eta_\eps,\psi_\eps)\in C^{0}([0,T_\eps[;H^{s+\mez}(\xR^d)\times H^{s}(\xR^d))$
\end{lemm}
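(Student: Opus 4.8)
The statement to prove is local well-posedness of the regularized Cauchy problem \eqref{A3} for fixed $\eps>0$; the plan is to view this as an ODE in the Banach space $\mathcal{X}^s\defn H^{s+\mez}(\xR^d)\times H^{s}(\xR^d)$ and invoke the Cauchy--Lipschitz (Picard--Lindel\"of) theorem. Writing the system abstractly as $\partial_t(\eta,\psi)=\Psi_\eps(\eta,\psi)$ with
$$
\Psi_\eps(\eta,\psi)\defn -T_V\cdot\partialx J_\eps\begin{pmatrix}\eta\\\psi\end{pmatrix}-\mathcal{L}^\eps\begin{pmatrix}\eta\\\psi\end{pmatrix}+f(J_\eps\eta,J_\eps\psi),
$$
the key point I would establish is that $\Psi_\eps\colon \mathcal{X}^s\to\mathcal{X}^s$ is well-defined and locally Lipschitz on $\mathcal{X}^s$ (on the open set where $\dist(\Sigma,\Gamma)>0$, i.e.\ where $\|\eta-\eta_0\|_{L^\infty}$ is small enough that the strip assumption $H_t$ persists).

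\textbf{Smoothing gained from $J_\eps$.} The crucial structural observation is that, for $\eps>0$ fixed, $J_\eps$ is a paradifferential operator whose symbol $\jmath_\eps=\exp(-\eps\gamma^{(3/2)})+\cdots$ lies in $C^0([0,T];\Gamma^m_{3/2})$ for \emph{every} $m\le 0$, because $\gamma^{(3/2)}\gtrsim\la\xi\ra^{3/2}$ is elliptic of positive order. Hence $J_\eps$ maps $H^\mu$ to $H^\nu$ for all $\nu$, with operator norm bounded in terms of $\|\eta\|_{H^{s+\mez}}$; in particular $T_V\cdot\partialx J_\eps$ and $\mathcal{L}^\eps$ (which reads off from \S\ref{SAE} as a composition of fixed paraproducts $T_\mathfrak{B},T_\lambda,T_h$, the smoothing factors $T_\wp J_\eps T_p$, etc.) are bounded operators from $\mathcal{X}^s$ to $\mathcal{X}^s$. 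The dependence of all these operators on $(\eta,\psi)$ enters only through the symbols $\lambda,h,\mathfrak{B},V,p,q,\gamma,\wp$, which are smooth (super-linear) functions of $(\partialx\eta,\partialx^2\eta,\partialx\psi,G(\eta)\psi)$; using Proposition~\ref{estDN} and Proposition~\ref{Lannes1} for the Dirichlet--Neumann contributions, together with the tame nonlinear estimates \eqref{pr}--\eqref{Fr} and the paradifferential operator-norm bounds of \S\ref{s2}, one checks that $(\eta,\psi)\mapsto\Psi_\eps(\eta,\psi)$ is Lipschitz on bounded subsets of $\mathcal{X}^s$ where $\dist(\Sigma,\Gamma)$ stays bounded below. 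The term $f=f^1,f^2$ from \eqref{f1f2} is handled by the already-proven Propositions~\ref{prop:csystem}, Lemma~\ref{paraH} and Lemma~\ref{lemmDBC}, which give even better regularity, so in particular $f(J_\eps\eta,J_\eps\psi)\in\mathcal{X}^s$ depends Lipschitz-continuously on $(\eta,\psi)$.

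\textbf{Conclusion and maximal solution.} Once $\Psi_\eps$ is known to be locally Lipschitz from (an open subset of) $\mathcal{X}^s$ into $\mathcal{X}^s$, the Cauchy--Lipschitz theorem in the Banach space $\mathcal{X}^s$ yields a unique local solution $(\eta_\eps,\psi_\eps)\in C^1([0,T_\eps[;\mathcal{X}^s)$; the standard continuation argument then produces a unique maximal solution, defined on a maximal interval $[0,T_\eps[$ characterized by the blow-up alternative (either $T_\eps=+\infty$, or $\|(\eta_\eps,\psi_\eps)(t)\|_{\mathcal{X}^s}\to\infty$, or $\dist(\Sigma_t,\Gamma)\to 0$, as $t\to T_\eps$). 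Since the initial domain satisfies $H_{t=0}$ with some $h>0$, and $\eta\in C^1_t H^{s+\mez}\subset C^1_t W^{1,\infty}$ stays close to $\eta_0$ for short time, the strip condition $H_t$ is preserved on $[0,T_\eps[$, so the solution is genuinely a solution of \eqref{A3} as stated. The main obstacle in carrying this out rigorously is bookkeeping: verifying the local Lipschitz bound for $\Psi_\eps$ requires chasing the $(\eta,\psi)$-dependence of the Dirichlet--Neumann operator and of all the constructed symbols through the paradifferential calculus at the low regularity threshold $s>2+d/2$ — but every estimate needed for this has already been established in Sections~\ref{sec:2}--\ref{Sym}, so no new analytic difficulty arises beyond assembling them, and the $\eps$-smoothing of $J_\eps$ makes the unbounded (order $\ge 1$) operators $T_\lambda,T_h$ harmless.
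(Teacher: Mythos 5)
Your proposal is correct and takes essentially the same route as the paper: view the $\eps$-regularized system as an ODE in the Banach space $H^{s+\mez}\times H^s$, use the smoothing of $J_\eps$ to see that the vector field is bounded, verify local Lipschitz (the paper says $C^1$) dependence on $(\eta,\psi)$ — with Proposition~\ref{Lannes1} handling the only nontrivial term, the Dirichlet--Neumann operator — and conclude by Cauchy--Lipschitz and the continuation principle. Your write-up is more detailed about which estimates feed the Lipschitz bound and about the blow-up alternative, but there is no difference in approach.
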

\begin{proof}
%\begin{proof}[Proof of Proposition~\ref{prop:Cauchy} given Proposition~\ref{ueps}]
Write \eqref{A3} in the compact form
\begin{equation}\label{edo}
\partial_t Y= \mathcal{F}_\eps(Y),\quad Y\arrowvert_{t=0}=Y_{0}.
\end{equation}
Since $J_\eps$ is a smoothing operator, \eqref{edo} is an ODE with values in a Banach space for any $\eps >0$. Indeed, 
it is easily checked that the function $\mathcal{F}_\eps$ is $C^1$ from $H^{s+\mez}(\xR^d)\times H^{s}(\xR^d)$ to itself 
(the only non trivial terms come from the Dirichlet-Neumann operator, 
whose regularity follows from Proposition~\ref{Lannes1}). 
The Cauchy Lipschitz theorem then implies the desired result. 
\end{proof}

\begin{lemm}
There exists $T_0>0$ such that $T_\eps\ge T_0$ for all $\eps \in ]0,1]$ and such that 
$\{ (\eta_\eps,\psi_\eps) \}_{\eps\in ]0,1]}$ is bounded in 
$C^{0}([0,T_0]; H^{s+\mez}(\xR^d)\times H^{s}(\xR^d))$. 
\end{lemm}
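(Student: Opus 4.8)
The plan is to derive the uniform lifespan and the uniform bound from the a priori estimate of Proposition~\ref{ueps} by a standard continuation (bootstrap) argument. First I would fix the initial data and set $M_0 \defn \lA (\eta_0,\psi_0)\rA_{H^{s+\mez}\times H^s}$. By the previous lemma, for each $\eps \in \,]0,1]$ there is a maximal solution $(\eta_\eps,\psi_\eps)\in C^0([0,T_\eps[;H^{s+\mez}\times H^s)$; moreover, since $\mathcal{F}_\eps$ is $C^1$ on $H^{s+\mez}\times H^s$, this solution is actually $C^1$ in time with values in $H^{s+\mez}\times H^s$, so the regularity hypothesis of Proposition~\ref{ueps} is met on every $[0,T]\subset [0,T_\eps[$.

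Next, pick a constant $R>0$ with $R > C(M_0)$, where $C$ is the non-decreasing function furnished by Proposition~\ref{ueps}, and then choose $T_0\in\,]0,1]$ small enough that $C(M_0)+T_0\, C(R) \le R$. I claim $T_\eps \ge T_0$ and $M_\eps(t)\defn \lA (\eta_\eps,\psi_\eps)\rA_{L^\infty(0,t;H^{s+\mez}\times H^s)}\le R$ for all $t\le T_0$, uniformly in $\eps$. To see this, fix $\eps$ and consider
$$
T^* \defn \sup\bigl\{\, T\in [0,\min(T_\eps,T_0)) \,:\, M_\eps(T)\le R \,\bigr\}.
$$
Since $M_\eps(0)=M_0 < R$ and $t\mapsto M_\eps(t)$ is continuous and non-decreasing, $T^*>0$. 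On $[0,T^*]$ Proposition~\ref{ueps} applies and gives $M_\eps(T) \le C(M_0) + T\,C(M_\eps(T)) \le C(M_0) + T_0\, C(R) \le R$ for every $T\le T^*$; in particular $M_\eps(T^*)\le R$, and by continuity $M_\eps(T^*+\tau)\le R$ for small $\tau>0$ unless $T^* = \min(T_\eps,T_0)$. If $T^* = T_\eps < T_0$, then $M_\eps$ stays bounded by $R$ up to $T_\eps$, hence $(\eta_\eps,\psi_\eps)$ remains bounded in $H^{s+\mez}\times H^s$ as $t\uparrow T_\eps$, contradicting maximality (the solution of the ODE~\eqref{edo} could be continued). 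Hence $T^* = T_0$, which yields $T_\eps\ge T_0$ and $\sup_{\eps\in\,]0,1]} M_\eps(T_0)\le R$, i.e. the family $\{(\eta_\eps,\psi_\eps)\}_{\eps\in\,]0,1]}$ is bounded in $C^0([0,T_0];H^{s+\mez}(\xR^d)\times H^s(\xR^d))$.

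The only genuine point requiring care — and the step I expect to be the main obstacle — is the \emph{blow-up alternative} for the ODE~\eqref{edo}: one must know that if $T_\eps<\infty$ then $\limsup_{t\uparrow T_\eps}\lA(\eta_\eps,\psi_\eps)(t)\rA_{H^{s+\mez}\times H^s}=+\infty$. This is where the smoothing operator $J_\eps$ is essential: because $J_\eps$ maps into $H^\infty$, the nonlinear map $\mathcal{F}_\eps$ (including the Dirichlet--Neumann contribution, whose smoothness comes from Proposition~\ref{Lannes1}) is not merely continuous but locally Lipschitz, indeed $C^1$, on the Banach space $H^{s+\mez}\times H^s$, so the Cauchy--Lipschitz theory gives exactly this alternative. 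Everything else is the continuity/monotonicity bootstrap sketched above. I would also remark that the argument is uniform in $\eps$ precisely because the function $C$ in Proposition~\ref{ueps} is independent of $\eps$, so $R$ and $T_0$ can be chosen once and for all.
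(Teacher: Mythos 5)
Your proof is correct and follows essentially the same route as the paper: apply the uniform a priori estimate of Proposition~\ref{ueps}, choose a threshold and a time $T_0$ so that the estimate is self-improving, run a bootstrap/continuity argument, and invoke the continuation principle for the ODE~\eqref{edo} to conclude $T_\eps \ge T_0$. The paper makes the explicit choice $M_1 = 2C(M_0)$ and picks $T_0$ with the \emph{strict} inequality $C(M_0) + T_0\,C(M_1) < M_1$, which is what makes the ``by continuity the bound persists'' step airtight; in your version you wrote $C(M_0)+T_0\,C(R)\le R$ (non-strict), which leaves a tiny gap at $M_\eps(T^*)=R$ — but since $C(M_0)<R$ you can of course arrange strict inequality, so this is a one-character fix rather than a genuine issue.
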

\begin{proof}The proof is standard.
%the Cauchy problem for \eqref{A3} has a unique maximal solution 
%$$
%(\eta_\eps,\psi_\eps)\in C^{0}([0,T_\eps[; H^{s+\mez}(\xR^d)\times H^{s}(\xR^d)).
%$$ 
For $\eps\in]0,1]$ and $T<T_\eps$, set
$$
M_\eps (T)\defn \lA (\eta_\eps,\psi_\eps)\rA_{L^{\infty}(0,T;H^{s+\mez}\times H^{s})}.
$$
Notice that automatically $(\eta_\eps,\psi_\eps) \in C^{1}([0,T_\eps[;H^{s+\mez}(\xR^d)\times H^{s}(\xR^d))$, so that one can apply 
Proposition~\ref{ueps} to obtain that there exists a continuous function $C$ such that, for all $\eps\in ]0,1]$ and all $T<T_\eps$
\begin{equation}\label{estuni}
M_\eps(T)\le C(M_0)+TC(M_\eps(T)),
\end{equation}
where we recall that $M_0=\lA (\eta_0,\psi_0)\rA_{H^{s+\mez}\times H^s}$. Let us set 
$M_1=2C(M_0)$ and choose $0<T_{0}\le 1$ small enough such that
$C(M_0)+T_0 C(M_1)< M_1$. We claim that
\begin{equation*}
  M_\varepsilon(T) <M_{1},\quad \forall T\in I\defn [0,\min\{T_{0},T_\varepsilon\}[.
\end{equation*}
Indeed, since $M_{\varepsilon}(0)=M_0<M_1$, assume that there exists $T\in I$ such that $M_\eps (T)=M_1$ then 
$$
M_1=M_\eps(T)\le C(M_0)+T C(M_\eps(T))\le C(M_0) + T_0 C(M_1)<M_1,
$$
hence the contradiction.

The continuation principle for ordinary differential equations then implies that $T_\varepsilon\geqslant T_0$ for 
all $\varepsilon\in ]0,1]$, and we have
%The estimate
%\eqref{estuni}
%then implies that 
$$
\sup_{\varepsilon\in]0,1]} \sup_{T\in[0,T_0]} M_\varepsilon(T) \le M_1.%2C(M_{0}).
$$
This completes the proof.
\end{proof}

\begin{lemm}\label{CSL}
Let $s'<s-\tdm$. 
Then there exists $0<T_1\le T_0$ such that $\{ (\eta_\eps,\psi_\eps) \}_{\eps\in ]0,1]}$ is a Cauchy sequence in 
$C^{0}([0,T_1]; H^{s'+\mez}(\xR^d)\times H^{s'}(\xR^d))$. 
\end{lemm}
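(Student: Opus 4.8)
The plan is to estimate the difference of two solutions $(\eta_\eps,\psi_\eps)$ and $(\eta_{\eps'},\psi_{\eps'})$ of the approximate systems~\eqref{A3} by running the energy method of Section~\ref{sle} at the lower regularity level $\sigma=s'$. First I would write $\delta\eta=\eta_\eps-\eta_{\eps'}$, $\delta\psi=\psi_\eps-\psi_{\eps'}$ and subtract the two equations $E(\eps,\eta_\eps,\psi_\eps)(\eta_\eps,\psi_\eps)^{t}=f(J_\eps\eta_\eps,J_\eps\psi_\eps)$ and $E(\eps',\eta_{\eps'},\psi_{\eps'})(\eta_{\eps'},\psi_{\eps'})^{t}=f(J_{\eps'}\eta_{\eps'},J_{\eps'}\psi_{\eps'})$. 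The key structural point is that the difference can be organized as
$$
E(\eps,\eta_\eps,\psi_\eps)\begin{pmatrix}\delta\eta\\ \delta\psi\end{pmatrix}
= \bigl(E(\eps',\eta_{\eps'},\psi_{\eps'})-E(\eps,\eta_\eps,\psi_\eps)\bigr)\begin{pmatrix}\eta_{\eps'}\\ \psi_{\eps'}\end{pmatrix}
+ f(J_\eps\eta_\eps,J_\eps\psi_\eps)-f(J_{\eps'}\eta_{\eps'},J_{\eps'}\psi_{\eps'})=:F,
$$
i.e.\ $(\delta\eta,\delta\psi)$ solves the linear system of Proposition~\ref{ueps2} with coefficients $(\eta,\psi)=(\eta_\eps,\psi_\eps)$, source $F$, and zero initial data. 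The uniform bound from the previous lemma gives $\lA(\eta_\eps,\psi_\eps)\rA_{X^s(T_0)}\le M_1$ uniformly in $\eps$, so Proposition~\ref{ueps2} applied with $\sigma=s'$ yields
$$
\lA(\delta\eta,\delta\psi)\rA_{X^{s'}(T)}\le T\,C(M_1)\,\lA(\delta\eta,\delta\psi)\rA_{X^{s'}(T)} + T\,\lA F\rA_{X^{s'}(T)},
$$
and choosing $T_1\le T_0$ with $T_1 C(M_1)\le \tfrac12$ absorbs the first term, leaving $\lA(\delta\eta,\delta\psi)\rA_{X^{s'}(T_1)}\le 2T_1\lA F\rA_{X^{s'}(T_1)}$.

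The remaining work is to show $\lA F\rA_{X^{s'}(T_1)}\le C(M_1)\bigl(\eps+\eps'\bigr)$, or at least $\to 0$ as $\eps,\eps'\to 0$, plus a term controlled by $\lA(\delta\eta,\delta\psi)\rA_{X^{s'}}$ that can again be absorbed. There are two sources of discrepancy in $F$. The first is the mollifier mismatch: terms like $(J_\eps-J_{\eps'})$ applied to the fixed-regularity functions $(\eta_{\eps'},\psi_{\eps'})\in X^s$; since $\jmath_\eps-\jmath_{\eps'}$ is, after losing $3/2$ derivatives, $O(|\eps-\eps'|)$ as a symbol in the relevant seminorms (write $\jmath_\eps-\jmath_{\eps'}=\int_{\eps'}^\eps \partial_\tau\jmath_\tau\,d\tau$ and note $\partial_\tau\jmath_\tau=-\gamma^{(3/2)}\exp(-\tau\gamma^{(3/2)})+\cdots$ is uniformly bounded in $\Gamma^{3/2}_{3/2}$), these contribute $O(|\eps-\eps'|)$ in $X^{s'}$, using $s'\le s-3/2$. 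The second source is the dependence of the operators $T_V,T_\mathfrak{B},T_\lambda,T_h,T_p,T_q,T_\gamma,\mathcal L^\eps$ on the coefficients $(\eta_\eps,\psi_\eps)$ versus $(\eta_{\eps'},\psi_{\eps'})$; each such difference is linear (to leading order) in $(\delta\eta,\delta\psi)$ with symbol-seminorm factors bounded by $C(M_1)$, and by the paradifferential product and composition estimates the corresponding contribution to $\lA F\rA_{X^{s'}}$ is bounded by $C(M_1)\lA(\delta\eta,\delta\psi)\rA_{X^{s'}(T)}$ — note the loss of $3/2$ derivatives here is exactly compensated by the constraint $s'<s-3/2$, so that $(\eta_{\eps'},\psi_{\eps'})\in X^s$ feeds $\delta\eta\in H^{s'+1/2}$ through an operator built from $\delta\eta$ without exceeding the available regularity. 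Finally, the genuinely nonlinear contribution $f(J_\eps\eta_\eps,J_\eps\psi_\eps)-f(J_{\eps'}\eta_{\eps'},J_{\eps'}\psi_{\eps'})$, with $f$ the remainder from the paralinearizations of Propositions~\ref{prop:csystem}, \ref{paraH}, \ref{lemmDBC} — which gain $1/2$ to $1$ derivative — is likewise Lipschitz in the coefficients at this level of regularity and splits into an $O(|\eps-\eps'|)$ mollifier piece and a piece $\le C(M_1)\lA(\delta\eta,\delta\psi)\rA_{X^{s'}(T)}$. Collecting, $\lA F\rA_{X^{s'}(T)}\le C(M_1)\bigl(|\eps-\eps'| + \lA(\delta\eta,\delta\psi)\rA_{X^{s'}(T)}\bigr)$.

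Inserting this into the absorbed energy inequality, shrinking $T_1$ once more so that $2T_1 C(M_1)\le \tfrac12$, gives
$$
\lA(\delta\eta,\delta\psi)\rA_{X^{s'}(T_1)}\le C(M_1)\,|\eps-\eps'|,
$$
which proves that $\{(\eta_\eps,\psi_\eps)\}_{\eps\in]0,1]}$ is Cauchy in $C^0([0,T_1];H^{s'+1/2}(\xR^d)\times H^{s'}(\xR^d))$. The main obstacle I anticipate is the careful bookkeeping in the second source above: verifying that every coefficient-difference term really is controlled by $\lA(\delta\eta,\delta\psi)\rA_{X^{s'}}$ and not by a stronger norm of $\delta\eta$. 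This is where the hypothesis $s'<s-3/2$ is used in an essential way — the operators lose up to $3/2$ derivatives and one must check that the coefficients, which live in $X^s$, and the differences, which live in $X^{s'}$, combine without exceeding $s$; in particular the shape-derivative identity for the Dirichlet--Neumann operator (Proposition~\ref{Lannes1} and Lemma~\ref{cancellation}) is what makes the $G(\eta_\eps)-G(\eta_{\eps'})$ term Lipschitz with the right loss. Once that is in place, the conclusion is immediate.
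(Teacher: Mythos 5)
Your proposal follows essentially the same route as the paper's: both subtract the two approximate equations to write $(\delta\eta,\delta\psi)$ as a solution of the linear system $E(\eps,\eta_\eps,\psi_\eps)(\delta\eta,\delta\psi)^t=F$, bound the source $F$ in $X^{s'}$ by a term absorbable into $\lA(\delta\eta,\delta\psi)\rA_{X^{s'}}$ plus a small power of $|\eps-\eps'|$ (this is exactly the paper's Lemma~\ref{CSLU}), and then invoke Proposition~\ref{ueps2} with $\sigma=s'$ and absorb by taking $T_1$ small. The only minor divergence is in the exponent of $|\eps-\eps'|$ (you get $|\eps-\eps'|$ directly from the order-$3/2$ bound $\lA J_\eps-J_{\eps'}\rA_{H^\mu\to H^{\mu-3/2}}\lesssim|\eps-\eps'|$, whereas the paper records $(\eps_2-\eps_1)^a$ with $a=s-\tfrac32-s'$), which is immaterial for the Cauchy conclusion.
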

\begin{proof}The proof is sketched in \S\ref{CSp} below.
\end{proof}

Then, as explains in the introduction to this section, the existence of a classical solution follows from standard arguments.

\subsection{Uniqueness}\label{sec.unique}
To complete the proof of Theorem~\ref{theo:Cauchy}, it remains to prove the uniqueness. 

\begin{lemm}\label{LUs}
Let %Consider two solutions 
$(\eta_j,\psi_j)\in C^0([0,T];H^{s+\mez}(\xR^d)\times H^s(\xR^d))$, $j=1,2$, be two solutions of system \eqref{system} 
with the same initial data, and such that the assumption $H_t$ is satisfied for all $t\in [0,T]$. Then 
$ (\eta_1,\psi_1)=(\eta_2,\psi_2)$. 
\end{lemm}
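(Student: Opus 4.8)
The plan is to prove uniqueness by an $L^2$ energy estimate for the \emph{difference} of the two solutions, carried out at a level of regularity slightly below $s$ so as to absorb the loss of derivatives coming from the nonlocal dependence of the Dirichlet--Neumann operator on the geometry. Set $\delta\eta\defn\eta_1-\eta_2$, $\delta\psi\defn\psi_1-\psi_2$ and fix $\sigma\defn s-\tdm$, so that $1\le\sigma\le s-1$ and, by Proposition~\ref{estDN} and \eqref{c43}, each $(\eta_j,\psi_j)$ is $C^1$ in time with values in $H^{s-1}(\xR^d)\times H^{s-\tdm}(\xR^d)=H^{\sigma+\mez}\times H^{\sigma}$; hence so is $(\delta\eta,\delta\psi)$. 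By the reformulation of \S\ref{s.5.1}, each $(\eta_j,\psi_j)$ solves the paradifferential system \eqref{A2} with its own coefficients $V_j,\mathfrak{B}_j,\lambda_j,h_j$; subtracting the two systems and keeping the coefficients of the first solution,
\begin{equation*}
\Bigl(\partial_t+T_{V_1}\cdot\partialx+\mathcal{L}_1\Bigr)\begin{pmatrix}\delta\eta\\ \delta\psi\end{pmatrix}=F,\qquad (\delta\eta,\delta\psi)\arrowvert_{t=0}=0,
\end{equation*}
where $\mathcal{L}_1$ is the operator of \eqref{A2} built from $(\eta_1,\psi_1)$ and the source is $F=\bigl(f(\eta_1,\psi_1)-f(\eta_2,\psi_2)\bigr)-\bigl[(T_{V_1}-T_{V_2})\cdot\partialx+\mathcal{L}_1-\mathcal{L}_2\bigr]\begin{pmatrix}\eta_2\\ \psi_2\end{pmatrix}$.

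The heart of the argument is the tame bound
\begin{equation*}
\lA F\rA_{X^\sigma(T)}\le C\Bigl(\lA(\eta_1,\psi_1)\rA_{X^s(T)}+\lA(\eta_2,\psi_2)\rA_{X^s(T)}\Bigr)\,\lA(\delta\eta,\delta\psi)\rA_{X^\sigma(T)}.
\end{equation*}
The operator differences $T_{V_1}-T_{V_2}$ and $\mathcal{L}_1-\mathcal{L}_2$ are sums of paradifferential operators $T_{\kappa_1-\kappa_2}$ with $\kappa\in\{V,\mathfrak{B},\lambda,h\}$, and the symbol differences $\kappa_1-\kappa_2$ depend linearly on $\partialx\delta\eta,\partialx\delta\psi$ (and, for the subprincipal parts $\lambda^{(0)},h^{(1)}$, linearly on $\partialx^2\delta\eta$). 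Since $\delta\eta$ is controlled only in $H^{\sigma+\mez}=H^{s-1}$, one cannot use the crude bound \eqref{ong} for the subprincipal contributions and must instead invoke the low-regularity symbolic calculus of \S\ref{Sym} (Propositions~\ref{2d21} and \ref{2d22} and the operator-norm estimate \eqref{esti:quants}), exactly as in the technical part of that section. The genuinely delicate term hidden in $F$ is the difference of Dirichlet--Neumann operators $G(\eta_1)\psi_1-G(\eta_2)\psi_2$; split it as $G(\eta_1)\,\delta\psi+(G(\eta_1)-G(\eta_2))\psi_2$. The first piece is estimated by the boundedness of $G(\eta_1)$ (Proposition~\ref{estDN}). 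For the second, apply the ``simpler case'' paralinearization of Proposition~\ref{prop:csystem3}: $G(\eta_j)\psi_2=T_{\lambda^{(1)}(\eta_j)}\psi_2+r_j$ with $r_j\in H^\sigma$ depending tamely on $\eta_j$, so that $(G(\eta_1)-G(\eta_2))\psi_2=T_{\lambda^{(1)}(\eta_1)-\lambda^{(1)}(\eta_2)}\psi_2+(r_1-r_2)$; the first term has an order-$1$ symbol of size $O(\lA\delta\eta\rA_{W^{1,\infty}})=O(\lA\delta\eta\rA_{H^{\sigma+\mez}})$ acting on $\psi_2\in H^s$, hence lands in $H^{s-1}\subset H^{\sigma}$, while the Lipschitz bound $\lA r_1-r_2\rA_{H^{\sigma}}\le C(\cdot)\lA\delta\eta\rA_{H^{\sigma+\mez}}$ follows by revisiting the (robust) proof of Proposition~\ref{prop:csystem3}, cf.\ the remark following it. The curvature difference $H(\eta_1)-H(\eta_2)$ is handled identically via Lemma~\ref{paraH}, and the remaining quadratic nonlinearities in $f^1,f^2$ of \eqref{f1f2} by the paralinearization rules of Theorem~\ref{lemPa} together with the product law \eqref{pr}. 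This step, and in particular the Lipschitz-in-$\eta$ control of the Dirichlet--Neumann remainder, is the main obstacle; everything else is routine once it is in place.

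Granting the tame bound, one symmetrizes the difference system using the symbols $p,q,\gamma,\wp$ and $\mathfrak{B}=\mathfrak{B}_1$ built from $(\eta_1,\psi_1)$ and runs the $L^2$ energy estimate of \S\ref{sueps}, steps (c)--(d) — equivalently, applies Proposition~\ref{ueps2} with $\eps=0$, coefficients $(\eta,\psi)=(\eta_1,\psi_1)$, unknown $(\vareta,\varpsi)=(\delta\eta,\delta\psi)$ and vanishing initial data — to obtain, for every $T'\in\,]0,\min\{T,1\}]$,
\begin{equation*}
\lA(\delta\eta,\delta\psi)\rA_{X^\sigma(T')}\le T'\,C\Bigl(\lA(\eta_1,\psi_1)\rA_{X^s(T)}\Bigr)\lA(\delta\eta,\delta\psi)\rA_{X^\sigma(T')}+T'\lA F\rA_{X^\sigma(T')}.
\end{equation*}
Combining with the tame bound, $\lA(\delta\eta,\delta\psi)\rA_{X^\sigma(T')}\le T'\,C_\ast\,\lA(\delta\eta,\delta\psi)\rA_{X^\sigma(T')}$, where $C_\ast$ depends only on $\lA(\eta_1,\psi_1)\rA_{X^s(T)}+\lA(\eta_2,\psi_2)\rA_{X^s(T)}<\infty$. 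Choosing $T'$ with $T'C_\ast<1$ forces $(\delta\eta,\delta\psi)\equiv 0$ on $[0,T']$. Since $C_\ast$ does not depend on the base point in time, restarting the argument at $t=T',2T',\dots$ propagates the vanishing to all of $[0,T]$, and therefore $(\eta_1,\psi_1)=(\eta_2,\psi_2)$.
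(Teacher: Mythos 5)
Your overall strategy — subtract the paradifferential reformulations of \S\ref{s.5.1} keeping the coefficients of the first solution, prove a tame Lipschitz bound on the remainder $F$ in $X^{s-\tdm}(T)$, then apply Proposition~\ref{ueps2} with $\eps=0$ and bootstrap — is exactly the paper's scheme. The gap is in the tame bound, which you present as ``the main obstacle'' but then treat too casually, and the specific moves you propose do not close it.

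The first component $F_1$ must be estimated in $H^{s-1}$ (so that, after applying $T_{p_1}$ of order $\mez$, the symmetrized source lands in $H^{s-\tdm}$). You propose to split $G(\eta_1)\psi_1-G(\eta_2)\psi_2=G(\eta_1)\delta\psi+(G(\eta_1)-G(\eta_2))\psi_2$ and bound the pieces independently. But Proposition~\ref{estDN} sends $H^\sigma\to H^{\sigma-1}$, so $G(\eta_1)\delta\psi\in H^{s-\frac52}$ when $\delta\psi\in H^{s-\tdm}$ — three halves of a derivative short of $H^{s-1}$. Similarly, writing $(G(\eta_1)-G(\eta_2))\psi_2=-\int_0^1\bigl[G(\eta_\theta)(\mathfrak{B}_\theta\delta\eta)+\cnx(V_\theta\delta\eta)\bigr]\,d\theta$ via Proposition~\ref{Lannes1} and bounding each integrand by Proposition~\ref{estDN} only gives $H^{s-2}$, and the companion paradifferential pieces $T_{\lambda_1}T_{\mathfrak{B}_1}\delta\eta$ and $T_{V_1}\cdot\partialx\delta\eta$ are likewise only in $H^{s-2}$. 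So nothing in your decomposition gives $H^{s-1}$; the individual terms genuinely lose a derivative.

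What is missing is the cancellation that the paper isolates. In bounding $d_\eta f^1(\eta,\psi)\cdot\dot\eta$ in $H^{s-1}$, the paper groups the three offending terms into
$$
I_4=-G(\eta)(\mathfrak{B}\dot\eta)-(\cnx V)\dot\eta+T_\lambda T_\mathfrak{B}\dot\eta,
$$
and emphasizes that ``one cannot estimate the terms separately''. The gain of one derivative comes from the algebraic identity $G(\eta)\mathfrak{B}=-\cnx V$ (Lemma~\ref{cancellation}): after paralinearizing $G(\eta)(\mathfrak{B}\dot\eta)$ and $G(\eta)\mathfrak{B}$ via Proposition~\ref{prop:csystem3}, substituting $-G(\eta)\mathfrak{B}$ for $\cnx V$, and commuting $T_{\dot\eta}$ with $T_\lambda$, one arrives at $T_\lambda\bigl(\mathfrak{B}\dot\eta-T_{\dot\eta}\mathfrak{B}-T_\mathfrak{B}\dot\eta\bigr)$ plus terms controlled by symbolic calculus, and the paralinearization remainder $\mathfrak{B}\dot\eta-T_{\dot\eta}\mathfrak{B}-T_\mathfrak{B}\dot\eta$ lies in $H^{2s-2-d/2}$, which is better than $H^s$. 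Your proposal never invokes (nor produces a substitute for) this identity, so the needed $H^{s-1}$ bound for $F_1$ simply does not follow from the estimates you cite. The rest of the argument — the low-regularity symbolic calculus for the coefficient differences, the Gronwall step, and the iteration in time — is sound, but without this cancellation the tame estimate, and hence the uniqueness proof, is incomplete.
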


As we shall see, the proof of Lemma~\ref{LUs} requires a lot of care. 

%Our purpose is to prove that $ (\eta_1,\psi_1)=(\eta_2,\psi_2)$. 

Recall (see \S\ref{s.5.1}) that $(\eta,\psi)$ solves \eqref{system} if and only if
\begin{equation*}
\left( \partial_t+ T_V\cdot \partialx 
+
\mathcal{L}\right) \begin{pmatrix}\eta \\ \psi\end{pmatrix}
=f(\eta,\psi),
\end{equation*}
with
\begin{equation}\label{defiLf}
\mathcal{L}\defn \begin{pmatrix}
I & 0 \\
T_\mathfrak{B} & I \end{pmatrix}
\begin{pmatrix}
0 & -T_{\lambda}\\
T_{h} & 0 \end{pmatrix}
\begin{pmatrix}
I & 0 \\
-T_\mathfrak{B} & I \end{pmatrix},\quad
f(\eta,\psi)\defn\begin{pmatrix}
I & 0 \\
T_\mathfrak{B} & I \end{pmatrix}\begin{pmatrix} f^1 \\ f^2 \end{pmatrix}.
\end{equation}
where
\begin{equation*}%\label{f1f2}
\begin{aligned}
f^1&=G(\eta)\psi -\big\{ T_{\lambda} ( \psi -T_\mathfrak{B} \eta) - T_V\cdot\partialx \eta \big\},\\
f^2&=\mez \la \partialx \psi\ra^2 +\mez \frac{\left( \partialx \eta \cdot \partialx \psi + G(\eta)\psi \right)^2}{1+\la \partialx\eta\ra^2} 
+ H(\eta) \\
&\quad + T_V \partialx \psi - T_\mathfrak{B}T_V\cdot \partialx  \eta  -T_{\mathfrak{B}}G(\eta)\psi 
+ T_{h}\eta -g\eta.
\end{aligned}
\end{equation*}

Introduce the notation
\begin{equation}\label{BjVj}
\mathfrak{B}_j = \frac{\partialx\eta_j\cdot\partialx\psi_j+G(\eta_j)\psi_j}{1+\la \partialx\eta_j\ra^2},\quad
V_j=\partialx\psi_j -\mathfrak{B}_{j}\partialx\eta_j,
\end{equation}
and denote by $\lambda_j, h_j$ the symbols obtained by replacing $\eta$ by $\eta_j$ in \eqref{dmu10}, \eqref{dh21} respectively. 
Similarly, denote by $\mathcal{L}_1$ the operator obtained by replacing $(\mathfrak{B},\lambda,h)$ by $(\mathfrak{B}_1,\lambda_1,h_1)$ in 
\eqref{defiLf}. To prove the uniqueness, the main technical lemma is the following.

\begin{lemm}\label{LU}The differences $\delta\eta\defn \eta_1-\eta_2$ and $\delta\psi\defn \psi_1-\psi_2$ satisfy a system of the form
\begin{equation*}%\label{A2}
\left( \partial_t+ T_{V_1}\cdot \partialx 
+
\mathcal{L}_1\right) \begin{pmatrix}\delta\eta \\ \delta\psi\end{pmatrix}
=f,
\end{equation*}
for some remainder term such that
$$
\lA f\rA_{L^\infty(0,T;H^{s-1}\times H^{s-\tdm})}
\le C (M_1,M_2)N,
$$
where 
$$
M_j\defn \lA (\eta_j,\psi_j)\rA_{L^\infty(0,T;H^{s+\mez}\times H^s)},~
N\defn \lA (\delta\eta,\delta\psi)\rA_{L^\infty(0,T;H^{s-1}\times H^{s-\tdm})}.
$$
\end{lemm}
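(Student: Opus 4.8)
The plan is to subtract the two solutions written in the reformulated form of \S\ref{s.5.1}, keeping the coefficients $V_1,\mathfrak{B}_1,\lambda_1,h_1$ of the first solution in the principal part. Recall that, by the construction of $\mathcal{L}_j$ and $f(\eta_j,\psi_j)$, that reformulation is \emph{equivalent} to the genuine equations $\partial_t\eta_j=G(\eta_j)\psi_j$ and the corresponding equation for $\partial_t\psi_j$. Inserting these expressions back into the definition of $\left(\partial_t+T_{V_1}\cdot\partialx+\mathcal{L}_1\right)\!\begin{pmatrix}\delta\eta \\ \delta\psi\end{pmatrix}$ and using, \emph{for each solution separately}, the paralinearization identities of Proposition~\ref{prop:csystem} (for $G(\eta_j)\psi_j$), Lemma~\ref{paraH} (for $H(\eta_j)$) and Lemma~\ref{lemmDBC} (for the quadratic terms), all the contributions of solution $1$ that carry $\partial_t$, $\mathcal{L}_1$, or a ``main'' paradifferential operator recombine into quantities evaluated on $(\eta_2,\psi_2)$; after subtracting the analogous paralinearized identity for solution $2$, what is left is a remainder $f$ equal to a finite sum of \emph{difference terms} of the schematic shape
$$
T_{\lambda_1-\lambda_2}\,w,\qquad T_{h_1-h_2}\,w,\qquad T_{V_1-V_2}\cdot\partialx w,\qquad T_{\mathfrak{B}_1-\mathfrak{B}_2}\,T_{\lambda_1}w,\qquad T_{\mathfrak{B}_2}\,T_{\lambda_1-\lambda_2}w,
$$
with $w$ ranging over $\eta_2,\psi_2,U_2\defn\psi_2-T_{\mathfrak{B}_2}\eta_2,\partialx\eta_2$, together with the differences $f^1(\eta_1,\psi_1)-f^1(\eta_2,\psi_2)$ and $f^2(\eta_1,\psi_1)-f^2(\eta_2,\psi_2)$ of the smooth remainders of \eqref{f1f2}. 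The cancellation of the ``bad'' terms $T_{\lambda_1}\delta\psi$, $T_{\mathfrak{B}_1}T_{\lambda_1}(\delta\psi-T_{\mathfrak{B}_1}\delta\eta)$ and $T_{h_1}\delta\eta$ against $\mathcal{L}_1\!\begin{pmatrix}\delta\eta \\ \delta\psi\end{pmatrix}$ is exactly what forces us to keep the coefficients of solution $1$ on the left-hand side.

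The heart of the matter is then to bound these difference terms in $H^{s-1}\times H^{s-\tdm}$ by $C(M_1,M_2)N$. First I would establish the Dirichlet--Neumann difference estimate: since $\mathfrak{B}_j$ and $V_j$ are algebraic expressions of $G(\eta_j)\psi_j,\partialx\eta_j,\partialx\psi_j$, it suffices to bound $G(\eta_1)\psi_1-G(\eta_2)\psi_2$ in $H^{s-\frac52}$ by $CN$. Writing it as $G(\eta_1)\delta\psi+\bigl(G(\eta_1)-G(\eta_2)\bigr)\psi_2$, the first term is controlled by the boundedness of $G(\eta_1)$ on Sobolev spaces (Proposition~\ref{estDN}), while for the second I would integrate the shape-derivative formula of Proposition~\ref{Lannes1} along the segment $\eta_\theta=\eta_2+\theta\,\delta\eta$, $\theta\in[0,1]$, obtaining $-\int_0^1\bigl[G(\eta_\theta)(\mathfrak{B}_\theta\,\delta\eta)+\cnx(V_\theta\,\delta\eta)\bigr]\,d\theta$, and then apply Proposition~\ref{estDN} and the product rule \eqref{pr}. (Alternatively one invokes Proposition~\ref{prop:csystem3}, whose statement is designed for exactly this low-regularity difference.) The nonlinear estimates \eqref{pr} and \eqref{Fr} then transfer the bound to $\mathfrak{B}_1-\mathfrak{B}_2$ and $V_1-V_2$ in $H^{s-\frac52}$; note that, since $s>2+d/2$, the paraproduct $T_{\mathfrak{B}_1-\mathfrak{B}_2}$ has loss strictly less than $1/2$ by Lemma~\ref{negmu}.

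Second, the symbol differences. Since $\lambda=\lambda^{(1)}+\lambda^{(0)}$ and $h=h^{(2)}+h^{(1)}$ have principal part a smooth function of $\partialx\eta$ and sub-principal part depending \emph{linearly} on $\partialx^2\eta$, one has $\lambda_1-\lambda_2=G(\partialx\eta_1,\partialx\eta_2,\xi)\cdot\partialx\delta\eta+\sum_{\la\alpha\ra=2}G_\alpha(\cdots)\partial_x^\alpha\delta\eta$ (and likewise for $h_1-h_2$), with $\partialx\delta\eta\in H^{s-2}\subset L^\infty$ and $\partial_x^\alpha\delta\eta\in H^{s-3}$. The continuity estimate of Proposition~\ref{2d21} handles the principal parts, while Lemma~\ref{negmu} together with the operator-norm bound \eqref{esti:quants} handles the sub-principal parts; all of this applied to $w\in H^{s-1/2}\cup H^{s}\cup H^{s+1/2}$. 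The only arithmetic to check is that these sub-one derivative losses leave the target index, which amounts to inequalities such as $2s-3-d/2>s-1$ and $2s-\frac72-d/2>s-\tdm$, both valid precisely because $s>2+d/2$.

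Finally, the remainder differences $f^1(\eta_1,\psi_1)-f^1(\eta_2,\psi_2)$ and $f^2(\eta_1,\psi_1)-f^2(\eta_2,\psi_2)$ are ``tame Lipschitz'' estimates: each building block of $f^1$ (the paralinearization of the Dirichlet--Neumann operator, Proposition~\ref{prop:csystem}, via the elliptic reduction of \S\ref{s3.2}) and of $f^2$ (Lemmas~\ref{paraH} and \ref{lemmDBC}) is produced from products, paraproducts, compositions and elliptic estimates, and re-running those arguments for the pair of solutions and collecting the difference yields, by the quantitative bounds of Theorems~\ref{theo:sc0}--\ref{theo:sc2} and Theorem~\ref{lemPa}, the required bounds in $H^{s-1}$, resp.\ $H^{s-\tdm}$, with ample margin (recall $f^1\in H^{s+\mez}$ and $f^2\in H^{2s-2-d/2}$ for a single solution). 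Assembling all the pieces gives the claimed identity together with $\lA f\rA_{L^\infty(0,T;H^{s-1}\times H^{s-\tdm})}\le C(M_1,M_2)N$. I expect the main obstacle to be the Dirichlet--Neumann Lipschitz estimate of the second paragraph combined with the bookkeeping of non-integer Sobolev losses: because $\delta\psi$ lies only in $H^{s-\tdm}$ and the principal symbols carry coefficients of limited regularity, one must verify at each step that $s>2+d/2$ is exactly enough — this is where the ``lot of care'' announced in the text is spent, whereas the algebraic telescoping of the first paragraph, though long, is mechanical.
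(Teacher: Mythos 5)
Your overall strategy — subtracting the two reformulated systems with the coefficients of solution~$1$ in the principal part, collecting coefficient-difference terms, and closing with a Dirichlet--Neumann Lipschitz bound via the shape derivative — matches the paper's. However, the last paragraph contains a genuine gap. You assert that the difference $f^1(\eta_1,\psi_1)-f^1(\eta_2,\psi_2)$ can be handled ``with ample margin'' by re-running the paralinearization arguments, on the grounds that $f^1\in H^{s+\mez}$ for a single solution. This misreads where the margin is. The individual pieces of $f^1=G(\eta)\psi-T_\lambda(\psi-T_\mathfrak{B}\eta)+T_V\cdot\partialx\eta$ each lie only in $H^{s-1}$; it is their \emph{sum} that lies in $H^{s+\mez}$, by the good-unknown cancellation. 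The Lipschitz estimate you need is a bound in $H^{s-1}$ by only the $H^{s-1}$ norm of $\delta\eta$ — measured $\tdm$ derivatives below the level $H^{s+\mez}$ at which that cancellation operates. Differentiating in $\eta$ in the direction $\dot\eta$, the combination $-G(\eta)(\mathfrak{B}\dot\eta)-(\cnx V)\dot\eta+T_\lambda T_\mathfrak{B}\dot\eta$ appears; estimated term by term each piece is only in $H^{s-2}$, one full derivative short. The paper closes this gap using the identity $G(\eta)\mathfrak{B}=-\cnx V$ (Lemma~\ref{cancellation}) together with the low-regularity paralinearization of Proposition~\ref{prop:csystem3}, rewriting the combination as commutators and paraproduct remainders that recover the missing derivative. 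Your proposal neither identifies this obstruction nor supplies the cancellation, and it misattributes the difficulty: the Dirichlet--Neumann Lipschitz bound you flag as the ``main obstacle'' is dispatched in a few lines exactly as you describe, whereas the Lipschitz bound on $f^1$ is precisely the step the paper singles out as delicate and to which most of the work is devoted.
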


Assume this technical lemma for a moment, and let us deduce the desired result: $ (\eta_1,\psi_1)=(\eta_2,\psi_2)$. 
To see this we use our previous analysis. Introducing
$$
\delta U\defn \delta\psi-T_{\mathfrak{B}_1}\delta\eta=
\psi_1-\psi_2-T_{\mathfrak{B}_1}(\eta_1-\eta_2),
$$
and
$$
\delta\Phi\defn \begin{pmatrix} T_{p_1}\delta\eta \\ T_{q_1}\delta U \end{pmatrix},
$$
we obtain that $\delta\Phi$ solves a system of the form
\begin{equation*}
\partial_{t}\delta\Phi+T_{V_1}\cdot\partialx\delta\Phi  +
\begin{pmatrix} 0 &  - T_{\gamma_1} \\ T_{\gamma_1} & 0\end{pmatrix}\delta\Phi  
= F
\end{equation*}
with
$$
\lA F\rA_{L^\infty(0,T;H^{s-\tdm}\times H^{s-\tdm})}
\le C(M_1,M_2)N.
$$
Then it follows from the estimate \eqref{estuni2} applied with
$$
\eps=0,\quad \sigma=s-\tdm,\quad \vareta=\delta\eta,\quad \varpsi=\delta\psi,
$$
that $N$ 
satisfies and estimate of the form
$$
N\le T C(M_1,M_2) N. 
$$
By chosing $T$ small enough, this implies $N=0$ which is the desired result. Now clearly 
it suffices to prove the uniqueness for $T$ small enough, so that this completes the proof. 

It remains to prove Lemma~\ref{LU}. To do this, we begin with the following lemma.

\begin{lemm}
We have
\begin{align*}
\lA V_1 - V_2\rA_{H^{s-\frac{5}{2}}} &\le C \lA  (\delta\eta,\delta\psi)\rA_{H^{s-1}\times H^{s-\tdm}},\\
\lA \mathfrak{B}_1 - \mathfrak{B}_2\rA_{H^{s-\frac{5}{2}}} &\le C \lA  (\delta\eta,\delta\psi)\rA_{H^{s-1}\times H^{s-\tdm}},
\end{align*}
\begin{align*}
\sup_{\la\xi\ra=1} \bigl(|\partial_\xi^\alpha \big(\lambda_1^{(1)}(\cdot,\xi) -\lambda_{2}^{(1)}(\cdot,\xi)\big)| 
+ \| \partial_\xi^\alpha \big(\lambda_1^{(0)}(\cdot,\xi) -\lambda_{2}^{(0)}(\cdot,\xi)\big)\|_{H^{s-3}}\bigr)
&\le C \| \delta\eta\|_{H^{s-1}},\\
\sup _{\la\xi\ra=1} \bigl(| \partial_\xi^\alpha \big( h_1^{(2)}(\cdot,\xi) -h_{2}^{(2)}(\cdot,\xi)\big)|
+\| \partial_\xi^\alpha \big( h_1^{(1)}(\cdot,\xi) -h_{2}^{(1)}(\cdot,\xi)\big)\|_{H^{s-3}}\bigr)
&\le C \lA \delta\eta\rA_{H^{s-1}}
\end{align*}
for all $\alpha\in\xN^d$ and some constant $C$ depending only on $M_1$, $M_2$ and $\alpha$.
\end{lemm}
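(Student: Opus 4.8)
The four estimates are Lipschitz-type (difference) bounds, and the plan is to establish them in the order: first the symbol differences (which involve $\eta_1,\eta_2$ only), then the difference of the Dirichlet--Neumann terms $G(\eta_1)\psi_1-G(\eta_2)\psi_2$, then $\mathfrak{B}_1-\mathfrak{B}_2$, and finally $V_1-V_2$. Throughout, the main device is to interpolate by $\eta^\tau\defn(1-\tau)\eta_2+\tau\eta_1$, $\tau\in[0,1]$, to write a difference $\mathcal{F}[\eta_1]-\mathcal{F}[\eta_2]$ as $\int_0^1\frac{d}{d\tau}\mathcal{F}[\eta^\tau]\,d\tau$, and then to reduce everything to the product rule \eqref{pr} and the nonlinear estimate \eqref{Fr}. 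One keeps in mind the available regularities: $\partialx\delta\eta\in H^{s-2}$, $\partial_x^2\delta\eta\in H^{s-3}$, $\partialx\delta\psi\in H^{s-5/2}$, while $\partialx\eta_j\in H^{s-1/2}$, $\partialx\psi_j\in H^{s-1}$ and $G(\eta_j)\psi_j\in H^{s-1}$ by Proposition~\ref{estDN}.

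\emph{Symbol differences.} Recall from \eqref{dmu10} and \eqref{dh21} that $\lambda^{(1)}$ and $h^{(2)}$ are of the form $\Psi(\partialx\eta,\xi)$ with $\Psi$ smooth in $(\zeta,\xi)$ for $\xi\neq0$ and homogeneous in $\xi$, while $\lambda^{(0)}$ and $h^{(1)}$ are of the form $\sum_{|\beta|\le2}G_\beta(\partialx\eta,\xi)\partial_x^\beta\eta$ with the $G_\beta$ smooth and homogeneous in $\xi$ — the crucial feature being the linear dependence on the second derivatives of $\eta$, exactly as in Definition~\ref{defiSigma}; this structure is preserved by $\partial_\xi^\alpha$. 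For the principal parts I would write $\partial_\xi^\alpha\Psi(\partialx\eta_1,\xi)-\partial_\xi^\alpha\Psi(\partialx\eta_2,\xi)=\int_0^1\nabla_\zeta(\partial_\xi^\alpha\Psi)(\partialx\eta^\tau,\xi)\cdot\partialx\delta\eta\,d\tau$; since $\nabla_\zeta\partial_\xi^\alpha\Psi(\partialx\eta^\tau,\xi)$ is, by \eqref{Fr}, bounded in $H^{s-1/2}$ uniformly in $\tau$ and $|\xi|=1$, the product rule \eqref{pr} places this difference in $H^{s-2}$ (in particular in $L^\infty$) with norm $\le C(M_1,M_2)\lA\delta\eta\rA_{H^{s-1}}$. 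For the subprincipal parts, split each term of $\partial_\xi^\alpha(\lambda_1^{(0)}-\lambda_2^{(0)})$ (and likewise for $h^{(1)}$) as
\[
\bigl(\partial_\xi^\alpha G_\beta(\partialx\eta_1,\xi)-\partial_\xi^\alpha G_\beta(\partialx\eta_2,\xi)\bigr)\partial_x^\beta\eta_1+\partial_\xi^\alpha G_\beta(\partialx\eta_2,\xi)\,\partial_x^\beta\delta\eta ;
\]
the first summand is a product of an $H^{s-2}$ factor (estimated as above) with $\partial_x^\beta\eta_1\in H^{s-3/2}$, the second of an $H^{s-1/2}$ factor with $\partial_x^\beta\delta\eta\in H^{s-3}$, and in both cases \eqref{pr} puts the result in $H^{s-3}$ with the bound $C(M_1,M_2)\lA\delta\eta\rA_{H^{s-1}}$ — here one checks that the inequalities required in \eqref{pr} hold precisely because $s>2+d/2$. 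Uniformity in $|\xi|=1$ follows from the continuity in $\xi$ of the finitely many symbol functions involved.

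\emph{The Dirichlet--Neumann difference.} Here I claim $\lA G(\eta_1)\psi_1-G(\eta_2)\psi_2\rA_{H^{s-5/2}}\le C(M_1,M_2)N$, which I would obtain by writing $G(\eta_1)\psi_1-G(\eta_2)\psi_2=G(\eta_1)\delta\psi+\bigl(G(\eta_1)-G(\eta_2)\bigr)\psi_2$. By Proposition~\ref{estDN} with $\sigma=s-3/2$ (admissible since $1\le s-3/2\le s$) the first term is in $H^{s-5/2}$ with norm $\le C(M_1)\lA\partialx\delta\psi\rA_{H^{s-5/2}}$. For the second term I use the shape-derivative formula of Proposition~\ref{Lannes1}: $\bigl(G(\eta_1)-G(\eta_2)\bigr)\psi_2=-\int_0^1\bigl\{G(\eta^\tau)(\mathfrak{B}^\tau\delta\eta)+\cnx(V^\tau\delta\eta)\bigr\}\,d\tau$, where $\mathfrak{B}^\tau,V^\tau$ are built from $(\eta^\tau,\psi_2)$. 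Since $\mathfrak{B}^\tau,V^\tau$ are bounded in $H^{s-1}$, $\delta\eta\in H^{s-1}$ (an algebra), and $G(\eta^\tau)$ maps $H^{s-1}$ to $H^{s-2}$ (Proposition~\ref{estDN} with $\sigma=s-1$), each integrand lies in $H^{s-2}\subset H^{s-5/2}$ with a uniform bound $C(M_1,M_2)\lA\delta\eta\rA_{H^{s-1}}$. Two technical points have to be addressed, and this is the step I expect to be the main obstacle: first, $\delta\eta$ only lies in $H^{s-1}$, not in $H^{s+1/2}$, so the chain rule leading to the above formula must be justified by first replacing $\delta\eta$ with a mollification in $H^\infty$ and then passing to the limit (using the continuity of $\eta\mapsto G(\eta)\psi_2$); second, the interpolating surfaces $y=\eta^\tau(x)$ must satisfy assumption $H_t$ uniformly in $\tau$, which is granted because this lemma is only invoked in the uniqueness proof, where one may assume $T$ so small that $\lA\eta_1-\eta_2\rA_{L^\infty}$ is as small as needed.

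\emph{The coefficients $\mathfrak{B}$ and $V$.} From $\mathfrak{B}_j\,(1+|\partialx\eta_j|^2)=\partialx\eta_j\cdot\partialx\psi_j+G(\eta_j)\psi_j$ one gets
\[
\mathfrak{B}_1-\mathfrak{B}_2=\frac{1}{1+|\partialx\eta_1|^2}\Bigl\{\partialx\delta\eta\cdot\partialx\psi_1+\partialx\eta_2\cdot\partialx\delta\psi+\bigl(G(\eta_1)\psi_1-G(\eta_2)\psi_2\bigr)-\mathfrak{B}_2\,\partialx\delta\eta\cdot(\partialx\eta_1+\partialx\eta_2)\Bigr\}.
\]
The bracket lies in $H^{s-5/2}$: the terms $\partialx\delta\eta\cdot\partialx\psi_1$ and $\mathfrak{B}_2\,\partialx\delta\eta\cdot(\partialx\eta_1+\partialx\eta_2)$ are products of an $H^{s-2}$ factor with $H^{s-1}$, resp.\ $H^{s-1/2}$, factors, hence in $H^{s-2}\subset H^{s-5/2}$; $\partialx\eta_2\cdot\partialx\delta\psi$ is a product $H^{s-1/2}\cdot H^{s-5/2}\subset H^{s-5/2}$; and $G(\eta_1)\psi_1-G(\eta_2)\psi_2\in H^{s-5/2}$ by the previous step. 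Multiplying by $(1+|\partialx\eta_1|^2)^{-1}\in H^{s-1/2}$ (via \eqref{Fr}) keeps the result in $H^{s-5/2}$, all bounds being $C(M_1,M_2)N$. Finally $V_1-V_2=\partialx\delta\psi-\mathfrak{B}_1\,\partialx\delta\eta-(\mathfrak{B}_1-\mathfrak{B}_2)\,\partialx\eta_2$, where $\partialx\delta\psi\in H^{s-5/2}$, $\mathfrak{B}_1\,\partialx\delta\eta\in H^{s-2}\subset H^{s-5/2}$, and $(\mathfrak{B}_1-\mathfrak{B}_2)\,\partialx\eta_2\in H^{s-5/2}$ by \eqref{pr}; this yields the two remaining estimates and completes the argument.
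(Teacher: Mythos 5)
Your proof is correct and follows essentially the same route as the paper: reduce the first two estimates to a bound on $\lA G(\eta_1)\psi_1-G(\eta_2)\psi_2\rA_{H^{s-5/2}}$, split that as $G(\eta_1)\delta\psi$ plus a $\tau$-integral of the shape derivative from Proposition~\ref{Lannes1}, and handle the symbol differences by the fundamental theorem of calculus together with the product rules, as in the end of the proof of Lemma~\ref{comdtp}. Note only that your concern about mollifying $\delta\eta$ is moot: both $\eta_1$ and $\eta_2$ lie in $H^{s+\mez}$, so $\delta\eta\in H^{s+\mez}$ already and Proposition~\ref{Lannes1} applies directly — the point is merely that the resulting estimate is expressed in terms of the weaker $H^{s-1}$ norm of $\delta\eta$.
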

\begin{proof}
The last two estimates are obtained from the product rule in Sobolev spaces (using similar arguments as in the end of 
the proof of Lemma~\ref{comdtp}). With regards to the first two estimates, notice that, 
by definition of $\mathfrak{B_j},V_j$ (see \eqref{BjVj}), to prove them the only non trivial point is to prove that
$$
\lA G(\eta_1)\psi_1-G(\eta_2)\psi_2\rA_{H^{s-\frac{5}{2}}}\le C \lA  (\delta\eta,\delta\psi)\rA_{H^{s-1}\times H^{s-\tdm}}.
$$
Indeed, setting $\eta_t=t\eta_1 +(1-t)\eta_2$ we have
$$
G(\eta_1)\psi_1-G(\eta_2)\psi_2=G(\eta_1)\delta\psi+\int_0^1 dG(\eta_t)\psi_2 \cdot\delta\eta\,dt
=:A+B.
$$
It follows from Proposition~\ref{estDN} that 
$$
\lA A \rA_{H^{s-\frac{5}{2}}}\le C(M_1)\lA \delta\psi\rA_{H^{s-\tdm}}.
$$
Now thanks to Proposition~\ref{Lannes1} we can write
$$
B= -\int_0^1 \left[ G(\eta_t)(\mathfrak{B}_t\delta \eta)+\cnx (V_t \delta \eta)\right]\, dt,
$$
where $\mathfrak{B}_t=\mathfrak{B}(\eta_t,\psi_2)$, $V=V(\eta_t,\psi_2)$. Using again Proposition~\ref{estDN} we obtain
\begin{equation}\label{doneabove}
\lA B\rA_{H^{s-\frac{5}{2}}} \le C(M_1,M_2)\lA \delta\eta\rA_{H^{s-\tdm}},
\end{equation}
which completes the proof.
\end{proof}

\begin{coro}
We have
\begin{align*}
&\lA T_{V_1-V_2} \cdot\partialx \eta_2\rA_{H^{s-1}} \le C  \lA  (\delta\eta,\delta\psi)\rA_{H^{s-1}\times H^{s-\tdm}},\\
&\lA T_{V_1-V_2}\cdot\partialx \psi_2\rA_{H^{s-\tdm}}\le C  \lA  (\delta\eta,\delta\psi)\rA_{H^{s-1}\times H^{s-\tdm}}, \\
&\lA T_{\lambda_1-\lambda_2}\psi_2 \rA_{H^{s-1}}\le C  \lA  (\delta\eta,\delta\psi)\rA_{H^{s-1}\times H^{s-\tdm}},\\
&\lA T_{h_1-h_2}\eta_2 \rA_{H^{s-\tdm}}\le C  \lA  (\delta\eta,\delta\psi)\rA_{H^{s-1}\times H^{s-\tdm}},
\end{align*}
for some constant $C$ depending only on $M_1$ and $M_2$.
\end{coro}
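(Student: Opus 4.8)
The plan is to derive each of the four bounds by feeding the estimates of the preceding lemma into the appropriate continuity result for paradifferential operators. Throughout, set $N\defn\lA(\delta\eta,\delta\psi)\rA_{H^{s-1}\times H^{s-\tdm}}$ and let $C$ denote a constant depending only on $M_1$ and $M_2$, which may change from line to line; recall also that $\lA\partialx\eta_2\rA_{H^{s-\mez}}$, $\lA\partialx\psi_2\rA_{H^{s-1}}$, $\lA\psi_2\rA_{H^s}$ and $\lA\eta_2\rA_{H^{s+\mez}}$ are all bounded by $M_2$.

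First I would treat the two terms built from $V_1-V_2$. The preceding lemma gives $\lA V_1-V_2\rA_{H^{s-\frac52}}\le CN$. Since $s>2+d/2$ one has $s-\frac52>\frac d2-\mez$, so Lemma~\ref{negmu} (complemented, in the easier range $s-\frac52>\frac d2$, by the paraproduct bound of Theorem~\ref{theo:sc0}) shows that $T_{V_1-V_2}$ is of order $\leo m$ with $m\defn\max\{0,\frac d2+\frac52-s\}<\mez$, and with operator norm $\les\lA V_1-V_2\rA_{H^{s-\frac52}}\le CN$. Applying this operator to $\partialx\eta_2\in H^{s-\mez}$ and to $\partialx\psi_2\in H^{s-1}$, and using $m\le\mez$, yields $T_{V_1-V_2}\cdot\partialx\eta_2\in H^{s-\mez-m}\subset H^{s-1}$ and $T_{V_1-V_2}\cdot\partialx\psi_2\in H^{s-1-m}\subset H^{s-\tdm}$, with the stated bounds.

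Next I would handle the terms built from $\lambda_1-\lambda_2$ and $h_1-h_2$, splitting each symbol into its homogeneous pieces. The principal parts $\lambda_1^{(1)}-\lambda_2^{(1)}$ and $h_1^{(2)}-h_2^{(2)}$ depend only on the first-order derivatives of the $\eta_j$, hence on $\partialx\delta\eta\in H^{s-2}\subset L^\infty$, and the preceding lemma bounds the seminorms $M_0^1(\lambda_1^{(1)}-\lambda_2^{(1)})$ and $M_0^2(h_1^{(2)}-h_2^{(2)})$ by $C\lA\delta\eta\rA_{H^{s-1}}$; so Theorem~\ref{theo:sc0} gives $T_{\lambda_1^{(1)}-\lambda_2^{(1)}}\psi_2\in H^{s-1}$ and $T_{h_1^{(2)}-h_2^{(2)}}\eta_2\in H^{s+\mez-2}=H^{s-\tdm}$, both with norm $\le CN$. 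The subprincipal parts $\lambda_1^{(0)}-\lambda_2^{(0)}$ and $h_1^{(1)}-h_2^{(1)}$ involve the second-order derivatives of the $\eta_j$, which need not be bounded when $2+\frac d2<s\le 3+\frac d2$, so here I would use the low-regularity operator norm estimate \eqref{esti:quants} from the proof of Proposition~\ref{2d21}: applied with $(r,\mu)=(0,s)$ to $\lambda_1^{(0)}-\lambda_2^{(0)}$ (homogeneous of degree $0$) and with $(r,\mu)=(1,s+\mez)$ to $h_1^{(1)}-h_2^{(1)}$ (homogeneous of degree $1$), it bounds $\lA T_{\lambda_1^{(0)}-\lambda_2^{(0)}}\psi_2\rA_{H^{s-1}}$ and $\lA T_{h_1^{(1)}-h_2^{(1)}}\eta_2\rA_{H^{s-\tdm}}$ by $C\sup_{\la\xi\ra=1}\sup_{\la\alpha\ra\le\frac d2+1}\lA\partial_\xi^\alpha(\,\cdot\,)(\cdot,\xi)\rA_{H^{s-3}}$, and the preceding lemma controls this last supremum by $C\lA\delta\eta\rA_{H^{s-1}}$. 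Summing the principal and subprincipal contributions gives the remaining two estimates.

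The only delicate point is the treatment of the two subprincipal symbols: since we assume only $s>2+d/2$ they are in general merely in $H^{s-3}$, not in $L^\infty$, so the naive boundedness bound is unavailable and one is forced to route through the operator norm estimate \eqref{esti:quants} — which is exactly the low-regularity device that Proposition~\ref{2d21} provides. Everything else is routine bookkeeping with Sobolev exponents.
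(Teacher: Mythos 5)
Your proof is correct and follows essentially the same route as the paper's own (very terse) argument: Lemma~\ref{negmu} for the two $V_1-V_2$ terms, and the low-regularity device of Proposition~\ref{2d21} — in particular the operator norm estimate \eqref{esti:quants} — for the two symbol differences, split into principal and subprincipal parts. You have simply made explicit what the paper compresses into a citation of Proposition~\ref{2d21}, and your bookkeeping of homogeneities and Sobolev indices for \eqref{esti:quants} checks out.
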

\begin{proof}
According to Lemma~\ref{negmu}, we have
$$
\lA T_a u\rA_{H^\mu}\les \lA a\rA_{H^{\frac{d}{2}-\mez}}\lA u\rA_{H^{\mu+\mez}}.
$$
so using the previous lemma we obtain the first two estimates. The last two estimates comes from the bounds for 
$\lambda_1-\lambda_2$ and $h_1-h_2$ and Proposition~\ref{2d21} (again it suffices to apply 
the usual operators norm estimate \eqref{esti:quant1} for $s>3+d/2$). 
\end{proof}

Similarly, we obtain that, for any $u\in H^{s+\mez}$, 
$$
\lA T_{\mathfrak{B}_1-\mathfrak{B}_2} u\rA_{H^{s}} \le C  \lA  (\delta\eta,\delta\psi)\rA_{H^{s-1}\times H^{s-\tdm}}
\lA u\rA_{H^{s+\mez}}.
$$
Therefore, to prove Lemma~\ref{LU}, it remains only to estimate the difference
$$
f(\eta_1,\psi_1)-f(\eta_2,\psi_2),
$$
where $f(\eta,\psi)$ is defined in \eqref{defiLf}. To do this, the most delicate part 
is to obtain an estimate for
$$
f^1(\eta_1,\psi_1)-f^1(\eta_2,\psi_2),
$$
where recall the notation
\begin{equation}\label{t1a}
f^1(\eta,\psi)=G(\eta)\psi-\left\{ T_{\lambda} \bigl(\psi-T_{\mathfrak{B}}\eta \bigr) - T_{V} \cdot\partialx \eta \right\}.
\end{equation}
We claim that 
$$
\lA f^1(\eta_1,\psi_1)-f^1(\eta_2,\psi_2)\rA_{H^{s-1}}\le C(M_1,M_2) \lA (\delta\eta,\delta\psi)\rA_{H^{s-1}\times H^{s-\tdm}}.
$$
To prove this claim, we shall prove an estimate for the partial derivative of $f^1(\eta,\psi)$ with respect to $\eta$ 
(since $f^1(\eta,\psi)$ is linear with respect to $\psi$, the corresponding result for the partial derivative with respect to $\psi$ 
is easy). 
Let $(\eta,\psi)\in H^{s+\mez}(\xR^d)\times H^s (\xR^d)$ (again we forget the time dependence) and consider 
$\dot\eta\in H^{s-1}(\xR^d)$. Introduce the notation
$$
d_\eta f^1(\eta,\psi)\cdot\dot{\eta}=\lim_{\eps\rightarrow 0} \frac{1}{\eps}
\left( f(\eta+\eps\dot\eta,\psi)-f(\eta,\psi)\right).
$$
Then, to complete the proof of the uniqueness, it remains only to prove the following technical lemma.
\begin{lemm}Let $s>2+d/2$. Then, 
for all $(\eta,\psi)\in H^{s+\mez}(\xR^d)\times H^{s}(\xR^d)$, and for all $\dot\eta\in H^{s+\mez}(\xR)$,
$$
\lA d_\eta f^1(\eta,\psi)\cdot\dot{\eta}\rA_{H^{s-1}}\le C \lA \dot\eta\rA_{H^{s-1}},
$$
for some constant $C$ which depends only on the $H^{s+\mez}(\xR^d)\times H^s (\xR^d)$-norm of $(\eta,\psi)$.
\end{lemm}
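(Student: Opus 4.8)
The plan is to use the shape–derivative formula for the Dirichlet–Neumann operator from Proposition~\ref{Lannes1} together with the cancellation in Lemma~\ref{cancellation}, exactly as in the proof of Proposition~\ref{prop:csystem2}, but now tracking one fewer derivative. Recall that
$$
f^1(\eta,\psi)=G(\eta)\psi-\big\{T_\lambda\big(\psi-T_\mathfrak{B}\eta\big)-T_V\cdot\partialx\eta\big\},
$$
so $d_\eta f^1(\eta,\psi)\cdot\dot\eta$ is the sum of $dG(\eta)\psi\cdot\dot\eta$ and the variations of the three paradifferential terms. By Proposition~\ref{Lannes1},
$$
dG(\eta)\psi\cdot\dot\eta=-G(\eta)(\mathfrak{B}\dot\eta)-\cnx(V\dot\eta),
$$
and the key structural point is that the paralinearization identity \eqref{t1} in Proposition~\ref{prop:csystem} should be differentiated as well: morally $f^1(\eta,\psi)=f(\eta,\psi)\in H^{s+1/2}$, so $d_\eta f^1$ is a genuine gain over the individual terms, each of which is only in $H^{s-1}$ a priori. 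First I would record the regularity budget: with $\dot\eta\in H^{s-1}$ and $(\eta,\psi)\in H^{s+1/2}\times H^s$, we have $\mathfrak{B},V\in H^{s-1}$, and we want every contribution to land in $H^{s-1}$.

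The main steps: (1) Expand $d_\eta f^1\cdot\dot\eta$ into the DN shape–derivative piece plus $d_\eta\{-T_\lambda(\psi-T_\mathfrak{B}\eta)+T_V\cdot\partialx\eta\}\cdot\dot\eta$. The latter involves $d_\eta\lambda\cdot\dot\eta$, $d_\eta\mathfrak{B}\cdot\dot\eta$ and $d_\eta V\cdot\dot\eta$, which I would estimate using that $\lambda=\lambda^{(1)}+\lambda^{(0)}$ depends smoothly on $\partialx\eta$ (for $\lambda^{(1)}$) and linearly on $\partialx^2\eta$ (for $\lambda^{(0)}$), so that $d_\eta\lambda\cdot\dot\eta\in\Sigma^1$-type symbols with coefficients controlled by $\|\dot\eta\|_{H^{s-1}}$; Proposition~\ref{2d21} then gives the needed mapping bounds with $H^{s-1}$-control of $\eta$. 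For $d_\eta\mathfrak{B}\cdot\dot\eta$ one uses the formula for $\mathfrak{B}$ together with $dG(\eta)\psi\cdot\dot\eta$, which brings in $G(\eta)(\mathfrak{B}\dot\eta)$ and $\cnx(V\dot\eta)$; Proposition~\ref{estDN} bounds $\|G(\eta)(\mathfrak{B}\dot\eta)\|_{H^{s-2}}\lesssim\|\partialx(\mathfrak{B}\dot\eta)\|_{H^{s-2}}$, which is fine. (2) Use Lemma~\ref{cancellation}, $G(\eta)\mathfrak{B}=-\cnx V$, to exhibit the cancellation: the combination $-G(\eta)(\mathfrak{B}\dot\eta)-\cnx(V\dot\eta)$ minus its paralinearization $-T_\lambda(-T_\mathfrak{B}\dot\eta+\dots)+\dots$ telescopes, just as in \S\ref{s3.2}, so that the "bad" top-order terms (those costing $s+1/2$ derivatives on $\dot\eta$) drop out and one is left with expressions of the form $T_{(\text{symbol in }\Sigma^0)}\dot\eta$ plus smoothing remainders, all bounded in $H^{s-1}$. (3) Collect the estimates, invoking the product rule \eqref{pr}, Theorem~\ref{lemPa}, Lemma~\ref{negmu}, and Propositions~\ref{2d21}--\ref{2d22} to absorb the low-regularity technicalities, exactly in the style of Lemma~\ref{comdtp}.

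The hard part will be the bookkeeping in step (2): making the cancellation between $dG(\eta)\psi\cdot\dot\eta$ and the variation of the paradifferential terms explicit at this low regularity, while only spending $\|\dot\eta\|_{H^{s-1}}$ rather than $\|\dot\eta\|_{H^{s+1/2}}$. Concretely, the term $-G(\eta)(T_\mathfrak{B}\dot\eta)$ coming from $T_\lambda(T_\mathfrak{B}\dot\eta)$ must be matched against $-G(\eta)(\mathfrak{B}\dot\eta)$ from the shape derivative, using that $\mathfrak{B}\dot\eta-T_\mathfrak{B}\dot\eta-T_{\dot\eta}\mathfrak{B}$ is smoother (Theorem~\ref{lemPa}) and that $G(\eta)$ maps into the right space (Proposition~\ref{estDN}); and similarly $\cnx(V\dot\eta)$ must be matched against $T_V\cdot\partialx\dot\eta$ modulo $T_{\cnx V}\dot\eta$ and a paraproduct remainder. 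The point is that after these matchings every surviving term either has a symbol in the class $\Sigma^{\le 1}$ acting on $\dot\eta$ with a loss of only one derivative, or is a paraproduct $T_{\dot\eta}(\cdot)$ with $\dot\eta\in H^{s-1}$, $s-1>d/2$, against an $H^{s-1}$ function — both of which are in $H^{s-1}$ with the stated constant. Once this is organized, the remaining estimates are routine applications of the symbolic calculus of \S\ref{s3.1} and the nonlinear estimates \eqref{pr}--\eqref{Fr}.
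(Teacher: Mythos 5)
Your overall strategy — expand $d_\eta f^1\cdot\dot\eta$ via the shape-derivative formula of Proposition~\ref{Lannes1}, invoke the cancellation $G(\eta)\mathfrak{B}=-\cnx V$ of Lemma~\ref{cancellation}, and paralinearize $G(\eta)$ by Proposition~\ref{prop:csystem3} — is exactly the paper's approach, and your treatment of the "easy" variations of $\lambda$, $\mathfrak{B}$, $V$ inside the paradifferential part (via Propositions~\ref{2d21} and~\ref{estDN}) matches the paper's terms $I_1,I_2,I_3$.

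There is, however, a genuine gap in your step (2), where you assert that after the matchings every surviving term is either a $\Sigma^0$-symbol acting on $\dot\eta$ or a paraproduct $T_{\dot\eta}(\cdot)$ against an $H^{s-1}$ function. The delicate remainder that survives the matchings you describe is
$$
-G(\eta)\big(T_{\dot\eta}\mathfrak{B}\big)-T_{\dot\eta}\,\cnx V
\;=\;-G(\eta)\big(T_{\dot\eta}\mathfrak{B}\big)+T_{\dot\eta}\,G(\eta)\mathfrak{B},
$$
and you are implicitly treating $T_{\dot\eta}\,G(\eta)\mathfrak{B}$ as a paraproduct against an $H^{s-1}$ function. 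It is not: $G(\eta)\mathfrak{B}=-\cnx V$ lies only in $H^{s-2}$ (since $\mathfrak{B}\in H^{s-1}$ and $G(\eta)$ loses one derivative), so each of the two summands is, separately, only in $H^{s-2}$, and the paraproduct bound you invoke does not close the estimate. The estimate works only after one further cancellation that you do not identify: replace $G(\eta)$ by $T_{\lambda^{(1)}}$ modulo the remainder $F(\eta,\cdot)\in H^{s-1}$ from Proposition~\ref{prop:csystem3}, so the pair becomes the \emph{commutator} $[T_{\dot\eta},T_{\lambda^{(1)}}]\mathfrak{B}$ up to $T_{\dot\eta}F(\eta,\mathfrak{B})-F(\eta,T_{\dot\eta}\mathfrak{B})\in H^{s-1}$, and then observe that this commutator is of order $0$ (because $\dot\eta\in H^{s-1}\subset W^{1,\infty}$ for $s>2+d/2$ while $\lambda^{(1)}$ has order $1$), hence sends $\mathfrak{B}\in H^{s-1}$ into $H^{s-1}$ with norm $\lesssim\lA\dot\eta\rA_{H^{s-1}}$. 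This commutator gain, not paraproduct boundedness alone, is what makes $I_4$ land in $H^{s-1}$; it is precisely the content of the paper's decomposition $I_4=J_1+J_2$ with $J_2\ni[T_{\dot\eta},T_\lambda]\mathfrak{B}$, and without making it explicit the bookkeeping you propose stalls at $H^{s-2}$.
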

\begin{rema}
The assumption $\dot\eta\in H^{s+\mez}(\xR)$ ensures that $d_\eta f^1(\eta,\psi)\dot\eta$ is well defined. However, 
of course, a key point is that we estimate the latter term in $H^{s-1}$ by means of only the 
$H^{s-1}$ norm of $\dot\eta$.
\end{rema}
\begin{proof}
To prove this estimate we begin by computing $d_\eta f^1(\eta,\psi)\dot\eta$. Given a coefficient $c=c(\eta,\psi)$ we use the notation
$$
\dot c=\lim_{\eps\rightarrow 0} \frac{1}{\eps}
\left( c(\eta+\eps\dot\eta,\psi)-c(\eta,\psi) \right).
$$
Using this notation for $\dot\lambda,\dot{\mathfrak{B}},\dot V$, we have
\begin{equation}\label{t1A}
\begin{aligned}
d_\eta f^1(\eta,\psi)\cdot\dot{\eta}&=-G(\eta)(\mathfrak{B}\dot\eta)-\cnx (V \dot\eta)\\
&\quad-\left\{ T_{\dot\lambda} \bigl(\psi-T_{\mathfrak{B}}\eta \bigr)  
-T_{\lambda} T_{\dot{\mathfrak{B}}}\eta- T_{\lambda} T_{\mathfrak{B}}\dot\eta  - T_{\dot V} \cdot\partialx \eta
- T_{V} \cdot\partialx \dot\eta \right\},
\end{aligned}
\end{equation}
We split the right-hand side into four terms 
(three of which are easy to estimate, whereas the last one requires some care): set
\begin{align*}
I_1&=V\cdot\partialx \dot \eta -T_V \cdot\partialx \dot\eta,\\
I_2&=- T_{\dot\lambda} \bigl(\psi-T_{\mathfrak{B}}\eta \bigr) ,\\
I_3&=-T_{\lambda} T_{\dot{\mathfrak{B}}}\eta,\\
I_4&=-G(\eta)(\mathfrak{B}\dot\eta)-(\cnx V)\dot\eta +T_{\lambda} T_{\mathfrak{B}}\dot\eta.
\end{align*}
To estimate $I_1$, we use that, for all function $a\in H^{s_0}(\xR^d)$ with $s_0>1+d/2$, we have
$$
\lA a u-T_a u\rA_{H^{\mu+1}}\le K \lA a\rA_{H^{s_0}}\lA u \rA_{H^\mu},
$$
whenever $u\in H^\mu(\xR^d)$ for some $0\le \mu\le s_0-1$. By applying this estimate with $s_0=s-1$, we obtain
$$
\lA I_1\rA_{H^{s-1}}=\lA (V-T_V)\cdot\partialx\dot\eta\rA_{H^{s-1}}\les \lA V\rA_{H^{s-1}}  
\lA \partialx\dot\eta\rA_{H^{s-1-1}}\le C \lA \dot\eta\rA_{H^{s-1}}.
$$
With regards to the second term, we use the arguments in the proof of 
Proposition~\ref{2d21} (notice that here, our symbol $\dot\lambda$ has not exactly the form~\eqref{eq.symbol}, but rather 
$$
F(\nabla \eta, \xi) \nabla \dot\eta+ G( \nabla \eta, \xi ) \nabla^2 \eta + K(\nabla \eta,\xi) \nabla \dot \eta \nabla^2 \eta 
$$
and the proof of Proposition~\ref{2d21} applies.) We obtain
$$
\lA I_2\rA_{H^s-1}\le C  \lA \dot\eta\rA_{H^{s-1}}.
$$
To estimate $I_3$, notice that \eqref{esti:quant1} implies that
$$
\lA I_3\rA_{H^{s-1}}\les M^{1}_0(\lambda)\lA T_{\dot{\mathfrak{B}}} \eta\rA_{H^{s-1+1}}\le 
C \lA T_{\dot{\mathfrak{B}}} \eta\rA_{H^{s-1+1}}.
$$
Next, using the general estimate 
$$
\lA T_a u\rA_{H^\mu}\le K \lA a\rA_{H^{\frac{d}{2}-m}}\lA u\rA_{H^{\mu+m}},
$$
we conclude
$$
\lA I_3\rA_{H^{s-1}}\le C \big\lVert \dot{\mathfrak{B}} \big\rVert_{H^{s-\frac{5}{2}}}\lA \eta\rA_{H^{s+\mez}}.
$$
Therefore, the desired result for $I_3$ will follow from the claim
$$
\big\lVert \dot{\mathfrak{B}} \big\rVert_{H^{s-\frac{5}{2}}}\le C \lA \dot\eta\rA_{H^{s-1}}.
$$
To see this, the only non-trivial point is to bound
$dG(\eta)\psi\cdot\dot{\eta}$, which was precisely done above (cf \eqref{doneabove}).

It remains to estimate $I_4$, which is the most delicate part. Indeed, one cannot estimate the terms separately, and we have to use a cancellation which comes 
from the identity $G(\eta)\mathfrak{B}=-\cnx V$ (see Lemma~\ref{cancellation}).

It follows from Proposition~\ref{prop:csystem3} that
$$
G(\eta)(\mathfrak{B}\dot\eta)=T_{\lambda^{(1)}} (\mathfrak{B}\dot\eta)
 +F(\eta,\mathfrak{B}\dot\eta),
\quad
G(\eta)\mathfrak{B}=T_{\lambda^{(1)}} \mathfrak{B}
 +F(\eta,\mathfrak{B}),
$$
where 
$$
\lA F(\eta,\mathfrak{B}\dot\eta)\rA_{H^{s-1}}
\le C\lA \dot\eta\rA_{H^{s-1}}, \quad \lA F(\eta,\mathfrak{B})\rA_{H^{s-1}}
\le C.
$$
Therefore
\begin{align*}
I_4&=-G(\eta)(\mathfrak{B}\dot\eta)-(\cnx V)\dot\eta +T_{\lambda} T_{\mathfrak{B}}\dot\eta\\
&=-T_\lambda (\mathfrak{B}\dot\eta ) -F(\eta,\mathfrak{B}\dot\eta)
-\dot \eta \cnx V +T_{\lambda} T_{\mathfrak{B}}\dot\eta\\
&=-T_\lambda (\mathfrak{B}\dot\eta)
-F(\eta,\mathfrak{B}\dot\eta)
-T_{\dot\eta}\cnx V -(\dot \eta -T_{\dot\eta} )\cnx V +T_{\lambda} T_{\mathfrak{B}}\dot\eta\\
\intertext{and hence using $\cnx V=-G(\eta)\mathfrak{B}$}
I_4&=-T_\lambda (\mathfrak{B}\dot\eta) -F(\eta,\mathfrak{B}\dot\eta)
+T_{\dot\eta}G(\eta)\mathfrak{B} +(\dot \eta -T_{\dot\eta} )\cnx V +T_{\lambda} T_{\mathfrak{B}}\dot\eta\\
\intertext{and paralinearizing $G(\eta)\mathfrak{B}$ and gathering terms we conclude}
I_4&=-T_\lambda (\mathfrak{B}\dot\eta)-F(\eta,\mathfrak{B}\dot\eta)
+T_{\dot\eta}\Big(T_\lambda \mathfrak{B}+F(\eta,\mathfrak{B})\Big)+(\dot \eta -T_{\dot\eta} )\cnx V +T_{\lambda} T_{\mathfrak{B}}\dot\eta
\end{align*}
then commuting $T_{\dot\eta}$ and $T_\lambda$ 
we conclude that
$$
I_4=J_1+J_2,
$$
where
\begin{align*}
J_1&=-T_{\lambda^{(1)}}\Bigl( \mathfrak{B}\dot\eta -T_{\dot\eta}\mathfrak{B} - T_{\mathfrak{B}}\dot\eta\Bigr)\\
J_2&=-T_{\lambda^{(0)}}(\mathfrak{B}\dot\eta)+[T_{\dot\eta},T_\lambda]\mathfrak{B}+T_{\dot\eta} F(\eta,\mathfrak{B})+(\dot \eta -T_{\dot\eta} )\cnx V-F(\eta,\mathfrak{B}\dot\eta).
\end{align*}
Now both terms $J_1$ and $J_2$ are estimated using symbolic calculus (namely we estimate the first term by means of (ii) in Theorem~\ref{lemPa}; and 
we estimate $J_2$ by means of \eqref{esti:quant1}, \eqref{esti:quant2} and (ii) in Theorem~\ref{lemPa}).
\end{proof}

\subsection{Sketch of the proof of Lemma~\ref{CSL}}\label{CSp}
Let $0<\eps_1<\eps_2$ and consider two solutions 
$(\eta_{\eps_j},\psi_{\eps_j})\in C^{0}([0,T];H^{s+\mez}(\xR^d)\times H^{s}(\xR^d))$ of \eqref{A3}. 
Introduce the notation
\begin{equation}\label{CSBjVj}
\mathfrak{B}_{\eps_j} = \frac{\partialx\eta_{\eps_j}\cdot\partialx\psi_{\eps_j}+G(\eta_{\eps_j})\psi_{\eps_j}}{1+\la \partialx\eta_{\eps_j}\ra^2},\quad
V_{\eps_j}=\partialx\psi_{\eps_j} -\mathfrak{B}_{j}\partialx\eta_{\eps_j},
\end{equation}
and denote by $\lambda_j, h_j$ the symbols obtained by replacing $\eta$ by $\eta_{\eps_j}$ in \eqref{dmu10}, \eqref{dh21} respectively. 
%Similarly, denote by $\mathcal{L}_1$ the operator obtained by replacing $(\mathfrak{B},\lambda,h)$ by $(\mathfrak{B}_1,\lambda_1,h_1)$ in 
%\eqref{defiLf}. 
Here, the main technical lemma is the following.

\begin{lemm}\label{CSLU}
Let $0<\eps_1<\eps_2$, consider $s'$ such that
$$
\mez+\frac{d}{2}<s'<s-\tdm,
$$
and set 
$$
a=s-\tdm-s'.
$$
Then the differences $\delta\eta\defn \eta_{\eps_1}-\eta_{\eps_2}$ and $\delta\psi\defn \psi_{\eps_1}-\psi_{\eps_2}$ 
satisfy a system of the form
\begin{equation}\label{CSA2}
\left( \partial_t+ T_{V_{\eps_1}}\cdot \partialx J_{\eps_1}
+
\mathcal{L}^{\eps_1}\right) \begin{pmatrix}\delta\eta \\ \delta\psi\end{pmatrix}
=f,
\end{equation}
for some remainder term such that
$$
\lA f\rA_{X^{s'}(T)}
\le C \left\{ \lA (\delta\eta,\delta\psi)\rA_{X^{s'}(T)} + (\eps_2-\eps_1)^a\right\},
$$
for some constant $C$ depending only on $\sup_{\eps\in ]0,1]}\lA (\eta_\eps,\psi_\eps)\rA_{X^{s}(T)}$.
\end{lemm}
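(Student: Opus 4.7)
The plan is to subtract the two approximate equations
\[
\left( \partial_t+ T_{V_{\eps_j}}\cdot \partialx J_{\eps_j} + \mathcal{L}^{\eps_j}\right) \begin{pmatrix}\eta_{\eps_j} \\ \psi_{\eps_j}\end{pmatrix}
= f(J_{\eps_j}\eta_{\eps_j}, J_{\eps_j}\psi_{\eps_j}), \qquad j=1,2,
\]
and to regroup the result so that the reference operator $\partial_t+T_{V_{\eps_1}}\cdot\nabla J_{\eps_1}+\mathcal{L}^{\eps_1}$ acts on $(\delta\eta,\delta\psi)$, pushing every other term into the remainder $f$. Each such term then falls in one of two families: (i) those carrying a difference of coefficients $V_{\eps_1}-V_{\eps_2}$, $\mathfrak{B}_{\eps_1}-\mathfrak{B}_{\eps_2}$, $\lambda_1-\lambda_2$, $h_1-h_2$, $p_1-p_2$, $q_1-q_2$, $\wp_1-\wp_2$, applied to $(\eta_{\eps_2},\psi_{\eps_2})$; and (ii) those carrying the regularizer difference $J_{\eps_1}-J_{\eps_2}$ (again applied to $(\eta_{\eps_2},\psi_{\eps_2})$), obtained for instance from the splitting
\[
T_{V_{\eps_1}}\cdot\nabla J_{\eps_1}\eta_{\eps_1}- T_{V_{\eps_2}}\cdot\nabla J_{\eps_2}\eta_{\eps_2}
= T_{V_{\eps_1}}\cdot\nabla J_{\eps_1}\delta\eta + T_{V_{\eps_1}-V_{\eps_2}}\cdot\nabla J_{\eps_2}\eta_{\eps_2}+ T_{V_{\eps_2}}\cdot\nabla (J_{\eps_1}-J_{\eps_2})\eta_{\eps_2},
\]
and similarly inside $\mathcal{L}^{\eps_j}$ (which contains $J_{\eps_j}$ sandwiched between paradifferential factors such as $T_\wp$, $T_p$, $T_{1/q}$, $T_q$) and in the nonlinearity $f(J_{\eps_j}\eta_{\eps_j},J_{\eps_j}\psi_{\eps_j})$, after inserting the intermediate $f(J_{\eps_1}\eta_{\eps_2},J_{\eps_1}\psi_{\eps_2})$.

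Family~(i) is handled by reusing the arguments of Lemma~\ref{LU}: the differences of the coefficients are estimated in the appropriate Sobolev norms by $\|(\delta\eta,\delta\psi)\|_{X^{s'}(T)}$, the Dirichlet--Neumann difference being controlled by Proposition~\ref{Lannes1} via the formula $G(\eta_1)\psi_1-G(\eta_2)\psi_2=G(\eta_1)\delta\psi+\int_0^1 dG(\eta_t)\psi_2\cdot\delta\eta\,dt$, and the subsequent paralinearization estimates for $f^1,f^2$ proceed verbatim, including the crucial cancellation $G(\eta)\mathfrak{B}=-\cnx V$ used to close the estimate for $f^1$ without losing a derivative on $\delta\eta$. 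Because $s'>\tfrac12+\tfrac d2$, Lemma~\ref{negmu}, Proposition~\ref{2d21} and the product rule~\eqref{pr} ensure that the constants depend only on $\sup_\eps\|(\eta_\eps,\psi_\eps)\|_{X^s(T)}$.

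For family~(ii), the key point is the pointwise bound $|1-e^{-x}|\le x^a$ for $x\ge 0$ and $0\le a\le 1$. Applied to
\[
\jmath_{\eps_1}^{(0)}-\jmath_{\eps_2}^{(0)}
=e^{-\eps_1\gamma^{(3/2)}}\bigl(1-e^{-(\eps_2-\eps_1)\gamma^{(3/2)}}\bigr)
\]
with $a=s-\tfrac32-s'\in(0,s-2-\tfrac d2)$, together with the analogous estimate on the subprincipal piece $\jmath_\eps^{(-1)}=-\tfrac i2(\partial_x\!\cdot\!\partial_\xi)\jmath_\eps^{(0)}$, this yields the operator bound
\[
\lA J_{\eps_1}-J_{\eps_2}\rA_{H^\mu\to H^{\mu-3a/2}}\le C(\eps_2-\eps_1)^a,
\]
uniformly in $0<\eps_1<\eps_2\le 1$, with $C$ depending on $\lA\eta\rA_{X^s(T)}$. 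Since every family~(ii) term contains $J_{\eps_1}-J_{\eps_2}$ followed (or preceded) by operators of order $\le 2$ acting on $(\eta_{\eps_2},\psi_{\eps_2})\in X^s$, the overall loss is $1+\tfrac{3a}{2}$ derivatives from the $X^s$ regularity. By the very choice of $a$, $s+\tfrac12-1-\tfrac{3a}{2}=s'+\tfrac12$ and $s-\tfrac{3a}{2}=s'+\tfrac12$, so each such term lands in $X^{s'}(T)$ with size bounded by $(\eps_2-\eps_1)^a\,\sup_\eps\|(\eta_\eps,\psi_\eps)\|_{X^s(T)}$.

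The main obstacle will be family~(ii) inside $\mathcal{L}^{\eps_j}$: there, $J_{\eps_1}-J_{\eps_2}$ is sandwiched between the paradifferential factors $T_p$, $T_\wp$, $T_{1/q}$, $T_q$ and the operators $T_\lambda$, $T_h$ (of respective orders $\pm\tfrac12$, $0$, $1$, $2$), all of whose symbols depend nonlinearly on $\eta_{\eps_j}$. Commuting $J_{\eps_1}-J_{\eps_2}$ through these factors produces additional commutator remainders which must themselves be controlled with the $(\eps_2-\eps_1)^a$ factor. This is where the restricted dependence on second derivatives of $\eta$ built into the class $\Sigma^m$ becomes essential: the low-regularity symbolic calculus of Proposition~\ref{pcs} gives composition/adjoint remainders that are bounded uniformly in $\eps$ under the threshold $s>2+\tfrac d2$, and Propositions~\ref{2d21}--\ref{2d22} let one absorb the loss into the constants. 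Once every family~(i) contribution is shown to be $\le C\,\|(\delta\eta,\delta\psi)\|_{X^{s'}(T)}$ and every family~(ii) contribution $\le C(\eps_2-\eps_1)^a$, the stated bound on $f$ follows.
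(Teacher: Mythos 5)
Your overall strategy---subtract the two approximate systems, regroup so that $\partial_t+T_{V_{\eps_1}}\!\cdot\partialx J_{\eps_1}+\mathcal{L}^{\eps_1}$ acts on $(\delta\eta,\delta\psi)$, split the remainder into coefficient differences (treated as in Lemma~\ref{LU}) and regularizer differences $J_{\eps_1}-J_{\eps_2}$---is exactly the paper's in \S\ref{CSp}. But your handling of the regularizer differences does not close. You derive $\lA J_{\eps_1}-J_{\eps_2}\rA_{H^\mu\to H^{\mu-3a/2}}\le C(\eps_2-\eps_1)^a$ from $|1-e^{-x}|\le x^a$ (which, incidentally, requires $a\le 1$, not guaranteed since $a<s-2-\tfrac d2$ can exceed one when $s$ is large). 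This bound loses $\tfrac{3a}{2}$ derivatives, but the budget is only $a$: the block $T_hT_\wp J_\eps T_p$ inside $\mathcal{L}^\eps$ has order $2$ and hits $\eta_{\eps_2}\in H^{s+\mez}$ with target $H^{s'}$ for the second component of $f$, so replacing $J_{\eps}$ by an operator of order $b$ lands in $H^{s+\mez-2-b}$ and one needs $b\le s-\tdm-s'=a$; with $b=\tfrac{3a}{2}>a$ the term leaves $X^{s'}(T)$. The arithmetic you invoke to save the day is false: with $a=s-\tdm-s'$, the displayed equalities $s+\mez-1-\tfrac{3a}{2}=s'+\mez$ and $s-\tfrac{3a}{2}=s'+\mez$ would force $s-s'=\tfrac52$ and $s-s'=\tfrac72$ respectively, which contradict each other and are not implied by the hypotheses.

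The correct input is $|1-e^{-x}|\le\min(1,x)\le x^\theta$ for $0\le\theta\le 1$ with $\theta\defn\min\bigl(1,\tfrac{2a}{3}\bigr)$. Since $\gamma^{(3/2)}$ has order $\tdm$, this yields a symbol of order $\tfrac{3\theta}{2}\le a$ with coefficient $(\eps_2-\eps_1)^\theta$, hence $\lA J_{\eps_1}-J_{\eps_2}\rA_{H^\mu\to H^{\mu-a}}\le C(\eps_2-\eps_1)^{\min(1,2a/3)}$, which is precisely the loss $X^{s'}(T)$ can absorb. You then obtain Lemma~\ref{CSLU} with exponent $\min\bigl(1,\tfrac{2a}{3}\bigr)$ in place of $a$; this is formally weaker than stated, but only the positivity of the exponent is used in \S\ref{CSp} to deduce Lemma~\ref{CSL}, so nothing is lost. (The paper's sketch writes $\lA J_{\eps_2}-J_{\eps_1}\rA_{H^\mu\to H^{\mu-a}}\le C(\eps_2-\eps_1)^a$, which carries the same imprecision in the exponent; the argument as given produces $(\eps_2-\eps_1)^{2a/3}$ for $a\le\tdm$, and that suffices.)
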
 
To prove Lemma~\ref{CSLU}, we proceed as in the previous paragraph. The only difference is that 
we use the fact that
$$
\lA J_{\eps_2}-J_{\eps_1}\rA_{H^\mu \rightarrow H^{\mu-a}}
\le C (\eps_2-\eps_1)^a.
$$

Now, since for $t=0$ we have $\delta\eta=0=\delta\psi$, 
it follows from  Lemma~\ref{CSLU} and \eqref{estuni2} applied with
$$
\sigma=s',\quad \eps=\eps_1,\quad \vareta=\delta\eta,\quad \varpsi=\delta\psi,
$$
that $N$ 
satisfies and estimate of the form
$$
N\le T C \left\{ N + (\eps_2-\eps_1)^a\right\}. 
$$
By chosing $T$  and $\eps_2$ small enough, this implies $N=O((\eps_2-\eps_1)^a)$, which proves 
Lemma~\ref{CSL}. 

\smallbreak

\section{The Kato smoothing effect}\label{s4}

We consider a given solution 
$(\eta,\psi)$ of ~\eqref{system} on the time interval $[0,T]$ with $0<T<+\infty$, 
such that the assumption $H_t$ is satisfied for all $t\in [0,T]$ and such that
$$
(\eta,\psi)\in C^0\big([0,T];H^{s+\mez}(\xR)\times H^{s}(\xR)\big),
$$
for some $s>\frac{5}{2}$. In this section we prove Theorem~\ref{theo:main}. Namely, we shall prove that
$$
\L{x}^{-\mez-\delta}
(\eta,\psi)\in L^2\big(0,T;H^{s+\frac{3}{4}}(\xR)\times H^{s+\frac{1}{4}}(\xR)\big),
$$
for any $\delta>0$.

\subsection{Reduction to an $L^2$ estimate}
Let $\Phi_1,\Phi_2$ be as defined in Corollary~\ref{psym}. Then the complex-valued unknown 
$\Phi=\Phi_{1} + i \Phi_{2}$ satisfies a {\em scalar equation} of the form
\begin{equation}\label{PhiF}
\partial_{t}\Phi   +T_V\partial_x \Phi  +i T_\gamma \Phi =F,
\end{equation}
with  $F=F_{1}+iF_{2}\in L^{\infty}(0,T;H^{s}(\xR^d))$. 
Recall from Proposition~\ref{A2D} and \eqref{dh21} that, if $d=1$ then 
$$
\lambda^{(1)}=\la\xi\ra,\quad \lambda^{(0)}=0, \quad h^{(2)}= c^{2}\la\xi\ra^2,
$$
with
$$
c=(1+|\partial_x\eta|^2)^{-\tq}.
$$ 
Therefore, directly from the definition of $\gamma$ (cf Proposition~\ref{key}), notice that if $d=1$ then $\gamma$ simplifies to
\begin{equation*}
\gamma =  
c \la \xi\ra^{\tdm}-\frac{3i}{4}\xi\la \xi\ra^{-\mez}\partial_{x} c,
\end{equation*}
and hence modulo an error term of order $0$, $T_\gamma$ is given by $\la D_{x}\ra^{\frac{3}{4}} T_{c} \la D_{x}\ra^{\frac{3}{4}}$.

In this paragraph we shall prove that one can deduce Theorem~\ref{theo:main} from the following proposition.

\begin{prop}\label{psmooth}
Assume that $\varphi\in C^0([0,T];L^2(\xR))$ satisfies
\begin{equation*}
\partial_{t}\varphi   +T_V \partial_{x} \varphi  + iT_\gamma \varphi  =f,
\end{equation*}
with $f \in L^{1}(0,T;L^{2}(\xR))$. Then, for all $\delta>0$,
$$
\L{x}^{-\mez-\delta}\varphi \in L^{2}(0,T; H^{\frac{1}{4}}(\xR)).
$$
\end{prop}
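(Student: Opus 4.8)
The plan is to use Doi's positive commutator method (following \cite{Doi1,Doi2}), exploiting the fact that, after the symmetrization of Section~\ref{Sym}, the equation for $\varphi$ is a scalar paradifferential equation of Schr\"odinger type with principal symbol $i\gamma$, where $\gamma$ is real, elliptic of order $3/2$, and (by Proposition~\ref{A2D} and the explicit formula in \S\ref{s4}) equals $c\la\xi\ra^{3/2}$ modulo lower order, with $c=(1+|\partial_x\eta|^2)^{-3/4}>0$ bounded below. The smoothing is a gain of $1/4$ derivative in $L^2_{t,x}$ with the spatial weight $\L{x}^{-1/2-\delta}$; the weight is dictated by the fact that $\L{x}^{-1/2-\delta}\in L^2$ only marginally fails, so the construction must be done with some care near the endpoint. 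First I would fix $\delta>0$ and choose a scalar symbol $\omega=\omega(x,\xi)$ of order $0$, real-valued, such that the Poisson bracket $\{\gamma,\omega\}=\partial_\xi\gamma\,\partial_x\omega-\partial_x\gamma\,\partial_\xi\omega$ is positive and elliptic of order $1/2$ after multiplication by the weight; the standard Doi choice is $\omega(x,\xi)=\arctan\!\big(\int_0^x \L{y}^{-1-2\delta}\,dy\big)\cdot \chi(\xi)\sgn(\xi)$ or a smoothed variant, so that $\partial_x\omega\sim \L{x}^{-1-2\delta}$ and, since $\partial_\xi\gamma\sim \sgn(\xi)\la\xi\ra^{1/2}$, one gets $\{\gamma,\omega\}\gtrsim \L{x}^{-1-2\delta}\la\xi\ra^{1/2}$ on the region $|\xi|\gtrsim 1$, up to a symbol supported where $\partial_\xi\gamma$ changes sign, which is handled by adding a small elliptic correction. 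The operator $T_\omega$ is bounded on $L^2$, uniformly in $t$, by Theorem~\ref{theo:sc0}.

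The core computation is then the energy identity for $\RE\langle T_\omega \varphi,\varphi\rangle$. Differentiating in $t$ and using the equation,
\[
\frac{d}{dt}\RE\langle T_\omega\varphi,\varphi\rangle
= \RE\langle [\partial_t,T_\omega]\varphi,\varphi\rangle
- \RE\langle T_\omega(T_V\partial_x\varphi),\varphi\rangle - \RE\langle T_\omega(iT_\gamma\varphi),\varphi\rangle + \text{(cross terms)} + 2\RE\langle T_\omega f,\varphi\rangle,
\]
and the only term that is not $O(\|\varphi\|_{L^2}^2)+O(\|f\|_{L^2}\|\varphi\|_{L^2})$ is the commutator $i\RE\langle (T_\omega T_\gamma - T_\gamma T_\omega)\varphi,\varphi\rangle$ — because $T_\gamma$ is skew-adjoint modulo order $0$ by the last line of Proposition~\ref{key}, i.e. $T_\gamma\sim(T_\gamma)^*$, and $T_V\partial_x$ is skew-adjoint modulo order $0$. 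By the symbolic calculus of Section~\ref{s2} (Theorem~\ref{theo:sc}), $T_\omega T_\gamma - T_\gamma T_\omega = \tfrac1i T_{\{\omega,\gamma\}} + R$ with $R$ of order $\le 1/2 - \rho$ for $\rho>1$ available here since $\eta\in H^{s+1/2}$ with $s>5/2$ gives $\partial_x\eta\in W^{3/2,\infty}$ (indeed $H^{s-1/2}\subset W^{3/2+\epsilon,\infty}$), so $R$ is of order $<1/2$ — in fact one wants $R$ bounded on $L^2$, which requires the commutator to be handled with $\rho$ slightly above $1$, comfortably available. Hence
\[
\frac{d}{dt}\RE\langle T_\omega\varphi,\varphi\rangle
\ge c_0 \big\| \L{x}^{-1/2-\delta}\varphi \big\|_{H^{1/4}}^2 - C\|\varphi\|_{L^2}^2 - C\|f\|_{L^2}\|\varphi\|_{L^2} - C\|\varphi\|_{L^2}\|\L{x}^{-1/2-\delta}\varphi\|_{H^{1/4}},
\]
where the last term is absorbed by the first via Young's inequality. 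Integrating over $[0,T]$ and using $\|T_\omega\varphi(t)\|\lesssim\|\varphi(t)\|_{L^2}$ together with $\varphi\in C^0_tL^2$ and $f\in L^1_tL^2$ gives
\[
c_0\int_0^T \big\|\L{x}^{-1/2-\delta}\varphi(t)\big\|_{H^{1/4}}^2\,dt
\le C\big(\|\varphi\|_{L^\infty_tL^2}^2 + \|\varphi\|_{L^\infty_tL^2}\|f\|_{L^1_tL^2}\big),
\]
which is exactly the claimed bound $\L{x}^{-1/2-\delta}\varphi\in L^2(0,T;H^{1/4})$.

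The main obstacle I expect is the construction of the multiplier symbol $\omega$ and the verification that $\{\gamma,\omega\}$ is, modulo acceptable errors, a nonnegative elliptic symbol of order $1/2$ with the exact weight $\L{x}^{-1-2\delta}$: one must deal with the sign change of $\partial_\xi\gamma$ at $\xi=0$ (adding a small positive elliptic symbol there, harmless because it only improves the estimate), with the low-frequency region $|\xi|\lesssim 1$ (where there is nothing to prove since $\varphi\in L^2$ already controls low frequencies), and above all with the limited regularity: $\gamma$ is only a symbol in the class $\Sigma^{3/2}$ of Definition~\ref{defiSigma}, with sub-principal part depending linearly on $\partial_x^2\eta$, so the Poisson bracket $\{\gamma^{(3/2)},\omega\}$ involves $\partial_x\gamma^{(3/2)}$, which is controlled by $\partial_x^2\eta\in H^{s-5/2}\subset L^\infty$ since $s>5/2$ — it is precisely here that the hypothesis $s>5/2$ is sharp, and one must track that the error operators $R$ are genuinely bounded on $L^2$ with norm $\le C(\|\eta\|_{H^{s+1/2}})$, using the quantitative estimates \eqref{esti:quant1}--\eqref{esti:quant2} and, if needed, the low-regularity refinements of Propositions~\ref{2d21}--\ref{2d22}. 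Once the positivity of the commutator is established with an honest constant $c_0>0$ depending only on a lower bound for $c$ (hence on $\|\eta\|_{W^{1,\infty}}$), the rest is the routine Gr\"onwall-type integration sketched above.
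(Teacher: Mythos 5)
Your approach is the right one, and it is essentially the one taken in the paper: Doi's positive-commutator method with a scalar escape-function symbol $a$ of order zero, an energy identity for $\langle T_a\varphi,\varphi\rangle$, and the observations that $T_\gamma$ and $T_V\partial_x$ are skew-adjoint modulo order zero so that the commutator term dominates. The numerology of the weight (symbol bound $\langle x\rangle^{-1-2\delta}|\xi|^{1/2}$, hence $\langle x\rangle^{-1/2-\delta}$ inside the $H^{1/4}$ norm) is also correct.

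There is, however, a genuine gap in the argument, and you do not identify it as such. You pass from the symbol bound $\{\gamma,\omega\}(x,\xi)\geq K\langle x\rangle^{-1-2\delta}|\xi|^{1/2}$ directly to the quadratic-form bound $\RE\langle T_{\{\gamma,\omega\}}\varphi,\varphi\rangle\geq c_0\|\langle x\rangle^{-1/2-\delta}\varphi\|_{H^{1/4}}^2-C\|\varphi\|_{L^2}^2$ with the word ``Hence.'' This step is a sharp G\aa rding inequality and, at the available regularity, it is not free. The symbol $d:=\{\gamma,\omega\}$ involves $\partial_x\gamma^{(3/2)}$, hence one spatial derivative of $c=(1+|\partial_x\eta|^2)^{-3/4}$, hence $\partial_x^2\eta$; for $s>5/2$ one only has $\partial_x^2\eta\in W^{\rho,\infty}$ for $\rho<s-2$, so $d\in\Gamma^{1/2}_{1/2}$ and nothing better. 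The classical sharp G\aa rding inequality of Bony requires regularity $\rho>2$, as the paper notes explicitly in its remark after Proposition~\ref{propd}. Moreover $d$ degenerates as $|x|\to\infty$ because of the weight, so it is not an elliptic symbol and no routine elliptic estimate applies. The paper devotes a separate technical result (Proposition~\ref{propd}) to precisely this step: a spatial dyadic partition of unity $1=\sum_j\theta_j^2$, a pseudo-locality lemma (Lemma~\ref{L.7.4}) to localize $T_d$ up to rapidly decaying errors, and then an explicit sum-of-squares representation $\langle T_{\widetilde\theta_j d}\theta_j u,\theta_j u\rangle = \|T_{(\widetilde\theta_j d)^{1/2}}\theta_j u\|_{L^2}^2 + O(\|\theta_j u\|_{L^2}^2)$ at each scale $j$, glued together by orthogonality. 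You flag the low-regularity concern peripherally (``one must track that the error operators $R$ are genuinely bounded on $L^2$''), but the actual obstruction is not the boundedness of the remainder $R$; it is that the \emph{main} term $T_{\{\gamma,\omega\}}$ is not, without a new argument, bounded below by the weighted $H^{1/4}$ norm. A secondary issue is that your blanket use of Theorem~\ref{theo:sc} on the full commutator $[T_\omega,T_\gamma]$ with $\rho$ large enough to push $R$ below order $0$ is not available for the subprincipal part $\gamma^{(1/2)}$ (which depends on $\partial_x^2\eta$ and hence lies only in $\Gamma^{1/2}_{\rho}$ with $\rho<1/2$); the paper sidesteps this by splitting $iT_\gamma=iT_c\langle D_x\rangle^{3/2}+\tfrac34 T_{\frac{\xi}{|\xi|}\partial_x c}\langle D_x\rangle^{1/2}$ and treating the order-$1/2$ part by a direct order-count rather than a Poisson-bracket expansion. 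Without supplying a G\aa rding-type argument for symbols of H\"older regularity $1/2$, your proof does not close.
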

We postpone the proof of Proposition~\ref{psmooth} to the next paragraph. 

The fact that one can deduce Theorem~\ref{theo:main} from the above proposition, 
though elementary, contains the idea that one 
simplify hardly all the nonlinear analysis by means of paradifferential calculus. 
 
\begin{proof}[Proof of Theorem~\ref{theo:main} given Proposition~\ref{psmooth}] 
As in the proof of Proposition~\ref{ueps} (cf \S\ref{sueps}), with
\begin{equation}\label{defibetabis}
\beta\defn c^{\frac{2}{3}s}\la \xi\ra^s.
\end{equation}
we find that the commutators $[T_\beta,\partial_t]$, 
$\left[T_{\beta},T_{\gamma}\right]$ and 
$[T_{\beta},T_{V}\partial_{x}]$ are of order $\leo s$. 
Consequently, \eqref{PhiF} implies that
\begin{equation*}
\left(\partial_{t}   +T_V \partial_{x}  + i T_{\gamma} \right) T_{\beta}\Phi  \in L^{\infty}(0,T;L^{2}(\xR)),
\end{equation*}
and hence,
$$
\left(\partial_{t}   +T_V \partial_{x}  +i T_\gamma\right) T_{\beta}\Phi \in L^{1}(0,T;L^{2}(\xR)).
$$
Therefore it follows from Proposition~\ref{psmooth} that
\begin{equation*}
\L{x}^{-\mez-\delta}T_{\beta}\Phi \in L^{2}(0,T; H^{\frac{1}{4}}(\xR)).
\end{equation*}
Since, by definition, $\Phi=T_p \eta + i T_q U$ where $T_p\eta$ and $T_q U$ are real valued functions, this yields
\begin{equation*}
\L{x}^{-\mez-\delta}T_{\beta}T_p\eta  \in L^{2}(0,T; H^{\frac{1}{4}}(\xR)),\quad
\L{x}^{-\mez-\delta}T_{\beta}T_q U \in L^{2}(0,T; H^{\frac{1}{4}}(\xR)),
\end{equation*}
and hence, since $\psi=U+T_{\mathfrak{B}}\eta$,
\begin{equation}\label{beta14}
\L{x}^{-\mez-\delta}T_{\beta}T_p\eta  \in L^{2}(0,T; H^{\frac{1}{4}}(\xR)),\quad
\L{x}^{-\mez-\delta}T_{\beta}T_q \psi \in L^{2}(0,T; H^{\frac{1}{4}}(\xR)),
\end{equation}
Since $\L{x}^{-\mez-\delta}\in \Gamma^{0}_{\rho}(\xR^d)$ for any $\rho\ge 0$, Theorem~\ref{theo:sc} implies that 
the commutators
$$
\left[\L{x}^{-\mez-\delta},T_\beta T_p\right], \quad \left[\L{x}^{-\mez-\delta},T_\beta T_q\right]
$$
are of order $s-1/2$ and $s-1$, respectively. Therefore, directly from \eqref{beta14} and the assumption
\begin{equation*}
\eta \in C^{0}([0,T]; H^{s+\mez}(\xR)),\quad \psi \in C^{0}([0,T]; H^{s}(\xR)),
\end{equation*}
we obtain
$$
T_{\beta} T_p \L{x}^{-\mez-\delta}\eta \in L^{2}(0,T; H^{\frac{1}{4}}(\xR)),
\quad 
T_{\beta} T_q \L{x}^{-\mez-\delta}\psi \in L^{2}(0,T; H^{\frac{1}{4}}(\xR)).
$$
Now since $\beta,p,q$ are elliptic symbols of order $s$, $1/2$, $0$, respectively, 
we conclude (cf Remark~\ref{R3.9} or Proposition~\ref{2d22})
$$
\L{x}^{-\mez-\delta}\eta \in L^{2}(0,T; H^{s+\tq}(\xR)),\quad \L{x}^{-\mez-\delta}\psi \in L^{2}(0,T; H^{s+\frac{1}{4}}(\xR)).
$$
This proves Theorem~\ref{theo:main}.
\end{proof}

\subsection{Proof of Proposition~\ref{psmooth}}To complete the proof of Theorem~\ref{theo:main}, it remains 
to prove 
Proposition~\ref{psmooth}. To do so, following the Doi approach, we begin with the following lemma.

\begin{lemm}\label{DoiL}There exists a symbol 
$$
a=a(x,\xi) \in 
\dot \Gamma^0_{\infty}(\xR)\defn
\bigcap_{\rho\ge 0} \dot \Gamma^0_{\rho}(\xR),
$$
such that, for any 
$\delta>0$ one can find $K>0$ such that
$$
\left\{ c \la \xi\ra^{\tdm},a\right\}(t,x,\xi)\ge K \L{x}^{-1-\delta}\la \xi\ra^{\mez},
$$
for all $t\in [0,T]$, $x\in \xR$, $\xi\in \xR\setminus\{0\}$.
\end{lemm}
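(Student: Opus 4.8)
The plan is to construct $a$ explicitly as a bounded symbol of the classical Doi form, namely $a(x,\xi) = \arctan\!\big(\int_0^x \L{y}^{-1-\delta}\,dy\big)\,\sgn(\xi)$, or a smoothed variant thereof so that $a$ is $C^\infty$ in $\xi$ away from $\xi=0$ and homogeneous of degree $0$. The point of taking an odd-in-$\xi$ bounded function of $x$ whose $x$-derivative is $\L{x}^{-1-\delta}$ is that the Poisson bracket with a symbol of the form $c(t,x)\L{\xi}^{3/2}$ will reproduce the desired positive weight. Since $\int_0^x \L{y}^{-1-\delta}\,dy$ is a bounded increasing function of $x$, composing with $\arctan$ keeps $a$ bounded together with all its $x$-derivatives uniformly in $t$ (the only $t$-dependence enters through $c$, not through $a$, so in fact $a$ can be chosen independent of $t$), which gives $a \in \dot\Gamma^0_\infty(\xR)$.

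The main computation is then the Poisson bracket. First I would write, for $\xi > 0$,
\begin{align*}
\left\{ c\L{\xi}^{\tdm}, a\right\}
&= \partial_\xi\!\big(c\L{\xi}^{\tdm}\big)\,\partial_x a - \partial_x\!\big(c\L{\xi}^{\tdm}\big)\,\partial_\xi a \\
&= c\,\partial_\xi\!\big(\L{\xi}^{\tdm}\big)\,\partial_x a - (\partial_x c)\,\L{\xi}^{\tdm}\,\partial_\xi a.
\end{align*}
Here $\partial_x a = \L{x}^{-1-\delta}\,g'(\int_0^x\L{y}^{-1-\delta})$ with $g=\arctan$, which is $\ge c_0 \L{x}^{-1-\delta}$ on any compact region and, crucially, stays comparable to $\L{x}^{-1-\delta}$ everywhere since $g'$ is bounded below on the bounded range of $\int_0^x\L{y}^{-1-\delta}\,dy$. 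Also $\partial_\xi(\L{\xi}^{\tdm}) = \tdm \xi\L{\xi}^{-\mez}$, which for $\xi \ge 1$ is of size $\L{\xi}^{\mez}$ and has a favorable sign matching $\sgn(\xi)$; the factor $c = (1+|\partial_x\eta|^2)^{-\tq}$ is bounded below by a positive constant depending on $\lA \eta\rA_{L^\infty(0,T;H^{s+\mez})}$ (using $s>5/2$ so $\partial_x\eta \in L^\infty$). Thus the first term is $\gtrsim \L{x}^{-1-\delta}\L{\xi}^{\mez}$ for $|\xi|\ge 1$. The second term involves $\partial_x c$, which is bounded (again using $s>5/2$, so $\partial_x^2\eta\in L^\infty$), times $\L{\xi}^{\tdm}$ times $\partial_\xi a$; but $\partial_\xi a$ is homogeneous of degree $-1$ and supported in $|\xi|\gtrsim 1$, hence $|\L{\xi}^{\tdm}\partial_\xi a| \lesssim \L{\xi}^{\mez}$, and this term carries a harmless $x$-decay at worst like a constant — so it is an $O(\L{\xi}^{\mez})$ term with no $x$-weight.

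The obstacle is exactly reconciling this last point with the claimed lower bound $K\L{x}^{-1-\delta}\L{\xi}^{\mez}$: the error term from $\partial_x c$ is $O(\L{\xi}^{\mez})$ uniformly in $x$, which is \emph{larger} than $\L{x}^{-1-\delta}\L{\xi}^{\mez}$ for $|x|$ large, so a naive estimate fails. The resolution — and the heart of Doi's argument — is to exploit that $a$ is odd in $\xi$ while the bad term $-(\partial_x c)\L{\xi}^{\tdm}\partial_\xi a$ has a sign depending only on $\sgn(\partial_x c)\cdot\sgn(\xi)\cdot\sgn(\partial_\xi a)$, and $\partial_\xi a \sim -\sgn(\xi)/|\xi|$ near infinity times a positive function, so this term is in fact $\gtrsim -|\partial_x c|\L{\xi}^{\mez}$ with no definite sign; one must instead choose the $x$-profile in $a$ more carefully, replacing $\int_0^x\L{y}^{-1-\delta}$ by $\int_0^x m(y)\,dy$ with $m$ chosen (depending only on a uniform bound for $\lA\partial_x c\rA_{L^\infty}$, hence on $\lA\eta\rA_{H^{s+\mez}}$) so that $m \ge c'\L{x}^{-1-\delta}$ yet $m$ decays slowly enough that $\partial_\xi a$'s contribution is absorbed — concretely one can first discard the subprincipal part of $\gamma$ (it is lower order and contributes an $O(1)$-order symbol, harmless) and then note that the genuinely dangerous term is $-\L{\xi}^{\tdm}(\partial_x c)\,\partial_\xi a$, which since $\partial_\xi a = O(\L{x}^{0}\L{\xi}^{-1})$ contributes $O(\L{\xi}^{\mez})$; one then repeats the construction with a large parameter $R$, taking $a_R = g(R^{-1}\!\int_0^x m)$, so that on the region $|\xi|\ge 1$ the good term beats the bad term once $R$ is large, at the cost of shrinking $K$. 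I would carry out this parameter-dependent construction following the references \cite{Doi1,Doi2}, verify $a_R \in \dot\Gamma^0_\infty(\xR)$ uniformly (only the $x$-derivatives matter and they are all bounded), and conclude with the stated inequality for $K = K(R,\delta,\lA\eta\rA_{L^\infty(0,T;H^{s+\mez})}) > 0$.
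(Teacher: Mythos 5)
Your core construction is on the right track and is in fact simpler than the paper's, but the proposal contains a significant confusion about the Poisson bracket that leads you to manufacture an "obstacle" that does not exist, and then to introduce an unnecessary large parameter $R$ to deal with it.

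First, a notational point: in the paper $\la \xi\ra$ denotes $|\xi|$, not $\langle\xi\rangle$ (the latter is $\L{\xi}$). So the principal symbol in the Poisson bracket is the homogeneous symbol $c\,|\xi|^{3/2}$, and the target lower bound is $K\L{x}^{-1-\delta}|\xi|^{1/2}$. This matters: with $|\xi|^{3/2}$ rather than $\langle\xi\rangle^{3/2}$, the estimate holds down to $\xi\to 0$ as required by the statement, whereas your computation with $\langle\xi\rangle^{3/2}$ would give $|\xi|\langle\xi\rangle^{-1/2}$, which is too small near $\xi=0$.

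Second, and more importantly: in dimension $d=1$, any symbol $a$ that is homogeneous of degree $0$ in $\xi$ satisfies $a(x,\xi)=a(x,\sgn(\xi))$, hence $\partial_\xi a\equiv 0$ for $\xi\neq 0$. Consequently the second term in the Poisson bracket,
$$
-\partial_x\bigl(c\,|\xi|^{3/2}\bigr)\,\partial_\xi a = -(\partial_x c)\,|\xi|^{3/2}\,\partial_\xi a,
$$
vanishes identically. Your claim that $\partial_\xi a$ is "homogeneous of degree $-1$ and supported in $|\xi|\gtrsim 1$" is false for your own symbol $a=g(x)\sgn(\xi)$, and the entire paragraph devoted to "reconciling" the resulting $O(\langle\xi\rangle^{1/2})$ error with the claimed bound is addressing a term that is zero. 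The large-$R$ rescaling is not needed. With $g(x)=\int_0^x\L{y}^{-1-\delta}dy$ (or the $\arctan$ of this, if you prefer; both are bounded with all $x$-derivatives bounded, so $a\in\dot\Gamma^0_\infty$) one gets directly, for $\xi\neq 0$,
$$
\bigl\{c\,|\xi|^{3/2},\,a\bigr\}
= \tfrac{3}{2}c\,|\xi|^{1/2}\sgn(\xi)\cdot g'(x)\sgn(\xi)
= \tfrac{3}{2}c\,g'(x)\,|\xi|^{1/2}
\ge K\,\L{x}^{-1-\delta}|\xi|^{1/2},
$$
with $K=\tfrac{3}{2}\inf c\cdot\inf g'>0$ depending only on $\delta$ and $\lA\eta\rA_{L^\infty(0,T;H^{s+1/2})}$. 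This is the whole proof.

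For comparison, the paper's proof of this lemma goes through a more elaborate partition-of-unity construction, writing $a$ in terms of $a_0=x\xi/|\xi|$, a trio of cutoffs $\psi_0,\psi_\pm$ in the variable $a_0/\L{x}$, and a primitive $f(\sigma)=\int_0^\sigma\L{y}^{-1-\delta}dy$. That construction is the generic Doi escape-function scheme, which is designed so as to generalize to higher dimensions (where $a_0=x\cdot\xi/|\xi|$ is genuinely $\xi$-dependent, $\partial_\xi a_0\neq 0$, and there really are extra terms to control); in $d=1$ it degenerates to exactly the simple form you wrote down. So your construction is essentially a cleaned-up $d=1$ specialization of theirs, and it does buy a shorter argument — but you should drop the phantom error term and the $R$-parameter, and track the $|\xi|$-vs-$\langle\xi\rangle$ distinction.
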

\begin{proof}
Consider an increasing function $\phi\in C^\infty(\xR)$ such that $0\le \phi\le 1$ and 
$$
\phi(y)=1 \text{ for }y\ge 2,\quad \phi(y)=0 \text{ for }y\le 1.
$$
Now with $\eps>0$ a small constant chosen later on we set
\begin{equation}\label{C1}
\left\{
\begin{aligned}
\phi_{+}(y)&=\phi\left(\frac{y}{\eps}\right), ~\phi_{-}(y)=\phi\left(-\frac{y}{\eps}\right)=\phi_{+}(-y),\\
\phi_{0}(y)&=1-\left(\phi_{+}(y)+\phi_{-}(y)\right).
\end{aligned}
\right.
\end{equation}
These are $C^\infty$-functions and we see easily that,
\begin{equation}\label{C2}
\left\{
\begin{aligned}
&\phi'_{0}+\phi'_{+}+\phi'_{-}=0,\\
&\phi_{+}(y)-\phi_{-}(y)= \sgn (y) \phi_{+}(\la y\ra) \quad (y\in \xR),\\
&\phi'_{+}(y)-\phi'_{-}(y)= \phi'_{+}(\la y\ra) \quad (y\in \xR),\\
&\phi'_{0}(y)=-\sgn (y) \phi'_{+}(\la y\ra) \quad (y\in\xR).
\end{aligned}
\right.
\end{equation}
Now we set
\begin{equation}\label{C3}
a_{0}(x,\xi)=x\frac{\xi}{\la \xi\ra}, \quad x\in \xR,\xi\neq 0,
\end{equation}
and we introduce
\begin{equation}\label{C4}
\psi_{0}(x,\xi)=\phi_{0}\left(\frac{a_{0}}{\L{x}}\right),\quad
\psi_{\pm}(x,\xi)=\phi_{\pm}\left(\frac{a_{0}}{\L{x}}\right).
\end{equation}
Let us note that on the support of $\psi_{+}$ (resp.\ $\psi_{-}$) we have $a_{0}\ge \eps \L{x}$ (resp.\ $a_{0}\le -\eps \L{x}$) and that 
$\la a\ra$ is a small function on $\xR\times \xR\setminus 0$. Finally we set
\begin{equation}\label{C5}
a(x,\xi) 
=\frac{a_{0}}{\L{x}}\psi_{0}(x,\xi)+\left[ 2\eps +f(\la a_{0}\ra)\right] \left( \psi_{+}(x,\xi)-\psi_{-}(x,\xi)\right),
\end{equation}
where
$$
f(\sigma)=\int_{0}^\sigma \frac{dy}{\L{y}^{1+\delta}}.
$$

We compute
$$
I\defn \{ c \la\xi\ra^{\tdm},a\} =\sum_{j=1}^5 I_{j},
$$
where
\begin{align*}
&I_{1}= \frac{ \left\{c \la\xi\ra^{\tdm},a_0\right\} }{ \L{x} } \psi_{0},\quad 
I_{2}=a_{0} \left\{ c \la\xi\ra^{\tdm}, \frac{1}{\L{x}} \right\} \psi_{0},\quad 
I_{1}= \frac{a_{0}}{\L{x}}\left\{c \la\xi\ra^{\tdm}, \psi_{0}\right\},\\
&I_{4}=\left\{ c \la\xi\ra^{\tdm},f\left(\la a_{0}\ra\right)\right\}  \left( \psi_{+}(x,\xi)-\psi_{-}(x,\xi)\right)\\
&I_{5}=\left[ 2\eps +f(\la a_{0}\ra)\right]  \left(\left\{ c \la\xi\ra^{\tdm},\psi_{+}\right\}-\left\{ c \la\xi\ra^{\tdm},\psi_{-}\right\}\right).
\end{align*}
Using the obvious identity $\partial_{\xi} (\xi/\la \xi\ra)=0$ for $\xi\neq 0$, we have
$$
\{ c \la\xi\ra^{\tdm},a_{0}\}=\left\{ c \la \xi\ra^{\tdm},x\frac{\xi}{\la\xi\ra}\right\}
=\frac{3}{2}c \frac{\xi}{\la \xi\ra}\la \xi\ra^\mez \frac{\xi}{\la \xi\ra}= \frac{3}{2} c \la \xi\ra^\mez. 
$$
Therefore
\begin{equation}\label{C6}
I_{1}=\frac{3}{2} c \frac{\la \xi\ra^{\mez}}{\L{x}}\psi_{0}.
\end{equation}
Now
$$
\left\{ c \la\xi\ra^{\tdm}, \frac{1}{\L{x}} \right\} = \partial_{\xi}\left(c \la\xi\ra^{\tdm} \right)\partial_{x}\left( \frac{1}{\L{x}} \right)
=-\frac{3}{2}c\frac{\xi}{\la \xi\ra}\la \xi\ra^\mez \frac{x}{\L{x}^3},
$$
so that
$$
I_{2}=-\frac{3}{2}c\frac{\xi}{\la \xi\ra}\la \xi\ra^\mez \frac{a_{0}}{\L{x}} \frac{x}{\L{x}^2}\psi_{0}.
$$
On the support of $\psi_{0}$ we have, by \eqref{C4} and \eqref{C1}, $\la a_{0}\ra\le \eps \L{x}$. It 
follows that 
\begin{equation}\label{C7}
\la I_{2}\ra\le \frac{3\eps c \la \xi\ra^\mez}{2 \L{x}}\psi_{0}.
\end{equation}
On the other hand we have by \eqref{C4} and \eqref{C2},
$$
I_{3}=\frac{a_{0}}{\L{x}}\left\{ c \la\xi\ra^{\tdm}, \frac{a_{0}}{\L{x}} \right\} \phi'_{0}\left(\frac{a_{0}}{\L{x}}\right)
= -\frac{a_{0}}{\L{x}}\left\{ c \la\xi\ra^{\tdm}, \frac{a_{0}}{\L{x}} \right\}\sgn \frac{a_{0}}{\L{x}}  \phi'_{+}\left(\frac{\la a_{0}\ra}{\L{x}}\right),
$$
which implies
\begin{equation}\label{C8}
I_{3}=-\frac{\la a_{0}\ra }{\L{x}}\left\{ c \la\xi\ra^{\tdm}, \frac{a_{0}}{\L{x}} \right\}\phi'_{+}\left(\frac{\la a_{0}\ra}{\L{x}}\right).
\end{equation}
Using \eqref{C4} and \eqref{C1} we see that
$$
I_{4}=f'\left( \la a_{0}\ra\right) \left\{ c \la\xi\ra^{\tdm} ,a_{0}\right\} \psi_{+} + 
f'\left(-a_{0}\right) \left\{ c \la\xi\ra^{\tdm} ,a_{0}\right\} \psi_{-}, 
$$
so by \eqref{C6} and \eqref{C3},
\begin{equation}\label{C9}
I_{4}= \frac{1}{\L{x}^{-1-\delta}} \frac{3}{2}c \la \xi\ra^{\mez} \left(\psi_{+}+\psi_{-}\right).
\end{equation}
Finally,
$$
I_{5}=\left[ 2\eps +f(\la a_{0}\ra)\right] \left\{ c \la\xi\ra^{\tdm}, \frac{a_{0}}{\L{x}} \right\}\left(  \phi'_{+}\left(\frac{a_{0}}{\L{x}}\right)
- \phi'_{+}\left(\frac{a_{0}}{\L{x}}\right)\right),
$$
which, using \eqref{C2}, implies
\begin{equation}\label{C10}
I_{5}=\left[ 2\eps +f(\la a_{0}\ra)\right] \left\{ c \la\xi\ra^{\tdm}, \frac{a_{0}}{\L{x}} \right\}  \phi'_{+}\left(\frac{\la a_{0}\ra}{\L{x}}\right).
\end{equation}
Using \eqref{C6} and \eqref{C7} we see that if $\eps$ is small enough,
$$
I_{1}+I_{2}\ge c \frac{\la \xi\ra^\mez}{\L{x}^{1+\delta}} \psi_{0}.
$$
Therefore by \eqref{C9} we have
$$
I_{1}+I_{2}+I_{4}\ge c \frac{\la \xi\ra^\mez}{\L{x}^{1+\delta}}\left( \psi_{0}+\psi_{+}+\psi_{-}\right)\ge 
c \frac{\la \xi\ra^\mez}{\L{x}^{1+\delta}}.
$$
Now by \eqref{C8} and \eqref{C10} we have, 
\begin{equation}\label{C11}
I_{3}+I_{5}= \left[ 2\eps +f(\la a_{0}\ra)-\frac{\la a_{0}\ra}{\L{x}}\right] 
\left\{ c \la\xi\ra^{\tdm}, \frac{a_{0}}{\L{x}} \right\}  \phi'_{+}\left(\frac{a_{0}}{\L{x}}\right).
\end{equation}
The function $\phi_{+}$ being increasing one has $\phi_{+}'\ge 0$. On the support of 
$\phi_{+}'\left(\frac{a_{0}}{\L{x}}\right)$ we have $\eps\le \la a_{0}\ra/\L{x}\le 2/\eps$ so 
$2\eps -\la a_{0}\ra/\L{x}\ge 0$. 
By definition, $f'\ge 0$. Finally, by \eqref{C6} and \eqref{C7} we have 
$\left\{ c \la\xi\ra^{\tdm}, a_{0}/\L{x} \right\}\ge 0$. This ensures that
\begin{equation}\label{C12}
I_{3}+I_{5}\ge 0.
\end{equation}
We conclude, using \eqref{C11} and \eqref{C12} that
$$
\left\{ c \la\xi\ra^{\tdm}, a \right\} \ge c \frac{\la\xi\ra^\mez}{\L{x}^{1+\delta}},
$$
which proves the proposition since $c\ge K_1 (1+\lA \eta\rA_{L^\infty(0,T;H^{s-1})}^2)^{-\tq}>0$.
\end{proof}

We are now in position to prove Proposition~\ref{psmooth}. 

\begin{proof}[Proof of Proposition~\ref{psmooth}]
We begin by remarking that 
we can assume without loss of generality that $\varphi\in C^1(I;L^2(\xR))$ 
(A word of caution: to do so, instead of using the usual Friedrichs mollifiers, we need to use 
the operators $J_\eps$ introduced in~\S\ref{SAE}). 
This allows us to write
\begin{align*}
\frac{d}{dt}\left\langle T_{a}\varphi, \varphi\right\rangle &= 
\left\langle T_{\partial_{t}a}\varphi, \varphi\right\rangle +
\left\langle T_{a}\partial_{t}\varphi, \varphi\right\rangle +
\left\langle T_{a}\varphi, \partial_{t}\varphi\right\rangle \\
&=\left\langle T_{\partial_{t}a}\varphi, \varphi\right\rangle \\
&\quad-\left\langle T_{a} T_{V}\partial_{x}\varphi +T_{a} i T_\gamma \varphi -T_{a}f, \varphi \right\rangle \\
&\quad-\left\langle T_{a}\varphi,  +T_{V}\partial_{x}\varphi + i T_\gamma \varphi -f\right\rangle,
\end{align*}
where $\langle\cdot,\cdot\rangle$ denotes the $L^2$ scalar product. Introduce the commutator
$$
C\defn \left[ i T_\gamma , T_{a}\right].
$$
Since $\partial_t a=0$, the previous identity yields
\begin{equation}\label{af}
\begin{aligned}
\frac{d}{dt} \left\langle T_{a}\varphi, \varphi\right\rangle &= 
\left\langle C \varphi,\varphi \right\rangle +\left\langle  i (T_\gamma^* -T_\gamma) T_{a} \varphi,\varphi \right\rangle\\
&\quad + \left\langle \partial_{x}(T_{V}T_{a} \varphi )-T_{a} T_{V}\partial_{x}\varphi, \varphi \right\rangle \\
&\quad  +\left\langle T_{a}f,\varphi\right\rangle+ \left\langle T_{a}\varphi, f \right\rangle
\end{aligned}
\end{equation}
Since $a\in \dot\Gamma^0_0$, it follows from the usual estimates for paradifferential operators that
$$
\la \left\langle T_{a}\varphi, \varphi\right\rangle \ra \les \lA \varphi\rA_{L^2}^2,\quad 
$$
and
$$
\la  \left\langle T_{a}\varphi, f \right\rangle \ra+\la \left\langle T_{a}f,\varphi\right\rangle\ra \le  
K \lA \varphi\rA_{L^2}^2+K\lA f\rA_{L^2}^2,
$$
for some positive constant $K$. One easily obtains similar bounds for the second and third terms in the right hand-side of \eqref{af}. Indeed, by definition 
of $\gamma$ we know that $T_{\gamma}^*-T_{\gamma}$ is of order $0$. On the other hand, as alredy seen, it follows from 
Theorems~\ref{theo:sc} that
$\partial_{x}(T_{V}T_{a} \cdot )-T_{a} T_{V}\partial_{x}$ is of order $0$. 
Therefore, integrating \eqref{af} in time, we end up with
\begin{equation*}
\int_{0}^{T} \left\langle C \varphi , \varphi \right\rangle \, dt 
\le M \left\{ \lA \varphi(0)\rA _{L^2}^2+
\lA \varphi(T)\rA_{L^2}^2 
+\int_{0}^T \left( \lA \varphi\rA_{L^2}^2+\lA f\rA_{L^2}^2\right)  \, dt\right\},
\end{equation*}
where $M$ depends only on the $L^\infty(0,T;H^{s+\mez}(\xR)\times H^s(\xR))$-norm of $(\eta,\psi)$.

\smallbreak 

Hence to complete the proof it remains only to obtain a lower bound for the left hand-side. To do so, write
$$
i T_\gamma = 
iT_{c}\la D_{x}\ra^\tdm+\frac{3}{4}T_{\frac{\xi}{\la\xi\ra}\partial_{x}c} \la D_{x}\ra^\mez,
$$
and recall that, by definition of $a$ (see Lemma~\ref{DoiL}) there exists a constant $K$ such that
$$
\left\{c (t,x)\la \xi\ra^\tq,a(x,\xi)\right\}\ge K \L{x}^{-1-2\delta}  \la \xi\ra^{\mez},
$$
for some positive constant $K>0$. Since
$$
\left[T_{a}, T_{\frac{\xi}{\la\xi\ra}\partial_{x}c} \la D_{x}\ra^\mez \right] \text{ is of order  }\leo 0,
$$
Proposition~\ref{propd} below then implies that
$$
 \left\langle C \varphi , \varphi \right\rangle \ge  a \lA \L{x}^{-\mez-\delta}\varphi
 \rA_{H^\frac{1}{4}}^2 -A \lA \varphi
 \rA_{L^2}^2,
$$
for some positive constants $a,A$. 
This completes the proof of Proposition~\ref{psmooth} and hence of 
Theorem~\ref{theo:main}.
\end{proof}

\begin{prop}\label{propd}
Let $d\ge 1$ and $\delta>0$. Assume that $d\in \Gamma^{1/2}_{1/2}(\xR^d)$ is such that, for some positive constant $K$, we have
$$
d(x,\xi)\ge K \L{x}^{-1-2\delta}\la\xi\ra^\mez,
$$
for all $(x,\xi)\in \xR^d\times \xR^d\setminus \{0\}$. Then there exist two positive constants $0<a<A$ such that
$$
 \left\langle T_d u , u \right\rangle \ge  a\lA \L{x}^{-\mez-\delta}u
 \rA_{H^\frac{1}{4}}^2 -A \lA u \rA_{L^2}^2.
$$
\end{prop}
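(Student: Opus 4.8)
The plan is to establish the estimate via the sharp Gårding inequality applied to a well-chosen symbol. First I would write $\L{x}^{-\mez-\delta}$ as a paradifferential operator: since $\L{x}^{-\mez-\delta}\in\Gamma^0_\rho(\xR^d)$ for every $\rho\ge0$, the operator $T_{\L{x}^{-\mez-\delta}}\la D_x\ra^{1/4}$ differs from a $\Gamma^{1/4}_\rho$-paradifferential operator of symbol $e(x,\xi)\defn\L{x}^{-\mez-\delta}\la\xi\ra^{1/4}$ by an operator of order $1/4-\rho$, and in particular (using symbolic calculus, Theorem~\ref{theo:sc} and Theorem~\ref{theo:sc2}) one has
$$
\lA \L{x}^{-\mez-\delta}u\rA_{H^{1/4}}^2 = \left\langle T_{e^2} u, u\right\rangle + O\bigl(\lA u\rA_{L^2}^2\bigr),
$$
where $e^2(x,\xi)=\L{x}^{-1-2\delta}\la\xi\ra^{1/2}$. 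Thus the claim reduces to showing $\left\langle T_d u,u\right\rangle \ge a \left\langle T_{e^2}u,u\right\rangle - A\lA u\rA_{L^2}^2$ for a suitable small $a>0$.

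Next I would use the hypothesis $d(x,\xi)\ge K\L{x}^{-1-2\delta}\la\xi\ra^{1/2}=K e^2(x,\xi)$ for $\la\xi\ra\ge1$ (the region $\la\xi\ra<1$ being irrelevant, since paradifferential operators only see $\la\xi\ra\ge1/2$, and one may modify the symbols there freely up to an $L^2$-bounded error). Fix $a$ with $0<a<K$. Then the symbol
$$
r(x,\xi)\defn d(x,\xi) - a\, e^2(x,\xi)
$$
is nonnegative, and it belongs to $\Gamma^{1/2}_{1/2}(\xR^d)$: indeed $d\in\Gamma^{1/2}_{1/2}$ by assumption and $e^2\in\Gamma^{1/2}_\rho$ for all $\rho$, so $r\in\Gamma^{1/2}_{1/2}$. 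Since $r\ge0$ and $r\in\Gamma^{1/2}_{1/2}$, the sharp Gårding inequality for paradifferential operators (valid for symbols in $\Gamma^m_\rho$ with $\rho\ge1/2$; see the references to \cite{MePise,Hormander}) gives
$$
\RE\left\langle T_r u,u\right\rangle \ge -C\lA u\rA_{H^{(1/2-1/2)/2 - \eps}}^2 \ge -C\lA u\rA_{L^2}^2
$$
for some constant $C$ and any small $\eps>0$; the loss in the sharp Gårding inequality for a symbol of order $1/2$ with $\rho=1/2$ regularity is exactly of order $1/2-1/2 = 0$, so the remainder is $L^2\to L^2$ bounded. Since $T_d$, $T_{e^2}$ map real-valued to real-valued functions and are (modulo lower order) self-adjoint, $\left\langle T_r u,u\right\rangle = \left\langle T_d u,u\right\rangle - a\left\langle T_{e^2}u,u\right\rangle$ up to an $O(\lA u\rA_{L^2}^2)$ term, which yields the assertion after combining with the first paragraph.

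The main obstacle is ensuring that the sharp Gårding inequality applies at this precise regularity level: the symbol $d$ is only in $\Gamma^{1/2}_{1/2}$, which is the borderline case where Gårding still holds but with no gain. I would handle this by invoking the paradifferential sharp Gårding inequality in the form stated in M\'etivier's book (or by the standard Friedrichs-symmetrization argument: write $T_r = \int |T_{\sqrt{r}(\cdot,\xi)\,\theta(\cdot,\xi)}|^2\,d\xi$-type decomposition, exploiting $\sqrt r\in\Gamma^{1/4}_{1/2}$ — note $\sqrt r$ inherits only half the smoothness, so one must check $\sqrt r$ still lies in a suitable symbol class, which holds since $r\ge0$ is smooth in $\xi$ away from $0$ and $W^{1/2,\infty}$ in $x$ after taking the square root, using that $t\mapsto\sqrt t$ is $1/2$-H\"older). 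A secondary technical point is the treatment of the low-frequency region $\la\xi\ra\le1$ and the homogeneity of $e$ at $\xi=0$; this is disposed of by a harmless cutoff, contributing only to the $A\lA u\rA_{L^2}^2$ term.
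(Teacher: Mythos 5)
Your plan rests on the assertion that the sharp G\aa rding inequality applies to the symbol $r=d-a\,e^2\in\Gamma^{1/2}_{1/2}$ and yields an $L^2$-bounded remainder, but this is precisely the obstruction that the paper singles out: the remark preceding the proof states that the argument \emph{would} follow from Bony's sharp G\aa rding inequality if $d\in\Gamma^{1/2}_\rho$ with $\rho>2$, and that the whole difficulty is that one only has $\rho=1/2$. Your fallback Friedrichs symmetrization has the same gap, and you misquote the regularity you would get: since $t\mapsto\sqrt t$ is only $1/2$-H\"older and $r\ge 0$ is \emph{not} uniformly bounded below (it degenerates like $\L{x}^{-1-2\delta}$ and also vanishes at the contact set where $d=a e^2$), the square root $\sqrt r$ lands in $\Gamma^{1/4}_{1/4}$ rather than the $\Gamma^{1/4}_{1/2}$ you write. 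With $\rho=1/4$ the composition theorem gives $T_{\sqrt r}^*T_{\sqrt r}-T_r$ of order $1/4+1/4-1/4=1/4$, which is \emph{not} bounded on $L^2$, so the positivity is lost.

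The paper circumvents this by a dyadic partition of unity in $x$: writing $1=\sum_j\theta_j^2(x)$ with $\theta_j$ localized at $|x|\sim 2^j$, on the support of each $\theta_j$ the hypothesis gives a \emph{uniform} lower bound $d\ge c\,2^{-j(1+2\delta)}\la\xi\ra^{1/2}$, so one can construct a genuinely elliptic symbol $e_j$ coinciding with $(\widetilde\theta_j d)^{1/2}$ on $\supp\theta_j$, run the Friedrichs-type argument piece by piece, and control the off-diagonal terms by the pseudo-local estimate $\lA\theta_j T_d(1-\widetilde\theta_j)\rA_{H^\mu\to H^{\mu+N}}\lesssim 2^{-jN}$ (Lemma~\ref{L.7.4}), the errors then summing over $j$. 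This localization is the ingredient your proposal is missing; without it the borderline regularity $\rho=1/2$ is not enough to close the positivity estimate.
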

\begin{rema}
This proposition has been used for $d=1$. However, it might be useful for $d\ge 1$.
\end{rema}
\begin{proof}
Again, the difficulty comes from our low regularity assumption. 
Indeed, with more regularity (say $d\in \Gamma^{1/2}_{\rho}(\xR^d)$ with $\rho>2$) 
this follows from the sharp G\aa rding inequality proved in \cite{Bony}.

Consider a partition of unity as a sum of squares, such that 
$$ 1= \theta^2_0(x) + \sum_{j=1}^{\infty} \theta^2 (2^{-j}  x)= \sum_{j=0}^{\infty} \theta^2_j (x),
$$
where $\theta_0\in C^\infty_0 (\xR)$ and $\theta\in C^\infty(\xR)$ is supported in the annulus 
$\{x\in \xR\,:\, 1\le \la x\ra\le 3\}$.

Then 
$$
I= \left\langle T_d u, u\right\rangle 
= \sum_{j=0}^\infty \left\langle \theta^2_j T_d  u, u\right\rangle.
$$
The following result is an illustration of the pseudo-local property of paradifferential operators (see~\cite[p435]{Che} for similar results in this direction).
\begin{lemm}\label{L.7.4}
Let $ \widetilde \theta\in C^\infty_0 (]1/2,4[)$ equal to $1$ on the support of $\theta$, and set 
$\widetilde \theta_j (x) = \widetilde \theta( 2^{-j} \la x\ra )$ for $j\ge 1$. Also introduce 
$\widetilde \theta_0\in C^\infty_0 (\xR)$ equal to $1$ on the support of $\theta_0$. 
Then for all $\mu \in \xR$, all $j \in \xN$,  and all $N\in \xN$, the operator $R_j =\theta_j T_d (1- \widetilde \theta_j)$ 
is continuous from $H^\mu$ to $H^{\mu+N}$ with norm bounded by $C_N 2^{-jN}$.
\end{lemm}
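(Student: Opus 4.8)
The plan is to exploit the (quasi-)local nature of paradifferential operators: the kernel of $T_d$ decays fast off the diagonal, so conjugating by two bump functions $\theta_j$ and $1-\widetilde\theta_j$ supported in essentially disjoint regions (at distance $\sim 2^j$) produces an arbitrarily smoothing operator with the stated gain $2^{-jN}$. First I would reduce to an explicit kernel estimate. Recall from \eqref{eq.para} that $T_d$ has Schwartz kernel
$$
K(x,y)=(2\pi)^{-2d}\int\!\!\int e^{i(x-y)\cdot\xi} e^{i(x-y')\cdot\theta}\,\chi(\theta,\xi)\,\psi(\xi)\,\widehat d(\theta,\xi)\,d\theta\,d\xi\Big|_{y'=y},
$$
and that, since $d\in\Gamma^{1/2}_{1/2}(\xR^d)$, the partial Fourier transform $\widehat d(\theta,\xi)$ in the first variable is rapidly decaying in $\theta$ only to the extent permitted by the $W^{1/2,\infty}$ regularity — which is not enough by itself. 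The standard fix is to use the spectral localization built into $\chi$: on $\supp\chi$ one has $\la\theta\ra\le\eps_2\la\xi\ra$, so integrations by parts in $\xi$ (each costing $\la\xi\ra^{-1}$ and producing a factor $|x-y|$) are unobstructed by the $\theta$-frequencies. This yields, for every $M$, a bound $|K(x,y)|\lesssim_M \la\xi\ra^{\text{(finite)}} $-type estimate that, after integrating in $\xi$, gives $|K(x,y)|\le C_M (1+|x-y|)^{-M}$ away from the diagonal, together with analogous bounds on derivatives.

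Next I would quantify the separation. On $\supp\theta_j$ we have $|x|\sim 2^j$ while on $\supp(1-\widetilde\theta_j)$ we have $|x|\le 2^{j-1}$ or $|x|\ge 2^{j+2}$; in either case $|x-y|\gtrsim 2^j$ whenever $\theta_j(x)\neq0$ and $(1-\widetilde\theta_j)(y)\neq0$. Hence the kernel of $R_j=\theta_j T_d(1-\widetilde\theta_j)$ is $\theta_j(x)K(x,y)(1-\widetilde\theta_j)(y)$, which by the off-diagonal estimate is bounded by $C_{M} 2^{-jM}(1+|x-y|)^{-(M-M')}$ for suitable $M'$; the same holds for all derivatives in $x$ and $y$ up to a fixed order (this is where I need $d\in\Gamma^{1/2}_{1/2}$ to control finitely many symbol seminorms, via $M^{1/2}_{1/2}(d)$ and the quantitative bound \eqref{esti:quant1}). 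Choosing $M$ large enough and invoking the Schur test (or simply $L^2\to L^2$ boundedness of an operator with rapidly decaying smooth kernel, combined with differentiation to raise the target regularity), one concludes $\lA R_j\rA_{H^\mu\to H^{\mu+N}}\le C_N 2^{-jN}$, uniformly in the relevant parameters. The case $j=0$ is trivial since $\theta_0$ and $1-\widetilde\theta_0$ have disjoint supports and $T_0$ is smoothing off-diagonal by the same argument with no $2^j$ gain needed.

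The main obstacle is obtaining the off-diagonal kernel decay with only $W^{1/2,\infty}$ regularity of the symbol in $x$: one cannot integrate by parts in $\theta$, so the decay in $x-y$ must come entirely from $\xi$-integrations by parts, and one has to check carefully that the finitely many derivatives of $\widehat d(\theta,\xi)$ in $\xi$ needed are controlled (by the seminorm $M^{1/2}_{1/2}(d)$, which is finite by hypothesis, using that $\widehat d(\cdot,\xi)$ as a tempered distribution pairs against the Schwartz function $e^{i(x-y)\cdot\theta}\chi(\theta,\xi)$). Once the cutoff $\chi$ is used to trade $\la\theta\ra$ for $\la\xi\ra$ and the spatial separation is converted into powers of $\la\xi\ra^{-1}$, the rest is a routine Schur-test/kernel-smoothness argument, and I would not belabor it. I should also note that the constant $C_N$ depends on $\mu$, on $N$, and on $\lA\partialx\eta\rA$ through $M^{1/2}_{1/2}(d)$, but this dependence is harmless for the application in the proof of Proposition~\ref{psmooth}.
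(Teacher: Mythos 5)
Your argument is essentially the paper's: write the Schwartz kernel of $T_d$ as an oscillatory integral, observe that $|x-y|\gtrsim 2^j$ on $\supp\theta_j\times\supp(1-\widetilde\theta_j)$, and integrate by parts in the frequency variable dual to $x-y$ (each integration gaining a factor $|x-y|^{-1}$ and, via the symbol estimates, a factor $\la\xi\ra^{-1}$), then conclude by a Schur-type bound. The only slip is cosmetic — in your kernel formula the factor $e^{i(x-y')\cdot\theta}\big|_{y'=y}$ should be $e^{ix\cdot\theta}$ (or $e^{i(x-x')\cdot\theta}$ with the internal integration variable $x'$) — but the integration-by-parts step and the gain of $2^{-jN}$ match the paper's proof exactly.
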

\begin{proof} 
Writing (see~\eqref{eq.para})
\begin{align*}
&\theta_j T_d (1- \widetilde \theta_j)u (x)\\
&\quad= \frac1{(2\pi)^2} \int 
e^{i (x\cdot\xi - y\cdot \eta) } \theta_j(x)(1- \widetilde\theta_j(y))\widehat{d} (\xi-\eta,\eta) \psi(\eta) \chi(\xi-\eta, \eta)u(y) d y d\eta d\xi,
\end{align*}
we have
\begin{align*}
&\theta_j T_d (1- \widetilde \theta_j)u (x)\\
&\quad= \frac1{(2\pi)^2} \int 
e^{i (x-y)\cdot\eta}e^{ix\cdot\zeta}\theta_j(x)(1- \widetilde\theta_j(y))\widehat{d} (\zeta,\eta) \psi(\eta) \chi(\zeta, \eta)u(y) d y d\eta d\zeta.
\end{align*}
We then obtain the desired result from a non-stationary phase argument. Indeed, 
using that on the support of this integral we have $|x-y| > c 2^j$, we can integrate by parts using the operator 
$$ L = \frac{(x-y)\cdot \partial_\eta} { \la x-y\ra^2}.$$
Since $\chi(\zeta, \eta)$ is homogeneous of degree $0$ in $(\zeta,\eta)$, 
we obtain that $N$ such integration by parts gain $N$ powers of $2^{-j}$ and of $|\eta|^{-1}$. 
\end{proof}

Now, write
\begin{align*}
&\theta_j T_d  u \\
&\quad=\theta_j T_d  \widetilde\theta_j  u + \theta_j T_d (1- \widetilde\theta_j )u \\
&\quad=T_d \theta_j \widetilde\theta_j+[\theta_j,T_d]\widetilde \theta_j +  \theta_j T_d (1- \widetilde\theta_j )u\\
&\quad=T_{d}  T_{\widetilde\theta_j }  \theta_j +
T_{d}  (\widetilde\theta_j -T_{\widetilde\theta_j})\theta_j +[\theta_j,T_d]\widetilde \theta_j +  \theta_j T_d (1- \widetilde\theta_j )u\\
&\quad=T_{\widetilde\theta_j  d } \theta_j + (T_d T_{\widetilde\theta_j}-T_{\widetilde\theta_j d})\theta_j+
T_{d}  (\widetilde\theta_j  -T_{\widetilde\theta_j})\theta_j +[\theta_j,T_d]\widetilde \theta_j +  \theta_j T_d (1- \widetilde\theta_j )u.
 \end{align*}
The last term in the right hand side is estimated by means of Lemma~\ref{L.7.4}. 
With regards to the second term in the right-hand side, we use \eqref{esti:quant2} to obtain
$$
\sup_{j\in \xN} \lA T_{\widetilde\theta_j } T_d -T_{\widetilde\theta_j  d} \rA_{L^2\rightarrow L^2} \les 
\sup_{j\in \xN} M^0_{1/2}(\widetilde\theta_j )M^{1/2}_{1/2}(d)\les 1.
$$
The third term is estimated by means of the following inequality (see \cite{MePise}) 
$$
\lA \widetilde\theta_j -T_{\widetilde\theta_j } \rA_{L^2\rightarrow H^1}\les \lA \theta_j\rA_{W^{1,\infty}(\xR)}\les 1.
$$
%Eventually the commutator in the fourth term is estimated by means of Lemma~\ref{??}

Therefore, we conclude that
\begin{align*}
\left\langle (\theta_j)^2 T_d  u, u\right\rangle 
=\left\langle T_{\widetilde\theta_j  d} \theta_j u, \theta_j u\right\rangle +
\left\langle U_j , \theta_j u\right\rangle 
\end{align*}
for some sequence $(U_j)$ such that
$$
\sum_{j=0}^{\infty}\lA U_j \rA_{L^2}^2\les \sum_{j=0}^\infty 
\left( \big\| \widetilde{\theta}_j u\big\|_{L^2} \lA \theta_j u\rA_{L^2}	
 +2^{-j}\lA u\rA_{L^2}\lA \theta_j u\rA_{L^2}\right)
\les \lA u\rA_{L^2}^2.
$$

We want to prove
$$
\sum_{j=0}^{\infty} \left\langle T_{\widetilde\theta_j  d}  \theta_j u, \theta_j u\right\rangle \ge  a\lA \L{x}^{-\mez-\delta}u
 \rA_{H^\frac{1}{4}}^2 -A \lA u \rA_{L^2}^2.
$$
To do this, it suffices to prove
$$
\left\langle T_{\widetilde\theta_j  d}  \theta_j u, \theta_j u\right\rangle \ge  a 2^{-j(1+2\delta)} \lA  \theta_j u  \rA_{H^\frac{1}{4}}^2 
-A \lA U_j'' \rA_{L^2}^2
$$
for some $U_j''$ such that
$$
\sum_{j=0}^{\infty}\lA U_j '' \rA_{L^2}^2\le A \lA u\rA_{L^2}^2.
$$

Since $(\widetilde{\theta}_j d)^\mez \in \Gamma^{1/2}_{1/2}(\xR^d)$, by applying Theorem~\ref{theo:sc} (with $m=m'=1/2$ and $\rho=1/2$), we have
$$
\left\langle T_{\widetilde\theta_j  d}  \theta_j u, \theta_j u\right\rangle
=\lA T_{ (\widetilde\theta_j d)^{\mez}} \theta_j u\rA_{L^2}^2 +\left\langle R_j  \theta_j u,\theta_j u\right\rangle,
$$
where $R_j$ is uniformly bounded from $L^2$ to $L^2$. 
Now by assumption on $d$, we have
$$
\left( \widetilde{\theta}_j (x) d(x,\xi)\right)^\mez \ge K \widetilde{\theta}_j(x)2^{-j\left(\mez+\delta\right)}\la \xi\ra ^{\frac{1}{4}},
$$
where we used $0\le \widetilde{\theta}_j\le 1$. Therefore the symbol $e_j$ defined by
$$
e_j(x,\xi)\defn \left( \widetilde{\theta}_j (x) d(x,\xi)\right)^\mez+ K 2^{-j \left(\mez+\delta\right)} (1-\widetilde{\theta}_j(x))\la \xi\ra ^{\frac{1}{4}},
$$
satisfies the elliptic boundedness inequality
$$
e_j(x,\xi)\ge K 2^{-j \left(\mez+\delta\right)}\la \xi\ra ^{\frac{1}{4}}.
$$
As a result
$$
2^{-j \left(\mez+\delta\right)} \lA \theta_j u \rA_{H^{\frac{1}{4}}}\le 
K \lA T_{e_j} \theta_j u\rA_{L^2}+ K \lA \theta_j u\rA_{L^2}.
$$
The desired result then follows from the fact that $(1-\widetilde{\theta}_j)\theta_j=0$ which implies that
$$
T_{ (\widetilde\theta_j d)^{\mez}} \theta_j -T_{e_j} \theta_j 
=2^{-j \left(\mez+\delta\right)} T_{ (1-\widetilde{\theta}_j(x))\la \xi\ra ^{\frac{1}{4}}}\theta_j= R'_j \widetilde{\theta}_j,
$$
for some operator $R_j'$ uniformly bounded from $L^2$ to $L^2$.

This completes the proof of Proposition~\ref{propd}.
\end{proof}

\appendix
\section{The case of time dependent bottoms}\label{app}

The purpose of this section is to show that our analysis is still valid in the case 
of a time-dependent bottom. The only difference is indeed the definition of the Dirichlet-Neumann operator. 
In this case, we make the additional Lipschitz regularity assumption on the domain
\begin{itemize}
\item[\bf{$\mathbf{H_{2}}$)}]  
We assume that the domain $\Omega_2$ depends now on the time variable and its boundary is locally the graph of a 
function which is continuous in time with values in Lipschitz functions of $x$, and moreover $C^1$ in time with values in $L^\infty$. 
Namely, for any point $(x_0,y_0,t) \in \Gamma_t=\partial\Omega_t\setminus \Sigma_t$, 
there exists an orthonormal  coordinate system $(x', x_{d+1})$ and a function  $b: [0,T]\times \xR^d \mapsto b(t,x')$ 
which is $C^1$ in time with values $L^\infty$ and $C^0$ in time with values  Lipshitz function with respect to the $x'$ 
variable such that near $(x_0,y_0)$, $\Omega_t$ coincides with the set  
$$
\{ (x',x_{d+1},t)\,:\, x_n > b(t,x')\}.
$$
\end{itemize}
In this setting,  the natural boundary condition at the bottom is to ask  the normal velocity of the fluid 
to be equal to the displacement velocity of the bottom. As a consequence, the water-wave problem reads
\begin{equation}\label{sIbis}
\left\{
\begin{aligned}
&\Delta\phi+\partial_{y}^2\phi=0 &&\text{in \,}\Omega_t,\\
&\partial_{t} \eta = \partial_{y}\phi -\partialx\eta\cdot\partialx \phi &&\text{on } \Sigma_t, \\
&\partial_{t}\phi=-g \eta + \kappa H(\eta)-\frac{1}{2}\la \partialx\phi\ra^2 -\frac{1}{2}\la \partial_{y}\phi\ra^2
&&\text{on } \Sigma_t ,\\
&\partial_{n}\phi(m) = \frac {dm} {dt}  \cdot  n(m)  &&\text{for } m\in \Gamma_t,
\end{aligned}
\right.
\end{equation}
where here  $ \frac {dm} {dt} $ is the time 
derivative of the point $m$ on the boundary $\Gamma_t$. 
Notice that clearly, this quantity is dependent on the choice of coordinates defining the domain, 
but $ \frac{dm} {dt}  \cdot  n(m)$ is not. In the coordinate system above, 
the point on the boundary is $m(x',t)= (x',b(t,x'))$, 
$$
n(m) = \frac{ (\nabla_{x'} b ,1)} { \sqrt{ |\nabla_{x'}b|^2 + 1}},\qquad  \frac{dm } {dt} = ( x', \partial_t b),
$$
and the boundary condition reads
$$
(\nabla_{x'} b \cdot \nabla_{x'} \phi)(t,x', b(t,x'))+   \partial_{x_{d+1}} \phi(t,x', b(t,x')) = \partial_t b.
$$
Consequently, to define the Dirichlet-Neumann operator, the crucial step is to solve the system
\begin{equation}\label{eq.harmotime}
\Deltayx \phi= 0 \text{ in } \Omega_t, 
\qquad \phi \mid_{\Sigma_t} = \psi, \qquad \frac{ \partial \phi} {\partial n} \mid_{\Gamma_t} =  \frac {dm} {dt}  \cdot  n(m)= k(t,m).
\end{equation}
The Poincar\'e inequality obtained in Section~\ref{sec.2.1} can be precised. We shall show that one can so chose 
the weight $g=g(m)$ in Corollary~{corog} so that $g$ 
does not blow up as long as the point $m$ remains in a bounded set. `

Recall (cf Notation~\ref{notaD}) that $\mathscr{D}_0$ is 
the space of functions $u\in C^{\infty}(\Omega)$ such that 
$\nabla_{x,y}u\in L^2(\Omega)$, and $u$ equals to $0$ near the top boundary $\Sigma$.
\begin{lemm} 
For any point $m_0 \in \Gamma$ there exists $m_1\in \Omega$, $C>0$ and $\delta_0>0$ 
such that for any $0<\delta < \delta_0$, and any $u\in \mathscr{D}_0$,
$$
\int_{B(m_0, \delta) \cap \Omega}  |u|^2 \,dxdy\leq C\int_{B(m_1, \delta) \cap \Omega}  |u|^2 \,dxdy+C\int_{\Omega} |\nabla u |^2 \,dxdy.
$$
\end{lemm}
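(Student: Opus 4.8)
The plan is to reduce this Poincaré-type inequality near a boundary point $m_0\in\Gamma$ to the chain-of-balls argument already used in Lemma~\ref{lem.accr} and Corollary~\ref{coro.K}, but now being careful to keep track of the intersection with $\Omega$ rather than working with full balls contained in $\Omega$. The geometric input is the assumption $\mathbf{H_2}$: near $m_0$ the boundary $\Gamma$ is locally a Lipschitz graph, continuous in time with values in Lipschitz functions, so we may fix a coordinate system $(x',x_{d+1})$ and a Lipschitz function $b(t,\cdot)$ with $\Omega$ coinciding near $m_0$ with $\{x_{d+1}>b(t,x')\}$. Let $L$ be a Lipschitz constant for $b(t,\cdot)$, valid uniformly in $t$ on a neighborhood (this is where the regularity hypothesis is used).

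The key steps, in order. First, choose $\delta_0>0$ small enough that on $B(m_0,4\delta_0)$ the domain is exactly the Lipschitz subgraph described above, and pick an interior point $m_1\in\Omega$, say $m_1 = m_0 + \ell\, e_{d+1}$ for a fixed $\ell$ with $0<\ell$ comparable to $\delta_0$, so that the straight segment from any point of $B(m_0,\delta)\cap\Omega$ to the corresponding vertically translated point lies inside $\Omega$ for all $\delta<\delta_0$ (here the Lipschitz graph property guarantees that vertical translation by a fixed amount, larger than $L$ times the horizontal extent, stays above the graph). Second, replicate the argument of Lemma~\ref{lem.accr}: for $m\in B(m_0,\delta)\cap\Omega$ write $u(m+v)=u(m)+\int_0^1 v\cdot\partialyx u(m+tv)\,dt$ with $v$ the fixed vertical displacement, square, use Cauchy–Schwarz, and integrate over $B(m_0,\delta)\cap\Omega$; since the whole "tube" swept out is contained in $\Omega\cap B(m_0,C\delta)\subset\Omega$, the gradient term is bounded by $C\int_\Omega|\partialyx u|^2$ and the translated term by $C\int_{B(m_1,\delta)\cap\Omega}|u|^2$ (after possibly enlarging constants, noting $m+v$ ranges in a set contained in $B(m_1,C\delta)\cap\Omega$; one may also first pass through an intermediate ball as in Corollary~\ref{coro.K} if the single displacement does not directly land in $B(m_1,\delta)$). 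This yields exactly the stated inequality. Third, observe that all constants depend only on $L$ and on the geometry at scale $\delta_0$, hence do not blow up as long as $m_0$ stays in a bounded subset of $\Gamma$, which is the point of the remark following the lemma (the weight $g$ in Proposition~\ref{corog} can then be taken locally bounded away from the top, uniformly on bounded sets).

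The main obstacle is the bookkeeping of the intersection with $\Omega$: unlike in Lemma~\ref{lem.accr}, the balls here are not contained in $\Omega$, so one must verify that the segments used in the fundamental-theorem-of-calculus step do not exit $\Omega$. This is precisely where the Lipschitz graph structure is essential — a purely vertical displacement of fixed size $\ell > 2L\delta_0$ maps $B(m_0,\delta)\cap\{x_{d+1}>b(t,x')\}$ into $\Omega$ for every $\delta<\delta_0$ — and it is the only place the hypothesis $\mathbf{H_2}$ enters. Once the direction and size of the displacement are fixed correctly, the rest is a routine repetition of the earlier arguments, combined if necessary with the connectedness-and-chain-of-balls device of Corollary~\ref{coro.K} to link $B(m_1,\delta)$ back to the strip $\mathcal{O}_1$ near $\Sigma$.
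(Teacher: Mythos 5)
Your proof is correct and takes essentially the same route as the paper's: both reduce to the translation-and-integration argument of Lemma~\ref{lem.accr}, with the Lipschitz hypothesis $\mathbf{H_2}$ entering only to guarantee that the displacement segment stays inside $\Omega$. The one cosmetic difference is that the paper first performs a Lipschitz change of variables to flatten $\Gamma$ to the hyperplane $\{x_n=0\}$ (so the translation is then along $e_n$ in a genuine half-space), whereas you argue in place, choosing $m_1=m_0+\ell\,e_{d+1}$ and using the graph structure to verify directly that a vertical displacement of size $\ell>2L\delta_0$ keeps you above the graph. Neither version buys anything over the other: the change of variables trades the in-place geometric check for the need to track the Jacobian and the distortion of $|\partialyx u|^2$ under a bi-Lipschitz map, both of which are controlled by the same Lipschitz constant $L$. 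Your closing observation about why the constants depend only on $L$ and the local geometry, hence remain bounded on bounded subsets of $\Gamma$, is exactly the point of the corollary that follows in the paper.
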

Indeed, using assumption $H2)$ and performing a Lipschitz change of variables near $m_0$, we are reduced to the case where the domain is 
$$ \Omega= \{(x_n , x'); x_n >0\}$$ and the point $m_0= (0,0)$. Choosing $m_1= (\epsilon, 0)$, Lemma~\ref{lem.accr} follows now from the same proof as for Lemma~\ref{lem.accr}. 

We now deduce easily using Lemma~\ref{lem.Poinc}, 
\begin{lemm}\label{lem.Poincbis}
Assume that the domain $\Omega$ satisfies the assumptions above. For any $m_0=(x_0,y_0)\in \overline{\Omega}$ 
there exists a 
neighboorhood $\omega$ of $m_0$ in $\xR^{d+1}$ and $C>0$ such that for any function $u \in \mathscr{D}_0$, we have 
$$
\int_{\omega\cap \Omega} |u|^2 \,dx dy\leq C \int_{\Omega} |\nabla_{x,y} u |^2 \,dxdy.
$$
\end{lemm}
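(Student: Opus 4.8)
The plan is to deduce Lemma~\ref{lem.Poincbis} directly from the two preceding results, treating separately the case where $m_0$ lies near the free surface and the case where it lies near the bottom. First I would observe that the claim is local, so it suffices to produce for each $m_0\in\overline{\Omega}$ one neighborhood $\omega$ of $m_0$ together with a constant $C$ such that the stated inequality holds; different points may require different arguments, but this is harmless.

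\textbf{Case 1: $m_0$ is in a fixed strip around $\Sigma$.} Here, by the assumption $\mathbf{H_t}$, a whole neighborhood $\omega$ of $m_0$ is contained in the strip $\mathcal{O}_1=\{(x,y):\eta(x)-h<y<\eta(x)\}$ (shrinking $h$ if necessary, or rather working in the strip defined by $\mathbf{H_t}$). Then $\omega\cap\Omega=\omega$ and Lemma~\ref{lem.Poinc} gives
$$
\int_{\omega\cap\Omega}|u|^2\,dxdy\le \int_{\mathcal{O}_1}|u|^2\,dxdy\le h^2\int_\Omega |\partialyx u|^2\,dxdy,
$$
which is the desired conclusion with $C=h^2$. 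This covers in particular every $m_0$ with $y_0$ sufficiently close to $\eta(x_0)$, and more generally every $m_0\in\mathcal{O}_1$.

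\textbf{Case 2: $m_0\in\overline{\mathcal{O}_2}$, i.e. $m_0$ is in the interior of $\Omega$ away from $\Sigma$, or on $\Gamma$.} If $m_0$ lies in the open set $\mathcal{O}_2$, I would simply pick a small ball $\omega=B(m_0,\rho)$ with $\overline\omega\subset\mathcal{O}_2$; then $\overline\omega$ is a compact subset of $\mathcal{O}_2$ and Corollary~\ref{coro.K} gives $\int_{\omega\cap\Omega}|u|^2=\int_\omega|u|^2\le C(\overline\omega)\int_\Omega|\partialyx u|^2$. If $m_0\in\Gamma$, I would invoke the preceding lemma (the Lipschitz-bottom Poincaré inequality stated just above, which I'll call Lemma~A): it produces $m_1\in\Omega$, a constant $C$, and $\delta_0>0$ such that for $0<\delta<\delta_0$,
$$
\int_{B(m_0,\delta)\cap\Omega}|u|^2\le C\int_{B(m_1,\delta)\cap\Omega}|u|^2+C\int_\Omega|\partialyx u|^2.
$$
The point $m_1$ belongs to $\Omega$, so by taking $\delta$ small enough the ball $B(m_1,\delta)$ is entirely contained either in $\mathcal{O}_1$ or in $\mathcal{O}_2$ (after possibly shrinking $\delta_0$; since $m_1\in\Omega=\mathcal{O}_1\cup\mathcal{O}_2$ this is automatic). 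In the first subcase, Lemma~\ref{lem.Poinc} bounds $\int_{B(m_1,\delta)\cap\Omega}|u|^2$ by $h^2\int_\Omega|\partialyx u|^2$; in the second, $\overline{B(m_1,\delta)}$ is a compact subset of $\mathcal{O}_2$ and Corollary~\ref{coro.K} does the same, up to a constant $C(\overline{B(m_1,\delta)})$. Substituting this into the inequality of Lemma~A and setting $\omega=B(m_0,\delta)$ yields
$$
\int_{\omega\cap\Omega}|u|^2\,dxdy\le C'\int_\Omega|\partialyx u|^2\,dxdy,
$$
which is exactly the claim. Finally, every $m_0\in\overline\Omega$ falls into one of these cases — either $m_0\in\mathcal{O}_1$, or $m_0\in\mathcal{O}_2$, or $m_0\in\Gamma$ — so the lemma follows.

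The only genuinely delicate point is the reduction underlying Lemma~A, namely checking that the Lipschitz change of variables flattening $\Gamma$ near $m_0$ transforms $\mathscr{D}_0$-functions into functions to which the half-space version of Lemma~\ref{lem.accr} applies, with control of the gradient norm; but that is precisely the content of the lemma stated immediately before, so I may invoke it. The rest is bookkeeping: combining the interior estimate (Corollary~\ref{coro.K}) and the strip estimate (Lemma~\ref{lem.Poinc}) according to where the auxiliary point $m_1$ lands. I expect no real obstacle beyond organizing these three cases cleanly and making sure the neighborhoods are chosen small enough that the relevant inclusions ($B(m_1,\delta)\subset\mathcal{O}_1$ or $\subset\mathcal{O}_2$, $\omega\subset\mathcal{O}_1$ in Case 1) hold.
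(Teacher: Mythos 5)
Your argument is correct and is the natural way to fill in what the paper leaves as ``we now deduce easily using Lemma~\ref{lem.Poinc}'' (the paper gives no explicit proof, only that one-line remark after establishing the boundary lemma). Your case split — interior points near $\Sigma$ via Lemma~\ref{lem.Poinc}, interior points in $\mathcal{O}_2$ via Corollary~\ref{coro.K}, boundary points $m_0\in\Gamma$ via the Lipschitz-bottom lemma followed by one of the previous two estimates at the shifted point $m_1$ — is exactly the intended reduction.

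One small imprecision worth fixing: the claim ``$m_1\in\Omega=\mathcal{O}_1\cup\mathcal{O}_2$, so this is automatic'' is not literally true, since $\mathcal{O}_1\cup\mathcal{O}_2$ omits the slice $\{y=\eta(x)-h\}$, and a ball $B(m_1,\delta)$ centred there sits in neither set. This is harmless here because for $m_0\in\Gamma$ the assumption $\mathbf{H_2}$ forces the bottom to lie strictly below the closed strip $\{\eta(x)-h\le y\le\eta(x)\}\subset\Omega_2$, so both $m_0$ and the nearby $m_1$ are at positive distance below $\{y=\eta(x)-h\}$ and $\overline{B(m_1,\delta)}\subset\mathcal{O}_2$ for $\delta$ small; thus only the Corollary~\ref{coro.K} branch actually occurs, and the $\mathcal{O}_1$ branch you allowed for is superfluous. (Alternatively, even if $m_1$ did land on that slice, one splits $B(m_1,\delta)$ into its $\mathcal{O}_1$- and $\mathcal{O}_2$-parts and applies the two estimates separately.) You should also, when invoking Corollary~\ref{coro.K}, ensure $\delta$ is small enough that $\overline{B(m_1,\delta)}\subset\Omega$, not merely $\delta<\delta_0$; this follows by further shrinking $\delta$ since $m_1$ is an interior point of $\Omega$.
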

\begin{coro}There exists a weight 
$ g \in L^\infty_{loc} ( \overline{\Omega})$, positive everywhere, equal to $1$ near the top 
boundary $\Sigma$ of $\Omega$, %bounded from below on any compact  subset of $\overline{\Omega}$ 
and such that  for any function $u \in \mathscr{D}_0$ equal to $0$ near $\Sigma$, we have 
$$
\int_\Omega g(x,y) |u|^2 \,dxdy \leq C \int_{\Omega} |\nabla_{x,y} u |^2 \,dxdy.
$$
\end{coro}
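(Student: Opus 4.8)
The plan is to mimic exactly the construction carried out in the proof of Proposition~\ref{corog}, but using Lemma~\ref{lem.Poincbis} in place of Lemma~\ref{lem.Poinc} and Corollary~\ref{coro.K}. First I would recall that, by Lemma~\ref{lem.Poincbis}, every point $m_0\in\overline{\Omega}$ (including points of the bottom $\Gamma$ and of $\Sigma$) has a neighbourhood $\omega_{m_0}$ and a constant $C_{m_0}>0$ such that $\int_{\omega_{m_0}\cap\Omega}|u|^2\,dxdy\le C_{m_0}\int_\Omega |\nabla_{x,y}u|^2\,dxdy$ for all $u\in\mathscr{D}_0$. Since near the top boundary $\Sigma$ the weight must be equal to $1$, I would first fix, as in Proposition~\ref{corog}, the strip $\mathcal{O}_1=\{\eta(x)-h<y<\eta(x)\}$ on which Lemma~\ref{lem.Poinc} already gives the bound with constant $h^2$ and weight $1$; so I set $g\equiv 1$ there.

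Next, I would cover the remaining region $\overline{\Omega}\setminus\mathcal{O}_1$ (which is a closed set, locally compact, contained in $\overline{\Omega}$) by a countable, locally finite family of such neighbourhoods $\omega_{m_k}$, $k\ge 1$, with associated constants $C_k=C_{m_k}$, and take a partition of unity $(\chi_k)$ subordinate to this cover, $0\le\chi_k\le 1$, $\supp\chi_k\subset\omega_{m_k}$, $\sum_k\chi_k=1$ on $\overline{\Omega}\setminus\mathcal{O}_1$. Then I would set, exactly as in the original argument,
$$
\widetilde g(x,y)=\sum_{k\ge 1}\frac{\chi_k(x,y)}{(1+C_k)\,k^2},
$$
which is locally bounded, bounded on $\overline\Omega$, and positive everywhere on $\overline{\Omega}\setminus\mathcal{O}_1$ by local finiteness. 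Using Lemma~\ref{lem.Poincbis} term by term,
$$
\int_{\Omega\setminus\mathcal{O}_1}\widetilde g\,|u|^2\,dxdy\le\sum_{k\ge 1}\frac{1}{(1+C_k)k^2}\int_{\omega_{m_k}\cap\Omega}|u|^2\,dxdy\le\Big(\sum_{k\ge1}\frac{1}{k^2}\Big)\int_\Omega|\nabla_{x,y}u|^2\,dxdy.
$$
Finally I would glue: define $g=1$ on $\mathcal{O}_1$ and $g=\widetilde g$ on $\Omega\setminus\mathcal{O}_1$, after multiplying $\widetilde g$ by a cut-off so that it transitions smoothly and stays positive and locally bounded on all of $\overline\Omega$; adding the two inequalities gives the claimed weighted Poincaré estimate with the stated properties, and the key extra feature — that $g$ does not blow up on bounded sets — follows because on any bounded set only finitely many $\chi_k$ are nonzero and each term is bounded.

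The main subtlety — and the only real difference with Proposition~\ref{corog} — is the extraction of the covering near the bottom: one must check that the local-in-$m$ neighbourhoods furnished by Lemma~\ref{lem.Poincbis} at boundary points $m_0\in\Gamma$ can be organized into a countable, locally finite family whose union, together with $\mathcal{O}_1$, covers $\overline\Omega$, and that the resulting weight is locally bounded in $L^\infty_{loc}(\overline\Omega)$ rather than merely in $L^\infty_{loc}(\Omega)$. This is where the Lipschitz regularity assumption $H_2$) enters, through Lemma~\ref{lem.Poincbis}, which is precisely what makes the neighbourhoods usable all the way up to $\Gamma$; the rest is a routine partition-of-unity argument identical to the one already given. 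I expect no genuine obstacle beyond bookkeeping, once Lemma~\ref{lem.Poincbis} is in hand.
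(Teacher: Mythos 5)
Your proposal is correct and takes the approach the paper implicitly intends: the paper gives no explicit proof of this Corollary, and the expected argument is exactly the partition-of-unity construction from the proof of Proposition~\ref{corog}, with Lemma~\ref{lem.Poincbis} replacing Corollary~\ref{coro.K} so that the covering (and hence the weight $\widetilde g$) extends up to the bottom $\Gamma$, which is what you do. The one bookkeeping point, which you already acknowledge via your cutoff remark, is the gluing near $\Sigma$: since $\Sigma\subset\overline{\Omega}\setminus\mathcal{O}_1$, setting $g=\widetilde g$ on all of $\overline{\Omega}\setminus\mathcal{O}_1$ would not give $g=1$ on a neighbourhood of $\Sigma$ in $\overline{\Omega}$, so one should instead impose $g\equiv 1$ on a closed strip such as $\{\eta(x)-h/2\le y\le\eta(x)\}$ and only pass to $\widetilde g$ below it.
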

As a consequence of this result and usual trace theorems, 
\begin{coro}\label{trace}
There exists a weight $g$ in  $L^\infty_{\text{loc}} (\overline \Omega)$ equal to $1$ such that the map 
$$
u \in \mathscr{D}_0 \mapsto u\arrowvert_{\Gamma} \in L^2( \Gamma, g d\sigma)
$$
extends uniquely to a continuous map
$$
u \in H^{1,0}(\Omega) \mapsto u\arrowvert_{\Gamma} \in L^2( \Gamma, g d\sigma).
$$
\end{coro}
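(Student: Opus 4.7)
The plan is to combine the local Lipschitz structure of $\Gamma$ with the weighted Poincar\'e inequality furnished by Corollary~\ref{corog} (in its refined form stated just above for the time-dependent Lipschitz case) and a partition-of-unity argument, in the same spirit as the proof of Proposition~\ref{corog}.

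First, I would establish a \emph{local} trace inequality near each point $m_0\in\Gamma$. By assumption $\mathbf{H_2}$, there is a neighborhood $\omega$ of $m_0$ in $\xR^{d+1}$ and a bi-Lipschitz change of variables $\Psi$ flattening the boundary, so that $\Psi(\omega\cap\Omega)$ is contained in the half-space $\{x_n>0\}$ and $\Psi(\omega\cap\Gamma)\subset\{x_n=0\}$. For $u\in\mathscr{D}_0$, the pullback $\tilde u=u\circ\Psi^{-1}$ is Lipschitz and compactly supported in the new chart (since $u$ vanishes near $\Sigma$), so the classical trace theorem on a half-ball applies: there is a constant $C=C(\omega,\Psi)$ such that
\begin{equation*}
\int_{\omega\cap\Gamma}|u|^2\, d\sigma \le C\Bigl(\int_{\omega\cap\Omega}|u|^2\,dxdy +\int_{\omega\cap\Omega}|\partialyx u|^2\,dxdy\Bigr).
\end{equation*}
Combining this with Lemma~\ref{lem.Poincbis}, which controls $\|u\|_{L^2(\omega\cap\Omega)}$ by $\|\partialyx u\|_{L^2(\Omega)}$, I obtain the local estimate
\begin{equation*}
\int_{\omega\cap\Gamma}|u|^2\, d\sigma \le C(m_0)\int_{\Omega}|\partialyx u|^2\,dx\,dy,
\end{equation*}
for every $u\in\mathscr{D}_0$, where $C(m_0)$ depends only on the chart.

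Second, I would globalize. Write $\Gamma=\bigcup_{n\ge 1} K_n$ as a countable union of compact pieces and, using the local trace bound just established, cover each $K_n$ by finitely many charts to get a constant $C(K_n)$ such that $\int_{K_n}|u|^2\, d\sigma\le C(K_n)\int_\Omega|\partialyx u|^2$. Then, mimicking the construction in the proof of Proposition~\ref{corog}, choose a partition of unity $(\chi_n)$ on $\Gamma$ with $0\le\chi_n\le 1$, $\supp\chi_n\subset K_n$, and define
\begin{equation*}
g(m)=\sum_{n=1}^\infty\frac{\chi_n(m)}{(1+C(K_n))n^2},\qquad m\in\Gamma.
\end{equation*}
This $g$ is positive everywhere on $\Gamma$, locally bounded, and summing the local bounds yields
\begin{equation*}
\int_\Gamma g(m)|u(m)|^2\, d\sigma(m)\le 2\int_\Omega|\partialyx u|^2\,dx\,dy =2\lA u\rA_{H^{1,0}(\Omega)}^2,
\end{equation*}
for every $u\in\mathscr{D}_0$. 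Near the top boundary $\Sigma$ all contributions from the sum vanish because functions in $\mathscr{D}_0$ vanish there; one may then redefine $g\equiv 1$ on the portion of $\Gamma$ (if any) close to $\Sigma$ without harm, consistently with the statement of the corollary.

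Third, the extension step is automatic from the definition of $H^{1,0}(\Omega)$: for $u\in H^{1,0}(\Omega)$, pick an approximating sequence $u_k\in\mathscr{D}_0$ with $\partialyx u_k\to\partialyx u$ in $L^2(\Omega)$. The bound above shows that $(u_k\arrowvert_\Gamma)$ is a Cauchy sequence in $L^2(\Gamma,g\, d\sigma)$, hence converges to a limit independent of the approximating sequence, which we define to be the trace $u\arrowvert_\Gamma$.

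The main obstacle I anticipate is the first step, namely the legitimacy of the classical trace inequality under the merely Lipschitz bi-Lipschitz change of variables. The Lipschitz regularity in $\mathbf{H_2}$ is exactly what is needed for the standard trace theorem to apply (Lipschitz domains admit a bounded trace $H^1\to L^2$ of the boundary, cf.\ Ne\v{c}as), but one must check that the weight $g$ one constructs globally remains locally bounded and positive, which is ensured by the locally finite character of the cover $(K_n)$ together with the summability built into the definition of $g$.
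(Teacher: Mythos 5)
Your proposal is correct and follows exactly the route the paper has in mind — the paper states Corollary~\ref{trace} as ``a consequence of this result and usual trace theorems'' without spelling out the details, and what you have written is precisely the expansion of that remark: a local Lipschitz trace inequality (the ``usual trace theorems'' for Lipschitz charts given by $\mathbf{H_2}$), combined with Lemma~\ref{lem.Poincbis} to absorb the $L^2$ term, followed by the same summable partition-of-unity construction used in the proof of Proposition~\ref{corog}, and a density argument for the extension. The bound $\int_\Gamma g|u|^2\,d\sigma\le \sum_n n^{-2}\,C(K_n)/(1+C(K_n))\int_\Omega|\partialyx u|^2\le 2\int_\Omega|\partialyx u|^2$ is exactly what the weights are designed to deliver, and the observation that the trace map is well-defined on $H^{1,0}$ by Cauchy-sequence passage is the intended final step.
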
 
We are now in position to define the Dirichlet-Neumann operator.
 Let $\psi (x) \in H^1( \xR^d)$. For $\chi \in C^\infty_0 (-1,1)$ equal to $1$ near $0$, we first define 
$$\widetilde \psi= \chi \left(\frac {y-\eta(x)} h\right) \psi(x) \in H^{1} ( \xR^{d+1})$$
Then let $\widetilde \phi$ be the unique variational solution of the system 
$$
- \Delta_{x,y} \widetilde \phi = \Delta_{x,y} \widetilde \psi, \qquad \widetilde \phi\arrowvert_{\Sigma} =0, 
\qquad \partial _n \widetilde \phi\arrowvert_{\Gamma} = k
$$
which is the unique function $\widetilde \phi \in H^1_0 ( \Omega)$ of
\begin{equation}\label{eq.variater}
 \forall v \in H^{1,0} ( \Omega), \qquad 
 \int_\Omega \nabla_{x,y} \widetilde \phi \cdot \nabla_{x,y}  v
= \int_{\Omega} v \Delta_{x,y} \widetilde \psi  - \int_{\Gamma} k_t v \arrowvert_{\Gamma}d\sigma.
 \end{equation}
Here notice that the first term in the right hand side of~\eqref{eq.variater} is, as in the 
time independent case, a bounded 
linear form on $H^{1,0} ( \Omega)$. Now, we make the additional assumption 
(which is always satisfied if the domain is time dependent only on a bounded zone).
\begin{itemize}
\item[\bf{$\mathbf{H_{3}}$)}]  
Assume that the time dependence of the domain (i.e. the function $k$) decays sufficiently fast 
near infinity, so that 
$$
\frac {dm} {dt}\cdot n(m) g(m)^{1/2}=k(m,t) g(m)^{1/2} \in L^\infty(0,T;L^2 ( \Gamma)).
$$
\end{itemize}
Then, according to Corollary~\ref{trace}  
the second term of the r.h.s. of~\eqref{eq.variater} 
is a bounded linear form on $H^{1,0}( \Omega)$ (uniformly with respect to time), and consequently~\eqref{eq.variater} has a unique variational solution.

 We now define $\phi = \widetilde \phi + \widetilde \psi$ and 
\begin{align*}
G(\eta, k) \psi  (x)&=
\sqrt{1+|\partialx\eta|^2}\,
\partial _n \phi\arrowvert_{y=\eta(x)},\\
&=(\partial_y \phi)(x,\eta(x))-\partialx \eta (x)\cdot (\partialx \phi)(x,\eta(x)).
\end{align*}
Notice that as in the previous section, a simple calculation shows that this definition is independent on the choice of the lifting function $\widetilde\psi$ as long as it remains bounded in $H^1$ and localized in the strip $-h <y \leq 0$.

Now, the proof of 
Theorem~\ref{th.4} is exactly the same as the proof of Theorems~\ref{theo:Cauchy} and~\ref{theo:main}, using this new definition of the Dirichlet-Neumann operator.

\providecommand{\bysame}{\leavevmode\hbox to3em{\hrulefill}\thinspace}
\providecommand{\href}[2]{#2}

\end{document}